\newtheorem{theorem}{Theorem}[section]
\newtheorem{proposition}[theorem]{Proposition}
\newtheorem{corollary}[theorem]{Corollary}
\newtheorem{lemma}[theorem]{Lemma}
\theoremstyle{definition}
\newtheorem{claim}[theorem]{Claim}
\newtheorem{definition}[theorem]{Definition}
\newtheorem{remark}[theorem]{Remark}
\newtheorem{hypothesis}[theorem]{Hypothesis}
\newtheorem{conjecture}[theorem]{Conjecture}
\newtheorem{acknow}{Acknowledgments}
\def\Z{\mathbb{Z}}
\def\Q{\mathbb{Q}}
\def\R{\mathbb{R}}
\def\C{\mathbb{C}}
\def\bE{\mathbb{E}}
\def\CP{\mathbb{C} \mathbb{P}}
\def\barCP2{\overline{\mathbb{CP}}\ \! \!^2}
\def\<{\left\langle}
\def\>{\right\rangle}
\def\({\left(}
\def\){\right)}
\def\cA{\mathcal{A}}
\def\cB{\mathcal{B}}
\def\cD{\mathcal{D}}
\def\cG{\mathcal{G}}
\def\cL{\mathcal{L}}
\def\fg{\mathfrak{g}}
\def\su{\mathfrak{su}}
\def\Aut{\operatorname{Aut}}
\def\const{\operatorname{const}}
\def\Im{\operatorname{Im}}
\def\Ind{\operatorname{Ind}}
\def\ind{\operatorname{ind}}
\def\supp{\operatorname{supp}}
\def\Sym{\operatorname{Sym}}
\def\Dir{\not \! \! \mathfrak{D}}
\def\Lam{\Lambda(\rho, \sigma)}
\def\lam{\lambda}
\def\unA{[\underline{A}]}
\def\B{\cB_{Y \times \R}^*(\rho, \sigma)}
\def\tB{\tilde{\cB}^*_{Y \times \R}(\rho, \sigma)}
\def\X0B{\cB^*_{\hat{X}_0}(\rho)}
\def\tX0B{\tilde{\cB}^*_{ \hat{X}_0}(\rho)}
\def\aR{\underline{\R}(\theta_a, \rho)}
\def\bR{\underline{\R}(\sigma, \theta_b)}
\def\aC{\underline{\C}(\theta_{\gamma,a}, \rho)}
\def\bC{\underline{\C}(\sigma, \theta_{\gamma, b})}
\def\aLam{\Lambda^{(a)}(\rho, \sigma)}
\def\1L{\cL_{\Gamma}^{(1)}(\rho,\sigma)}
\def\a0Lam{\Lambda^{(a)}(\rho)}
\def\S0L{\cL_{ \hat{\Sigma}_0 }^{(1)}(\rho)}
\def\aInd{\Ind^{-+}\Dir_{A(\theta_a, \rho)}}
\def\bInd{\Ind^{-+} \Dir_{A(\sigma, \theta_b)}}
\def\abInd{\Ind^{-+} \Dir_{A(\theta_a, \theta_b)}}
\def\gamaInd{\Ind^{-+} \bar{\partial}_{A(\theta_{\gamma, a},\rho)} }
\def\gambInd{\Ind^{-+} \bar{\partial}_{A(\sigma,\theta_{\gamma, b})}}
\def\gamabInd{\Ind^{-+} \bar{\partial}_{A(\theta_{\gamma, a}, \theta_{\gamma, b})}}
\def\M{M_{Y \times \R}(\rho, \sigma)}
\def\0M{M_{Y \times \R}^0(\rho, \sigma)}
\def\ML{M_{Y \times \R}^0(\rho, \sigma;L)}
\def\iM{M_{Y \times \R}(\rho(i-1), \rho(i))}
\def\i0M{M^0_{Y \times \R}(\rho(i-1), \rho(i))}
\def\bM{M_{Y \times \R}^0(\rho, \sigma;L_{\beta})}
\def\3M{M_{Y \times \R}^0(\rho, \sigma;L_{3})}
\def\b12M{M_{Y \times \R}^0(\rho, \sigma;L_{\beta_2} \backslash L_{\beta_1})}
\def\MX0{ M_{ \hat{X}_0 }(\rho) }
\def\ungam{\underline{\gamma}}
\def\unlam{\underline{\lambda}}
\def\unn{\underline{n}}
\def\sl{s_l(\rho, \sigma)}
\def\isl_{s_l(\rho(i-1), \rho(i)}
\begin{document}


\title[Floer homology for  2-torsion instanton invariants]
{Floer homology for  \\  2-torsion instanton invariants}
\author{Hirofumi Sasahira}
\date{}

\renewcommand{\thefootnote}{\fnsymbol{footnote}}
\footnote[0]{2000\textit{ Mathematics Subject Classification}.
Primary 57R57, 57R58.}

\address{
Graduate school of Mathematics, Nagoya University,\endgraf 
Furocho, Chikusaku, Nagoya, Japan.
}
\email{hsasahira@math.nagoya-u.ac.jp}

\maketitle



\begin{abstract}
We construct a variant of Floer homology groups and prove a gluing formula for a variant of Donaldson invariants.
As a corollary, the variant of Donaldson invariants is non-trivial for connected sums of 4-manifolds which satisfy a condition for Donaldson invariants.
We also show a non-existence result of compact, spin 4-manifolds with boundary some homology 3-spheres and with certain intersection forms.

\end{abstract}



\section{Introduction}

In this paper, we construct a variant of instanton Floer homology groups and prove a gluing formula for a variant of Donaldson invariants introduced in \cite{S}.

As S. K. Donaldson showed in \cite{appli, conn, h cobordism, poly}, the moduli spaces of instantons have crucial information about the topology of closed, oriented, smooth 4-manifolds $X$.
Mainly there are two methods to draw out the information from the moduli spaces. First one is to describe the structures of the singular points and the ends of the moduli spaces \cite{appli, conn}. The description gives strong restrictions to the realization of unimodular quadratic forms as the intersection form of a smooth 4-manifold.
Second one is to integrate cohomology classes over moduli spaces \cite{h cobordism, poly}. This gives differential-topological invariants of 4-manifolds which distinguish different smooth structures on the same topological 4-manifold.

It used to be hard to compute Donaldson invariants in general. However instanton Floer theory gives us a way to compute Donaldson invariants $\Psi_X$ when $X$ has a decomposition $X=X_0 \cup_{Y} X_1$ for some compact 4-manifolds $X_0$ and $X_1$ with boundary $Y$ and $\bar{Y}$. Here $Y$ is a closed, oriented 3-manifold and $\bar{Y}$ is $Y$ with the opposite orientation.
A. Floer introduced instanton homology groups $HF_*(Y)$ for homology 3-spheres $Y$ in \cite{Fl}. Floer's groups allow us to generalize Donaldson invariants for compact, oriented 4-manifolds $X_0$ whose boundaries are homology 3-spheres $Y$. The relative invariant $\Psi_{X_0}$ is an element of $HF_*(Y)$, which is defined by integrating cohomology classes of the moduli spaces of instantons over $\hat{X}_0 = X_0 \cup (Y \times  \R_{ \geq 0 })$. Assume that a closed 4-manifold $X$ has a decomposition $X = X_0 \cup_{Y} X_1$ for some homology 3-sphere  $Y$ and two compact 4-manifolds $X_0, X_1$ with boundary $Y$,  $\bar{Y}$ and with $b^+(X_0), b^+(X_1)>1$. Here $b^+$ stands for the dimension of a maximal positive subspace of the intersection form on $H_2(X)$. Then we have the relative invariants $\Psi_{X_0} \in HF_*(Y)$, $\Psi_{X_1} \in HF_*( \bar{Y} )$. There is a natural pairing 
\[
< \ , \ > : HF_*(Y) \otimes HF_*( \bar{Y} ) \longrightarrow \Q,
\]
and we have a relation
\begin{equation} \label{eq intro 1}
\Psi_{X} = < \Psi_{X_0}, \Psi_{X_1} >.
\end{equation}

Instanton Floer homology groups also enable us to generalize the results on the non-existence of closed 4-manifolds with $b^+ = 0$ and with non-standard intersection forms. Donaldson \cite{D Floer} showed that if $HF_1(Y)=0$ then there is a restriction to the realization of unimodular quadratic forms as the intersection form of a 4-manifold $X_0$ with boundary $Y$ and with $b^+ =0$.
The proof involves the description of the singular points and the ends of the moduli spaces of instantons over $\hat{X}_0$ as in the case of closed 4-manifolds. We also refer to K. A. Froyshov's papers \cite{Fr SW, Fr Equiv, Fr h inv}. He gave obstructions for (rational) homology 3-spheres to bound a 4-manifold, using Seiberg-Witten theory and ``h invariant'' defined by instanton Floer theory.

\vspace{2mm}

On the other hand, there are some variants of Donaldson invariants. R. Fintushel and R. Stern defined a variant of Donaldson invariants for closed, simply connected, spin 4-manifolds in \cite{FS}. They used a 2-torsion cohomology class $u_1 \in H^1(M_P;\Z_2)$ of moduli spaces $M_P$ which was originally defined by Donaldson in \cite{conn}.
In \cite{AMR}, S. Akbulut, T. Mrowka and Y. Ruan extended the construction of $u_1$ to non-spin 4-manifolds and they showed a universal constraint of Donaldson invariants for non-spin 4-manifolds.
Using $u_1$ for non-spin 4-manifolds, the author \cite{S} defined a variant $\Psi_{X}^{u_1}$ of Donaldson invariants for non-spin 4-manifolds, which can be regarded as an extension of Fintushel-Stern's invariant to non-spin 4-manifolds.
A remarkable feature of these variants is that they are non-trivial for connected sums of the form $X = Y \# S^2 \times S^2$ in general.

In \cite{Fur}, it was announced (without proof) that K. Fukaya, M. Furuta and H. Ohta showed a non-existence result of compact, spin 4-manifolds with $b^+ = 1$ and with some intersection form when their boundaries are diffeomorphic to some homology 3-spheres.  
In \cite{Fur}, Furuta explained that a variant of instanton Floer homology groups and an extension of $\Psi_X^{u_1}$ to compact, spin $4$-manifolds with boundary a homology 3-sphere are used in the proof. This is a nice generalization of a result in \cite{conn}, however, explicit examples were not given.

\vspace{2mm}

The main purpose of this paper is to construct a variant of instanton Floer homology groups and to extend the relation (\ref{eq intro 1}) to $\Psi_X^{u_1}$. As a corollary, we obtain a non-vanishing result for connected sums of some 4-manifolds. Moreover we will write down a proof of Fukaya-Furuta-Ohta's non-existence result  using the variant of instanton Floer homology groups.
In the proof, we make use of the structure of the ends of some moduli spaces as in \cite{conn}. There is a difference between our variant of instanton Floer homology and Fukaya-Furuta-Ohta's one. In our construction we use the determinant line bundles of families of $\bar{\partial}$-operators over $\gamma \times \R$. Here $\gamma$ is a loop in $Y$. This enables our gluing formula to be applied to not only the case when 4-manifolds are spin but also the case when 4-manifold are non-spin.

\vspace{2mm}

In \cite{S}, the author showed that the variant of Donaldson invariants is non-trivial for $2\CP^2 \# \barCP2$. As we will explain in Section \ref{s torsion}, this calculation is suggesting that we need to take into account the following aspects in the construction of our variant of instanton Floer homology groups:

\vspace{2mm}

\begin{itemize}

\item
The trivial flat connection on $Y$ should play an important role in the gluing formula for $\Psi_{X}^{u_1}$.

\vspace{2mm}

\item
We need a similar construction to Fukaya's extension  \cite{Fu} of instanton Floer homology groups.

\end{itemize}

\vspace{2mm}

In the gluing formula for the usual Donaldson invariants, the trivial flat connection has no contribution. Hence the contribution of the trivial flat connection to the gluing formula for $\Psi^{u_1}_X$ is a new phenomenon. See Donaldson's book \cite{D Floer} and Froyshov's papers \cite{Fr Equiv, Fr h inv} for other treatments of the trivial flat connection.
In Fukaya's construction, we use the determinant line bundles of $\bar{\partial}$-operators over $\gamma \times \R$, and we need sections of the determinant line bundles with certain properties. The properties have to do with non-compactness of the moduli spaces over $Y \times \R$ and transversality. (See Proposition \ref{prop section}.) We call sections having the properties admissible sections. One of the main purposes of this paper is to give a construction of admissible sections of determinant line bundles and it will be done in Section \ref{section Fukaya-Floer}.

\vspace{2mm}

Instanton Floer homology groups are defined for any homology 3-spheres. For other 3-manifolds $Y$, there are some problems coming from reducible flat connections over 3-manifolds and the decomposition of homology classes of $X$ along $Y$. In order to overcome the first difficulty, D. M. Austin and P. J. Braam \cite{AB} introduced an equivariant version of instanton Floer homology for 3-manifolds $Y$ with $b_1(Y) = 0$ or $H_1(Y;\Z)$ torsion free. Kronheimer and Mrowka \cite{KM} gave another method to treat reducible solutions in the context of Seiberg-Witten theory. On the other hand, Fukaya's construction \cite{Fu} gives an effective way to overcome the second difficulty when there are no reducible flat connections.  However it does not seem that we have a complete method to construct suitable instanton Floer homology groups fitting to the situation where we treat both the problems at the same time. To construct a gluing formula for our invariant $\Psi_X^{u_1}$, we must treat both the problems at the same time, even when the 3-manifold $Y$ is a homology 3-sphere. Although we treat only homology 3-spheres in this paper, we can regard our construction as a first step to give a method simultaneously applicable  to both the problems.

\vspace{2mm}

The organization of this paper is as follows.
In Section \ref{section Fukaya-Floer}, we recall Fukaya's construction.
We will basically follow \cite{Fu} and \cite{BD}, however, with some modification.
In Section \ref{s torsion}, making use of techniques developed in Section \ref{section Fukaya-Floer}, we will introduce a variant $I^{(a)}_*(Y; \gamma)$ of instanton Floer homology groups for homology 3-spheres $Y$, loops $\gamma$ in $Y$ and $a \in \{ 0, 1 \}$. We define relative invariants $\Psi_{X_0}^{u_1} \in I^{(a)}_*(Y; \gamma)$ for compact 4-manifolds $X_0$ with boundary $Y$. We also prove a formula  for $\Psi_{X}^{u_1}$ similar to (\ref{eq intro 1}) (Theorem \ref{thm main}).
In particular, we deduce a non-vanishing result for connected sums of 4-manifolds (Corollary \ref{coro connected sum}).
In Section \ref{s Thm B}, we will prove the result on the non-existence of spin 4-manifolds with boundaries some homology 3-spheres and with some intersection forms (Theorem \ref{thm B}).

\begin{acknow}
The author is grateful to Mikio Furuta for his suggestions.
He would also like to thank Yukio Kametani and Nobuhiro Nakamura for useful conversations.
\end{acknow}

\section{Fukaya-Floer homology} \label{section Fukaya-Floer}

Let $X$ be a closed, oriented 4-manifold with $b^+ > 1$ and take a $U(2)$-bundle $P$ over $X$ and assume that the dimension of the moduli space of instantons on $P$ is $2d$ for some integer $d \geq 0$.
We can associate cohomology classes $\mu([\Sigma])$ of degree 2 on the moduli space to homology classes $[\Sigma] \in H_2(X;\Z)$. Here $\Sigma$ is a closed, oriented surface embedded in $X$ which represent the homology class.
The cohomology classes $\mu([\Sigma])$ are the first Chern classes of the determinant line bundles of a family of twisted $\bar{\partial}$ operators on $\Sigma$. 
Roughly speaking Donaldson invariants are the evaluations of cup products $\mu([\Sigma_1]) \cup \cdots \cup \mu([\Sigma_d])$ on the moduli spaces.

Suppose that we  have a decomposition $X = X_0 \cup_{Y} X_1$ and that the surfaces $\Sigma_l$ are split into two surfaces with boundary $\gamma_{l} \cong S^1$ along $Y$.
We consider how to recover the Donaldson invariants of $X$ from relative invariants of $X_0$ and $X_1$.
Fukaya's construction \cite{Fu} provide us suitable homology groups to define the relative invariants.
In the construction, we need sections of determinant line bundles of families of twisted $\bar{\partial}$ operators on $\gamma_l \times \R$ which behave nicely on the ends of moduli spaces of instantons over $Y \times \R$ and satisfy certain transversality conditions.
We call such sections admissible.
The main point of this section is the construction of admissible sections of the determinant line bundles (Proposition \ref{prop section}).

\subsection{Outline of proof of gluing formula}
\label{ss outline}

In this subsection, we give a sketch of the construction of a gluing formula for Donaldson invariants.

Let $X$ be a closed, oriented, simply connected smooth 4-manifolds with $b^+ > 1$ and odd, and $P$ be a $U(2)$-bundle over $X$ with $w_2(P)$ non-trivial.
We write $P_{\det}$ for the $U(1)$-bundle over $X$ induced by $P$, and fix a connection $a_{\det}$ on $P_{\det}$.
We take a Riemannian metric on $g$ on $X$ and suppose that the virtual dimension of $M_P$ is $2d$ with $d \geq 0$.
Here $M_P = M_P(g)$ is the moduli space of instantons on $P$ which induce the connection $a_{\det}$ on $P_{\det}$.
The moduli space $M_P$ is a smooth manifold for generic metrics $g$, and a choice of an orientation of the space $H_g^+(X)$ of self-dual harmonic 2-forms on $X$ orients $M_P$.
The moduli space $M_P$ gives us the Donaldson invariant
\[
\Psi_{X,P}:H_2(X;\Z)^{\otimes d} \longrightarrow \Q.
\]
This is defined as follows.

Let $[\Sigma]$ be a class in $H_2(X;\Z)$.
We denote by $\tilde{\cB}_{\Sigma}^*$ the space of gauge equivalence classes of framed, irreducible connections on the restriction $P|_{\Sigma}$ of $P$ to $\Sigma$ which are compatible with $a_{\det}|_{\Sigma}$.
We have the determinant line bundle
\[
\tilde{\cL}_{\Sigma} = \det \Ind \{ \bar{\partial}^*_{A} \}_{ [A] \in \tilde{\cB}_{\Sigma} } \stackrel{\C}{\longrightarrow} \tilde{\cB}_{\Sigma}
\]
of the family of operators $\{ \bar{ \partial }_{A}^* \}_{ [A] \in \tilde{\cB}_{\Sigma} }$. 
Here  
\[
\bar{\partial}_A:
\Omega_{\Sigma}^0 (E|_{\Sigma} \otimes K_{\Sigma}^{\frac{1}{2}})
\longrightarrow
\Omega_{\Sigma}^{0,1}(E|_{\Sigma} \otimes K_{\Sigma}^{\frac{1}{2}})
\]
is the $\bar{\partial}$ operator twisted by the rank-two complex vector bundle $E$ associated to $P$ and a square root $K^{\frac{1}{2}}_{\Sigma}$ of the canonical line bundle of $\Sigma$, and $\bar{\partial}_{A}^*$ is the adjoint.

There is a natural action of $SU(2)$ on $\tilde{\cB}_{\Sigma}^*$, and the action of  $\{ \pm 1 \} \subset SU(2)$ is trivial. Hence we have the $SO(3)=SU(2)/\pm 1$ action on $\tilde{\cB}_{\Sigma}^*$. The quotient space $\tilde{\cB}^*_{\Sigma}/SO(3)$ is the space $\cB_{\Sigma}^*$ of irreducible connections on $P|_{\Sigma}$ which are compatible with $a_{\det}|_{\Sigma}$.
Since the action of $\{ \pm 1 \} \subset SU(2)$ on $\tilde{\cL}_{\Sigma}^{\otimes 2}$ is trivial, we have the line bundle
\[
\cL_{\Sigma}^{ \otimes 2 } := 
\tilde{\cL}_{\Sigma}^{ \otimes 2}/SO(3)
\stackrel{\C}{\longrightarrow}
\cB_{\Sigma}^*.
\]
Note that $\cL_{\Sigma}^{\otimes 2}$ may not be a square of a genuine line bundle.
Take a section $s_{\Sigma}$ of $\cL_{\Sigma}^{ \otimes 2 }$ and denote the zero locus by $V_{\Sigma}$. We define $M_P \cap V_{\Sigma}$ by
\[
M_P \cap V_{\Sigma} = \{ \ [A] \in M_P \ | \ [ A|_{\Sigma} ] \in V_{\Sigma} \ \}.
\]
Since the restriction of elements of $M_P$ to $\Sigma$ may not be irreducible, this is not well-defined. But we can avoid this problem by replacing $\Sigma$ with a small neighborhood $\nu(\Sigma)$ of $\Sigma$. (See \cite{DK}.)

Take $d$ homology classes $[\Sigma_1], \dots, [\Sigma_d] \in H_2(X;\Z)$. We can show that the intersection
\[
M_P \cap V_{\Sigma_1} \cap \cdots \cap V_{\Sigma_d}
\]
is transverse and  finite for generic surfaces $\Sigma_1, \dots, \Sigma_d$ and sections $s_{\Sigma_1}, \dots, s_{\Sigma_d}$. Since $M_P$ and $V_{\Sigma_i}$ are oriented, we can associate each point of the intersection with a sign. We define $\Psi_{X, P}([\Sigma_1], \dots, [\Sigma_d])$ to be $1/2^d$ times the number of points of the intersection counted with sign, i.e.
\[
\Psi_{X,P}([\Sigma_1], \dots, [\Sigma_d]) =
\frac{1}{2^d} \# ( M_{P} \cap V_{\Sigma_1} \cap \cdots \cap V_{\Sigma_d}).
\]
We can see that this is independent of the choices of metric and sections.

\vspace{2mm}

Suppose that $X$ has a decomposition $X = X_0 \cup_Y X_1$.
Choose a Riemannian metric $g_Y$ on $Y$ and let $g_0, g_1$ be Riemannian metrics on $X_0, X_1$ whose restrictions to $Y$ are equal to $g_Y$.
Assume that the dimension of the moduli space $M_P$ is $2d$ with $0 \leq d \leq 3$. Take homology classes $[\Sigma_1], \dots, [\Sigma_d]$ of $X$ represented by surfaces $\Sigma_1, \dots, \Sigma_d$ which intersect with $Y$. 
Put $\Sigma_l' = \Sigma_l \cap X_0$, $\Sigma_l'' = \Sigma_l \cap X_1$ and $\gamma_l = \Sigma_l \cap Y$.
We assume that $\gamma_l$ are diffeomorphic to $S^1$ for all $l$.
We show how to compute $\Psi_X([\Sigma_1], \dots, [\Sigma_d])$ from data of $X_0, X_1$ briefly under this situation. We use some facts about instantons which can be found in \cite{D Floer}.

\vspace{2mm}

Take a sequence $\{ T^{\alpha} \}_{\alpha=1}^{\infty}$ of positive real numbers which diverges to infinity. We have manifolds $X^{\alpha} = X_0 \cup ( Y \times [-T^{\alpha}, T^{\alpha}] ) \cup X_1$ which are diffeomorphic to $X$. The Riemannian metrics $g_Y$, $g_0$, $g_1$ induce Riemannian metrics $g^{\alpha}$ on $X^{\alpha}$. Take instantons $[A^{\alpha}] \in M_{P}(g^{\alpha})$. Then there is a subsequence $\{ [A^{\alpha'}] \}_{ \alpha' }$ such that
\[
[A^{\alpha'}] \longrightarrow ([A_0^{\infty}],\dots, [A_{r}^{\infty}]).
\]
Here $[A_0^{\infty}] \in M_{ \hat{X}_0  }(\rho(0))$, $[A_r^{\infty}] \in M_{ \hat{X}_1 }( \rho(r-1) )$ are instantons over $\hat{X}_0 = X_0 \cup Y \times \R_{ \geq 0 }$, $\hat{X}_1=X_1 \cup Y \times \R_{ \geq 0 }$ which converge to projectively flat connections $\rho(0)$, $\rho(r-1)$ at infinity.
$M_{ \hat{X}_0 }(\rho(0))$ is a moduli space of instantons over $\hat{X}_0$ with limit $\rho(0)$ and similarly for $M_{\hat{X}_1}(\rho(r-1))$.
For $i=1,\dots, r-1$, $[A_i^{\infty}] \in M^0_{ Y \times \R }( \rho(i-1), \rho(i) ) = M_{Y \times \R}( \rho(i-1), \rho(i) ) / \R$ are instantons over $Y \times \R$ with limits $\rho(i-1)$, $\rho(i)$. ( The action of $\R$ on $M_{Y \times \R}(\rho(i-1), \rho(i))$ is defined by translations. ) 
Since $\dim M_P$ is less than $8$, bubbling phenomena do not occur. As we will see below, we can take sections $s_l^{\alpha}$ of $\cL^{\otimes 2}_{\Sigma_l} \rightarrow M_{P}(g^{\alpha})$ such that
\begin{equation} \label{eq section tensor}
s^{\alpha}_{\Sigma_l}([A^{\alpha}]) \longrightarrow
s_{\hat{\Sigma}_l'}([A_0^{\infty}]) \boxtimes s_{\Gamma_l}([A_1^{\infty}]) \boxtimes \cdots \boxtimes s_{ \Gamma_l} ([A_{r-1}^{\infty}]) \boxtimes s_{ \hat{\Sigma}_l'' }([A_r^{\infty}]),
\end{equation}
where 
$\hat{\Sigma}_l' = \Sigma_{l}' \cup (\gamma_l \times \R_{ \geq 0})$,
$\hat{\Sigma}_l'' = \Sigma_{l}'' \cup (\gamma_l \times \R_{ \geq 0})$,
$\Gamma_l = \gamma_l \times \R$,
and
$s_{\hat{\Sigma}_l'}$, $s_{ \gamma_l \times \R }$, $s_{ \hat{\Sigma}_l'' }$ are sections of line bundles defined by families of twisted $\bar{\partial}$ operators on the surfaces. 
(See Definition \ref{def section covergence} and Proposition \ref{prop section}.)
Suppose that all $[A^{\alpha}]$ lie in the intersection
\[
M_{P}(g^{\alpha}) \cap V_{\Sigma_1} \cap \cdots \cap V_{\Sigma_d},
\]
then at least one of the components of the limit of $s_{ \Sigma }^{ \alpha }([A^{\alpha}])$ vanishes.
A dimension counting argument shows that $r$ is $1$ and that 
\[
[A_0^{\infty}] \in 
M_{ \hat{X}_0 }(\rho(0);L) = 
M_{\hat{X}_0}(\rho(0)) \cap \bigcap_{l \in L} V_{ \hat{\Sigma}_l' },
\quad
[A_1^{\infty}] \in 
M_{\hat{X}_1} (\rho(0);L^c) =
M_{\hat{X}_1} (\rho(0)) \cap \bigcap_{l \in L^c} V_{ \hat{\Sigma}_l'' } .
\]
Here $L$ is a subset of $\{ 1, \dots, d \}$ and $L^c$ is its complement.
We denote the number of elements of $L$ by $|L|$.
Then we also have
\begin{equation} \label{eq L rho}
\dim M_{ \hat{X}_0 }(\rho(0)) = 2|L|, 
\quad
\dim M_{ \hat{X}_1 }(\rho(0))=2|L^c|
\end{equation}
and $M_{ \hat{X}_0 }(\rho(0); L)$, $M_{ \hat{X}_1 }(\rho(0); L^c)$ are finite (if the sections over the moduli spaces satisfy some transversality conditions and behave nicely on the end of the moduli spaces).
A standard theory of gluing of instantons shows that
\[
M_P (g^{\alpha}) \cap V_{\Sigma_1} \cap \cdots \cap V_{\Sigma_d} \cong
\bigcup_{L} \bigcup_{[\rho]} M_{ \hat{X}_0 }( \rho; L) \times M_{ \hat{X}_1 }(\rho; L^c).
\]
Here $[\rho]$ runs over the gauge equivalence classes of flat connections satisfying (\ref{eq L rho}).
This implies that
\begin{equation} \label{eq gl formula}
\Psi_{X,P}([\Sigma_1], \dots, [\Sigma_d])=
\sum_{L} \sum_{ [\rho] }
\# M_{ \hat{X}_0 }(\rho; L) \cdot \# M_{ \hat{X}_1 }(\rho; L^c).
\end{equation}
From this formula, formal sums
\[
\psi_{X_0} = \sum_{L} \sum_{ [\rho] } n_{ 0 }(\rho;L) \cdot [\rho] \otimes \gamma_{L},
\quad
\psi_{X_1} = \sum_{L} \sum_{ [\rho] } n_{1}(\rho;L^c) \cdot [\rho] \otimes \gamma_{L^c}
\]
recover the Donaldson invariant.
Here
\[
n_{ 0 }(\rho;L) := \# M_{ \hat{X}_0 }(\rho; L),
\quad
n_{1}(\rho;L^c) := \# M_{ \hat{X}_1 }(\rho; L^c).
\]
We consider $\psi_{X_0}$ as an element of the vector space $CFF(Y; \ungam)$ spanned by a set
\[
\{ \ [\rho] \otimes \gamma_{L} \ | \ \text{$L$ and $\rho$ satisfy (\ref{eq L rho})} \  \}.
\]
The formal sums $\psi_{X_0}$, $\psi_{X_1}$ depend on the metrics and sections. We will define a boundary map
\[
\partial:CFF(Y;\ungam) \longrightarrow CFF(Y;\ungam)
\] 
such that the composition $\partial \circ \partial$ is identically zero, and show that $\partial \psi_{X_0}=0$ and the class $\Psi_{X_0} = [\psi_{X_0}] \in HFF(Y; \ungam) =H(CFF(Y;\ungam), \partial)$ is independent of the metric and sections.

There is a pairing
\[
< \ , \ > : CFF(Y; \ungam) \otimes CFF( \bar{Y}; \ungam) \longrightarrow \Q
\]
such that $[\rho] \otimes \gamma_{L}$ and $[\rho] \otimes \gamma_{L^c}$ are dual to each other. We can see that the pairing induces a pairing
\[
HFF(Y; \ungam) \otimes HFF( \bar{Y}; \ungam) \longrightarrow \Q.
\]
The formula (\ref{eq gl formula}) implies
\[
\Psi_{X,P}([\Sigma_1], \dots, [\Sigma_d]) = < \Psi_{X_0}, \Psi_{X_1}>.
\]
This is the gluing formula.

\subsection{Fukaya's construction}
\label{ss Fukaya}

Let $Y$ be a closed, oriented 3-manifold. Take a Riemannian metric $g_Y$ on $Y$ and a $U(2)$-bundle $Q$ over $Y$, and fix a connection $a_{\det}$ on the $U(1)$-bundle $Q_{\det}$ induced by $Q$. We consider connections on $Q$ which induce the fixed connection $a_{\det}$ on $Q_{\det}$.
Let $\cA_Q$ be the space of connections on $Q$ with fixed determinant and $\cG_Q$ be the space of automorphisms of $Q$ of determinant $1$.
The Chern-Simons functional is an $S^1$-valued functional on the quotient space $\cB_Q = \cA_Q/ \cG_Q$.
The critical points are the gauge equivalence classes of projectively flat connections. If the Hessian of the Chern-Simons functional at a projectively flat connection is non-generate, we say that the projectively flat connection is non-degenerate.
For simplicity, we will refer projectively flat connections over $Y$ with fixed determinant as ``flat connections".
Throughout this section we assume the following hypothesis.

\begin{hypothesis} \label{ass flat conn}
All flat connections on $Q$ are irreducible and non-degenerate.
\end{hypothesis}

We are always able to perturb the Chern-Simons functional such that any critical points are non-degenerate. See \cite{Fl}, \cite{D Floer}.

Let $R(Y)=R(Y, Q)$ be the set of gauge equivalence classes of flat connections on $Q$. It follows from Hypothesis \ref{ass flat conn} that $R(Y)$ is a finite set.
We define a $\Z_8$-grading function $\delta_Y$ on $R(Y)$ as follows. 
Let $\pi$ be the projection from $Y \times \R$ to $Y$.
Fix a  flat connection $\rho_0$ on $Q$. For each flat connection $\rho$ on $Q$, choose a connection $A_0=A_0(\rho,\rho_0)$ over $Y \times \R$, which is compatible with $\pi^* a_{\det}$, such that
\[
A_0=\left\{
\begin{array}{ll}
\pi^* \rho & \text{on $Y \times (-\infty, -1)$}, \\
\pi^* \rho_0 & \text{on $Y \times (1, \infty)$.}
\end{array}
\right.
\]
Then we have an operator
\begin{equation} \label{eq D}
D_{A_0}=d_{A_0}^* + d_{A_0}^+:
L_4^2( \Lambda_{Y \times \R}^1 \otimes \pi^* \fg_{Q} )
\longrightarrow
L_3^2 ( (\Lambda^0_{ Y \times \R } \oplus \Lambda_{Y \times \R}^+) \otimes \pi^* \fg_{Q} ).
\end{equation}
Here $\fg_Q$ is the bundle of trace free, skew adjoint, endomorphisms of the rank-two complex vector bundle $E$ associated with $Q$.
Under Hypothesis \ref{ass flat conn}, this operator is a Fredholm operator and we have the numerical index $\ind D_{A_0} \in \Z$.
We set
\[
\delta_Y([\rho]) \equiv \ind D_{A_0} \mod 8.
\]
We can show that this depends only on the gauge equivalence class of $\rho$ (and $\rho_0$).

For $j \in \Z$ we write $CF_j(Y)$ for the $\Q$-vector space spanned by 
\[
\{ \ [\rho] \in R(Y) \ | \ \delta_Y([\rho]) \equiv j \mod 8 \ \}.
\]
Let $d$ be an integer with $1 \leq d \leq 3$ and $\gamma_l \cong S^1$ be a loop in $Y$ for $l = 1, \dots, d$. We write $\ungam$ for $\{ \gamma_l \}_{l=1}^d$.
We define the Fukaya-Floer chain group $CFF_*(Y;\ungam)$ by
\[
CFF_j(Y; \underline{\gamma}) := 
\bigoplus_{\beta=0}^d
\bigoplus_{
\begin{subarray}{c}
L \subset \{ 1, \dots, d \} \\
|L| = \beta
\end{subarray}
} 
CF_{j-2 \beta}(Y) \otimes \Q < \gamma_L >,
\]
where $\gamma_L := \gamma_{l_1} \cdots \gamma_{l_\beta} \in \Sym  \Q< \gamma_1,\dots, \gamma_d >$ for $L=\{ l_1, \dots, l_{\beta} \}$.
We define a boundary operator
\[
\partial:CFF_j(Y; \underline{\gamma}) \longrightarrow CFF_{j-1}(Y; \underline{\gamma})
\]
as follows.
Take two generators
\[
\begin{split}
[\rho] \otimes \gamma_{L_1} 
&\in CF_{j-2\beta_1}(Y) \otimes \Q< \gamma_{L_1}> \subset CFF_j(Y; \ungam), \\
[\sigma] \otimes \gamma_{L_2} 
&\in CF_{j-2\beta_2-1} \otimes \Q <\gamma_{L_2}> \subset CFF_{j-1}(Y; \ungam).
\end{split}
\]
Then we have a moduli space $\M$ of instantons with limits $\rho, \sigma$ and the  dimension is $2(\beta_2-\beta_1)+1$.
We write $\0M$ for the quotient $\M/\R$, where the action of $\R$ is defined by translations.
When $L_1 \subset L_2$, we define
\begin{equation} \label{eq FF boundary}
< \partial ( [\rho] \otimes \gamma_{L_1} ), [\sigma] \otimes \gamma_{L_2}>:=
`` < c_1( \cL_{ l_1 }^{ \otimes 2 } ) \cup \cdots \cup c_1( \cL_{ l_\beta }^{\otimes 2}  ), [ M_{Y \times \R}^0(\rho, \sigma)  ] > ",
\end{equation}
where $\{ l_1, \dots, l_{\beta} \} = L_2 \backslash L_1$ and
$\cL_{l_i}^{\otimes 2}$ are line bundles defined by families of twisted $\bar{\partial}$ operators over $\gamma_{l_i} \times \R$.
Since the moduli spaces are non-compact in general, we must specify the meaning of the pairing, and it will be done later.
When $L_1$ is not included in $L_2$, we define $< \partial ( [\rho] \otimes \gamma_{L_1} ), [\sigma] \otimes \gamma_{L_2}>$ to be zero. 
The matrix elements $< \partial ( [\rho] \otimes \gamma_{L_1} ), [\sigma] \otimes \gamma_{L_2}>$ give the boundary operator $\partial$.
We will show that $\partial \circ \partial$ is identically zero.
The Fukaya-Floer homology group $HFF_*(Y;\underline{\gamma})$ is defined to be the homology group of $(CFF_*(Y;\underline{\gamma}), \partial)$.

\vspace{2mm}

We give the precise definition of (\ref{eq FF boundary}). To do this, we introduce some spaces of connections on $Y \times \R$ and $\gamma_l \times \R$.  
For flat connections $\rho, \sigma$ on $Q$, take a smooth connection $A_0 = A_0(\rho, \sigma)$ on $\pi^* Q$ as before.
Let $\tau>0$ be a small positive number and we set
\[
\cA_{Y \times \R}^*(\rho,\sigma) :=
\{ \ A_0 + a \ | \ a \in L^{2, \tau}_{4} ( \Lambda_{Y \times \R}^1 \otimes \pi^* \fg_ Q ) \ \}.
\]
The wighted Sobolev space $L^{2, \tau}_4$ is defined as follows.
Take a function $W_{\tau}$ on $Y \times \R$ such that
\[
\begin{array}{ll}
W_{\tau}(y,t) > 0 & \text{ for $^{\forall} (y, t) \in Y \times \R$, } \\
W_{\tau}(y,t) = e^{ \tau | t | } & \text{ for $|t|>1$. }
\end{array}
\]
For a smooth, compact supported section $f$, we define the weighted $L^2_{4}$-norm by
\[
\| f \|_{ L_{4}^{2, \tau}}^2 = 
\sum_{k=0}^4 \| \nabla_{A_0}^k ( W_{\tau} f) \|_{L^2}^2.
\]
The weighted Sobolev space $L^{2, \tau}_4$ is the completion of the space of smooth, compact supported sections.
Note that since $\rho, \sigma$ are irreducible, every connection in $\cA_{Y \times \R}^*(\rho, \sigma)$ is irreducible. 
By Hypothesis \ref{ass flat conn}, we need not to introduce the weighted Sobolev space for the construction of the moduli space. However we need the weighted Sobolev space to define the determinant line bundles as explained below. 

We introduce a gauge group acting on the space of connections. We put
\[
\cG_{Y \times \R} :=
\{ \ 
g \in L^2_{5, loc}( \Aut \pi^* Q ) \ | \ d_{A_0}g \cdot g^{-1} \in L_4^{2,\tau}
\ \}.
\]
Then $\cG_{Y \times \R}$ acts on $\cA_{Y \times \R}^*(\rho, \sigma)$ by the gauge transformations. We denote the quotient $\cA^*_{Y \times \R}/ \cG_{Y \times \R}$ by $\cB_{Y \times \R}^* (\rho, \sigma)$ . 
For $g \in \cG_{Y \times \R}$, we have $g(y,t) \rightarrow 1$ or $g(y,t) \rightarrow -1$ as $t \rightarrow \pm \infty$.  ( See Proposition 4.7 in \cite{D Floer}. ) We set
\[
\cG_{Y \times \R}^0 := 
\{ \ 
g \in \cG_{Y \times \R} \ | \ \lim_{t \rightarrow \pm \infty} g(y, t) = 1
\ \}.
\]
We write $\tilde{\cB}^*_{Y \times \R}(\rho, \sigma)$ for $\cA_{Y \times \R}^* (\rho, \sigma) / \cG_{Y \times \R}^0$. Since $\cG_{Y \times \R}/ \cG_{Y \times \R}^0$ is isomorphic to $\Z_2 \times \Z_2$, we have a natural action of $\Z_2 \times \Z_2$ on $\tilde{\cB}_{Y \times \R}^*(\rho, \sigma)$, and $\cB_{Y \times \R}^*(\rho, \sigma)$ is identified with $\tilde{\cB}^*_{ Y \times \R }(\rho, \sigma) / \Z_2 \times \Z_2$.

Let $M_{Y \times \R}(\rho, \sigma) \subset \cB_{Y \times \R}^*(\rho, \sigma)$ be the moduli space of instantons with limits $\rho, \sigma$. We can perturb the instanton equation such that the operators $D_A$ are surjective for all $[A] \in \0M$ (\cite{Fl, D Floer}). For simplicity, we always assume the following.

\begin{hypothesis} \label{hypo D}
The operators $D_A$ defined by (\ref{eq D}) are surjective for all $[A] \in \0M$.
\end{hypothesis}

Under this hypothesis, the moduli space $\0M$ is smooth of expected dimension.

Next we introduce spaces of connections and gauge groups over $\Gamma_l = \gamma_l \times \R$. Let $A_{l} = A_{l}(\rho, \sigma)$ be the restriction of the fixed connection $A_0(\rho, \sigma)$ to $\Gamma_l$. We set
\[
\begin{split}
\cA_{\Gamma_l}(\rho, \sigma) 
&:= 
\{ \ 
A_{l} + a \ | \ a \in L_{3}^{2, \tau}( \Lambda_{\Gamma_l}^{1} \otimes \pi^* \fg_{Q}|_{\Gamma_l} )
\ \}, \\
\cG_{\Gamma_l}^0 
&:=
\{ \ 
g \in L^{2}_{3, loc}( \Aut \pi^* Q |_{\Gamma_l} ) \ | \ 
d_{A_{l}} g \cdot g^{-1} \in L^{2, \tau}_{3},
\lim_{t \rightarrow \pm \infty} g = 1
\ \}.
\end{split}
\]
We denote the quotient space $\cA_{\Gamma_l}(\rho, \sigma) / \cG_{ \Gamma_l }^0$ by $\tilde{\cB}_{\Gamma_l}(\rho, \sigma)$.
Note that the restrictions of $\rho, \sigma$ to $\gamma_l$ may be reducible, and hence some connections in $\cA_{\Gamma_l}(\rho, \sigma)$ are reducible.

We define the determinant line bundle over $\tilde{\cB}_{\Gamma_l}(\rho, \sigma)$. 
We need a spin structure on $\Gamma_l$.
Since $H^1(\Gamma_l;\Z_2)$ is isomorphic to $\Z_2$, there are two spin structures on each $\Gamma_l$ (up to isomorphism).
We fix a spin structure on $\gamma_l$ which represent the trivial class in the 1-dimensional spin bordism group. 
This spin structure induces a spin structure on $\Gamma_l$.
We use this spin structure.
(We will explain the reason why we take this spin structure in Remark \ref{rm spin} below.)
The spin structure induces a square root $K_{\Gamma_l}^{ \frac{1}{2} }$ of the canonical line bundle $K_{\Gamma_l}$.
For connections $A \in \cA_{\Gamma_l}(\rho, \sigma)$ we have the twisted $\bar{\partial}$ operators
\[
\bar{\partial}_A:
L^{2,(-\tau, \tau)}_3 ( K_{ \Gamma_l }^{ \frac{1}{2} } \otimes \pi^* E|_{\Gamma_l} )
\longrightarrow
L^{2,(-\tau, \tau)}_2 ( \Lambda_{ \Gamma_l }^{0,1} \otimes K_{ \Gamma_l }^{ \frac{1}{2} } \otimes \pi^* E|_{\Gamma_l} ).
\]
Here $L_{3}^{2, (-\tau, \tau)}, L_{2}^{2, (-\tau, \tau)}$ are the weighted Sobolev spaces with weight function $W'_{\tau} > 0$ such that
\[
W'_{\tau}(y,t) = e^{ \tau t } 
\quad
\text{ for $|t| > 1$}.
\]
We do not take the absolute value of $t$ in the exponent this time.
The operators are Fredholm operators for small $\tau > 0$. 
Since we have the universal bundles
\[
\tilde{\bE}_{ \Gamma_l } := \cA_{ \Gamma_l }(\rho, \sigma) \times_{\cG_{\Gamma_l}^0} ( \pi^* E|_{ \Gamma_l } )
\longrightarrow
\tilde{\cB}_{\Gamma_l} \times \Gamma_l
,
\]
we obtain complex line bundles
\[
\tilde{\cL}_{l}(\rho, \sigma) 
=
\big(
\det \Ind \{ \bar{\partial}_A \}_{ [A] }
\big)^{*}
\longrightarrow
\tilde{\cB}_{\Gamma_l}(\rho, \sigma).
\]
Let $\tilde{r}_l$ be the map from $\tilde{ \cB }^*_{Y \times \R}(\rho, \sigma)$ to $\tilde{\cB}_{\Gamma_l}(\rho, \sigma)$ defined by restricting connections to $\Gamma_l$. 
Then we have a natural action of $\Z_2 \times \Z_2$ on the pull-back $\tilde{r}_l^* \tilde{\cL}_{\Gamma_l}^{\otimes 2}$ and the action of the diagonal $\Z_2=\{ \pm (1, 1) \}$ is trivial. Hence we get the line bundle
\[
\cL_l^{\otimes 2}(\rho, \sigma) := 
\tilde{r}_{l}^* \tilde{\cL}_{l}^{ \otimes 2 } (\rho, \sigma)/ \Z_2 \times \Z_2
\longrightarrow
\cB_{Y \times \R}^*(\rho, \sigma)
\]
for each $l$.

\begin{remark} \label{rm spin}
We explain the reason why we choose the spin structure on $\Gamma_l$ induced by the spin structure on $\gamma_l$ representing the trivial class of 1-dimensional spin bordism group $\Omega_1^{spin} = \Z_2$.
When we prove the gluing formula for Donaldson invariants, we consider $\Gamma_l$ as a neck of a closed surface in a closed 4-manifold.  We need the restriction of a spin structure on the closed surface, which is used to define Donaldson invariants, to $\Gamma_l$.  This spin structure is induced by the spin structure on $\gamma_l$ which represent the trivial element in $\Omega_1^{spin}$.
\end{remark}

To define (\ref{eq FF boundary}) we need sections $s_l=s_l(\rho, \sigma)$ of $\cL_{l}^{\otimes 2}(\rho, \sigma)$ which behave nicely on the ends of the moduli spaces and satisfy suitable transversality conditions. 
We briefly recall some basic definitions and facts which are relevant to the end of the moduli spaces. (See \cite{D Floer} for details.)

For a real number $T$ let $c_T$ be the translation
\[
\begin{array}{ccc}
Y \times \R & \longrightarrow & Y \times \R \\
(y,t) & \longmapsto & (y, t+T).
\end{array}
\]
We call a sequence $\underline{T}$ of real numbers
\[
T_1 < \cdots < T_{r-1}
\]
a translation vector.

\begin{definition}
Let $\{ [A^{\alpha}] \}_{ \alpha =1 }^{\infty}$ be a sequence in $M_{Y \times \R}(\rho, \sigma)$.
We say that $\{ [A^{\alpha}] \}_{\alpha}$ is weakly convergent to 
\[
(([A_1],Z_1), \dots, \cdots, ([A_r], Z_r))
\]
for some $([A_i], Z_i) \in \big( M_{Y \times \R}(\rho(i-1), \rho(i)) \times \Sym^{s_i}(Y \times \R) \big) / \R$ if there is a sequence $\{ \underline{T}^{\alpha} \}_{\alpha}$ of translation vectors  with
\[
T^{\alpha}_i - T^{\alpha}_{i-1} \longrightarrow \infty
\]
as $\alpha \rightarrow \infty$ such that for each $i$ the translates $c^*_{T^{\alpha}_i}([A^{\alpha}])$ converge to $[A_i^{\infty}]$ over any compact sets of $(Y \times \R) \backslash Z_i$ and
$|c_{T^{\alpha}_i}^* (F_{A^{\alpha} }) |^2 d \mu_{Y \times \R}$
 weakly converge to
$| F_{ A_i^{\infty} } | ^2 d \mu_{Y \times \R} + \sum_{l = 1}^{s_i} \delta_{z_l}$.
Here $d\mu_{Y \times \R}$ is the volume form on $Y \times \R$, $Z_i = [z_1,\dots, z_{s_i}]$ and $\delta_{z_l}$ are the delta functions.

\end{definition}

\begin{proposition} \label{prop 1}
Any sequence in $M_{Y \times \R}^0(\rho, \sigma)$ has a weakly convergent subsequence.
\end{proposition}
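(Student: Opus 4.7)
The plan is to adapt the standard Uhlenbeck-Floer broken-trajectory compactness argument to our weighted setting. I would proceed as follows.

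First, I would establish the uniform energy bound. Every $[A^{\alpha}] \in M^0_{Y\times\R}(\rho,\sigma)$ lifts to an instanton in $M_{Y\times\R}(\rho,\sigma)$, and the Yang-Mills energy of any such instanton is determined by the drop of the Chern-Simons functional between $\rho$ and $\sigma$. Hence $\int_{Y\times\R}|F_{A^{\alpha}}|^2\,d\mu$ is a fixed finite constant $E(\rho,\sigma)$ independent of $\alpha$. Combined with Hypothesis~\ref{ass flat conn} (non-degeneracy of $\rho,\sigma$), standard decay estimates give a uniform exponential decay of the curvature on the ends $Y\times(-\infty,-T_0]$ and $Y\times[T_0,\infty)$ modulo gauge.

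Next, I would iteratively extract translation vectors and weak limits. Put $T^{\alpha}_1 := 0$ and apply Uhlenbeck compactness to the sequence $\{[A^{\alpha}]\}$ on compact subsets of $Y\times\R$: after passing to a subsequence we get a bubbling set $Z_1\in\Sym^{s_1}(Y\times\R)$ and, away from $Z_1$, convergence (modulo gauge) to a finite-energy instanton $[A_1^{\infty}]$ on $Y\times\R$. Using the exponential decay and Hypothesis~\ref{ass flat conn}, $[A_1^{\infty}]$ has well-defined flat limits $\rho(0)=\rho$ as $t\to-\infty$ and some flat connection $\rho(1)$ as $t\to+\infty$, so $[A_1^{\infty}]\in M_{Y\times\R}(\rho(0),\rho(1))$. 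Accounting for the energy carried off by bubbles (each contributing at least the minimal instanton energy) and the energy of $[A_1^{\infty}]$, the residual energy available is strictly less than $E(\rho,\sigma)$. If $\rho(1)=\sigma$ and no residual energy escapes to infinity, we are done with $r=1$; otherwise, after rescaling back to $[A^{\alpha}]$, the convergence on compact sets of $Y\times\R$ cannot capture all the energy, and there must exist $T^{\alpha}_2\to\infty$ with energy concentrating near $t=T^{\alpha}_2$. Apply the same Uhlenbeck extraction to $c^*_{T^{\alpha}_2}[A^{\alpha}]$ to obtain $([A_2^{\infty}],Z_2)\in (M_{Y\times\R}(\rho(1),\rho(2))\times\Sym^{s_2}(Y\times\R))/\R$.

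I would then iterate this procedure. At each stage the remaining energy drops by at least the difference $cs(\rho(i-1))-cs(\rho(i))$ plus $8\pi^2 s_i$, and by Hypothesis~\ref{ass flat conn} the set $R(Y)$ is finite, so the differences in Chern-Simons values are bounded below by a positive constant except when $\rho(i)=\rho(i-1)$, in which case bubbling must occur and contributes at least $8\pi^2$ to the energy. Thus the process terminates after finitely many steps $r$, producing translation vectors $T^{\alpha}_1<\cdots<T^{\alpha}_{r-1}$ with $T^{\alpha}_i-T^{\alpha}_{i-1}\to\infty$, flat connections $\rho=\rho(0),\rho(1),\dots,\rho(r)=\sigma$, and a weak limit $(([A_i^{\infty}],Z_i))_{i=1}^r$ satisfying the definition.

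The main obstacle will be the bookkeeping: verifying that the chosen translations $T^{\alpha}_i$ separate the distinct instanton ``chunks'' unambiguously and that no energy is lost between consecutive windows. This requires the standard ``no energy in the neck'' argument: on annular regions $Y\times[T^{\alpha}_{i-1}+R,T^{\alpha}_i-R]$ for large $R$, the curvature must tend to zero uniformly, so that the connection there is close to a single flat connection $\rho(i)$ (using non-degeneracy and decay). Establishing this cleanly in the weighted $L^{2,\tau}_4$ setting, so that the limits $\rho(i-1),\rho(i)$ of $[A_i^{\infty}]$ truly agree on both sides, is the technical heart of the proof and follows the treatment in Chapter~5 of~\cite{D Floer}.
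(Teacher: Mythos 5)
The paper does not give a proof of this proposition; it is stated as one of the ``basic definitions and facts'' recalled from Donaldson's book \cite{D Floer}, to which the reader is referred. Your sketch is the standard Uhlenbeck--Floer broken-trajectory compactness argument (energy bound from the Chern--Simons drop, Uhlenbeck extraction on compact sets, exponential decay from non-degeneracy, iterative extraction of translation windows, termination by energy quantization), which is precisely the treatment in Chapter~5 of \cite{D Floer} that the paper invokes, so your approach matches the intended one.
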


Let $\{ [A^{\alpha}] \}_{\alpha}$ be a sequence of $M_{Y \times \R}^0(\rho, \sigma)$ which weakly converges to $(([A_1^{\infty}], Z_1), \dots, ([A_r^{\infty}], Z_r))$. It follows from the additivity of the index of the operator $D_A$ that if the dimension of $M_{Y \times \R}^0( \rho, \sigma)$ is less than $8$ then $Z_i$ are empty for all $i$.
In Section \ref{section Fukaya-Floer} and \ref{s torsion}, we only consider the case  when $\dim M_{Y \times \R}^0(\rho, \sigma) < 8$ and hence $Z_i$ are always empty. In Section \ref{s Thm B}, we will analyze the end of moduli spaces under the  situation where $Z_i$ are not empty.

The end of the moduli spaces are described by gluing maps. We consider the case  when $Z_i$ are empty.
Let $U_1, \dots, U_{r}$ be precompact, open sets of $M_{Y \times \R}^0(\rho, \rho(1)), \dots, M_{Y \times \R}^0(\rho(r-1), \sigma)$. Then we have a gluing map
\[
Gl:U_1 \times (T_0, \infty) \times \cdots \times (T_0, \infty) \times U_r 
\longrightarrow
M_{Y \times \R}^0 (\rho, \sigma)
\]
for some $T_0 > 0$. The map $Gl$ is a diffeomorphism onto its image.

\begin{proposition} \label{prop 2}
Let $\{ [A^{\alpha}] \}_{\alpha}$ be a sequence in $M_{Y \times \R}^0(\rho, \sigma)$ converging to some $([A_1^{\infty}],\dots, [A_r^{\infty}]) \in U_1 \times \cdots \times U_r$. Then for large $\alpha$, $[A^{\alpha}]$ are in the image of the gluing map.
\end{proposition}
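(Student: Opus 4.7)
The plan is to realize $[A^\alpha]$ as a value of the gluing map by showing it is close, in a suitable weighted Sobolev norm, to a pre-glued approximate solution, and then invoking the uniqueness part of the gluing construction. So the proof breaks into (i) quantitative convergence estimates, (ii) assembly of an approximate instanton, and (iii) a uniform implicit-function-theorem argument.

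First, from the weak convergence I extract a sequence $\underline{T}^\alpha = (T_1^\alpha,\dots,T_{r-1}^\alpha)$ with $T_i^\alpha - T_{i-1}^\alpha \to \infty$ such that $c^*_{T_i^\alpha}[A^\alpha] \to [A_i^\infty]$ smoothly on compact subsets of $Y \times \R$. Since each $[A_i^\infty] \in U_i$ converges at its two ends to the non-degenerate flat connections $\rho(i-1),\rho(i)$, the standard linearization argument using Hypothesis \ref{ass flat conn} produces $\delta>0$ and a gauge in which each $A_i^\infty$ is within $C e^{-\delta|t|}$ of $\pi^*\rho(i-1)$, $\pi^*\rho(i)$ on the two ends (in $C^k$ for every $k$). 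Combining this with the weak convergence and Hypothesis \ref{ass flat conn}, a bootstrap of the curvature decay on long necks yields a similar estimate for $A^\alpha$: on each slab $Y \times [T_{i-1}^\alpha + S, T_i^\alpha - S]$ (with $S$ large but fixed) the connection $A^\alpha$ is, after a gauge transformation in $\cG_{Y \times \R}^0$, within $Ce^{-\delta\min(t-T_{i-1}^\alpha,T_i^\alpha-t)}$ of $\pi^*\rho(i-1)$, uniformly in $\alpha$.

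Next, I build a pre-glued connection $A'^\alpha$ on $Y \times \R$ by splicing the translates $c_{-T_i^\alpha}^* A_i^\infty$ with $\pi^*\rho(i)$ via cutoff functions supported in the necks. The exponential decay estimates above, combined with the matching asymptotics, then show that $A^\alpha$ and $A'^\alpha$ define the same element of $\cA^*_{Y\times \R}(\rho,\sigma)$ up to a gauge transformation in $\cG_{Y\times \R}^0$, and that their $L_4^{2,\tau}$-distance is $O\!\left(e^{-\delta' \min_i (T_i^\alpha - T_{i-1}^\alpha)/2}\right)$ for some $\delta'>0$.

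Finally, the gluing map $Gl$ is constructed by the standard implicit function theorem: starting from the pre-glued connection $A'^\alpha$, one finds the unique nearby solution of the (perturbed) instanton equation in a $\cG_{Y\times\R}^0$-slice ball of radius $\varepsilon$. Hypothesis \ref{hypo D} gives surjectivity of the linearizations at each $A_i^\infty$; patching the resulting right inverses across the necks, using the invertibility of the Hessian at the non-degenerate flat limits from Hypothesis \ref{ass flat conn}, produces a right inverse of the linearization at $A'^\alpha$ whose operator norm is bounded uniformly in $\underline{T}^\alpha$ once the $T_i^\alpha - T_{i-1}^\alpha$ are large enough. Consequently $\varepsilon$ can be taken independent of $\alpha$, and the previous paragraph places $A^\alpha$ inside this $\varepsilon$-ball for all large $\alpha$, forcing $[A^\alpha]$ to coincide with the image of $Gl$ at some parameters near $(([A_i^\infty]), \underline{T}^\alpha)$. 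The main obstacle is the uniform right-inverse estimate: controlling the norm of the parametrix as the neck lengths diverge, which is where both hypotheses are used simultaneously and which is the technical heart of the gluing theory (as in Chapter 4 of Donaldson's Floer book).
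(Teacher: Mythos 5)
The paper does not give a proof of Proposition \ref{prop 2}; it is stated as a recalled fact with a pointer to Donaldson's Floer homology book (``See \cite{D Floer} for details''), so there is no in-paper argument to compare your proposal against. Your sketch is the standard route to this statement in that reference: extract translation parameters from the weak limit, use non-degeneracy of the flat limits to obtain uniform exponential decay of $A_i^{\infty}$ and of $A^{\alpha}$ on the long necks, splice a pre-glued connection $A'^{\alpha}$ that is $L^{2,\tau}_4$-close to $A^{\alpha}$ after a gauge transformation in $\cG^0_{Y\times\R}$, and then invoke the implicit function theorem with a right inverse whose operator norm is bounded uniformly as the neck lengths diverge.

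The one place your write-up is thin is the final inference: being inside a fixed $\varepsilon$-ball of the pre-glued $A'^{\alpha}$ does not by itself put $[A^{\alpha}]$ in the image of $Gl$; you also need the uniqueness/surjectivity clause of the gluing theorem, namely that every instanton in a slice ball about $A'^{\alpha}$ of uniform radius is obtained by the contraction iteration applied to some pre-glued connection with nearby gluing data $([B_i],\underline{T})\in U_1\times(T_0,\infty)^{r-1}\times U_r$. You name this ingredient at the outset (``invoking the uniqueness part of the gluing construction'') but do not explain how the parametrization of the full solution set near the broken trajectory, as opposed to a single IFT solution in a fixed Coulomb slice, is achieved. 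Since that local-model statement is precisely what the paper delegates to \cite{D Floer}, this is not a defect relative to the paper's own treatment, but it is the step that carries the real content and deserves to be stated explicitly rather than folded into ``forcing $[A^{\alpha}]$ to coincide with the image of $Gl$.''
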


We also have a gluing map
\[
\widehat{Gl}:
\cL_{l}^{\otimes 2}(\rho, \rho(1)) |_{U_1}
\boxtimes
\cdots
\boxtimes
\cL_{l}^{\otimes 2}(\rho(r-1), \sigma) |_{U_r}
\stackrel{\cong}{\longrightarrow}
\cL_{l}^{\otimes 2}(\rho, \sigma)|_{ \Im Gl }
\]
which covers $Gl$. 
For $\underline{T}=(T_1,\dots, T_{r-1})$ with $T_i > T_0$, we write $\widehat{Gl}_{\underline{T}}$ for the restriction of $\widehat{Gl}$ to $U_1 \times \{ T_1 \} \times \cdots \times \{ T_{r-1} \} \times U_r$.

\vspace{3mm}

Using these definitions and facts, we state the properties of sections of the line bundles which are required to define (\ref{eq FF boundary}). Let $\{ [A^{\alpha}] \}_{\alpha}$ be a sequence in $M_{Y \times \R}^0(\rho, \sigma)$ with limit $([A_1^{\infty}], \dots, [A_r^{\infty}])$. By the above proposition, for large $\alpha$, there are instantons $[A_i^{\alpha}] \in U_i$ and $T_i^{\alpha} > T_0$ such that
\[
[A^{\alpha}] = 
Gl([A_1^{\alpha}], T_1^{\alpha}, \dots, T_{r-1}^{\alpha}, [A_r^{\alpha}]).
\]

\begin{definition} \label{def section covergence}
Let $s_l$ be sections of the line bundles $\cL_{l}^{\otimes 2}(\rho, \sigma)$.
Under the above situation, we say that $s_l([A^{\alpha}])$ converge to $s_l([A_1^{\infty}]) \boxtimes \cdots \boxtimes s_l([A_{r}^{\infty}])$ if
\[
\| 
s_l([A^{\alpha}]) - 
\widehat{Gl}_{ \underline{T}^{\alpha} } \big( s_l([A_1^{\alpha}]) \boxtimes \cdots \boxtimes s_l( [A_r^{\alpha}] ) \big)
\|
\longrightarrow 0
\quad
\]
as $\alpha \rightarrow \infty$.
Here $\| \cdot \|$ is the norm on $\cL_l^{\otimes 2}(\rho, \sigma)$ induced by the $L^{2, (-\tau, \tau)}$-norms on the spaces of sections of $K_{\Gamma_l}^{\frac{1}{2}} \otimes \pi^* E$, $\Lambda^{0,1}_{ \Gamma_l } \otimes K_{\Gamma_l}^{ \frac{1}{2} } \otimes \pi^* E $.
\end{definition}

The following proposition is the key in this paper.

\begin{proposition} \label{prop section}
For flat connections $\rho, \sigma$ on $Q$ with $\dim M_{Y \times \R}^0 (\rho, \sigma) < 8$, we have sections $s_l(\rho, \sigma):M_{Y \times \R}^0(\rho, \sigma) \rightarrow \cL^{\otimes 2}_{l}(\rho,\sigma)$ which have the following properties:

\vspace{2mm}

\begin{enumerate}[(a)]
\item \label{tensor}
For any sequence $\{ [A^{\alpha}] \}_{\alpha}$ in $M_{Y \times \R}^0(\rho, \sigma)$  converging to some $([A_1^{\infty}], \dots, [A_{r}^{\infty}])$, 
\[
s_l([A^{\alpha}]) \longrightarrow 
s_l([A_1^{\infty}]) \boxtimes \cdots \boxtimes s_l([A_{r}^{\infty}])
\]
in the sense of Definition \ref{def section covergence}.

\vspace{2mm}

\item \label{transversality I}
Let $V_l = V_l(\rho, \sigma)$ be the zero locus of $s_l(\rho, \sigma)$. 
For $L \subset \{ 1, \dots, d \}$ with $\dim M^0_{Y \times \R}(\rho, \sigma) < 2 |L|$, the intersection
\[
M_{Y \times \R}^0(\rho, \sigma; L) := M_{Y \times \R}^0(\rho, \sigma) \cap \bigcap_{l \in L} V_l
\]
is empty.

\vspace{2mm}

\item \label{transversality II}
If $\dim M_{Y \times }^0(\rho, \sigma) = 2|L|$, the intersection $M_{Y \times \R}^0(\rho, \sigma;L)$ is transverse and compact. Hence the intersection is a finite set.
\end{enumerate}
\end{proposition}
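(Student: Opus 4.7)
The plan is to construct the sections $s_l(\rho,\sigma)$ by induction on the (relative) dimension of the moduli space $M^0_{Y\times\R}(\rho,\sigma)$, forcing property (a) by declaring the section near every end to be the tensor product of the sections on lower-dimensional strata, and then arranging (b) and (c) by a generic perturbation supported in the interior. The base case is $\dim M^0_{Y\times\R}(\rho,\sigma)=0$: here the moduli space is a finite set and any nowhere-vanishing section $s_l(\rho,\sigma)$ will do, with $V_l(\rho,\sigma)=\emptyset$, so (b) and (c) are automatic.

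For the inductive step, I suppose that sections $s_l(\rho',\sigma')$ on all $M^0_{Y\times\R}(\rho',\sigma')$ of strictly smaller dimension have been chosen satisfying (a)--(c). On each chart $\mathrm{Im}\,Gl(U_1\times(T_0,\infty)^{r-1}\times U_r)$ near the ends of $M^0_{Y\times\R}(\rho,\sigma)$ provided by Proposition \ref{prop 2}, I define
\[
s_l(\rho,\sigma)\bigl(Gl([A_1],T_1,\ldots,T_{r-1},[A_r])\bigr) := \widehat{Gl}_{\underline{T}}\bigl(s_l([A_1])\boxtimes\cdots\boxtimes s_l([A_r])\bigr).
\]
By induction this is well-defined on each chart and, after checking that the overlaps of such charts are themselves parametrized by intermediate gluings (so the two prescriptions agree by an iterated application of the same formula), it yields a well-defined section $s_l^{(0)}$ on some neighborhood $\cN$ of the ends; by construction $s_l^{(0)}$ already satisfies property (a) at every end. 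Using a smooth cut-off in the gluing parameters $T_i$, I extend $s_l^{(0)}$ to a global smooth section of $\cL_l^{\otimes 2}(\rho,\sigma)$.

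Next, I perturb $s_l^{(0)}$ by a section supported in a compact subset of the interior of $M^0_{Y\times\R}(\rho,\sigma)$ (so that the prescribed tensor-product form, and hence property (a), is untouched). The space of such compactly-supported perturbations is large, and a standard Sard--Smale argument applied to the universal evaluation map
\[
\bigl(M^0_{Y\times\R}(\rho,\sigma)\times\text{(perturbations)}\bigr) \longrightarrow \bigoplus_{l\in L}\cL_l^{\otimes 2}
\]
shows that for a generic choice of perturbations the common zero locus $M^0_{Y\times\R}(\rho,\sigma)\cap\bigcap_{l\in L}V_l$ is either empty (when the expected dimension $\dim M^0_{Y\times\R}(\rho,\sigma)-2|L|$ is negative) or a transverse $0$-manifold (when this expected dimension is zero), for every $L\subset\{1,\dots,d\}$ simultaneously. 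Compactness of the transverse intersection in case (c) then follows from property (a) together with the inductive hypothesis: by Proposition \ref{prop 1} any sequence has a weakly convergent subsequence with limit $([A_1^\infty],\dots,[A_r^\infty])$, and property (a) together with the additivity of the index forces the limit to lie in $\prod_i M^0_{Y\times\R}(\rho(i-1),\rho(i);L_i)$ for some decomposition $L=\sqcup L_i$; dimension additivity and the inductive version of (b) rule out any $r\geq 2$, so the sequence stays in a compact part of $M^0_{Y\times\R}(\rho,\sigma)$ and the intersection is finite.

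The main obstacle I expect is the simultaneous coherence of the boundary prescriptions: several distinct strata of ends may be fed by the same lower-dimensional moduli space, and for different sublists $L$ one needs the same section $s_l(\rho,\sigma)$ to behave correctly at every end while still affording enough freedom in the interior to achieve transversality for all $L$ at once. Handling this requires (i) checking that the tensor formula is compatible across overlapping gluing neighborhoods using the associativity of iterated gluing, and (ii) ensuring that the compactly-supported perturbation class is large enough to span the cokernels of the relevant evaluation maps along each $V_l$; both points are technical but routine generalizations of the construction in \cite{Fu} and \cite{BD}, adapted to the weighted Sobolev setting of $\S\ref{ss Fukaya}$.
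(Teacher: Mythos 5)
Your overall strategy -- induction on dimension, prescribing the section near the ends by the boxtensor formula, then perturbing in a compact interior -- is the same as the paper's. But there is a genuine gap in the step where you assemble the near-end prescriptions into a single section $s_l^{(0)}$. You assert that on overlaps of gluing charts ``the two prescriptions agree by an iterated application of the same formula.'' This is false, and it is precisely the difficulty the paper's construction is designed to handle. Two gluing maps $Gl$ and $Gl'$ built from different precompact opens $U_i, U_i'$ (or from different translation parameters) do \emph{not} identify the same point of the moduli space via the same gluing data, and the corresponding line-bundle gluing maps $\widehat{Gl}_{T}$ and $\widehat{Gl}'_{T'}$ do \emph{not} coincide on the overlap; they only become close asymptotically as $T, T' \to \infty$. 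This is the content of the paper's Lemma~\ref{lem 2 instantons I}, which is non-trivial and requires the explicit analysis in Subsection~\ref{ss eva} (comparing the translation parameters $T^\alpha$, $T'^\alpha$ via Lemma~\ref{lem T T'}, and then estimating the weighted norms of the cut-off comparisons term by term). Similarly, a chart with $r=3$ glued pieces can overlap a chart with $r=2$; again the prescriptions are compatible only in the limit (Lemma~\ref{lem 3 instantons}). Because of this, patching the local prescriptions with a cut-off in the $T_i$'s will not produce a section that manifestly satisfies (a); one must control the discrepancy between overlapping prescriptions.

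The paper's resolution is to replace patching by averaging: it takes a locally finite open cover $\{U_\lambda\}_{\lambda\in\Lam}$ by a precompact interior piece $U_0$ and the pieces $U_{\unlam,\unn}$ produced by gluing, imposes the hierarchical constraint (\ref{eq condition on T}) on the thresholds $T_{\unlam}$, and constructs a partition of unity $\{f_\lambda\}$ with the special decay property of Lemma~\ref{lem partition} (namely that $f_{\unlam,\unn}$ vanishes once enough of the $n_j$'s are large, after discarding the redundant charts via $\Lambda'$, $\Lambda''$). The section is then the infinite sum (\ref{eq section}) of glued prescriptions weighted by $f_{\unlam,\unn}$. Property (a) is proved in Lemma~\ref{lem tensor} by showing that along any degenerating sequence, first the $|\unlam|\geq 3$ terms eventually drop out (using the partition-of-unity decay and Lemma~\ref{lem 3 instantons}), and then the surviving $|\unlam|=2$ terms are all asymptotically close to the target via Lemma~\ref{lem 2 instantons I} and Lemma~\ref{lem 2 instantons II}. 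This is the part of the argument your proposal is missing: you would need to prove the analogue of Lemma~\ref{lem 2 instantons I}, and then your ``cut-off extension'' would in effect have to become a weighted average, i.e.\ the partition-of-unity construction. Your compactness and transversality steps (Sard--Smale after a compactly supported perturbation, and the dimension-counting argument showing no boundary strata can occur when $\dim M^0 \leq 2|L|$) match the paper's Lemma~\ref{lem compactness} and the concluding paragraph of the proof of Proposition~\ref{prop section}, and are fine.
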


Here we introduce the following definition.

\begin{definition}
If sections $s_{l}(\rho, \sigma)$ of $\cL^{\otimes 2}_{l}(\rho,\sigma)$ have the properties in Proposition \ref{prop section}, we call them admissible.
\end{definition}

The proof of Proposition \ref{prop section} will be given in the following two subsections.  
In this subsection, we assume that we have admissible sections $s_l(\rho, \sigma)$ and define the boundary operator for the Fukaya-Floer homology groups.

\vspace{2mm}

Let $d$ be an integer with $1 \leq d \leq 3$.
For subsets $L_1 \subset L_2 \subset \{ 1, \dots, d \}$ and flat connections $\rho, \sigma$ with $\dim M_{Y \times \R}^{0}(\rho, \sigma) = 2 | L_2 \backslash L_1|$, the intersections $M_{Y \times \R}^0(\rho, \sigma; L_2 \backslash L_1)$ are finite by the property (\ref{transversality II}) in Proposition \ref{prop section}. Hence we can count the number of points in the intersections. They numbers give the definition of (\ref{eq FF boundary}). More precisely we need to attach a sign $\pm 1$ to each point.
However we mention nothing about signs.
(The main purpose of this paper is to construct variants of Floer homology groups for 2-torsion instanton invariants.  They are defined over $\Z_2$ and we do not need signs for the construction. )

\begin{definition}
Let $\underline{\gamma} = \{ \gamma_{l} \}_{l=1}^d$ be a set of loops in $Y$, where $d$ is an integer with $1 \leq d \leq 3$.
For integers $\beta_1, \beta_2$ with $0 \leq \beta_1 \leq \beta_2 \leq d$, take generators $[ \rho ] \in CF_{j-2\beta_1}(Y)$, $[\sigma] \in CF_{j-2\beta_2-1}(Y)$ and choose subsets $L_1, L_2$ of $\{ 1,\dots, d \}$ with $|L_1|=\beta_1$, $|L_2|=\beta_2$. Then we put
\[
< \partial( [\rho] \otimes \gamma_{L_1} ), [\sigma] \otimes \gamma_{L_2} >
:=
\left\{
\begin{array}{cl}
\# M_{Y \times \R}^0 (\rho, \sigma; L_2 \backslash L_1) & 
\text{ if $L_1 \subset L_2$ } \\
0 & \text{otherwise.}
\end{array}
\right.
\]
We define $\partial:CFF_{j}(Y;\underline{\gamma}) \rightarrow CFF_{j-1}(Y;\underline{\gamma})$ by
\[
\partial([\rho] \otimes \gamma_{L_1}) :=
\sum_{\beta_2} \sum_{[\sigma]} \sum_{L_2}
< \partial ( [\rho] \otimes \gamma_{L_1} ), [\sigma] \otimes \gamma_{L_2} > 
[\sigma] \otimes \gamma_{L_2}.
\]

\end{definition}

We prove the following.

\begin{lemma} \label{lem complex}
$\partial \circ \partial = 0$.
\end{lemma}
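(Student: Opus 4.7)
The plan is to run the standard Floer-theoretic boundary argument: I will identify each matrix coefficient $\langle \partial^2([\rho]\otimes\gamma_{L_1}),\,[\tau]\otimes\gamma_{L_3}\rangle$ with the count of ends of a $1$-dimensional cut-down moduli space, and then conclude by the fact that a compact $1$-manifold has evenly many boundary points (since only $\Z_2$-coefficients are ultimately needed for the intended application, signs can be ignored). Fix generators $[\rho]\in CF_{j-2|L_1|}(Y)$ and $[\tau]\in CF_{j-2|L_3|-2}(Y)$ with $L_1\subset L_3\subset\{1,\dots,d\}$ and $\dim M^0_{Y\times\R}(\rho,\tau)=2|L_3\setminus L_1|+1$, and consider
\[
\cM := M^0_{Y\times\R}(\rho,\tau)\cap\bigcap_{l\in L_3\setminus L_1} V_l(\rho,\tau).
\]
With the transversality arranged in the construction of admissible sections extended one dimension higher than stated in Proposition~\ref{prop section}(c), the space $\cM$ is a smooth $1$-manifold.

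Next I analyze the ends of $\cM$. By Proposition~\ref{prop 1} any divergent sequence $\{[A^\alpha]\}\subset\cM$ has a weakly convergent subsequence with limit $([A_1^\infty],\dots,[A_r^\infty])$, where $[A_i^\infty]\in M^0_{Y\times\R}(\rho(i-1),\rho(i))$ and $\rho(0)=\rho$, $\rho(r)=\tau$; since $\dim M^0_{Y\times\R}(\rho,\tau)<8$ the sets $Z_i$ are empty, so no bubbling occurs. Index additivity yields
\[
\sum_{i=1}^{r} \dim M^0_{Y\times\R}(\rho(i-1),\rho(i)) + (r-1) = 2|L_3\setminus L_1|+1.
\]
By Proposition~\ref{prop section}(a) applied to each $l\in L_3\setminus L_1$, the limit of $s_l([A^\alpha])$ is the tensor product $s_l([A_1^\infty])\boxtimes\cdots\boxtimes s_l([A_r^\infty])$, so for each such $l$ at least one factor vanishes. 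Assigning each $l$ to a level where $s_l$ vanishes partitions $L_3\setminus L_1=K_1\sqcup\cdots\sqcup K_r$ with $[A_i^\infty]\in M^0_{Y\times\R}(\rho(i-1),\rho(i);K_i)$. Property~(\ref{transversality I}) of Proposition~\ref{prop section} then forces $\dim M^0_{Y\times\R}(\rho(i-1),\rho(i))\geq 2|K_i|$, and summing gives $r-1\leq 1$. Since $r\geq 2$ at an end, exactly $r=2$, and equality in the dimension count forces $\dim M^0_{Y\times\R}(\rho,\rho(1))=2|K_1|$ and $\dim M^0_{Y\times\R}(\rho(1),\tau)=2|K_2|$.

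Setting $\sigma:=\rho(1)$ and $L_2:=L_1\cup K_1$, this shows that every end of $\cM$ determines a pair
\[
\bigl([A_1^\infty],[A_2^\infty]\bigr)\in M^0_{Y\times\R}(\rho,\sigma;L_2\setminus L_1)\times M^0_{Y\times\R}(\sigma,\tau;L_3\setminus L_2)
\]
with $L_1\subset L_2\subset L_3$, and each such factor is $0$-dimensional and finite by Proposition~\ref{prop section}(\ref{transversality II}). Conversely, Proposition~\ref{prop 2} and the gluing map $\widehat{Gl}_{T}$ described just before Definition~\ref{def section covergence}, combined with the convergence statement of Definition~\ref{def section covergence} itself, produce for each such pair a unique embedded arc of $\cM$ parametrized by the gluing parameter $T\to\infty$, since the zero locus of $\widehat{Gl}_T(s_l([A_1^\infty])\boxtimes s_l([A_2^\infty]))$ cuts out the arc transversely in the remaining $T$-direction once the level-wise cut-downs have accounted for the other parameters. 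This establishes a bijection between the ends of $\cM$ and the pairs enumerated by
\[
\sum_{L_1\subset L_2\subset L_3}\ \sum_{[\sigma]} \#M^0_{Y\times\R}(\rho,\sigma;L_2\setminus L_1)\cdot\#M^0_{Y\times\R}(\sigma,\tau;L_3\setminus L_2),
\]
which is exactly $\langle \partial^2([\rho]\otimes\gamma_{L_1}),[\tau]\otimes\gamma_{L_3}\rangle$. The compact $1$-manifold $\cM$ has an even number of ends, so this coefficient vanishes.

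The main obstacle is the gluing/surjectivity half of this argument: checking that every broken pair actually appears as the limit of an end of the \emph{cut-down} space $\cM$ rather than just of the unperturbed moduli space $M^0_{Y\times\R}(\rho,\tau)$. This is where Proposition~\ref{prop section}(a) is essential, since it is what allows the zero locus of the full section $s_l$ near an end to be modeled by the zero locus of its broken tensor-product limit under $\widehat{Gl}_T$, and thus gives control of the zero set of the cut-down equation in the gluing region. The transversality statements (b) and (c) of Proposition~\ref{prop section} then rule out any unexpected contributions to the end, so the bijection is clean and the argument closes.
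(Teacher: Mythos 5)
Your dimension-counting and end analysis are correct: index additivity plus Proposition~\ref{prop section}(\ref{transversality I}) forces $r=2$ at any end, and equality in the estimate pins down the dimensions of the two factors so that each broken pair lands in the enumeration of $\langle\partial^2([\rho]\otimes\gamma_{L_1}),[\tau]\otimes\gamma_{L_3}\rangle$. The overall shape (count ends of a $1$-dimensional cut-down moduli space, conclude the matrix coefficient vanishes) is the same as the paper's.

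However, there is a genuine gap in the first step, and it is precisely the one the paper flags explicitly. You write that ``with the transversality arranged in the construction of admissible sections extended one dimension higher than stated in Proposition~\ref{prop section}(c), the space $\cM$ is a smooth $1$-manifold.'' But Proposition~\ref{prop section}(\ref{transversality II}) gives transversality only when $\dim M^0_{Y\times\R}(\rho,\sigma)=2|L|$; the paper inserts a parenthetical reminder that it does \emph{not} assure transversality of the $1$-dimensional intersection because $\dim M^0_{Y\times\R}(\rho,\sigma)>2|L_2\setminus L_1|$ there. You cannot simply assume the admissible-section construction ``extends one dimension higher,'' because achieving transversality of the non-compact cut-down space by a global perturbation of $s_l(\rho,\sigma)$ risks destroying the tensor-product asymptotics (\ref{tensor}) that your end analysis relies on. The paper resolves this by truncating: it removes the pieces $E([A_1],[A_2],T_1)$ of the moduli space past a large gluing parameter, obtaining a compact set $M'$, shows using (\ref{tensor}) that near each triple $([A_1],T_1,[A_2])$ the actual section $s_l$ has zero set close to that of the glued product section and hence cuts out exactly one transverse point, and then perturbs $s_l(\rho,\sigma)$ only on the compact interior to make $M'$ a smooth $1$-manifold with boundary identified as in (\ref{eq M' end}). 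This compact-support perturbation is what legitimizes the boundary count. Relatedly, your closing sentence ``the compact $1$-manifold $\cM$ has an even number of ends'' conflates $\cM$ (which is non-compact) with its truncation $M'$; the parity argument must be run on the compact truncated manifold whose boundary has been identified with the broken configurations.
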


This is given by counting the number of the ends of 1-dimensional moduli spaces. Let $\beta_1, \beta_2$ be integers with $0 \leq \beta_1 \leq \beta_2 \leq d$. Choose generators $[\rho] \in CF_{j-2\beta_1}(Y)$, $[\sigma] \in CF_{j-2\beta_2 -2}(Y)$ and $L_1 \subset L_2 \subset \{ 1, \dots, d \}$ with $|L_1|= \beta_1$, $|L_2|=\beta_2$. We have a reduced moduli space $\0M$ of dimension $2(\beta_2 - \beta_1) + 1 \ (< 8)$. Suppose that we have a sequence  $\{ [A^{\alpha}] \}_{ \alpha }$ in the intersection $M_{Y \times \R}^0 (\rho, \sigma; L_2 \backslash L_1)$ of formal dimension 1 which converges to some $([A_1^{\infty}],\dots, [A_r^{\infty}])$ with $r>1$.
(Note that Proposition \ref{prop section} does not assure that  $M_{Y \times \R}^0 (\rho, \sigma; L_2 \backslash L_1)$ is transverse, since  $\dim M_{Y \times \R}^0(\rho, \sigma) > 2| L_2 \backslash L_1 |$.)
First we show that $r=2$. Put
\[
L(i) = \{ \ l \in L_2 \backslash L_1 \ | \ s_l([A_i^{\infty}]) = 0 \ \}.
\]
For $l \in L_2 \backslash L_1$ and all $\alpha$, $s_l([A^{\alpha}]) = 0$, and $s_l([A^{\alpha}])$ converges to $s_l([A_1^{\infty}]) \boxtimes \cdots \boxtimes s_l([A_r^{\infty}])$. Hence there is a number $i(l) \in \{ 1, \dots , r \}$ such that
\[
s_l([A^{\infty}_{i(l)}]) = 0
\]
for each $l \in L_2 \backslash L_1$. This means that $l \in L_2 \backslash L_1$ lies in $L(i(l))$. Therefore we have
\[
\beta_2 - \beta_1 = | L_2 \backslash L_1 | \leq \sum_{i=1}^{r} | L(i) |.
\]
Since $[A_i^{\infty}]$ are included in $M_{Y \times \R}^0(\rho(i-1), \rho(i); L(i))$, the intersection are not empty. The transversality condition (\ref{transversality I}) in Proposition \ref{prop section} implies that
\[
2|L(i)| + 1 \leq \dim M_{Y \times \R} (\rho(i-1), \rho(i))
\]
for each $i$.
From the additivity of the index, we have
\[
\begin{split}
2(\beta_2 - \beta_1) + 2
&=
\dim M_{Y \times \R} (\rho, \sigma) \\
&=
\sum_{i=1}^r \dim M_{Y \times \R}(\rho(i-1), \rho(i)) \\
&\geq
\sum_{i=1}^r ( 2|L(i)| + 1) \\
&\geq 2(\beta_2 - \beta_1) + r.
\end{split}
\]
Therefore we get $r \leq 2$. We assumed that $r > 1$, so $r=2$. 

Put $L := L(1) \coprod L_1$. Using (\ref{transversality I}), (\ref{transversality II}) in Proposition \ref{prop section}, we can easily see that 
\[
[A_1^{\infty}] \in M_{Y \times \R}^0(\rho, \rho(1); L \backslash L_1), 
\quad
[A_2^{\infty}] \in M_{Y \times \R}^0(\rho(1), \sigma; L_2 \backslash L)
\]
and that 
\[
\dim M_{Y \times \R}^0(\rho, \rho(1)) = 2|L \backslash L_1|, 
\quad
\dim M_{Y \times \R}^0(\rho(1), \sigma; L_2 \backslash L) = 2|L_2 \backslash L|.
\]

Conversely for each $[ \underline{A} ]= ([A_1], [A_2]) \in M_{Y \times \R}^0(\rho, \rho(1); L \backslash L_1) \times M_{Y \times \R}^0(\rho(1),\sigma; L_2 \backslash L)$, we have gluing maps
\[
\begin{split}
Gl_{[ \underline{A}] } &:
U_{[A_1]} \times (T_0, \infty) \times U_{[A_2]} 
\longrightarrow 
\0M, \\
\widehat{Gl}_{[ \underline{A}] } &:
\cL^{ \otimes 2 }_{l}(\rho, \rho(1))|_{U_{[A_1]}} \boxtimes \cL_{l}^{\otimes 2}(\rho(1), \sigma)|_{U_{[A_2]}}
\longrightarrow
\cL_{l}^{\otimes 2}(\rho, \sigma)|_{ \Im Gl_{[ \underline{A}] } }
\end{split}
\]
for some precompact open neighborhoods $U_{[A_1]}, U_{[A_2]}$ of $[A_1]$, $[A_2]$ and positive number $T_0 > 0$. When $U_{  [A_1] }$, $U_{[A_2]}$ are sufficiently small, the transversality conditions in Proposition \ref{prop section} imply that intersections
\[
U_{[A_1]} \cap \bigcap_{ l \in L \backslash L_1 } V_l, \quad
U_{ [A_2] } \cap \bigcap_{ l \in L_2 \backslash L } V_l
\]
are transverse and that
\begin{equation*} \label{eq intersections}
\begin{split}
&U_{[A_1]} \cap \bigcap_{ l \in L \backslash L_1 } V_l = \{ [A_1] \}, 
\quad
\partial \bar{U}_{[A_1]} \cap \bigcap_{ l \in L \backslash L_1 } V_l = \emptyset, 
\quad
\bar{U}_1 \cap V_l = \emptyset  \quad ( l \in L_2 \backslash L), \\
&U_{[A_2]} \cap \bigcap_{ l \in L_2 \backslash L } V_{l} = \{ [A_2] \}, \quad
\partial \bar{U}_{[A_2]} \cap \bigcap_{ l \in L_2 \backslash L } V_l = \emptyset, 
\quad
\bar{U}_2 \cap V_l = \emptyset  \quad ( l \in L \backslash L_1).
\end{split}
\end{equation*}
Put 
\[
s_l':= \widehat{Gl} \big( s_l(\rho, \tau ) \boxtimes s_l(\tau, \sigma) \big):
U_{[A_1]} \times (T_0, \infty) \times U_{[A_2]} \longrightarrow
\cL_{l}^{\otimes 2} (\rho, \sigma)|_{ \Im Gl_{\unA} } 
\]
and let $V_l'$ be the zero locus of $s_l'$. Then for each $T_1 > T_0$ we have
\[
\begin{split}
( U_{[A_1]} \times \{ T_1 \} \times U_{[A_2]} ) \cap
\bigcap_{ l \in L_2 \backslash L_1  } V_l' 
&=
\{ ( [A_1], T_1, [A_2]) \}, \\
\partial \overline{ ( U_{[A_1]} \times \{ T_1 \} \times U_{[A_2]} )} 
\cap
\bigcap_{ l \in L_2 \backslash L_1  } V_l' 
&= \emptyset
\end{split}
\]
and the first intersection is transverse. The condition (\ref{tensor}) in Proposition \ref{prop section} means that $s_l(\rho, \sigma)$ and $s_l'$ are close to each other on the end of the moduli space. Hence for $T_1$ large enough, the intersection
\[
( U_{ [A_1] } \times \{ T_1 \} \times U_{[A_2]}) \cap
\bigcap_{ l \in L_2 \backslash L_1 } V_{l}
\]
 is transverse and consists of a single point which is close to $( [A_1], T_1, [A_2] )$. We consider a subset
\[
M' = 
M_{Y \times \R}^0 (\rho, \sigma; L_2 \backslash L_1) 
\ \backslash \ 
\bigcup_{ ([A_1], [A_2]) }
E ([A_1], [A_2],  T_1),
\]
of the moduli space. Here
\[
E ([A_1], [A_2],  T_1) =
Gl_{ [A_1], [A_2] }(U_{[A_1]} \times (T_1, \infty) \times U_{ [A_2] }).
\]
Then $M'$ is compact and if we perturb sections $s_l(\rho, \sigma)$ outside of neighborhoods of the triples $([A_1], T_1, [A_2])$, $M'$ becomes a smooth manifold of dimension $1$. Moreover there is a natural identification
\begin{equation} \label{eq M' end}
\partial M'
\cong
\bigcup_{
\begin{subarray}{c}
\beta \\
\beta_1 \leq \beta \leq \beta_2 
\end{subarray}
} 
\bigcup_{
\begin{subarray}{c}
[\tau] \\
\delta_{Y} ([\tau]) \equiv \\
j - 2 \beta - 1 \mod 8
\end{subarray}
} 
\bigcup_{ 
\begin{subarray}{c}
L \\
L_1 \subset L  \subset L_2 \\
| L | = \beta
\end{subarray}
}
M_{Y \times \R}^0 (\rho, \tau; L \backslash L_1) \times
M_{Y \times \R}^0 (\tau, \sigma; L_2 \backslash L).
\end{equation}
By counting the number of $\partial M'$ with signs, we get
\[
< \partial \partial (\rho \otimes \gamma_{L_1}), \sigma \otimes \gamma_{L_2} > = 0
\]
and this gives 
\[
\partial \circ \partial = 0
\]
as required.

\begin{definition}
$HFF_*(Y; \ungam) := H_*( CFF_*(Y; \ungam), \partial)$.
\end{definition}

\subsection{Construction of $s_l(\rho, \sigma)$}
\label{ss sections}

In this section, we will prove Proposition \ref{prop section}.

First, for flat connections $\rho, \sigma$ with $\dim \0M < 8$, we take locally finite open covers $\{ U_{\lambda} \}_{\lambda \in \Lambda(\rho, \sigma)}$ of $M_{Y \times \R}^0(\rho, \sigma)$ with $U_{\lambda}$ precompact as follows.
We will use a partition of unity associated with the open cover to construct admissible sections.

If $\dim M^0_{Y \times \R}(\rho, \sigma) = 0$, $M_{Y \times \R}^0(\rho, \sigma)$ is compact. Put $U_0 = M_{Y \times \R}^0(\rho, \sigma)$. Then $\{ U_0 \}$ is the required open cover.
Fix an integer $m$ with $1 \leq m \leq 7$ and suppose that we have open covers $\{ U_{\lambda} \}_{\lambda \in \Lambda(\rho, \sigma)}$ for flat connections $\rho, \sigma$ with $\dim M_{Y \times \R}^0(\rho, \sigma) \leq m-1$. We consider flat connections $\rho, \sigma$ with $\dim \0M = m$. For each $\underline{\lambda} = ( \lambda_1, \dots, \lambda_{r} ) \in \Lambda(\rho, \rho(1)) \times \cdots \times \Lambda(\rho(r-1), \sigma)$, we have a gluing map
\[
Gl_{ \underline{\lambda} }:
U_{\lambda_1} \times (T_{ \underline{\lambda} }, \infty) \times \cdots \times (T_{ \underline{\lambda} }, \infty) \times U_{\lambda_r} 
\longrightarrow
\0M
\]
for some $T_{\unlam} > 0$. 
We can suppose that $Gl_{\unlam}$ extends to the closures $\overline{U}_{\lam_i}$ of $U_{\lam_i}$.
For $\unn =(n_1,\dots, n_{r-1}) \in ( \Z_{ \geq 0 } )^{r-1}$, we put
\[
U_{\unlam, \unn} :=
Gl_{\unlam}
\left(
U_{\lam_1} \times \big( T_{ \unlam } + n_1, T_{\unlam} + n_1 + \frac{3}{2} \big) \times \cdots \times \big( T_{\unlam} + n_{r-1}, T_{\unlam} + n_{r-1} + \frac{3}{2} \big) \times U_{\lam_r}
\right).
\]
This is a precompact, open set of $\0M$. 
We write $N$ for the complement of the union of the sets $U_{\unlam, \unn}$:
\[
N := \0M \ \backslash \ \bigcup_{ \unlam, \unn } U_{\unlam, \unn}.
\]

\begin{lemma}
Under the above notations and assumptions, $N$ is compact.
\end{lemma}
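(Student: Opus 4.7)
The plan is to verify sequential compactness of $N$: given any sequence $\{[A^{\alpha}]\}_{\alpha} \subset N$, I would extract a subsequence converging in $N$. The inductive hypothesis gives open covers $\{U_{\lambda}\}_{\lambda \in \Lambda(\rho', \sigma')}$ for all pairs of flat connections with $\dim M^0_{Y \times \R}(\rho', \sigma') \leq m-1$, and together with Propositions \ref{prop 1} and \ref{prop 2} this should suffice.

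First I would apply Proposition \ref{prop 1} to pass to a weakly convergent subsequence with limit $([A_1^{\infty}], \ldots, [A_r^{\infty}])$; because $\dim \0M < 8$, no bubbling occurs, so the singular parts $Z_i$ are empty. The goal is to rule out $r \geq 2$. Assume $r \geq 2$. By additivity of the index,
\[
m = \dim \0M = \sum_{i=1}^{r} \dim M^0_{Y \times \R}(\rho(i-1), \rho(i)) + (r-1),
\]
and since each $M^0_{Y \times \R}(\rho(i-1), \rho(i))$ is nonempty (it contains $[A_i^{\infty}]$), each summand is nonnegative; hence each is at most $m - (r-1) \leq m-1$. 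The inductive hypothesis therefore produces covers $\{U_{\lambda_i}\}_{\lambda_i \in \Lambda(\rho(i-1), \rho(i))}$, and I pick indices with $[A_i^{\infty}] \in U_{\lambda_i}$, forming $\underline{\lambda} = (\lambda_1, \ldots, \lambda_r)$.

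Next I would apply Proposition \ref{prop 2} to the convergent subsequence: for all large $\alpha$ there are $[A_i^{\alpha}] \in U_{\lambda_i}$ and $T_i^{\alpha} > T_{\underline{\lambda}}$ with
\[
[A^{\alpha}] = Gl_{\underline{\lambda}}([A_1^{\alpha}], T_1^{\alpha}, \ldots, T_{r-1}^{\alpha}, [A_r^{\alpha}]).
\]
The half-open intervals $\bigl(T_{\underline{\lambda}} + n, T_{\underline{\lambda}} + n + \tfrac{3}{2}\bigr)$ for $n \in \Z_{\geq 0}$ cover $(T_{\underline{\lambda}}, \infty)$ (each real $t > T_{\underline{\lambda}}$ with $t - T_{\underline{\lambda}} = k + \theta$, $k \in \Z_{\geq 0}$, $\theta \in [0,1)$, lies in the interval indexed by $n = \max(k-1, 0)$ or $n = k$ depending on $\theta$). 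Hence each $T_i^{\alpha}$ lies in some interval indexed by $n_i(\alpha)$, and $[A^{\alpha}] \in U_{\underline{\lambda}, \underline{n}(\alpha)}$ for large $\alpha$, contradicting $[A^{\alpha}] \in N$.

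This forces $r = 1$, and the weak convergence then reduces to ordinary convergence of $[A^{\alpha}]$ to some $[A_1^{\infty}] \in \0M$ (there is no translation vector when $r=1$). Since $N$ is the complement in $\0M$ of a union of open sets, it is closed, so $[A_1^{\infty}] \in N$. Thus $N$ is sequentially compact, hence compact since $\0M$ is metrizable. The main subtlety is the bookkeeping for the inductive step—one must track that each piece of the broken trajectory has dimension strictly less than $m$ and that the lengths $T_i^{\alpha}$ really do exhaust $(T_{\underline{\lambda}}, \infty)$ via the chosen intervals; everything else is a direct application of the compactness and gluing theorems already quoted.
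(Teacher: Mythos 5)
Your proof is correct and follows essentially the same route as the paper's: extract a weakly convergent subsequence (Proposition \ref{prop 1}), note $Z_i = \emptyset$ by the dimension bound, and for $r \geq 2$ apply Proposition \ref{prop 2} to place $[A^{\alpha'}]$ in the image of a gluing map $Gl_{\underline{\lambda}}$, contradicting $[A^{\alpha'}] \in N$. The only difference is that you spell out two points the paper leaves implicit — the dimension count showing each $\dim M^0_{Y\times\R}(\rho(i-1),\rho(i)) \leq m-1$ so the inductive hypothesis applies, and the observation that the shifted intervals $(T_{\underline{\lambda}}+n,\, T_{\underline{\lambda}}+n+\tfrac{3}{2})$ exhaust $(T_{\underline{\lambda}},\infty)$ so that $\bigcup_{\underline{n}} U_{\underline{\lambda},\underline{n}}$ really is the full gluing image — which is helpful bookkeeping but not a different argument.
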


\begin{proof}
Let $\{ [A^{\alpha}] \}_{\alpha}$ be a sequence in $N$. By Proposition \ref{prop 1}, there is a subsequence $\{ [A^{\alpha'}] \}_{\alpha'}$ such that 
\[
[A^{\alpha'}] \longrightarrow 
( [A_1^{\infty}], \dots, [A_{r}^{\infty}] ).
\]
Note that $Z_i$ are empty since the dimension of the reduced moduli space is less than $8$.
 If $r>1$, it follows from Proposition \ref{prop 2} that $[A^{\alpha'}]$ lie in the image of $Gl_{\unlam}$ for large $\alpha'$ and  some $\unlam$.
This is a contradiction to the fact that $[A^{\alpha'}]$ are in the complement $N$ of the images of the gluing maps.

\end{proof}

Take a small neighborhood $U_0$ of $N$ in $M_{Y \times \R}^0 (\rho, \sigma)$, and put 
\[
\{ U_{\lam } \}_{ \lam \in \Lambda(\rho, \sigma) } := 
\{ U_0 \} \cup
\bigcup_{r=1}^m \bigcup_{\rho(1),\dots, \rho(r-1)}
\bigcup_{\unlam } \ \bigcup_{\unn} \ 
\{ U_{\unlam, \unn} \}.
\]
Then $\{ U_{\lam} \}_{\lam \in \Lam}$ is a locally finite open cover of $\0M$ with $U_{\lam}$ precompact.
We impose a condition on $T_{\unlam}$, where $\unlam = ( \lambda_1, \dots, \lambda_r)$.
If $\lam_i$ has the form $\lam_i=(\lam_1',\dots, \lam_{r'}'; n_1', \dots, n_{r'-1}')$, we suppose that $T_{\unlam}$ satisfies the inequality
\begin{equation} \label{eq condition on T}
T_{\unlam} > \sum_{k} ( T_{\unlam'} + n_k' )
\end{equation}
where $\unlam'=(\lam_1',\dots, \lam_{r'}')$. 
This condition will be used for the proof of Proposition \ref{prop section}.

\vspace{2mm}

Next we construct admissible sections $s_l(\rho, \sigma)$ using a suitable partition of unity associated with the open cover $\{ U_{\lambda} \}_{\lambda}$.
We again do it by induction on $\dim M_{Y \times \R}^0(\rho, \sigma)$.
Let $\rho$ and $\sigma$ be flat connections with $\dim \0M = 0$.
For generic sections $\sl$ on $\0M$, the zero locus are empty, and $\sl$ have the properties in Proposition \ref{prop section}. 
Fix an integer $m$ with $1 \leq m \leq 7$. Assume that for $\rho, \sigma$ with $\dim \0M \leq m-1$ we have sections $\sl$ on $\0M$ having the properties in Proposition \ref{prop section}. 
We need to construct admissible sections $s_l(\rho, \sigma)$ for $\rho, \sigma$ with $\dim \0M = m$.
We need some notations and lemmas.

Take subsets $I=\{ i_1, \dots, i_s \}$, $J=\{ j_1, \dots, j_t \}$ of $I_{r-1} := \{ 1, \dots, r-1 \}$ with $I \cap J = \emptyset$, $I \cup J = I_{r-1}$. Here $i_1 < \dots < i_{s}$, $j_1 < \cdots < j_t$. For $\unn = (n_1, \dots, n_{r-1}) \in (\Z_{ \geq 0 })^{r-1}$ we write $\unn_{I} = (n_{i_1}, \dots, n_{i_s})$, $\unn_{J}= (n_{j_1}, \dots, n_{j_t})$, and for $\unlam=(\lambda_1,\dots, \lam_{r})$ we write $| \unlam |=r$.

\begin{lemma} \label{lem partition}
Let $I, J$ be non-empty subsets of $I_{r-1}$ as above.
There is a partition of unity $\{ f_{\lam} \}_{\lam \in \Lam}$ satisfying the following condition. 
For $(\unlam, \unn) \in \Lam$ with $| \unlam |= r$, there exists a positive integer $N = N(\unlam, \unn_I) > 0$ depending on $\unlam$ and $\unn_I$ such that if $n_j > N$ for all $j \in J$ then $f_{\unlam, \unn}$ is identically zero.
\end{lemma}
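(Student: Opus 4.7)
The plan is to proceed by induction on $m = \dim \0M$, assuming the lemma is established on every lower-dimensional factor moduli space so that for each pair $\rho', \sigma'$ with $\dim M_{Y \times \R}^0(\rho', \sigma') < m$ we already have a partition of unity $\{ g^{(\rho', \sigma')}_\lam \}$ with the stated support property. I would fix a smooth bump function $\chi \colon \R \to [0,1]$ supported in $(0, 3/2)$ with $\sum_{n \in \Z_{\geq 0}} \chi(t - n) \equiv 1$ on $[1, \infty)$, and for each $(\unlam, \unn) \in \Lam$ with $|\unlam| = r$ set
$$
\tilde{f}_{\unlam, \unn}([A]) := \prod_{k=1}^{r-1} \chi(T_k - T_{\unlam} - n_k) \cdot \prod_{i=1}^{r} g^{(\rho(i-1), \rho(i))}_{\lam_i}(u_i)
$$
whenever $[A] = Gl_{\unlam}(u_1, T_1, \dots, T_{r-1}, u_r) \in U_{\unlam, \unn}$, extended by zero; a bump $\tilde{f}_0$ supported in $U_0$ with value $1$ near the compact core $N$ handles the core. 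Normalizing by the locally finite strictly positive sum $F := \tilde{f}_0 + \sum \tilde{f}_{\unlam, \unn}$ then yields $f_\lam := \tilde{f}_\lam / F$, a partition of unity subordinate to $\{U_\lam\}$.

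The essential geometric input for the support property will be a redundancy among the charts. For fixed $(\unlam, \unn)$ with $|\unlam| = r$ and $\unn_I$, I would argue that each $[A] = Gl_{\unlam}(u_1, T_1, \dots, u_r) \in U_{\unlam, \unn}$ admits an alternative description $[A] = Gl_{\unlam'}(u'_1, T'_1, \dots, u'_{s})$ with $s = |J|+1 < r$, obtained by grouping consecutive factors $u_i$ linked only by $I$-necks into single intermediate elements $u'_k = Gl(\dots)$, so that the necks of $\unlam'$ are precisely the $J$-necks $T_j$ of $\unlam$. Since each $u_i \in \overline{U}_{\lam_i}$ is precompact and each $T_i$ for $i \in I$ is bounded, the intermediate $u'_k$ range over a precompact subset of the appropriate lower-dimensional moduli space $M^0_{Y \times \R}(\rho(\cdot), \rho(\cdot))$ and hence are covered by finitely many charts $U_{\lam''_k}$ of the inductive cover. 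Condition (\ref{eq condition on T}) then bounds the associated thresholds $T_{\unlam'}$ uniformly over those finitely many options, so there exists $N = N(\unlam, \unn_I)$ such that $n_j > N$ for all $j \in J$ forces $T_j > T_{\unlam'}$ for every relevant $\unlam'$, placing $[A]$ in some $U_{\unlam', \unn'}$ with $|\unlam'| < r$.

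Given this redundancy, I would modify $\tilde{f}_{\unlam, \unn}$ by multiplying with a smooth cutoff $1 - \eta_{\unlam, \unn}$, where $\eta_{\unlam, \unn} \equiv 1$ on the region where all $n_j > N$; this forces $f_{\unlam, \unn} \equiv 0$ for $\unn_J$ beyond the bound. The hard part will be making these cutoffs mutually compatible so that the total still sums to $1$. My plan is to apply them in order of decreasing $r$, so that any mass removed from a depth-$r$ chart is immediately absorbed by a shallower chart $U_{\unlam', \unn'}$ whose own cutoff has not yet been imposed; the strict inequality in (\ref{eq condition on T}) should prevent circular dependence and keep the renormalized $F$ strictly positive throughout.
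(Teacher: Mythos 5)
Your proposal isolates the right geometric content: the redundancy statement that for $n_j$ large ($j \in J$) the chart $U_{\unlam, \unn}$ is swallowed by shallower charts $U_{\unlam', \unn'}$ with $|\unlam'| < |\unlam|$, obtained by refusing to stretch the $I$-necks and collapsing the $I$-linked blocks into lower-dimensional glued instantons, with precompactness of the $U_{\lam_i}$ and boundedness of $\unn_I$ giving a uniform threshold $N(\unlam, \unn_I)$, and condition (\ref{eq condition on T}) controlling the shallower $T_{\unlam'}$. That is precisely the paper's Lemma \ref{lem open set}, proved by the same compactness argument (send one neck to infinity, land in the image of a shorter gluing map, take the threshold uniform over the precompact parameters).

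Where you and the paper diverge is how this redundancy is turned into a partition of unity, and here your route has a genuine unresolved difficulty. You build an explicit candidate by taking products of $\chi$-bumps on the neck lengths and inductively obtained partitions on the factor moduli spaces, normalize by the total $F$, and then propose to multiply each $\tilde{f}_{\unlam,\unn}$ by a cutoff $1-\eta_{\unlam,\unn}$ supported in the redundant range, imposed in decreasing order of $r$. You acknowledge yourself that keeping $F$ strictly positive after all the cutoffs is ``the hard part,'' and your appeal to (\ref{eq condition on T}) as preventing ``circular dependence'' does not close it: after a depth-$r$ tilde is killed at a point $[A]$, you need a \emph{specific} shallower chart $U_{\unlam',\unn'}$ containing $[A]$ at which the corresponding tilde is strictly positive \emph{and} survives its own cutoff, and your construction does not supply a mechanism that guarantees this simultaneously (the $\chi$-bump in a given shallower chart may vanish at $[A]$, and the chart you do find positive mass in may itself be in its own redundant regime). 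The paper avoids all of this: it simply discards the redundant charts from the cover --- by the redundancy lemma the remaining $\{U_\lam\}_{\lam \in \Lambda''}$ is still an open cover --- and takes an arbitrary partition of unity subordinate to the reduced cover, setting $f_\lam \equiv 0$ for $\lam \in \Lambda'$. The support condition is then automatic because the discarded charts are exactly those that become redundant for $\unn_J$ beyond the threshold, and no renormalization or cutoff compatibility ever arises.
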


Using the partition of unity, we define sections $\sl$ on $\0M$ by
\begin{equation} \label{eq section}
\sl = 
\sum_{ r=2 }^m \sum_{ \rho(1), \dots, \rho(r-1) } 
\sum_{\unlam, \unn} 
f_{\unlam, \unn} \ \widehat{Gl}_{\unlam} \big( s_l(\rho, \rho(1)) \boxtimes \cdots \boxtimes s_l(\rho(r-1), \sigma) \big).
\end{equation}
Here $s_l(\rho, \rho(1)), \dots, s_l(\rho(r-1), \sigma)$ are admissible sections.
We will show that we can perturb $\sl$ on a compact set in $M_{Y \times \R}^0(\rho, \sigma)$ such that $\sl$ have the properties in Proposition \ref{prop section}.

\vspace{2mm}

To prove Lemma \ref{lem partition}, we need to remove extra open sets from the open cover $\{ U_{\lam} \}_{\lam \in \Lam}$.
We consider the condition
\begin{equation} \label{eq open set}
U_{\unlam, \unn} \subset 
\bigcup_{ 
\begin{subarray}{c}
(\unlam', \unn') \in \Lam \\
| \unlam' | < | \unlam |
\end{subarray}
}
U_{ \unlam', \unn' }.
\end{equation}
Put
\[
\begin{split}
\Lambda' = \Lambda'(\rho, \sigma) &:=
\{ \ (\unlam, \unn) \in \Lam  \ | \ 
\text{ $U_{\unlam, \unn}$ satisfies (\ref{eq open set})} \  
\} \\
\Lambda''= \Lambda'' (\rho, \sigma) &:= 
\Lam \ \backslash \ \Lambda'.
\end{split}
\]
By definition, $\{ U_{\lambda} \}_{\lam \in \Lambda''}$ is still an open cover of $\0M$. Let $\{ f''_{\lam} \}_{\lam \in \Lambda''}$ be a partition of unity associated with $\{ U_{\lam} \}_{\lam \in \Lambda''}$. For $\lambda \in \Lam$, we define $f_{\lam}$  by
\[
f_{\lam} := 
\left\{
\begin{array}{cc}
f''_{\lam} & \text{ if $\lam \in \Lambda''$ } \\
0 & \text{otherwise.}
\end{array}
\right.
\]
Then $\{ f_{\lambda} \}_{\lambda \in \Lambda(\rho, \sigma)}$ is a partition of unity associated with $\{ U_{\lambda} \}_{ \Lam }$.
We show that this partition of unity has the property of Lemma \ref{lem partition}.
It is sufficient to show the following:

\begin{lemma} \label{lem open set}
Let $I, J$, $( \unlam, \unn) \in \Lam$ be as in Lemma \ref{lem partition}.
Then there is a positive integer $N=N(\unlam, \unn_I)$ such that $U_{\unlam, \unn}$ satisfies (\ref{eq open set}) if $n_j > N$ for all $j \in J$.

\end{lemma}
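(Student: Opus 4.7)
The plan is to exploit the associativity of the iterated gluing construction: when the gluing parameters at the positions indexed by $J$ are sufficiently large, the image of $Gl_{\unlam}$ can be recovered as the image of a nested gluing $Gl_{\unlam'}$ in which consecutive $\lam_i$'s separated by $I$-indices are absorbed into ``deep'' $\lambda$-entries, thereby reducing the number of top-level factors.

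Concretely, write $J=\{j_1<\dots<j_t\}$ with $t=|J|\ge 1$ and set $j_0:=0$, $j_{t+1}:=r$. Partition $\{1,\dots,r\}$ into consecutive blocks $B_k=\{j_{k-1}+1,\dots,j_k\}$ and define
\[
\mu_k := (\lam_{j_{k-1}+1},\dots,\lam_{j_k};\,n_{j_{k-1}+1},\dots,n_{j_k-1}),
\]
interpreting $\mu_k=\lam_{j_{k-1}+1}$ when $B_k$ is a singleton. The internal separators of $B_k$ all lie in $I$, so $\mu_k$ is built only from entries of $\unlam$ and from the components $\unn_I$ of $\unn$. By the inductive construction of the open cover (on moduli dimension), each $\mu_k$ is a valid index in $\Lam(\rho(j_{k-1}),\rho(j_k))$. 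Put $\unlam':=(\mu_1,\dots,\mu_{t+1})$, so that $|\unlam'|=t+1\le r-|I|<r$ since $I$ is non-empty.

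To match the $j_k$-th junction in $U_{\unlam,\unn}$ with the $k$-th junction of the outer gluing $Gl_{\unlam'}$, I would take integer parameters $n'_k$ with
\[
T_{\unlam'}+n'_k \ \approx \ T_{\unlam}+n_{j_k}.
\]
The compatibility condition (\ref{eq condition on T}) on the thresholds forces $T_{\mu_k}$ together with the internal shifts of $\mu_k$ to be bounded above by $T_{\unlam}$, and iterating gives $T_{\unlam'}\le T_\unlam + C(\unlam,\unn_I)$ for a constant depending only on $\unlam$ and $\unn_I$. Consequently $n'_k := n_{j_k}+T_\unlam-T_{\unlam'}$ is well-defined and non-negative as soon as $n_{j_k}$ exceeds this fixed shift. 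Since the outer window at position $k$ has width $3/2$, at most two consecutive integer values of $n'_k$ suffice to cover the image of the $j_k$-th window in the original parametrisation. Setting $N=N(\unlam,\unn_I)$ to be the maximum over $k$ of these shift constants (plus a uniform margin so that $T_{\unlam'}+n'_k>T_{\unlam'}$ and the window-alignment works), the iterated gluing identity then yields
\[
U_{\unlam,\unn}\ \subset\ \bigcup_{\unn'} U_{\unlam',\unn'},
\]
which is exactly (\ref{eq open set}).

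The main technical point is the identification of the iterated gluing $Gl_\unlam$ with the nested gluing $Gl_{\unlam'}$ on the overlap of their parameter ranges. This is the standard ``associativity'' of Taubes-type gluing for instantons: two ways of performing a multi-gluing that differ only by which necks are glued first agree, up to reparametrisation of the gluing parameters, once all the relevant parameters are sufficiently large. This is implicit in the inductive construction of the open cover, and one can deduce it from the foundational gluing results in \cite{D Floer}. Granted this, the remainder of the argument is the combinatorial bookkeeping outlined above, and the lemma follows.
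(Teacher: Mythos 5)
Your proposal hinges on a claim that you describe as the ``main technical point'' but leave unproven: that the iterated gluing $Gl_{\unlam}$ and the nested gluing $Gl_{\unlam'}$ agree up to reparametrisation. In the framework the paper works in (following \cite{D Floer}), a multi-factor gluing map is built by splicing approximate solutions and running a single Newton iteration, whereas the nested construction first produces a genuine intermediate instanton and then runs a second iteration; these are not literally the same map, and the identification of the split limit $[A_{12}]$ (arising from the one-parameter degeneration of $Gl_{\unlam}$ as the $J$-parameters diverge) with $Gl_{\mu_1}$ evaluated on the same input data is precisely the associativity statement you assume. It is plausible, but it is a genuine analytic assertion that requires proof, and the paper neither proves nor uses it. Your bound $T_{\unlam'}\le T_{\unlam}+C(\unlam,\unn_I)$ is also unjustified: condition (\ref{eq condition on T}) imposes only a lower bound on each threshold in terms of its own nested components, and gives no a priori comparison between $T_{\unlam}$ and $T_{\unlam'}$ for two distinct same-level tuples. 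Without control on $T_{\unlam'}$, the reindexing $n'_k := n_{j_k}+T_{\unlam}-T_{\unlam'}$ need not be non-negative, nor need the window alignment hold.

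The paper instead proves the lemma by a soft compactness argument that avoids identifying which $\unlam'$ works. In the model case $\unlam=(\lam_1,\lam_2,\lam_3)$, $I=\{1\}$, $J=\{2\}$: for each fixed $([A_1],[A_2],[A_3],T_1)$, the glued connection $Gl_{\unlam}([A_1],T_1,[A_2],T_2,[A_3])$ converges to a split limit $([A_{12}],[A_3])$ as $T_2\rightarrow\infty$; Proposition \ref{prop 2} then puts $[A]$ in the image of some two-factor gluing map once $T_2$ exceeds a bound $T_2^0([A_1],[A_2],[A_3],T_1)$, and hence in some $U_{\unlam',\unn'}$ with $|\unlam'|=2$. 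The uniform $N$ is extracted from precompactness of $U_{\lam_1}\times U_{\lam_2}\times U_{\lam_3}\times(T_{\unlam}+n_1,T_{\unlam}+n_1+\tfrac{3}{2})$. Your block decomposition into the $B_k$ is a reasonable organising device at the combinatorial level, but as written the argument substitutes a hard associativity claim for the paper's soft appeal to Proposition \ref{prop 2}, and that substitution is exactly the piece that would need to be filled in.
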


\begin{proof}
We give the proof in the case $\unlam = (\lam_1, \lam_2, \lam_3)$, $I=\{ 1 \}$, $J = \{ 2 \}$. 
(The proof in the general case is similar.)

For $[A] \in U_{\unlam, \unn}$, we can write
\[
[A] = Gl_{\unlam} ([A_1], T_1, [A_2], T_2, [A_3]).
\]
Here  $[A_i] \in U_{\lam_i}$, $T_{\unlam} + n_i < T_i < T_{\unlam} + n_i + \frac{3}{2}$. As $T_2 \rightarrow \infty$, $[A]$ converges to $([A_{12}], [A_3])$ for some $[A_{12}]$. Hence there is a positive real number $T_2^0 = T_2^{0} ([A_1],[A_2], [A_3], T_1)$ such that if $T_2 > T_2^0$ then $[A]$ lies in the image of a gluing map. Therefore $[A]$ is included in $U_{\unlam', \unn'}$ for some $( \unlam', \unn') \in \Lam$ with $| \unlam' | = 2 \ (< | \underline{\lambda} |)$. Since $U_{\lam_1}, U_{\lam_2}, U_{\lam_3},  (T_{\unlam} + n_1, T_{\unlam} + n_1 + \frac{3}{2})$ are precompact, we can take $T_2^0$ uniformly with respect to $([A_1], [A_2], [A_3], T_1) $. Therefore we obtain the statement.
\end{proof}

\vspace{5mm}

Define sections $s_l(\rho, \sigma)$ by (\ref{eq section}).
Then we have:

\begin{lemma} \label{lem tensor}
The sections $s_l(\rho, \sigma)$ satisfy (\ref{tensor}) in Proposition \ref{prop section}.
\end{lemma}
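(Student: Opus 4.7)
The plan is to argue by induction on $m = \dim M^0_{Y \times \R}(\rho, \sigma)$. The base case $m = 0$ is automatic: the moduli space is a finite set of points, so only constant sequences arise and there is nothing to check. For the inductive step, assume the lemma has been proved whenever $\dim M^0_{Y \times \R}(\rho', \sigma') < m$, so that all sections $s_l(\rho(i-1), \rho(i))$ appearing on the right-hand side of (\ref{eq section}) already satisfy property (\ref{tensor}).

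Fix a weakly convergent sequence $\{[A^\alpha]\}$ in $M^0_{Y \times \R}(\rho, \sigma)$ with limit $([A_1^\infty], \dots, [A_r^\infty])$. By Proposition \ref{prop 2}, for all large $\alpha$ we can write $[A^\alpha] = Gl_{\unlam}([A_1^\alpha], T_1^\alpha, \dots, T_{r-1}^\alpha, [A_r^\alpha])$ where $[A_i^\alpha] \in U_{\lam_i}$, $[A_i^\alpha] \to [A_i^\infty]$, and $T_i^\alpha \to \infty$; here $\unlam = (\lam_1, \dots, \lam_r)$ labels the strata containing the $[A_i^\infty]$. The first step is to identify which terms of (\ref{eq section}) are nonzero at $[A^\alpha]$. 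Since $\{f_{\unlam', \unn'}\}$ is locally finite, only finitely many contribute; I would show that every contributing $(\unlam', \unn')$ is a \emph{refinement} of $\unlam$, in the sense that the gluing tree described by $\unlam'$ partitions into $r$ sub-trees, the $i$-th of which is itself a gluing decomposition of $[A_i^\infty]$ recorded inside the label $\lam_i$. This uses Proposition \ref{prop 2} together with the separation condition (\ref{eq condition on T}) on the parameters $T_{\unlam}$, which forces inner and outer gluing scales to be well-separated.

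Given the refinement structure, the key input is an associativity-up-to-error property for gluing of determinant line bundles: the composite gluing map $\widehat{Gl}_{\unlam'}$ factors, up to an error that decays to zero in the $L^{2,(-\tau,\tau)}$ norm as all gluing parameters tend to infinity, as the composition of the outer map $\widehat{Gl}_{\unlam, \underline{T}^\alpha}$ with the inner gluing maps describing each $[A_i^\alpha]$. Applying the inductive hypothesis to each factor $s_l(\rho(i-1), \rho(i))$ at the inner pieces and combining via $\widehat{Gl}_{\unlam, \underline{T}^\alpha}$ shows that
\begin{equation*}
\widehat{Gl}_{\unlam'}\bigl( s_l(\rho, \rho'(1)) \boxtimes \cdots \boxtimes s_l(\rho'(r'-1), \sigma) \bigr)([A^\alpha])
\end{equation*}
approaches $\widehat{Gl}_{\unlam, \underline{T}^\alpha}\bigl( s_l([A_1^\infty]) \boxtimes \cdots \boxtimes s_l([A_r^\infty]) \bigr)$, uniformly across all contributing refinements $(\unlam', \unn')$. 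Summing and using $\sum f_{\unlam', \unn'}([A^\alpha]) = 1$ gives the desired convergence in the sense of Definition \ref{def section covergence}.

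The principal obstacle is the associativity-up-to-error statement relating $\widehat{Gl}_{\unlam'}$ to the iterated composition of outer and inner gluings. Standard gluing theory for ASD connections yields the analogous statement for the connections themselves; transferring it to the determinant line bundles built from weighted Sobolev completions, and hence to their sections, is where the separation condition (\ref{eq condition on T}) enters essentially. It guarantees that the inner gluing parameters stay much smaller than the outer ones, so that the iterated and single-step gluings differ by an operator whose determinant-norm on sections vanishes in the limit. Once this is in place, the partition-of-unity summation and the subsequent step to admissibility are formal.
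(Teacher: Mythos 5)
Your proposal takes a genuinely different route from the paper, and the route has a gap at its core.

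The paper does not allow higher-order gluing strata to contribute at all, and then handle them by an associativity argument, as you propose. Instead, the partition of unity $\{f_\lambda\}$ in Lemma \ref{lem partition} is specifically engineered (via Lemma \ref{lem open set} and the separation condition (\ref{eq condition on T})) so that for a sequence $\{[A^\alpha]\}$ breaking into $r$ pieces, every $f_{\underline{\lambda}', \underline{n}'}([A^\alpha])$ with $|\underline{\lambda}'| > r$ is \emph{identically zero} for large $\alpha$ --- that is the content of the Claim in the paper's proof, proved via Lemma \ref{lem 3 instantons}. Consequently only $r$-fold gluing terms appear in the sum (\ref{eq section alpha large}), and what remains to check is that different $r$-fold gluing charts agree in the limit, which is exactly Lemma \ref{lem 2 instantons I} together with the convergence of components in Lemma \ref{lem 2 instantons II}. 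You mention the separation condition but do not use the key feature of the partition of unity it enables: that the cutoffs on refined strata vanish outright when the outer gluing parameter diverges. Without this, you are left needing an ``associativity-up-to-error'' theorem that compares $\widehat{Gl}_{\underline{\lambda}'}$ for $|\underline{\lambda}'| > r$ against iterated $2$-fold gluings of the determinant line bundles. That statement is never established in the paper (Lemma \ref{lem 2 instantons I} only compares two $r$-fold charts, not a fine chart against a composite of coarser ones), and it is not an ``obstacle'' to be noted and set aside --- it would be the main technical lemma of your proof and would require a substantial independent argument, including uniform control of the right inverses $Q_T$ and the error terms $g^\alpha$ across nested gluing scales.

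A secondary issue: your invocation of the inductive hypothesis at the inner pieces is misplaced. Once $[A^\alpha] \to ([A_1^\infty], \dots, [A_r^\infty])$, each component $[A_i^\alpha]$ converges to $[A_i^\infty]$ \emph{without} further breaking; so what is actually needed for the lower-dimensional sections $s_l(\rho(i-1), \rho(i))$ is only their continuity on $\i0M$, not the admissibility property (a) that the induction supplies. The induction in Subsection \ref{ss sections} is used for Lemma \ref{lem compactness}, not for Lemma \ref{lem tensor} itself.
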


The proof will be given in Subsection \ref{ss proof of lem}.
Here we prove that we can perturb the sections $\sl$ such that the sections become admissible, assuming Lemma \ref{lem tensor}. We show the next lemma to do this.

\begin{lemma} \label{lem compactness}
For each $L \subset \{ 1,\dots, d \}$ with $\dim \0M \leq 2|L|$, the intersection $\ML$ is compact.
\end{lemma}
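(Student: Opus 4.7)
The plan is to take an arbitrary sequence in $\ML$ and show, using the weak compactness of moduli spaces together with the factorization property of the sections, that no actual bubbling off into a broken trajectory is possible under the dimension hypothesis $\dim \0M \leq 2|L|$. We proceed by induction on $\dim \0M$; for the inductive step we fix $\rho,\sigma$ with $\dim \0M = m$ and assume that the sections $s_l(\rho',\sigma')$ constructed on lower-dimensional moduli spaces are already admissible, so in particular they satisfy property (\ref{transversality I}) of Proposition \ref{prop section}.

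Let $\{[A^\alpha]\}_\alpha \subset \ML$ be a sequence. By Proposition \ref{prop 1}, after passing to a subsequence we have weak convergence
\[
[A^\alpha] \longrightarrow ([A_1^\infty],\dots,[A_r^\infty]),
\]
with $[A_i^\infty] \in M_{Y \times \R}^0(\rho(i-1),\rho(i))$; no bubbling occurs because $\dim \0M < 8$. I want to show $r=1$. Assuming $r \geq 2$, Lemma \ref{lem tensor} gives that for each $l \in L$ the vanishing of $s_l([A^\alpha])$ forces at least one factor of the limit section
$s_l([A_1^\infty]) \boxtimes \cdots \boxtimes s_l([A_r^\infty])$
to vanish (the norm on a tensor product of line bundles is multiplicative). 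Setting
\[
L(i) := \{\, l \in L \mid s_l([A_i^\infty]) = 0 \,\},
\]
each $l \in L$ lies in some $L(i)$, so $\sum_{i=1}^r |L(i)| \geq |L|$, and $[A_i^\infty] \in M_{Y \times \R}^0(\rho(i-1),\rho(i); L(i))$ is non-empty.

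By the inductive hypothesis, property (\ref{transversality I}) applied to each factor gives
\[
\dim M_{Y \times \R}(\rho(i-1),\rho(i)) \geq 2|L(i)| + 1
\]
for every $i$. Summing over $i$ and using additivity of the index,
\[
\dim M_{Y \times \R}(\rho,\sigma) = \sum_{i=1}^r \dim M_{Y \times \R}(\rho(i-1),\rho(i)) \geq 2|L| + r.
\]
Since $\dim M_{Y \times \R}(\rho,\sigma) = \dim \0M + 1 \leq 2|L| + 1$ by hypothesis, this yields $r \leq 1$, contradicting $r \geq 2$. Hence $r=1$, meaning the subsequence converges in $\0M$ to a point $[A^\infty]$; by continuity of the $s_l$, $[A^\infty]$ lies in $\bigcap_{l \in L} V_l$, i.e., in $\ML$. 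This proves compactness.

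The main obstacle is conceptual rather than technical: one has to be sure that Lemma \ref{lem tensor} really forces the tensor factorization to have a vanishing factor, which uses the fact that the norm on the tensor product of line bundles from Definition \ref{def section covergence} is a product norm. Everything else is dimension bookkeeping via the index additivity and the inductive application of the emptiness part of Proposition \ref{prop section}(\ref{transversality I}).
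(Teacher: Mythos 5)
Your proof is correct and follows essentially the same approach as the paper: weak compactness via Proposition~\ref{prop 1}, factorization of the sections via Lemma~\ref{lem tensor}, the sets $L(i)$ with $\sum |L(i)| \geq |L|$, and the inductive use of property~(\ref{transversality I}) of Proposition~\ref{prop section} to bound $\dim M_{Y\times\R}(\rho(i-1),\rho(i))$ from below, leading to a dimension-count contradiction. The only difference is organizational: the paper first proves (by an inner contradiction) that some $i_0$ violates the dimension bound and then contradicts the inductive hypothesis, whereas you apply the contrapositive of~(\ref{transversality I}) directly to each factor and read off the contradiction from the sum --- logically equivalent and slightly more streamlined.
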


\begin{proof}
If $\ML$ is not compact, there is a sequence $\{ [A^{\alpha}] \}_{\alpha}$ in $\ML$ converging to some $([A_1^{\infty}], \dots, [A_{r}^{\infty}])$ with $r>1$. 
From Lemma \ref{lem tensor}, we have
\[
s_l([A^{\alpha}]) \longrightarrow 
s_l ([A_1^{\infty}]) \boxtimes \cdots \boxtimes s_l ([A_{r}^{\infty}])
\]
as $\alpha \rightarrow \infty$.
Since $s_l([A^{\alpha}])=0$ for $l \in L$, we have
\[
s_l([A_{1}^{\infty}]) \boxtimes \cdots \boxtimes s_{l}([A_r^{\infty}]) = 0.
\]
This means that for each $l \in L$, there is some $i(l)$ such that $s_l([A_{ i(l) }^{\infty}]) = 0$. Put
\[
L(i) := \{ \ l \in L \ | \ s_l([A_i^{\infty}]) = 0 \ \},
\]
then we obtain
\[
L = \bigcup_{i=1}^r L(i).
\]
Hence we have
\begin{equation} \label{eq L}
|L| \leq \sum_{i=1}^r | L(i) |.
\end{equation}
Using this inequality, we show
\begin{equation} \label{eq dim M, L}
\dim M_{Y \times \R}^0 (\rho(i_0-1), \rho(i_0)) < 2 |L(i_0)|
\end{equation}
for some $i_0$.
If not, we have
\[
\dim \iM \geq 2 |L(i)| + 1 \quad 
\]
for all $i$.
From (\ref{eq L}),
\[
\dim \M = 
\sum_{i=1}^r \dim \iM  \geq 
\sum_{i=1}^r (2 |L(i)| + 1) \geq 
2 |L| + r > 2|L| + 1.
\]
This is a contradiction since we assumed that $\dim \0M \leq 2|L|$. We have obtained (\ref{eq dim M, L}). 

By the hypothesis of induction, $s_{l}(\rho(i_0-1), \rho(i_0))$ satisfy (\ref{transversality I}) in Proposition \ref{prop section}. Hence the inequality (\ref{eq dim M, L}) means that
\[
M_{Y \times \R}^0 (\rho(i_0 - 1), \rho (i_0); L(i_0)) = \emptyset.
\]
On the other hand, $[ A_{i_0}^{\infty} ]$ lies in $M_{Y \times \R}^0 (\rho (i_0-1), \rho(i_0); L(i_0))$ by the definition of $L(i_0)$. We have obtained a contradiction. Therefore $\ML$ is compact.

\end{proof}

\noindent
{\it Proof of Proposition \ref{prop section}.}

When $\dim M_{Y \times \R}^0(\rho, \sigma) = 0$, generic sections $s_{l}(\rho, \sigma)$ are admissible.
Let $m$ be an integer with $1 \leq m \leq 7$ and suppose that we have admissible sections $s_l(\rho, \sigma)$ when $\dim M_{Y \times \R}^0(\rho, \sigma) \leq m-1$.
Let $\rho, \sigma$ be flat connections with $\dim M_{Y \times \R}^0(\rho, \sigma) = m$.
It follows from Lemma \ref{lem tensor} that the sections $\sl$ defined by (\ref{eq section}) satisfy (\ref{tensor}) in Proposition \ref{prop section}.
From Lemma \ref{lem compactness}, the intersections $\ML$ are compact if $\dim \0M \leq 2|L|$. Hence perturbing $\sl$ on a compact set of $\0M$, the transversality conditions (\ref{transversality I}), (\ref{transversality II}) in Proposition \ref{prop section} are satisfied. Since the region where $\sl$ are perturbed is compact, the perturbed sections also satisfy (\ref{tensor}). Therefore the perturbed sections are admissible.

\subsection{Proof of Lemma \ref{lem tensor}}
\label{ss proof of lem}

It remains to show Lemma \ref{lem tensor}. For simplicity of notations, we give a proof of the case when $[A^{\alpha}]$ converges to $([A_1^{\infty}], [A_2^{\infty}])$. 
To do this, we show three lemmas.

\vspace{2mm}

Let $[A^{\alpha}]$ converge to $([A_1^{\infty}], [A_2^{\infty}]) \in M_{Y \times \R}^0(\rho, \rho(1)) \times M_{Y \times \R}^0(\rho(1), \sigma)$. In the first lemma,  we consider the situation where the connections $[A^{\alpha}]$ can be written as gluing of three instantons.
For example, such a situation occurs if $[A^{\alpha}]$ are given by 
\[
[A^{\alpha}] = Gl_{\unlam}([A_1], T_1, [A_2], T_2^{\alpha}, [A_3]).
\]
Here $\lambda$, $[A_1], [A_2], [A_3]$ and $T_1$ are independent of $\alpha$, and $T_2^{\alpha} \rightarrow \infty$.

Assume that $[A^{\alpha}] \in Gl_{\unlam^{\alpha}}$ with $| \unlam^{\alpha} | = 3$. We can write
\[
[A^{\alpha}] = Gl_{\unlam^{\alpha}} ([B_1^{\alpha}], S_1^{\alpha}, [B_2^{\alpha}], S_2^{\alpha}, [B_3^{\alpha}]).
\]
for some $[B_i^{\alpha}] \in U_{\lam_i^{\alpha}}$  ($i=1, 2, 3$) and $S_j^{\alpha} > T_{\unlam^{\alpha} }$ ($j=1, 2$).
The first lemma is the following.

\begin{lemma} \label{lem 3 instantons}
There is a subsequence $\{ [A^{\alpha'}] \}_{\alpha'}$ which satisfies the following.

\begin{enumerate}[(1)]

\item
$\unlam^{\alpha'}$ are independent of $\alpha'$. We denote $\unlam^{\alpha'}$ by $\unlam = ( \lam_1, \lam_2, \lam_3 )$.

\vspace{3mm}

\item
For all $i$, 
$[B_i^{\alpha'}] \in U_{ \lam_i}$ converges to some $[B_i^{ \infty}] \in \overline{U}_{ \lam_i }$.

\vspace{3mm}

\item
We have either $S_1^{\alpha'} \rightarrow S_1^{\infty} < \infty$, $S_2^{\alpha'} \rightarrow \infty$ or $S_1^{\alpha'} \rightarrow \infty$, $S_{2}^{\alpha'} \rightarrow S_{2}^{\infty} < \infty$.

\end{enumerate}

\end{lemma}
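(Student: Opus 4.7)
The plan is to peel off the data of Lemma \ref{lem 3 instantons} in three stages: first identify the limit behavior of the gluing parameters $S_j^{\alpha}$, then stabilize the combinatorial index $\underline{\lambda}^{\alpha}$, and finally extract the limits $[B_i^{\infty}]$ by precompactness. The key input throughout is that $[A^{\alpha}]$ converges weakly to the genuinely two-broken trajectory $([A_1^{\infty}], [A_2^{\infty}])$, and that Propositions \ref{prop 1} and \ref{prop 2} relate three-piece gluing to weak convergence.

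First, after passing to a subsequence I may assume each $S_j^{\alpha}$ is either bounded or tends to $\infty$. If both were bounded, then the three-piece gluing $([B_1^{\alpha}], S_1^{\alpha}, [B_2^{\alpha}], S_2^{\alpha}, [B_3^{\alpha}])$ would exhibit no long necks; together with precompactness of the $U_{\lambda_i^{\alpha}}$ and continuity of $Gl$, one could extract a subsequence with $[A^{\alpha}]$ converging to a single unbroken instanton in $M_{Y \times \R}^0(\rho, \sigma)$, contradicting the fact that the weak limit is genuinely two-broken. If both were to diverge, the gluing description would produce two distinct translation parameters going to infinity, exhibiting a three-piece weak limit and contradicting uniqueness of the two-broken limit $([A_1^{\infty}], [A_2^{\infty}])$. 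So exactly one of $S_1^{\alpha}, S_2^{\alpha}$ is bounded and the other tends to $\infty$, which is conclusion (3).

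To establish (1), I use that $R(Y)$ is finite by Hypothesis \ref{ass flat conn} to extract a further subsequence on which the two intermediate flat connections appearing in $\underline{\lambda}^{\alpha}$ are constant. Suppose $S_1^{\alpha} \to S_1^{\infty} < \infty$ and $S_2^{\alpha} \to \infty$; the symmetric case is identical. Gluing $[B_1^{\alpha}]$ with $[B_2^{\alpha}]$ through the bounded parameter $S_1^{\alpha}$ produces a connection which, after a bounded translation, approximates $[A_1^{\infty}]$; the fact that $Gl$ is a diffeomorphism onto its image then forces $([B_1^{\alpha}], [B_2^{\alpha}])$ into a precompact subset of the corresponding product of reduced moduli spaces, and pins the second intermediate flat connection to that of the two-broken limit. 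A parallel argument, applied to the translate $c_{T_2^{\alpha}}^{*}[A^{\alpha}] \to [A_2^{\infty}]$, confines $[B_3^{\alpha}]$ to a precompact subset as well. Local finiteness of the cover $\{U_{\lambda}\}$ on each relevant moduli space then implies that each $\lambda_i^{\alpha}$ takes only finitely many values, and a successive extraction for $i = 1, 2, 3$ gives (1). Conclusion (2) is then immediate from precompactness of each $U_{\lambda_i}$.

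The main obstacle is the stabilization step: identifying the correct intermediate flat connections and confining the three pieces to a precompact region purely from the assumption that the two-broken weak limit is fixed. This requires a careful use of the interplay between the three-piece gluing picture (which supplies discrete data in $R(Y)$ and the locally finite $\Lambda$) and the two-piece weak convergence (which supplies the only available geometric control on where the $[B_i^{\alpha}]$ can cluster).
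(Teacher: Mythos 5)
Your proposal reverses the order of the paper's argument (you prove~(3) first, then use it to get (1), (2); the paper proves $r_i=1$, i.e.\ that each $[B_i^{\alpha'}]$ converges \emph{unbroken}, first, then derives (1), (2), (3) in order) — and in doing so it leaves a genuine gap at precisely the point where the paper invokes its standing condition \eqref{eq condition on T} on the numbers $T_{\unlam}$.

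The problem is your use of ``precompactness of the $U_{\lambda_i^{\alpha}}$.'' Each open set $U_{\lambda_i^{\alpha}}$ is indeed precompact, but the indices $\lambda_i^{\alpha}$ vary with $\alpha$, and they can be of the inductive form $\lambda_i^{\alpha} = (\lambda_1',\lambda_2';n^{\alpha})$ with $n^{\alpha}\to\infty$, so that $U_{\lambda_i^{\alpha}}$ runs off to the end of $M_{Y\times\R}^0(\rho(i-1),\rho(i))$ and $[B_i^{\alpha}]$ itself breaks. Your case analysis for (3) (``if both $S_j^{\alpha}$ bounded, then no long necks, so $[A^{\alpha}]$ converges unbroken'') is not justified in this scenario: an internal neck inside $B_1^{\alpha}$ can produce extra pieces in the weak limit of $[A^{\alpha}]$ even though both $S_1^{\alpha},S_2^{\alpha}$ are bounded, and you have not excluded this. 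The same issue undermines your argument for (1): you say the diffeomorphism property of $Gl$ forces $([B_1^{\alpha}],[B_2^{\alpha}])$ into a precompact set, but $Gl$ is a diffeomorphism onto its image only on a \emph{fixed} domain $U_1\times(T_0,\infty)\times U_2$; allowing $\unlam^{\alpha}$ to vary is exactly the thing you still need to control, so this step is essentially circular.

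The paper closes this gap by first showing that no $[B_i^{\alpha'}]$ breaks. The key point is that if $\lambda_1^{\alpha'} = (\lambda_1',\lambda_2';n^{\alpha'})$ with $n^{\alpha'}\to\infty$, then the inequality \eqref{eq condition on T} forces $T_{\unlam^{\alpha'}} > T_{(\lambda_1',\lambda_2')} + n^{\alpha'}$, and since the gluing parameters satisfy $S_j^{\alpha'} > T_{\unlam^{\alpha'}}$, both $S_1^{\alpha'}$ and $S_2^{\alpha'}$ go to infinity — producing a limit with at least four pieces, contradicting the assumed two-piece weak limit. Without invoking this condition (or some equivalent control on how the neck-lengths of the covering sets compare to $T_{\unlam}$), the argument does not close; this is the missing idea, not merely a presentational difference.
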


We can show similar statements to this lemma when we assume $| \unlam^{\alpha} | = \ell$ with $\ell \geq 4$.
As we will see later, these statements imply that when $[A^{\alpha}]$ splits into two instantons there are no terms with $| \unlam | \geq 3$ in (\ref{eq section}) for large $\alpha$ .

\vspace{2mm}

In the second lemma, we consider the difference of two gluing maps.
Let $\{ [A^{\alpha}] \}_{\alpha}$ be a sequence in $\0M$ converging to some $([A_1^{\infty}], [A_2^{\infty}]) \in M_{Y \times \R}^0(\rho, \rho(1)) \times M_{Y \times \R}^0(\rho(1), \sigma)$.
Let $U_i$, $U_{i}'$ be precompact open neighborhoods of $[A_i^{\infty}]$ in $\i0M$ for $i=1, 2$. 
(Here $\rho(0) = \rho, \rho(2) = \sigma$.)
Then we have two gluing maps
\[
\begin{split}
Gl  &: U_1 \times (T_0, \infty) \times U_2 \longrightarrow \0M, \\
Gl' &: U_1' \times (T_0, \infty) \times U_2' \longrightarrow \0M.
\end{split}
\]
When $\alpha$ is sufficiently large, we can write
\[
[A^{\alpha}] 
= Gl ([A_1^{\alpha}], T^{\alpha}, [A_2^{\alpha}])
= Gl' ([A_1^{' \alpha}], T^{' \alpha}, [A_2^{' \alpha}])
\]
for some $[A_i^{\alpha}] \in U_i$, $[A_i^{' \alpha}] \in U_i'$, $T^{\alpha}, T^{' \alpha} > T_0$.

\begin{lemma} \label{lem 2 instantons I}
Under the above notations, we have
\[
\lim_{ \alpha \rightarrow \infty }
\left\| 
Gl_{T^{\alpha}} \big( s_l([A_1^{\alpha}] ) \boxtimes s_l ([A_2^{\alpha}]) \big) - 
Gl'_{ T^{' \alpha} } \big( s_l ( [A_1^{' \alpha} ] ) \boxtimes s_l ([ A_2^{' \alpha} ]) \big)
\right\|
= 0.
\]
\end{lemma}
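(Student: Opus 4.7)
The plan is to reduce the claim to the asymptotic canonicity of the gluing isomorphism for determinant line bundles as the neck length tends to infinity. Both sides of the identity lie in the same fiber $\cL_l^{\otimes 2}|_{[A^{\alpha}]}$, obtained by two different parametrizations of a neighborhood of the ideal limit $([A_1^{\infty}], [A_2^{\infty}])$; what must be shown is that both parametrizations produce elements that agree in the limit with a common ideal datum, up to an error going to zero in the weighted norm.

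First, I would establish that both sets of gluing data converge to the common ideal limit. Uniqueness of weak limits (Proposition \ref{prop 1}) together with the fact that $Gl$ and $Gl'$ are diffeomorphisms onto their images forces, after passing to a subsequence and choosing gauge representatives, that $[A_i^{\alpha}] \to [A_i^{\infty}]$ and $[A_i^{\prime\alpha}] \to [A_i^{\infty}]$ in $\cB_{Y \times \R}^*(\rho(i-1), \rho(i))$, while both $T^{\alpha}, T^{\prime\alpha} \to \infty$ with $T^{\alpha} - T^{\prime\alpha}$ bounded. (The composition $Gl^{\prime -1} \circ Gl$ is a smooth transition between open subsets of $U_1 \times (T_0,\infty) \times U_2$ and $U_1^\prime \times (T_0,\infty) \times U_2^\prime$, so the translation parameters and the slice connections are asymptotically comparable.)

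Next, by continuity of the inductively constructed sections $s_l(\rho(i-1), \rho(i))$ and of the bundle maps $\widehat{Gl}, \widehat{Gl}^\prime$, it suffices to compare the two gluing prescriptions applied to the common ideal datum $s_l([A_1^{\infty}]) \boxtimes s_l([A_2^{\infty}])$. I would construct determinant-line representatives by pre-gluing local trivializations with cut-off functions supported in the middle of the neck; the two constructions differ only in the choice of cut-off and gauge interpolation in a region where the weight function $W_\tau$ takes value of order $e^{\tau T^{\alpha}}$, so the error contribution to the $L^{2,(-\tau,\tau)}$ norm decays like $e^{-\tau T^{\alpha}}$. This exponential decay is exactly the mechanism by which the weighted Sobolev setup forces the determinant-line gluing to be asymptotically canonical, so the two fiber elements converge to each other.

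The main obstacle is this last analytic step: carrying out a parametrix construction for the family $\{\bar{\partial}_A\}$ that is uniform over the compact parameter sets $U_i$, $U_i^\prime$, and confirming that the two pre-gluing recipes produce elements of $\cL_l^{\otimes 2}|_{[A^{\alpha}]}$ whose difference decays exponentially in $T^{\alpha}$ under the weighted norms on sections of $K_{\Gamma_l}^{\frac{1}{2}} \otimes \pi^* E$. This is a standard but delicate calculation, in the spirit of the determinant-line gluing in Donaldson--Kronheimer, adapted to the cylindrical-end setting with exponential weights.
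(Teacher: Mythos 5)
Your proposal has a genuine gap in how it controls the difference of translation parameters, and the proposed exponential-decay mechanism does not match the actual structure of the error.

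First, you assert only that $T^{\alpha} - T^{'\alpha}$ remains \emph{bounded}. This is not enough. The paper's proof isolates Lemma \ref{lem T T'}, which shows the stronger statement $|T^{\alpha} - T^{'\alpha}| \to 0$, and this is genuinely needed: one of the terms in the difference is $\| \gamma_1 f_1^{'\alpha} - c_{\delta^{\alpha}}(\gamma_1 f_1^{'\alpha}) \|$ with $\delta^{\alpha} = \tfrac{3}{2}(T^{\alpha} - T^{'\alpha})$, and a translation of a fixed section by a bounded but non-vanishing amount does not have small norm. If you only know boundedness of $\delta^{\alpha}$, you cannot conclude this term vanishes in the limit. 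The proof of Lemma \ref{lem T T'} itself uses the weak-convergence description (\ref{eq A translated by S}) compared against the explicit form of the glued connection (\ref{eq A translated by T}), pinning both $\tfrac{3}{2}T^{\alpha}$ and $\tfrac{3}{2}T^{'\alpha}$ to the same centering sequence $S_1^{\alpha}$.

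Second, the mechanism you propose --- that the two recipes differ only over a neck region where the weight $W'_{\tau}$ is large, giving an error that decays like $e^{-\tau T^{\alpha}}$ --- is not how the comparison works, and would not close the argument. After the paper writes $Gl_{T^{\alpha}}(f_1^{\alpha}, f_2^{\alpha}) = c_{-3T^{\alpha}/2}(\gamma_1 f_1^{\alpha}) + c_{3T^{\alpha}/2}(\gamma_2 f_2^{\alpha}) + g^{\alpha}$ (and similarly for $Gl'$), the dominant sources of error are not localized in the neck at all: they are the difference of slice representatives $f_i^{\alpha} - f_i^{'\alpha}$ (which tends to zero by continuity, not exponentially) and the translation error by $\delta^{\alpha}$ (which is controlled by a three-region estimate: tails controlled by uniform integrability of $f_1^{\infty}$, and the compact middle region controlled by uniform continuity plus $\delta^{\alpha} \to 0$). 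Only the correction terms $g^{\alpha}, g^{'\alpha}$ decay because of the large neck; the rest of the estimate is softer and would not be recovered by the parametrix-localization picture you describe. To make your approach work you would need, at minimum, to independently establish $T^{\alpha} - T^{'\alpha} \to 0$ and then handle the non-localized part of the difference directly, which is precisely what the paper's proof supplies.
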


\vspace{3mm}

As before, let $\{ [A^{\alpha}] \}_{\alpha}$ a sequence of instantons which converges to a pair $([A_0^{\infty}], [A_1^{\infty}]) \in M_{Y \times \R}^0(\rho, \rho(1)) \times M_{Y \times \R}^0(\rho(1), \sigma)$.
For $\unlam$ with $[A^{\alpha}] \in \Im Gl_{\unlam}$ and with $| \unlam | = 2$, we have $[A_i^{\alpha}(\unlam)] \in U_{\lam_i}$, $T^{\alpha}(\unlam) > T_{\unlam}$ such that
\[
[A^{\alpha}] = Gl_{\unlam} ([A_1^{\alpha}(\unlam)], T^{\alpha}(\unlam), [A_2^{\alpha}(\unlam)]).
\]
In the third lemma, we consider the behavior of $[A_i^{\alpha}(\unlam)]$.

\begin{lemma} \label{lem 2 instantons II}
$\phantom{a}$

\begin{enumerate}[(1)]

\item
There is an integer $\alpha_0 > 0$ such that if $\alpha > \alpha_0$, then $[A_i^{\alpha}(\unlam)]$ have the same flat limits as $[A_i^{\infty}]$. That is, $[A_i^{\alpha}(\unlam)] \in \i0M$ for $\alpha > \alpha_0$.
Here $\rho(0) = \rho$, $\rho(2) = \sigma$.

\vspace{2mm}

\item
For any positive number $\delta > 0$, there is some $\alpha_{\delta} > \alpha_0$ independent of $\unlam$ such that if $\alpha > \alpha_{\delta}$ we have
\[
d([A_i^{\alpha}(\unlam)], [A_i^{\infty}]) < \delta 
\]
for all $\unlam$ with $| \unlam |=2$, $[A^{\alpha}] \in \Im Gl_{\unlam}$ and $i=1, 2$. Here $d( \cdot, \cdot )$ is the metric on $\i0M$ induced by the $L^{2,\tau}_4$-norm.

\end{enumerate}

\end{lemma}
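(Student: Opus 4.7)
Both parts follow from a compactness-and-uniqueness argument for the gluing decomposition. The essential inputs are: (i) precompactness of each $U_{\lam_i}$ together with continuity of $Gl_{\unlam}$ on the closure $\overline{U}_{\lam_1} \times [T_{\unlam}, \infty) \times \overline{U}_{\lam_2}$; and (ii) the defining asymptotics of the gluing map, namely that $Gl_{\unlam}([A_1], T, [A_2])$ converges in $L_4^{2,\tau}$ when $T$ stays bounded and converges weakly to the broken trajectory $([A_1], [A_2])$ (in the sense of the definition preceding Proposition \ref{prop 1}) as $T \to \infty$.

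Fix $\unlam = (\lam_1, \lam_2)$ with $[A^\alpha] \in \Im Gl_{\unlam}$ for infinitely many $\alpha$, pass to that subsequence, and write $[A^\alpha] = Gl_{\unlam}([A_1^\alpha(\unlam)], T^\alpha(\unlam), [A_2^\alpha(\unlam)])$, where $U_{\lam_i} \subset M_{Y \times \R}^0(\rho'(i-1), \rho'(i))$ with $\rho'(0) = \rho$ and $\rho'(2) = \sigma$. By precompactness extract a further subsequence with $[A_i^\alpha(\unlam)] \to [A_i^\star] \in \overline{U}_{\lam_i}$. If $T^\alpha(\unlam)$ were bounded, a sub-subsequence would yield $T^\alpha(\unlam) \to T^\infty < \infty$, and continuity of $Gl_{\unlam}$ would force $[A^\alpha] \to Gl_{\unlam}([A_1^\star], T^\infty, [A_2^\star])$ strongly in $L_4^{2,\tau}$; but this limit is an unbroken instanton, contradicting the fact that $[A^\alpha]$ has the two-piece broken trajectory $([A_1^\infty], [A_2^\infty])$ as its weak limit. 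Hence $T^\alpha(\unlam) \to \infty$, and the gluing asymptotics then give weak convergence $[A^\alpha] \to ([A_1^\star], [A_2^\star])$ with intermediate flat connection $\rho'(1)$. By uniqueness of the weak limit, modulo the already-quotiented $\R$-action, one obtains $\rho'(1) = \rho(1)$ and $[A_i^\star] = [A_i^\infty]$, which is assertion (1) for this $\unlam$. Because every subsequence of $\{[A_i^\alpha(\unlam)]\}$ admits a sub-subsequence converging to $[A_i^\infty]$, the whole sequence converges to $[A_i^\infty]$ in $L_4^{2,\tau}$, yielding (2) for this fixed $\unlam$.

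To upgrade these conclusions to uniformity in $\unlam$, observe that any $\unlam$ with $[A^\alpha] \in \Im Gl_{\unlam}$ infinitely often forces $[A_i^\infty] \in \overline{U}_{\lam_i}$; by local finiteness of each open cover $\{U_\lam\}_{\lam \in \Lambda(\rho(i-1), \rho(i))}$ and the finiteness of $R(Y)$ from Hypothesis \ref{ass flat conn}, only finitely many such $\unlam$ can occur. For these $\unlam$, take the maxima of $\alpha_0(\unlam)$ and $\alpha_\delta(\unlam)$ produced above; for any other $\unlam$, $[A^\alpha] \notin \Im Gl_{\unlam}$ past a finite threshold, and these thresholds are absorbed by further enlarging the constants. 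The main technical point is precisely this uniformization step: the per-$\unlam$ arguments are routine applications of precompactness and the standard asymptotic properties of $Gl_{\unlam}$, but reducing to a finite collection of $\unlam$ relies crucially on both finiteness inputs, and it is this reduction that makes the uniform $\alpha_0$ of (1) and the uniform $\alpha_\delta$ of (2) available simultaneously.
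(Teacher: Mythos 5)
The per-$\unlam$ analysis in your first paragraph is sound and matches the paper's core mechanism: precompactness, the dichotomy on $T^\alpha(\unlam)$, and uniqueness of weak limits give $[A_i^\alpha(\unlam)] \to [A_i^\infty]$ for any fixed $\unlam$. Your reading of assertion~(1), that the intermediate flat connection must be $\rho(1)$, is also correct.

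The uniformization step, however, has a genuine gap. Your argument correctly shows that only finitely many $\unlam$ satisfy ``$[A^\alpha] \in \Im Gl_{\unlam}$ for infinitely many $\alpha$'' (local finiteness passes to closures, and $R(Y)$ is finite). But you then dismiss the remaining $\unlam$ with ``$[A^\alpha] \notin \Im Gl_{\unlam}$ past a finite threshold, and these thresholds are absorbed by further enlarging the constants.'' There may be infinitely many such $\unlam$, each hit only once or twice, with the corresponding thresholds unbounded; nothing in your argument bounds $\sup_{\unlam} \alpha_\delta(\unlam)$. Concretely, you have not excluded a sequence $\alpha_k \to \infty$ with pairwise distinct $\unlam^{(k)}$, each hit only at $\alpha_k$, such that $d([A_1^{\alpha_k}(\unlam^{(k)})],[A_1^\infty]) \ge \delta$. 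Your per-$\unlam$ argument says nothing about such a diagonal sequence, so the uniform $\alpha_\delta$ is not obtained. (The claim that ``only finitely many $\unlam$ are relevant for large $\alpha$'' is essentially equivalent to conclusion (2), so invoking it here would be circular.)

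The paper avoids this by arguing by contradiction and applying weak compactness directly to the sequence of \emph{pieces}: given a counterexample subsequence $(\alpha', \unlam^{\alpha'})$ with $\unlam^{\alpha'}$ allowed to vary, one uses Proposition \ref{prop 1} to extract a weakly convergent subsequence of $[A_i^{\alpha'}(\unlam^{\alpha'})]$, shows the weak limit is an \emph{unbroken} instanton (this is where the constraint (\ref{eq condition on T}) on the $T_{\unlam}$ enters, exactly as in the proof of Lemma \ref{lem 3 instantons}: if a piece broke further, $[A^{\alpha'}]$ would weakly converge to a trajectory with more than two components), and only \emph{then} invokes local finiteness to fix $\unlam^{\alpha'}$ along a subsequence. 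Your proof does not use (\ref{eq condition on T}) at all, which is a sign that it is missing the mechanism that handles the diagonally-varying $\unlam$. To repair the argument you would need to replace the final uniformization paragraph by this contradiction-and-weak-compactness scheme.
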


Before we prove these lemmas, we show Lemma \ref{lem tensor} using the lemmas.

\vspace{3mm}

\noindent
{\it Proof of Lemma \ref{lem tensor} assuming Lemma \ref{lem 3 instantons}, \ref{lem 2 instantons I} and \ref{lem 2 instantons II} }

Let $\{ A^{\alpha}] \}_{\alpha}$ be a sequence in $M_{Y \times \R}^0(\rho, \sigma)$ with limit $([A_1^{\infty}], [A_2^{\infty}])$.
Let $U_i$ be precompact open neighborhoods of $[A_i^{\infty}]$ for $i=1, 2$. Then we have a gluing map $Gl$ from $U_1 \times (T_0, \infty) \times U_2$ to $\0M$. 
For large $\alpha$, we have 
\[
[A^{\alpha}] = Gl([A_1^{\alpha}], T^{\alpha}, [A_2^{\alpha}])
\]
for some $[A_i^{\alpha}] \in U_i$, $T^{\alpha} > T_0$.
We need to show
\[
\lim_{ \alpha \rightarrow \infty}
\left\| 
s_l([A^{\alpha}]) - \widehat{Gl}_{T^{\alpha} } \big( s_l([A_1^{\alpha}]) \boxtimes s_l ([A_2^{\alpha}]) \big) 
\right\| 
= 0.
\]
First we prove the following claim.

\begin{claim}
Let $\{ f_{\lambda} \}_{\lambda}$ be the partition of unity as in Lemma \ref{lem partition}. Then there is a positive integer $\alpha_1 > 0$ such that if $\alpha > \alpha_1$, then $f_{\unlam, \unn} ([A^{\alpha}]) = 0$ for all $(\unlam, \unn) \in \Lam$ with $| \unlam | \geq 3$. 
\end{claim}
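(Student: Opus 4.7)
The plan is to argue by contradiction: if the claim fails, then after passing to a subsequence (still indexed by $\alpha$) there exist $(\unlam^\alpha, \unn^\alpha) \in \Lambda''(\rho,\sigma)$ with $|\unlam^\alpha| \geq 3$ and $[A^\alpha] \in U_{\unlam^\alpha,\unn^\alpha}$ (because $f_{\unlam,\unn} = 0$ whenever $(\unlam,\unn) \in \Lambda'$). Since $\dim\0M < 8$, the additivity of the index bounds the possible breakings, so there are only finitely many $\unlam$ that can appear; after a further subsequence we may assume $\unlam^\alpha = \unlam = (\lambda_1,\dots,\lambda_\ell)$ is constant with $\ell \geq 3$.

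Next I would apply Lemma \ref{lem 3 instantons} (or, for $\ell \geq 4$, the analogous statement promised in the text) to write
\[
[A^\alpha] = Gl_{\unlam}([B_1^\alpha], S_1^\alpha, [B_2^\alpha], \dots, S_{\ell-1}^\alpha, [B_\ell^\alpha])
\]
with each $[B_i^\alpha]$ convergent in $\overline{U}_{\lambda_i}$ and each neck length $S_j^\alpha$ having a finite limit or diverging to $\infty$. Partition $\{1,\dots,\ell-1\}$ into $I = \{j : S_j^\alpha \text{ bounded}\}$ and $J = \{j : S_j^\alpha \to \infty\}$. Since the hypothesis is that $\{[A^\alpha]\}$ splits into exactly two pieces in the limit, a standard count shows that $|J|$ equals the number of gaps in the limit minus one, hence $|J| = 1$, so $J \neq \emptyset$ and $I \neq \emptyset$ (this is where $\ell \geq 3$ is used). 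After one more subsequence, $n_j^\alpha$ is constant for $j \in I$, while $n_j^\alpha \to \infty$ for $j \in J$, because $S_j^\alpha \in (T_{\unlam} + n_j^\alpha, T_{\unlam} + n_j^\alpha + \tfrac{3}{2})$.

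With $I,J$ both nonempty, Lemma \ref{lem open set} applies: there is an integer $N = N(\unlam,\unn_I^\alpha)$ (which is independent of $\alpha$ on this subsequence since $\unn_I^\alpha$ is constant) such that whenever $n_j^\alpha > N$ for every $j \in J$, the open set $U_{\unlam,\unn^\alpha}$ satisfies the inclusion (\ref{eq open set}), i.e.\ $(\unlam,\unn^\alpha) \in \Lambda'$. But $n_j^\alpha \to \infty$ for $j \in J$, so this holds for all sufficiently large $\alpha$, contradicting $(\unlam,\unn^\alpha) \in \Lambda''$. This contradiction produces the desired $\alpha_1$.

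The main obstacle I anticipate is the step identifying $|J| = 1$: one needs a clean statement of the $\ell \geq 4$ analogue of Lemma \ref{lem 3 instantons} (in particular that the $S_j^\alpha$ dichotomize into bounded/divergent after passing to a subsequence, and that the number of divergent ones is controlled by the number of pieces in the weak limit). Once that bookkeeping is in place, the combination with Lemma \ref{lem open set} is essentially automatic, since every factor with $j \in J$ can be forced past the threshold $N(\unlam,\unn_I)$.
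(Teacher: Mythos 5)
Your proof is correct and follows essentially the same route as the paper's: contradiction, reduction to constant $\unlam$ via Lemma \ref{lem 3 instantons}, and then forcing the offending $U_{\unlam,\unn^\alpha}$ to lie in the "redundant" collection once the divergent neck parameter crosses the threshold $N(\unlam,\unn_I)$. The only cosmetic difference is that you invoke Lemma \ref{lem open set} directly (concluding $(\unlam,\unn^\alpha)\in\Lambda'$, hence $f_{\unlam,\unn^\alpha}\equiv 0$) rather than passing through the packaged statement of Lemma \ref{lem partition}, and you explicitly spell out the $I,J$ bookkeeping for $|\unlam|\geq 4$, where the paper illustrates only the $|\unlam|=3$ case and refers to the analogous generalization of Lemma \ref{lem 3 instantons}.
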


\begin{proof}
Otherwise we have a subsequence $\{ [A^{\alpha'}] \}_{\alpha'}$ such that $f_{ \unlam^{\alpha'}, \unn^{\alpha'} } ([A^{\alpha'}]) \not= 0$ for some $(\unlam^{\alpha'}, \unn^{\alpha'}) \in \Lam$ with $| \unlam^{\alpha'} | \geq 3$. From the additivity of index, we have
\[
| \unlam^{\alpha'} | \leq \dim \M.
\]
Hence we may assume that $| \unlam^{\alpha'} |$ are independent of $\alpha'$. For simplicity, we suppose $| \unlam^{\alpha'} | = 3$.

By Lemma \ref{lem 3 instantons}, we can suppose that $\unlam^{\alpha'}$ are independent of $\alpha'$ and we denote it by $\unlam = (\lam_1, \lam_2, \lam_3)$. 
Since the supports of  $f_{\unlam, \unn^{\alpha'}}$ are included in $U_{\unlam, \unn^{\alpha'}}$ and $f_{\unlam, \unn^{\alpha'}}([ A^{\alpha'}]) \not=0$, $[A^{\alpha'}]$ lie in $U_{\unlam, \unn^{\alpha'}}$.  Hence there are $[ B_i^{\alpha'}] \in U_{\lam_i}$ for $i=1, 2, 3$, and $S_j^{\alpha'} > T_{\unlam}$ for $j=1, 2$ such that
\[
[A^{\alpha'}] = Gl_{\unlam}([B_1^{ \alpha'}], S_1^{ \alpha'}, [B_2^{\alpha'}], S_2^{ \alpha'},[B_3^{\alpha'}]).
\]
We can assume that $[B_i^{\alpha'}]$ converge to $[B_i^{ \infty}]$ for $i=1, 2, 3$ and that $T_1^{\alpha'} \rightarrow T_1^{\infty} < \infty$, $T_2^{\alpha'} \rightarrow  \infty$ by Lemma \ref{lem 3 instantons}. The $\unn^{\alpha'}$ are  pairs $(n_1^{\alpha'}, n_2^{\alpha'})$ with
\[
T_{\unlam} + n_j^{\alpha'} < S_j^{\alpha'} < T_{\unlam} + n_j^{\alpha'} + \frac{3}{2}.
\]
Hence $n_1^{\alpha'} \rightarrow n_1^{\infty} < \infty$, $n_{2}^{\alpha'} \rightarrow \infty$. Put $I = \{ 1 \}$, $J= \{ 2 \}$, then we have
\[
n_2^{\alpha'} > N (\unlam, \unn_I^{\alpha'})
\]
for large $\alpha'$. 
Here $N (\unlam, \unn_I^{\alpha'})$ is the integer which appeared in Lemma \ref{lem partition}. Hence $f_{ \unlam, \unn^{\alpha'} }$ are identically zero for large $\alpha'$. This is a contradiction.
\end{proof}

It follows from this claim and (\ref{eq section}) that
\begin{equation} \label{eq section alpha large}
s_l([A^{\alpha}]) = 
\sum_{ 
\begin{subarray}{c}
\unlam = ( \lam_1, \lam_2) \\
[A^{\alpha}] \in \Im Gl_{\unlam}
\end{subarray}
}
f_{\unlam, n^{\alpha}(\unlam)} \ 
\widehat{Gl}_{\unlam, T^{\alpha}(\unlam)}
\big(
s_l([A_1^{\alpha}(\unlam)]) \boxtimes s_l ([A_2^{\alpha}(\unlam)])
\big)
\end{equation}
for $\alpha > \alpha_1$.
For $i=1, 2$ and $\delta > 0$, we put
\[
U_{i, \delta} := 
\{ \ [A] \in \i0M \ | \ d([A], [A_i^{\infty}]) < \delta \ \}.
\]
The open covers $\{ U_{\lam} \}_{\lam \in \Lambda(\rho(i-1), \rho(i))}$ are locally finite, hence if $\delta$ is sufficiently small then the numbers of open sets $U_{\lam}$ with $U_{\lam} \cap U_{i, \delta}$ non-empty are finite for all $i$. We fix such $\delta>0$. 
By Lemma \ref{lem 2 instantons II}, there is a positive integer $\alpha_{\delta}$ such that if $\alpha > \alpha_{\delta}$, we have $[A_i^{\alpha}(\unlam)] \in U_{i,\delta}$ for all $\unlam$.
Let $\{ U_{\lam_i(p)} \}_{p=1}^{q_i}$ be the sets of open sets which intersect $U_{i,\delta}$ for each $i$, and put $\alpha_{\delta}' := \max \{ \alpha_1, \alpha_{\delta} \}$. Then $\unlam$ in (\ref{eq section alpha large}) take the form
\[
\unlam(p_1, p_2) = (\lam_1(p_1), \lam_2(p_2))
\quad
(1 \leq p_1 \leq q_1, 1 \leq p_2 \leq q_2)
\]
for $\alpha > \alpha_{\delta}'$.

By Lemma \ref{lem 2 instantons I}, for any $\varepsilon > 0$, there exists $\alpha(p_1, p_2; \varepsilon)$ such that if $\alpha > \alpha(p_1, p_2; \varepsilon)$, we have
\[
\left\|
\widehat{Gl}_{T^{\alpha} }
\big(
s_l([A_1^{\alpha}]) \boxtimes s_l([A_2^{\alpha}])
\big)
-
\widehat{Gl}_{\unlam(p_1, p_2), T^{\alpha}(p_1, p_2)}
\big(
s_l( [A_1^{\alpha}(p_1, p_2) ] ) \boxtimes
s_l( [A_2^{\alpha}(p_1, p_2)])
\big)
\right\|
<
\varepsilon.
\]
Here $[A_i^{\alpha}(p_1, p_2)]=[A_i^{\alpha}(\unlam(p_1, p_2))]$, $T^{\alpha}(p_1, p_2) = T^{\alpha}(\unlam(p_1, p_2))$. We put
\[
\alpha(\varepsilon) :=
\max \{ \ 
\alpha_{\delta}', \alpha(p_1, p_2; \varepsilon) \ | \ 
1 \leq p_ 1 \leq q_1, 1 \leq p_2 \leq q_2 \ 
\}.
\]
Then for $\alpha > \alpha(\varepsilon)$
\[
\left\| 
s_l ([A^{\alpha}]) - 
Gl_{ T^{\alpha} } \big(
s_l ([A_1^{\alpha}]) \boxtimes s_l ([A_2^{\alpha}])
\big)
\right\|
<
\sum_{
\begin{subarray}{c}
\unlam = (\lam_1, \lam_2) \\
[A^{\alpha}] \in \Im Gl_{ \unlam }
\end{subarray}
}
f_{\lam, n^{\alpha}(\unlam)}([A^{\alpha}]) \ \varepsilon
= 
\varepsilon.
\]
Therefore $s_l([ A^{\alpha}])$ converges to $s_l([A_1^{\infty}]) \boxtimes s_l([A_2^{\infty}])$.

\qed

\vspace{3mm}

Here we prove Lemma \ref{lem 3 instantons} and \ref{lem 2 instantons II}. The proof of Lemma \ref{lem 2 instantons I} will be given in the next subsection.

\vspace{3mm}

\noindent
{\it Proof of Lemma \ref{lem 3 instantons} }

Let $[A^{\alpha}]$, $[B_i^{\alpha}]$ and $S_j^{\alpha}$ be as in Lemma \ref{lem 3 instantons}.
There is a subsequence $\{ [A^{\alpha'}] \}_{\alpha'}$ such that $[B_i^{\alpha'}]$ converge to $([B_{i,1}^{\infty}], \dots, [B_{i,r_i}^{\infty}])$ for all $i$. We will show that $r_i=1$ for all $i$. 
This implies (1) and  (2) since the oven covers $\{ U_{\lam} \}_{\lam}$ are locally finite.

If not, $r_{i_0} > 1$ for some $i_0$. For simplicity, we assume $r_1 = 2$, $r_2 = r_3 = 1$.
Since $[B_2^{\alpha'}]$, $[B_3^{\alpha'}]$ converge and $\{ U_{\lambda} \}_{\lambda}$ are locally finite, we may suppose that $\lam_2^{\alpha'}$, $\lam_3^{\alpha'}$ are constant. We write $\lam_2, \lam_3$ for $\lam_2^{\alpha'}$, $\lam_3^{\alpha'}$. On the other hand, the sequence $\{ [B_1^{\alpha'}] \}_{\alpha'}$ has the limit $([B_1^{\infty}], [B_2^{\infty}])$. We may suppose that $\lam_1^{\alpha'}$ take the form $\lam_1^{\alpha'}=(\lam_1', \lam_2'; n^{\alpha'})$ with $n^{\alpha'} \rightarrow \infty$. 
Put $\unlam' := (\lam_1', \lam_2')$.
We assumed that $T_{\unlam}$ satisfy the inequality (\ref{eq condition on T}) for all $\unlam$. Hence we have
\[
 T_{\unlam'} + n^{\alpha'}  < T_{\unlam^{\alpha}} < S_j^{\alpha}
\]
for $j=1, 2$.
In particular, $S_j^{\alpha'} \rightarrow \infty$ for $j=1, 2$. This means that $[A^{\alpha}]$ converge to some
\[
([B_{1,1}^{\infty}], [B_{1, 2}^{\infty}], [B_{3}^{\infty}], [B_4^{\infty}]).
\]
This is a contradiction since $[A^{\alpha}] \rightarrow ([A_1^{\infty}], [A_2^{\infty}])$. Therefore we obtain $r_1 = r_2 = r_3 = 1$.

Lastly we prove (3). If (3) does not hold, then we may suppose that $S_j^{\alpha'} \rightarrow S_j^{\infty} < \infty$ for $j= 1, 2$ or $S_j^{\alpha'} \rightarrow \infty$ for $j=1, 2$. In the first case, $[A^{\alpha'}]$ does not split into  instantons as $\alpha' \rightarrow \infty$. In the second case, $[A^{\alpha'}]$ splits into three instantons as $\alpha' \rightarrow \infty$. Both cases contradict the fact that $[A^{\alpha}]$ split into two instantons.

\qed

\vspace{3mm}

\noindent
{\it  Proof of Lemma \ref{lem 2 instantons II} }

Let $[A^{\alpha}]$, $[A^{\alpha}_i]$ and $T^{\alpha}(\unlam)$ be as above.
Suppose that (1) does not hold. Then we have a subsequence $\{ [A^{\alpha'}] \}_{\alpha'}$ and $\unlam^{\alpha'} \in \Lambda(\rho, \rho^{\alpha'}(1)) \times \Lambda(\rho^{\alpha'}(1), \sigma)$ with $[A^{\alpha}] \in \Im Gl_{\unlam^{\alpha'}}$ and $\rho^{\alpha'}(1) \not= \rho(1)$. By Hypothesis \ref{ass flat conn}, the set of gauge equivalence classes of flat connections is finite. Hence we can suppose $\rho^{\alpha'}(1)$ are independent of $\alpha'$. We write $\rho'(1)$ for $\rho^{\alpha'}(1)$.
As in the proof of Lemma \ref{lem 3 instantons}, we can see that there is a subsequence (still denoted by $[A^{\alpha'}]$) such that $[A_i^{\alpha'}(\unlam^{\alpha'})]$ converge to some $[A_i^{' \infty}]$,  $\unlam^{\alpha'}$ are independent of $\alpha'$ and that $T^{\alpha'}(\unlam^{\alpha'})$ diverges to $\infty$ . This means that
\[
[A^{\alpha}] 
\longrightarrow 
([A_1^{' \infty}], [A_2^{' \infty}])
\in M_{Y \times \R}^0 (\rho, \rho'(1)) \times M_{Y \times \R}^0(\rho'(1), \sigma)
\quad
\rho'(1) \not= \rho(1).
\]
This is a contradiction to the fact that $[A^{\alpha}]$ converge to $([A_1^{\infty}], [A_2^{\infty}]) \in M_{Y \times \R}^0(\rho, \rho(1)) \times M_{Y \times \R}^0(\rho(1), \sigma)$ and we have shown that (1) holds.

If (2) does not hold, there is a subsequence $\{ [A^{\alpha'}] \}_{\alpha'}$ and $\unlam^{\alpha'} \in \Lambda(\rho, \rho(1)) \times \Lambda(\rho(1), \sigma)$ with  
\begin{equation} \label{eq dist}
d([A_i^{\alpha'}( \unlam^{\alpha'} )], [A_i^{\infty}]) \geq \delta.
\end{equation}
As before, we can deduce that $[A_i^{\alpha'}(\unlam^{\alpha'})]$ converge to some $[A_i^{' \infty}]$ for $i=1, 2$, $\unlam^{\alpha'}$ are independent of $\alpha'$, and that $T^{\alpha'}(\unlam^{\alpha'})$ diverges to $\infty$.
Hence $[A^{\alpha'}]$ converge to $([A_1^{' \infty}], [A_2^{' \infty}])$.
 From (\ref{eq dist}), we have $d([A_i^{\infty}], [A_i^{' \infty}]) \geq \delta$. In particular, $[A_i^{\infty}] \not= [A_{i}^{' \infty}]$. This is a contradiction.

\subsection{Evaluation of difference of two gluing maps}
\label{ss eva}

In this subsection, we prove Lemma \ref{lem 2 instantons I}. We need to compare  two gluing maps defined on different regions intersecting each other. To do this, we must explicitly see the construction of gluing maps.  We give outline of the construction following \cite{D Floer} before we prove Lemma \ref{lem 2 instantons I}.

First note that elements in $\0M$ can be considered as elements in $\M$ with center of mass $0$. Here the center of mass of $[A] \in \M$ is defined to be
\[
\int_{Y \times \R} t | F_A |^2 \ d \mu_{Y \times \R}.
\]
Let $X_1, X_2$ be two copies of $Y \times \R$ and for $T > 0$ put $X_1(T) = Y \times ( -\infty, T)$, $X_2(Y) = Y \times ( -T, \infty)$. For each $T$, we have an identification
\[
\begin{array}{cccc}
\varphi : & Y \times (T, 2T) & \longrightarrow & Y \times (-2T, -T) \\
                     & (y, t) & \longmapsto & (y, t-3T).
\end{array}
\]
Gluing $X_1(2T)$ and $X_2(2T)$ through $\varphi$, we obtain a manifold $X^{\#(T)} = X_1(2T) \cup_{\varphi} X_2(2T)$. We have a natural identification between $X^{\# (T)}$ and $Y \times \R$ such that $(y, 3T/2) \in X_1(2T) \subset X^{\#(T)}$ corresponds to $(y, 0) \in Y \times \R$.

Let $\rho=\rho(0)$, $\rho(1)$, $\sigma = \rho(2)$ be flat connections and choose $[A_1] \in M_{X_1}^0(\rho, \rho(1))$, $[A_2] \in M_{X_2}^0(\rho(1), \sigma)$. Here $A_1, A_2$ are instantons on $X_1, X_2$ with center of mass $0$. We write $\rho(0)$, $\rho(2)$ for $\rho$, $\sigma$ respectively.
Then we can write 
\[
A_i = B_i + a_i,
\]
where $B_i$ are connections with
\[
B_i = \left\{
\begin{array}{cl}
\rho(i-1) & \text{ on $Y \times (-\infty, -1)$ }, \\
\rho(i) & \text{ on $Y \times (1, \infty)$ },
\end{array}
\right.
\]
and $a_i$ are $\pi_i^* \fg_{Q}$-valued 1-forms on $X_i$ with
\[
| \nabla^{k} a_i(y,t) | \leq C_k e^{-\delta t} 
\quad ( ^{\forall} k \in \Z_{\geq 0})
\]
for some $C_k > 0$, $\delta > 0$.
Fix a smooth cut-off function $\chi:Y \times \R \rightarrow [0,1]$ such that
\[
\chi(y,t) = \left\{
\begin{array}{cl}
1 & \text{ on $Y \times (-\infty, 0)$, } \\
0 & \text{ on $Y \times (1, \infty)$. }
\end{array}
\right. 
\]
We define $A_1'$, $A_2'$ by
\[
A_1' = B_1 + \chi(t-T) a_1, 
\quad
A_2' = B_2 + \chi(-t-T) a_2.
\]
These connections give a connection $A'$ on $X^{\#(T)} \cong Y \times \R$ such that
\begin{equation} \label{eq A'}
A' = 
\left\{
\begin{array}{cl}
c_{-3T/2}^*(A_1') & \text{ on $ Y \times (-\infty, -\frac{T}{2}+1] $ } \\
\rho(1) & \text{ on $ Y \times (-\frac{T}{2} + 1, \frac{T}{2} -1) $ } \\
c_{3T/2}^*(A_2') & \text{ on $ Y \times [\frac{T}{2}-1, \infty) $ }.
\end{array}
\right.
\end{equation}
Let $F_{A'}^+$ be the self-dual part of the curvature of the connection on $\pi^* \fg_{Q}$ induced by $A'$. Then we have
\[
\| F_{A'}^+ \|_{ L_4^2(Y \times \R) }
\leq
\const e^{-\delta T}.
\]
That is, $A'$ is an almost instanton. To obtain a genuine instanton, we consider the equation 
\[
F_{A' + a}^+ = 0
\]
for a small $\pi^* \fg_{Q}$-valued 1-form $a$. The equation can be rewritten as
\begin{equation} \label{eq perturb}
d_{A'}^+ a + ( a \wedge a )^+ = -F_{A'}^+.
\end{equation}
By Hypothesis \ref{hypo D}, $D_{A_i} = d_{A_i}^* + d_{A_i}^+$ have right inverses $P_i$. When $T$ is large enough, we can construct a right inverse $P = P_T$ of $D_{A'}$ by using $P_1, P_2$ and some cut-off functions. 
We can see the operator norm $\| P \|$ of $P$ is bounded by $2( \| P_1 \| + \| P_2 \|)$.
If we substitute  $a = P \phi$  into (\ref{eq perturb}), we get the equation
\[
\phi + ( P \phi \wedge P \phi)^ + = - F_{A'}^+
\]
for $\phi \in L_3^2( \Lambda^0_{Y \times \R} \otimes \pi^* \fg_Q)$.
We can see that for large $T > 0$ this equation has a unique solution $\phi = \phi_T$ with $\phi_{T} \rightarrow 0$ as $T \rightarrow \infty$. Hence for large $T$ we get an instanton of the form
\begin{equation} \label{eq A}
A^{\#(T)} = A' + a
\end{equation}
with
\begin{equation} \label{eq a}
\lim_{T \rightarrow \infty} a = 0.
\end{equation}

For $U_i \subset M_{X_i}^0(\rho(i-1), \rho(i))$ precompact open sets, the construction can be applied to all $([A_1], [A_2]) \in U_1 \times U_2$ and we obtain the gluing map
\[
Gl:U_1 \times (T_0, \infty) \times U_2 \longrightarrow \0M
\]
for large $T_0$.
(We can take $T_0$ uniformly since $U_1, U_2$ are precompact.)

More precisely, we must slightly translate $A^{\#(T)}$ to make the center of mass be $0$.
Let $m_{T}$ be the center of mass of $A^{\#(T)}$.
Then the translate $c_{-m_T}^*(A^{\#(T)})$ is an instanton with center of mass $0$. The precise definition of the gluing map is
\[
Gl([A_1], T, [A_2]) = [c_{-m_T}^*(A^{\#(T)})].
\]
We can easily show
\[
\lim_{T \rightarrow \infty} m_T = 0
\]
and it does not matter even if we assume that $m_T = 0$ in the proof of Lemma \ref{lem 2 instantons I}. Therefore we will drop $c_{-m_T}^*$ from notations in the proof.

Let $\gamma \cong S^1$ be a loop in $Y$ and put $\Gamma = \gamma \times \R$.
We see outline of the construction of the gluing map 
\[
\widehat{Gl}:
\cL^{\otimes 2}_{\Gamma} (\rho, \rho(1)) |_{U_1} \boxtimes 
\cL^{\otimes 2}_{\Gamma} (\rho(1), \sigma)|_{U_2}
\stackrel{\cong}{\longrightarrow}
\cL^{\otimes 2}_{\Gamma}(\rho, \sigma)|_{\Im Gl}
\]
covering $Gl$. Let $A_1, A_2$ be instantons on $X_1, X_2$. For simplicity of notations, we suppose that $\bar{\partial}_{A_i}$ are surjective for $i=1, 2$. 
Let $\Gamma_1$, $\Gamma_2$ be copies of $\Gamma$ and fix cut-off functions $\gamma_i = \gamma_i^{T}$ on $\Gamma_i$ with
\[
\| d \gamma_i \|_{C^0} = O \left( \frac{1}{T} \right), 
\quad
\supp (\gamma_i) \subset \Gamma_i(2T),
\quad
(\gamma_1)^2 + ( \gamma_2 )^2 = 1 \ \text{on $\Gamma^{\#(T)}$}.
\]
Here $\supp (\gamma_i)$ is the support of $\gamma_i$, and $\Gamma_i(2T), \Gamma^{\#(T)}$ are defined as before.
In the third equation, we consider $\gamma_{i}$ as functions on $\Gamma^{\#(T)}$ in the natural way.
Take $f_i \in \ker \bar{\partial}_{A_i}$ for $i=1, 2$ and put
\begin{equation} \label{eq f'}
f' = c_{-3T/2} (\gamma_1 f_1) + c_{3T/2}(\gamma_2 f_2).
\end{equation}
Then we can show that
\begin{gather*}
\| \bar{ \partial }_{ A^{ \# (T) } } f' \|_{ L^{ 2, (-\tau, \tau) }_2 }
\leq 
\varepsilon (T) 
\big ( \| f_1 \|_{L^{ 2, (-\tau, \tau) }_{3} } + \| f_2 \|_{ L_3^{2,(-\tau, \tau)} } \big), \\
\varepsilon (T) \longrightarrow 0 \ 
\text{as  $T \rightarrow \infty $}.
\end{gather*}

We may construct a right inverse $Q_{T}$ from right inverses $Q_i$ of $\bar{\partial}_{A_i}$ for large $T$. The difference
\begin{equation} \label{eq gl line 1}
f^{\#(T)} = f' - Q_{T} \bar{\partial}_{A^{\#(T)}} f'
\end{equation}
lies in $\ker \bar{\partial}_{ A^{ \#(T) } }$. The operator norm $\| Q_{ T } \|$ of $Q_{T}$ is bounded by $2( \| Q_1 \| + \| Q_2 \|)$, hence we have
\begin{equation} \label{eq gl line 2}
Q_{T} \bar{\partial}_{A^{\#(T)}} f' \longrightarrow 0
\end{equation}
as $T \rightarrow \infty$. It can be shown that for large $T$ the map $(f_1, f_2) \mapsto f^{\#(T)}$ is an isomorphism from $\ker \bar{\partial}_{A_1} \oplus \ker \bar{\partial}_{A_2}$ to $\ker \bar{\partial}_{ A^{ \#(T) } }$  and induces the isomorphism from $\cL_{\Gamma_l}(\rho, \rho(1))_{[A_1]} \otimes  \cL_{ \Gamma_l }(\rho(1), \sigma)_{ [A_2] }$ to $ \cL_{\Gamma_l}( \rho, \sigma)_{[ A^{\# (T)} ]} $. Applying this construction to all connections in $U_1 \times U_2$, we get the map $\widehat{Gl}$.

\vspace{3mm}

When $\bar{\partial}_{A_i}$ are surjective, we choose maps
\[
U_i : 
\R^{n_i} \longrightarrow 
\Omega_{ \Gamma_l }^{0,1}( E \otimes K_{\Gamma_l}^{ \frac{1}{2} }  )
\]
such that $\bar{\partial}_{A_i} \oplus U_i$ are surjective.
Applying the above method to these operators, we obtain the gluing maps.

\noindent
{\it Proof of Lemma \ref{lem 2 instantons I}}

Assume that we have a sequence $\{ [A^{\alpha}] \}_{\alpha}$ in $\0M$ which converges to some $([A_1^{\infty}], [A_2^{\infty}]) \in M_{Y \times \R}^0(\rho, \rho(1)) \times M_{Y \times \R}^0(\rho(1), \sigma)$. Take precompact, open neighborhoods $U_i, U_i'$ of $[A_i^{\infty}]$. Then we have two gluing maps
\[
\begin{split}
Gl &: U_1 \times (T_0, \infty) \times U_2 \longrightarrow \0M \\
Gl' &: U_1' \times (T_0, \infty) \times U_2' \longrightarrow \0M.
\end{split}
\]
For large $\alpha$, we have two different expressions of $[A^{\alpha}]$:
\[
[A^{\alpha}] 
= Gl([A_1^{\alpha}], T^{\alpha}, [A_2^{\alpha}])
= Gl'([A_1^{' \alpha}], T^{' \alpha}, [A_2^{' \alpha}]).
\]
We can take representations $A_i^{\alpha}$, $A_{i}^{' \alpha}$, $A_i^{\infty}$ of $[A_i^{\alpha}]$, $[A_i^{' \alpha}]$, $[A^{\infty}_i]$ such that $A_i^{\alpha}$, $A_i^{' \alpha}$ converge to $A_i^{\infty}$.
For simplicity, we assume that $\bar{\partial}_{A_i}$ are surjective for all $[A_i] \in U_i$.
Then what we must show is that
\[
\lim_{\alpha \rightarrow \infty} 
\|
Gl_{T^{\alpha}} (f_1^{\alpha}, f_2^{\alpha}) - Gl'_{T^{' \alpha}} (f_1^{' \alpha}, f_2^{' \alpha})
\|_{ L^{2, (-\tau, \tau)} }
=0
\]
for sequences $\{ f_i^{\alpha} \}_{\alpha}$, $\{ f_i^{ ' \alpha} \}_{\alpha}$ of $\ker \bar{\partial}_{A_i^{\alpha}}$, $\ker \bar{\partial}_{A_i^{' \alpha} }$ which converge to some $f_i^{\infty} \in \ker \bar{\partial}_{A_i^{\infty}}$ in the $L^{2, (-\tau, \tau)}_{3}$-norms.
From (\ref{eq f'}), (\ref{eq gl line 1}) and (\ref{eq gl line 2}), we have
\begin{equation} \label{eq gl}
\begin{split}
Gl_{T^{\alpha}} (f_1^{\alpha}, f_2^{\alpha})
&= c_{ -\frac{ 3T^{\alpha} }{ 2 } } (\gamma_1 f_1^{\alpha}) + 
c_{ \frac{ 3T^{\alpha} }{ 2 } } (\gamma_2 f_2^{\alpha}) +
g^{\alpha} \\
Gl'_{ T^{' \alpha} }(f_1^{' \alpha}, f_2^{' \alpha})
&= c_{ -\frac{ 3T^{' \alpha} }{ 2 } } (\gamma_1 f_1^{' \alpha}) + 
c_{ \frac{ 3T^{' \alpha} }{ 2 } } (\gamma_2 f_2^{' \alpha}) +
g^{' \alpha}
\end{split}
\end{equation}
for some sections $g^{\alpha}$, $g^{' \alpha}$ with $g^{\alpha}, g^{' \alpha} \rightarrow 0$. Here we show that the difference $T^{\alpha} - T^{' \alpha}$ goes to zero as $\alpha \rightarrow \infty$.

\begin{lemma} \label{lem T T'}
$\lim_{\alpha \rightarrow \infty} | T^{\alpha} - T^{' \alpha} | = 0$.
\end{lemma}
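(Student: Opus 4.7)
The strategy is to extract both $T^{\alpha}$ and $T^{' \alpha}$ from a single gauge-invariant observable of $[A^{\alpha}]$ with an additive $o(1)$ error, so that subtracting the two extractions forces $T^{\alpha} - T^{' \alpha} \to 0$. The natural observable is the ``left-end moment''
\begin{equation*}
\Phi^{\alpha} \;:=\; \int_{Y \times (-\infty, 0)} t \, |F_{A^{\alpha}}|^2 \, d\mu_{Y \times \R},
\end{equation*}
which depends only on $[A^{\alpha}]$ and is well-defined by the exponential decay of $|F_{A^{\alpha}}|$ as $t \to -\infty$ guaranteed by Hypothesis \ref{ass flat conn}.

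The key step is a Chern--Simons reformulation. The ASD condition implies that for any gauge representative of an instanton $A$ on $Y \times \R$, the slice-integral $\int_{Y \times \{t\}} |F_A|^2\, d\mu_Y$ equals $-c_0\,\partial_t CS(A|_{Y \times \{t\}})$ for a positive universal constant $c_0$. Integration by parts, the boundary term at $-\infty$ vanishing by exponential decay, then gives
\begin{equation*}
\Phi^{\alpha} \;=\; -c_0 \int_{-\infty}^{0} \bigl[\, CS(\rho) - CS(A^{\alpha}|_{Y \times \{t\}})\, \bigr] \, dt .
\end{equation*}
This replaces a moment of curvature against the linearly-growing factor $t$ by an $L^1(dt)$-integral of a pointwise bounded quantity. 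Evaluating using the explicit representative $A^{\alpha} = A' + a$ of (\ref{eq A'}),(\ref{eq A}): on the neck $(-T^{\alpha}/2 + 1, 0)$ the connection is $L^{2,\tau}_4$-close to $\rho(1)$, contributing $(T^{\alpha}/2 - 1)\,\Delta CS + o(1)$ with $\Delta CS := CS(\rho) - CS(\rho(1)) > 0$; on the left end, the substitution $s = t + 3T^{\alpha}/2$ together with $A^{\alpha}|_{Y \times \{t\}} \approx A_1^{\alpha}|_{Y \times \{s\}}$ and a regularization at $s = +\infty$ yield $(T^{\alpha} + 1)\,\Delta CS + K([A_1^{\infty}]) + o(1)$, where
\begin{equation*}
K([A_1^{\infty}]) \;:=\; \int_{\R} \bigl[\, CS(\rho) - CS(A_1^{\infty}|_{Y \times \{s\}}) - \Delta CS \cdot \mathbf{1}_{s > 0}\, \bigr]\, ds
\end{equation*}
is a finite constant depending only on the common limit $[A_1^{\infty}]$. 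Combining,
\begin{equation*}
\Phi^{\alpha} \;=\; -c_0\Bigl[\,\tfrac{3T^{\alpha}}{2}\,\Delta CS + K([A_1^{\infty}])\,\Bigr] + o(1).
\end{equation*}

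The identical analysis with the second gluing produces the same expression with $T^{\alpha}$ replaced by $T^{' \alpha}$ and the \emph{same} constant $K([A_1^{\infty}])$, since $[A_1^{\alpha}]$ and $[A_1^{' \alpha}]$ both converge to $[A_1^{\infty}]$. Subtracting yields $\tfrac{3}{2} c_0\,\Delta CS\,(T^{' \alpha} - T^{\alpha}) = o(1)$, and $\Delta CS > 0$ (because $\rho \neq \rho(1)$, as $A_1^{\infty}$ is a non-trivial instanton) gives $|T^{\alpha} - T^{' \alpha}| \to 0$. The principal technical obstacle is the $o(1)$-bookkeeping for the perturbation $a$ of (\ref{eq A}): a direct moment estimate on $\int t|F_{A^{\alpha}}|^2$ using only $\|a\|_{L^{2,\tau}_4} \to 0$ yields an $o(T^{\alpha})$ remainder, enough for the asymptotic ratio $T^{\alpha}/T^{' \alpha} \to 1$ but not for the additive conclusion; the Chern--Simons rewriting is what converts this into an $L^1(dt)$-integrand controlled by slicewise trace bounds of $a|_{Y \times \{t\}}$, giving the sharper additive $o(1)$ estimate needed.
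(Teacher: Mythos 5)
Your argument is correct in outline and takes a genuinely different route from the paper's. The paper's proof is short and soft: by the definition of weak convergence there are translation parameters $S_1^{\alpha}$ with $c_{S_1^{\alpha}}^*[A^{\alpha}|_K] \to [A_1^{\infty}|_K]$; the explicit gluing formula (\ref{eq A'}), (\ref{eq A}), (\ref{eq a}) shows that $c_{3T^{\alpha}/2}^*[A^{\alpha}|_K] \to [A_1^{\infty}|_K]$ as well, and the same with $T'^{\alpha}$. Because $[A_1^{\infty}]$ is a non-flat instanton it has trivial translation stabilizer, so the two translation amounts tracking the same limit must agree asymptotically, whence $|3T^{\alpha}/2 - 3T'^{\alpha}/2| \to 0$. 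No observable needs to be built. Your approach instead constructs an explicit gauge-invariant functional $\Phi^{\alpha}$ and reads off $T^{\alpha}$ from its asymptotics; this is more quantitative (and would give a rate), at the cost of heavier bookkeeping: you need to justify the $\R$-valued lift of the Chern--Simons functional along the representative, the continuity of the renormalized constant $K(\cdot)$ on $M^0_{Y\times\R}(\rho,\rho(1))$ near $[A_1^{\infty}]$, and the fact that dropping the center-of-mass correction $c_{-m_{T}}^*$ in $Gl$ (which the paper also does, citing $m_T \to 0$) contributes only $o(1)$ to $\Phi^{\alpha}$.

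Two points worth correcting. First, the justification ``$\Delta CS > 0$ because $\rho \neq \rho(1)$'' is not quite right: $\Delta CS$ equals a positive constant times the Yang--Mills energy of $A_1^{\infty}$, so it is positive because $A_1^{\infty}$ is a non-flat instanton, even if $\rho$ and $\rho(1)$ happened to be gauge-equivalent or to have equal Chern--Simons values mod $\Z$. Second, your motivating claim that a direct moment estimate ``using only $\|a\|_{L^{2,\tau}_4} \to 0$ yields an $o(T^{\alpha})$ remainder'' is not correct as stated: with the weighted norm, the cross term $\int t \langle F_{A'}, d_{A'}a\rangle$ can be estimated by pairing $t e^{-\tau|t|} F_{A'}$ (bounded in $L^2$ since $|t|e^{-\tau|t|}$ is uniformly bounded) against $e^{\tau|t|} d_{A'}a$ (small by assumption), giving $o(1)$ directly. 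The Chern--Simons rewriting would be essential if only the unweighted bound $\|a\|_{L^2_4} \to 0$ were available, but then your slicewise estimates for the rewritten integral would also only give $o(\sqrt{T^{\alpha}})$ by Cauchy--Schwarz. In either regularity regime the two estimates are of comparable strength, so the rewriting is a clean device rather than a necessary one; the real content of your argument is the construction of the observable and its asymptotic evaluation, which is sound.
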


\begin{proof}
Since $[A^{\alpha}]$ converges to $([A_1^{\infty}], [A_2^{\infty}])$, we have real numbers $S_1^{\alpha}, S_2^{\alpha}$ with $S_2^{\alpha} - S_1^{\alpha} \rightarrow \infty$ such that for any compact sets $K$ in $Y \times \R$, 
\begin{equation} \label{eq A translated by S}
c_{S_i^{\alpha}} ([A^{\alpha}|_K]) \longrightarrow [A_i^{\infty}|_{K}]
\end{equation}
in $L^{2, \tau}_{4}$. 

On the other hand, $[A^{\alpha}]$ can be written as $Gl([A_1^{\alpha}], T^{\alpha}, [A_2^{\alpha}])$.
It follows from (\ref{eq A'}), (\ref{eq A}) and (\ref{eq a}) that
\begin{equation} \label{eq A translated by T}
c_{ \frac{3T^{\alpha}}{2} }([A^{\alpha}|_{K}])
\longrightarrow
[ A_1^{\infty}|_{K} ].
\end{equation}
Comparing (\ref{eq A translated by S}) with (\ref{eq A translated by T}), we have
\[
\lim_{\alpha \rightarrow \infty}
| S_1^{\alpha} - (3T^{\alpha} / 2)|
= 0.
\]
Similarly we have
\[
\lim_{\alpha \rightarrow \infty}
| S_1^{\alpha} - (3T^{' \alpha} / 2)|
= 0.
\]
Hence we obtain the required result.

\end{proof}

From (\ref{eq gl}), we get
\[
\begin{split}
& \| Gl_{T^{\alpha}}(f_1^{\alpha}, f_2^{\alpha}) -
Gl_{T^{' \alpha}}(f_1^{' \alpha}, f_2^{' \alpha})
\| \\
& \quad \leq
\| c_{ -\frac{ 3T^{\alpha} }{2} }( \gamma_1 f_1^{\alpha}) -
c_{ -\frac{ 3T^{\alpha} }{2} }( \gamma_1 f_1^{' \alpha})
\| +
\|
c_{ -\frac{ 3T^{\alpha} }{2} }( \gamma_1 f_1^{' \alpha}) -
c_{ -\frac{ 3T^{' \alpha} }{2} }( \gamma_1 f_1^{' \alpha})
\| + \\
& \qquad \qquad
\|
c_{ \frac{ 3T^{\alpha} }{2} }( \gamma_2 f_2^{\alpha}) -
c_{ \frac{ 3T^{\alpha} }{2} }( \gamma_2 f_2^{' \alpha})
\| +
\|
c_{ \frac{ 3T^{\alpha} }{2} }( \gamma_2 f_2^{'\alpha}) -
c_{ \frac{ 3T^{' \alpha} }{2} }( \gamma_2 f_2^{' \alpha})
\| + 
\| g^{\alpha} \| + \| g^{' \alpha} \|.
\end{split}
\]
Here $\| \cdot \|$ is the $L^{2, (-\tau, \tau)}$ norm.
The first term on the right hand side is equal to
\[
\| \gamma_1 (f_1^{\alpha} - f_1^{' \alpha}) \|.
\]
This is bounded by $\| f_1^{\alpha} - f_1^{ ' \alpha} \|$, and $\| f_1^{\alpha} - f_1^{ ' \alpha} \|$ converges to zero since both of $f_1^{\alpha}$, $f_1^{' \alpha}$ converge to $f_1^{\infty}$.
Hence the first term goes to zero. 

\vspace{2mm}

The second term is equal to
\[
\| \gamma_1 f_1^{' \alpha} - c_{ \delta^{ \alpha } } ( \gamma_1 f_1^{ ' \alpha } ) \|,
\]
where $\delta^{\alpha} = 3(T^{\alpha}-T^{' \alpha}) / 2$. It follows from Lemma \ref{lem T T'} that $\delta^{\alpha}$ goes to zero.
Since  $\| f_1^{\infty} \|$ is finite and $\{ \delta^{\alpha} \}_{\alpha}$ is bounded, for any $\varepsilon > 0$ there exist $T(\varepsilon) > 0$ independent of $\alpha$ such that
\begin{gather*}
\| f_1^{\infty} |_{Y \times (-\infty, \ -T(\varepsilon) )} \| 
< \frac{\varepsilon}{8},
\quad
\| f_1^{\infty} |_{Y \times (T(\varepsilon), \ \infty )} \| 
< \frac{\varepsilon}{8} \\
\| c_{ \delta^{ \alpha } } (f_1^{\infty}) |_{Y \times (-\infty, \ -T(\varepsilon) )} \| 
< \frac{\varepsilon}{8},
\quad
\| c_{ \delta^{ \alpha } } (f_1^{\infty}) |_{Y \times (T(\varepsilon), \ \infty )} \| 
< \frac{\varepsilon}{8}.
\end{gather*}
Since $f_{1}^{' \alpha}$ converge to $f_1^{\infty}$, we have $\alpha(\varepsilon) > 0$ such that if $\alpha > \alpha(\varepsilon)$ then
\[
\| f_1^{' \alpha} - f_1^{\infty} \| < \frac{ \varepsilon }{8}.
\]
Thus we have
\[
\begin{split}
& \left\| 
\left.
\big( \gamma_1 f_1^{' \alpha} - c_{ \delta^{ \alpha } } ( \gamma_1 f_1^{ ' \alpha } ) \big) 
\right|_{ Y \times (-\infty, \ -T(\varepsilon) ) } 
\right\| \\
&\quad  \leq
\| \gamma_1 f_1^{' \alpha} |_{ Y \times (-\infty, \ -T(\varepsilon) ) }  \| + 
\| c_{ \delta^{ \alpha } } ( \gamma_1 f_1^{ ' \alpha } )|_{ Y \times (-\infty, \ -T(\varepsilon) ) } \| \\
&\quad \leq 
\| f_1^{' \alpha} - f_1^{\infty} \| + 
\|  f_1^{\infty}|_{ Y \times (-\infty, \ -T(\varepsilon) ) }  \| +
\| c_{ \delta^{\alpha} } ( f_1^{' \alpha} - f_1^{\infty} ) \| + 
\| c_{ \delta^{ \alpha } }  (f_1^{\infty}) |_{ Y \times (-\infty, \ -T(\varepsilon) ) }  \| \\
&\quad <
\frac{ \varepsilon }{2}.
\end{split}
\]
Similarly
\[
\left\| 
\left.
\big( \gamma_1 f_1^{' \alpha} - c_{ \delta^{ \alpha } } ( \gamma_1 f_1^{ ' \alpha } ) \big) 
\right|_{ Y \times (T(\varepsilon), \ \infty ) } 
\right\|
< \frac{ \varepsilon }{ 2 }.
\]
Since $\gamma_1 f_1^{' \infty}$ is uniformly continuous on $Y \times [-T(\varepsilon), T(\varepsilon)]$ and $\delta^{\alpha}$ converge to zero, we have
\[
\begin{split}
&
\left\| \left.
\big( \gamma_1 f_1^{\infty} - c_{\delta^{\alpha}} (\gamma_1 f_1^{\infty}) \big)  \right|_{ Y \times [ -T(\varepsilon), T(\varepsilon) ] }
\right\|_{ L^{2, (-\tau, \tau)} } \\
& \qquad
\leq 
2 T(\varepsilon) M(\varepsilon)
\max_{ Y \times [-T(\varepsilon), T(\varepsilon)] } 
| \gamma_1 f_1^{\infty} -  c_{\delta^{\alpha}} (\gamma_1 f_1^{\infty}) |
\longrightarrow 0
\end{split}
\]
as $\alpha \rightarrow \infty$.
Here $M(\varepsilon)$ is the maximum of the weight function $W_{\tau}'$ over $Y \times [-T(\varepsilon), T(\varepsilon)]$.
Hence we get
\begin{equation*}
\begin{split}
&
\left\| \left.
\big(
\gamma_1 f_1^{' \alpha} - c_{\delta^{\alpha}} (\gamma_1 f_1^{' \alpha}) 
\big)
\right|_{ Y \times [-T(\varepsilon), T(\varepsilon)] }
\right\| \\
& \quad \leq
\left\| \left. \big( \gamma_1 f_1^{' \alpha} - \gamma_1 f_1^{\infty} \big) \right|_{ Y \times [ -T(\varepsilon), T(\varepsilon) ] } \right\| +
\left\| \left. \big( \gamma_1 f_1^{\infty} - c_{\delta^{\alpha}} (\gamma_1 f_1^{\infty}) \big) \right|_{ Y \times [-T(\varepsilon), T(\varepsilon) ] } \right\| \\
& \quad
\longrightarrow 0 
\end{split}
\end{equation*}
as $\alpha \rightarrow \infty$.
Therefore
\[
\begin{split}
& \limsup_{\alpha \rightarrow \infty} 
\|
\gamma_1 f_1^{' \alpha} - c_{ \delta^{ \alpha } } ( \gamma_1 f_1^{ ' \alpha } )
\| \\
&\quad \leq
\limsup_{\alpha \rightarrow \infty}
\left(
\left\| 
\left.
\big( \gamma_1 f_1^{' \alpha} - c_{ \delta^{ \alpha } } ( \gamma_1 f_1^{ ' \alpha } ) \big) 
\right|_{ Y \times (-\infty, \ -T(\varepsilon) ) } 
\right\|
+ \right. \\
& \quad \quad
\left.
\left\| 
\left.
\big(
\gamma_1 f_1^{' \alpha} - c_{ \delta^{ \alpha } } ( \gamma_1 f_1^{ ' \alpha } )
\big) 
\right|_{ Y \times ( -T(\varepsilon), \ T(\varepsilon )) }
\right\|
+
\left\| 
\left.
\big( \gamma_1 f_1^{' \alpha} - c_{ \delta^{ \alpha } } ( \gamma_1 f_1^{ ' \alpha } ) \big) 
\right|_{ Y \times (T(\varepsilon), \ \infty ) } 
\right\|
\right) \\
&\quad< 
\varepsilon.
\end{split}
\]
Since $\varepsilon$ is arbitrary, the norm of difference $\|
\gamma_1 f_1^{' \alpha} - c_{ \delta^{ \alpha } } ( \gamma_1 f_1^{ ' \alpha } )
\|$ goes to zero. Thus the limit of the second term is zero.
Similarly the limits of the third and fourth terms are zero.
Since $g^{\alpha}, g^{' \alpha} \rightarrow 0$, we have
\[
\lim_{\alpha \rightarrow \infty}
\| Gl_{T^{\alpha}}(f_1^{\alpha}, f_2^{\alpha}) -
Gl_{T^{' \alpha}}(f_1^{' \alpha}, f_2^{' \alpha})
\|
= 0
\]
as required.

\subsection{Well-definedness}
\label{ss FFH well-defined}

A priori, Fukaya-Floer homology groups seem to depends on the choices of Riemannian metric on $Y$ and sections of $\cL_l^{\otimes 2}(\rho, \sigma)$. But we will prove that Fukaya-Floer homology does not depend on these choices up to canonical isomorphism. In the proof of well-definedness of usual Floer homology groups, we need the functorial property of Floer homology groups with respect to Riemannian bordisms.  We shall generalize the functorial property to Fukaya-Floer homology groups and prove the well-definedness by using this property.

The main statement of this subsection is

\begin{proposition} \label{prop well-defined}
Take two Riemannian metrics $g_0, g_1$ on $Y$ and two sets $\{ s_l(\rho, \sigma) \}_{\rho, \sigma, l}$, $\{ s_l'(\rho, \sigma) \}_{\rho, \sigma, l}$ of admissible sections. We write $HFF_*(Y; \ungam)$, $HFF_*' (Y; \ungam)$ for Fukaya-Floer homology groups  associated with the metrics and sections. Then we have a canonical isomorphism
 between $HFF_*(Y; \ungam)$ and $HFF_*'(Y; \ungam)$.
\end{proposition}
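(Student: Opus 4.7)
The plan is to follow the standard continuation-map argument of Floer theory, adapted to the Fukaya-Floer setting that also involves admissible sections. Given two choices $(g_0, \{s_l\})$ and $(g_1, \{s_l'\})$, I would first construct a one-parameter interpolation: choose a smooth family of metrics $\tilde{g}$ on $Y \times \R$ with $\tilde{g}|_{Y \times (-\infty,-1)} = g_0 + dt^2$ and $\tilde{g}|_{Y \times (1,\infty)} = g_1 + dt^2$, together with an interpolating family of perturbations of the Chern--Simons functional. For each pair $(\rho, \sigma)$ with $\rho \in R(Y, g_0)$ and $\sigma \in R(Y, g_1)$, form the (non-reduced) moduli space $M^{\tilde{g}}_{Y \times \R}(\rho, \sigma)$ of instantons for $\tilde{g}$ with prescribed limits; since $\tilde{g}$ is not translation-invariant, we do \emph{not} quotient by $\R$. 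For each loop $\gamma_l$ we get line bundles $\cL_l^{\otimes 2}(\rho, \sigma)$ over these moduli spaces in the same manner as in Subsection 2.2.

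Next I would extend Proposition \ref{prop section} to this cobordism setting, producing admissible sections $\tilde{s}_l(\rho, \sigma)$ of $\cL_l^{\otimes 2}(\rho, \sigma)$ over $M^{\tilde{g}}_{Y \times \R}(\rho, \sigma)$ which on the ends factor appropriately through gluing maps. The ends of $M^{\tilde{g}}_{Y \times \R}(\rho, \sigma)$ now have two flavors: bubbles of translation-invariant instantons on $(Y,g_0) \times \R$ attached at $-\infty$, and of $(Y,g_1) \times \R$ attached at $+\infty$, concatenated through a piece of $M^{\tilde{g}}$. The inductive construction of Subsection 2.3 carries over using the already-constructed admissible sections $s_l$ and $s_l'$ as the outer pieces in the gluing formula (\ref{eq section}), with the obvious changes to the index strata. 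When $\dim M^{\tilde{g}}_{Y \times \R}(\rho, \sigma) = 2|L_2 \setminus L_1|$ the intersection $M^{\tilde{g}}_{Y \times \R}(\rho, \sigma; L_2 \setminus L_1)$ is then transverse and compact, hence finite, and I define
\[
\langle \Phi([\rho] \otimes \gamma_{L_1}), [\sigma] \otimes \gamma_{L_2} \rangle
:= \# M^{\tilde{g}}_{Y \times \R}(\rho, \sigma; L_2 \setminus L_1)
\]
when $L_1 \subset L_2$, and zero otherwise. This gives a degree-$0$ map $\Phi: CFF_*(Y; \ungam; g_0, s) \to CFF_*(Y; \ungam; g_1, s')$. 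Verifying that $\Phi$ is a chain map is a boundary-counting argument identical in spirit to Lemma \ref{lem complex}: the ends of one-dimensional cobordism moduli spaces $M^{\tilde{g}}_{Y\times\R}(\rho, \sigma; L_2 \setminus L_1)$ break as either (translation-invariant $g_0$-instanton) $\cup$ (cobordism instanton) or (cobordism instanton) $\cup$ (translation-invariant $g_1$-instanton), and these contributions are precisely the matrix coefficients of $\Phi \partial$ and $\partial' \Phi$.

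Finally, to upgrade $\Phi$ to an isomorphism on homology, I would run the same construction backwards to get $\Psi: CFF_*(g_1, s') \to CFF_*(g_0, s)$ and construct a chain homotopy $K$ from $\Psi \Phi$ to $\id$ using a one-parameter family $\{\tilde{g}_u\}_{u \in [0,1]}$ of interpolations that degenerates at $u=0$ to the constant interpolation (where the moduli space reduces to translation-invariant instantons). Counting points in the parametrized moduli spaces $\bigcup_u M^{\tilde{g}_u}_{Y \times \R}(\rho, \sigma; L_2 \setminus L_1)$ of virtual dimension $1$ yields $K$, and its boundary identifies $\Psi\Phi - \id$ with $\partial K + K \partial$; the same argument shows $\Phi \Psi \simeq \id$. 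A standard further homotopy argument shows $\Phi$ is canonical: any two choices of interpolation give chain-homotopic continuation maps. The main obstacle will be extending Proposition \ref{prop section} to the non-translation-invariant cobordism and to its parametric version; in particular, Lemma \ref{lem 3 instantons}, \ref{lem 2 instantons I}, \ref{lem 2 instantons II} have to be redone when one of the glued pieces lives in a moduli space carrying no free $\R$-action, and the bookkeeping of gluing parameters in the partition-of-unity construction must be modified accordingly. Once this technical extension is in place, the well-definedness follows by the formal arguments above.
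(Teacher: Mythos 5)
Your proposal follows essentially the same continuation-map strategy as the paper. The paper packages the argument a bit more abstractly through Lemma~\ref{lem chain map}, a functorial chain-map statement for arbitrary Riemannian bordisms $(X, G, P, \{\Sigma_l\})$, which it then specializes to the cylinder $Y\times[0,1]$; the identity $\zeta'_*\circ\zeta_*=\mathrm{id}$ is obtained there from the gluing/composition formula $\zeta_*\circ\zeta'_*=\zeta^{(T)}_*$ for bordisms with long necks, combined with the observation that the product metric $g_0+dt^2$ with pulled-back sections gives the identity chain map $\zeta''$. Your step deforming $\Psi\Phi$ to $\mathrm{id}$ via a one-parameter family of interpolations is the same content, though strictly speaking it tacitly uses the gluing-of-bordisms step (that the composite $\Psi\Phi$ is chain-homotopic to the continuation map of a single concatenated interpolation) which the paper isolates as the third part of Lemma~\ref{lem chain map}; otherwise your account of the technical extension of Proposition~\ref{prop section} to the cobordism setting matches what the paper actually does.
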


Assume that we have the following data:

\vspace{3mm}

\begin{itemize}
\item
two oriented, closed Riemannian 3-manifolds $(Y_0, g_0)$, $(Y_1, g_1)$,

\vspace{3mm}

\item
$U(2)$-bundles $Q_0, Q_1$ over $Y_0, Y_1$ satisfying Hypothesis \ref{ass flat conn},

\vspace{3mm}

\item
sets of loops 
$\underline{\gamma} = \{ \gamma_l \}_{l=1}^d$,
$\underline{\gamma}' = \{ \gamma_l' \}_{l=1}^{d}$,

\vspace{3mm}

\item
a Riemannian bordism $(X, G)$ between $(Y_0, g_0), (Y_1, g_1)$,

\vspace{3mm}

\item
a $U(2)$-bundle $P$ over $X$ with $P|_{Y_0}=Q_0$, $P|_{Y_1}=Q_1$,

\vspace{3mm}

\item
oriented, compact surfaces $\Sigma_l$ embedded in $X$ with boundary $\gamma_l \coprod \gamma_l'$.

\end{itemize}

\vspace{3mm}

We introduce the following notations:

\vspace{3mm}

\begin{itemize}

\item
$\hat{X}:= X \cup (Y_0 \times \R_{ \geq 0 }) \cup ( Y_1 \times \R_{ \geq 0}) $,

\vspace{3mm}

\item
the extension $\hat{P}$ of $P$ to $\hat{X}$,

\vspace{3mm}

\item
the extension $\hat{G}$ of $G$ to $\hat{X}$,

\vspace{3mm}

\item
$\hat{\Sigma}_l := \Sigma_l \cup (\gamma_l \times \R_{ \geq 0}) \cup (\gamma_l' \times \R_{\geq 0})$.

\end{itemize}

Let $\rho, \sigma$ be flat connections on $Q_0, Q_1$ respectively. We denote the moduli space of instantons on $\hat{P}$ with limits $\rho, \sigma$ by $M_{ \hat{X} }(\rho, \sigma)$. Using families of twisted $\bar{\partial}$ operators on $\hat{\Sigma}_l$, we get the line bundles $\cL^{\otimes 2}_{\hat{\Sigma}_l}(\rho, \sigma)$ over $M_{ \hat{X} }(\rho, \sigma)$. As the proof of Proposition \ref{prop section}, we can show that for $\rho, \sigma$ with $\dim M_{ \hat{X} }(\rho, \sigma) < 8$ there are sections $\hat{s}_l(\rho, \sigma)$ of $\cL^{\otimes 2}_{\hat{ \Sigma }_l}(\rho, \sigma)$ satisfying the following conditions:

\begin{itemize}

\item
Let $\{ [A^{\alpha}] \}_{\alpha}$ be a sequence in $M_{ \hat{X} }(\rho, \sigma)$  converging to 
\[
([A_1^{\infty}], \dots, [A_{r}^{\infty}], [A^{\infty}], [A_1^{' \infty}], \dots, [A_{r'}^{' \infty}]),
\]
where 
\[
\begin{split}
[A_i^{\infty}] & \in M_{Y_0 \times \R}^0(\rho(i-1), \rho(i)), \\
[A^{\infty}] & \in M_{ \hat{X} }(\rho(r), \rho'(0)), \\
[A_i^{' \infty}] & \in M_{ Y_1 \times \R }^0 (\rho'(i-1), \rho'(i)).
\end{split}
\]
Then we have
\[
\hat{s}_l ([A^{\alpha}]) \longrightarrow
s_l([A_1^{\infty}]) \boxtimes \cdots \boxtimes s_{l}([A_r^{\infty}]) \boxtimes
\hat{s}_l ([A^{\infty}]) \boxtimes
s_l'([A_1^{' \infty}]) \boxtimes \cdots \boxtimes s_l'([A_{r'}^{' \infty}]).
\]

\vspace{3mm}

\item
For $L \subset \{ 1, \dots, d \}$ with $\dim M_{ \hat{X} }(\rho, \sigma) < 2 |L|$, the intersection $M_{ \hat{X} }(\rho, \sigma; L) = M_{ \hat{X} }( \rho, \sigma ) \cap \cap_{l \in L} V_{l}$ is empty.

\vspace{3mm}

\item
For $L$ with $\dim M_{ \hat{X} }(\rho, \sigma) = 2|L|$, the intersection $M_{ \hat{X} }(\rho, \sigma; L)$ is transverse and compact. Hence the intersection is finite.

\end{itemize}

\begin{lemma} \label{lem chain map}
Using the data $(\hat{X}, \hat{G}, \hat{\Sigma}_l, \hat{s}_l)$, we can define a homomorphism
\[
\zeta:CFF_*(Y_0; \underline{\gamma}) \longrightarrow CFF_*(Y_1; \underline{\gamma}')
\]
with the following property:

\begin{itemize}
\item
The homomorphism $\zeta$ is a chain map. That is,
$\partial_1 \circ \zeta = \zeta \circ \partial_0$.

\vspace{2mm}

\item
The induced homomorphism $\zeta_*:HFF_*(Y_0; \ungam) \rightarrow HFF_*(Y_1; \ungam')$ is independent of the metric $G$ and the sections $\hat{s}_l(\rho, \sigma)$.

\vspace{2mm}

\item
Assume that we have a bordism $(X', G', P', \{ \Sigma_l' \}_{l=1}^d)$ from $(Y_1, g_1, Q_1, \ungam')$ to other 4-tuple $(Y_2, g_2, Q_2, \ungam'')$. Let $\zeta'$ be the map from $CFF(Y_1;\ungam')$ to $CFF(Y_2;\ungam'')$.
For $T > 0$ we define a bordism $X^{\# (T)}$ between $X_0, X_2$ to be
\[
X^{(T)} := 
X \cup (Y_1 \times [0, T]) \cup X'.
\]
The metrics $G, G'$ naturally induce a metric $G^{(T)}$ on $X^{(T)}$ and we have surfaces $\Sigma_l^{(T)} = \Sigma_l \cup \gamma' \times [0, T] \cup \Sigma_l'$ with boundary $\gamma_l \coprod \gamma_l''$. So we have the map $\zeta^{(T)}$ from $CFF_*(Y_0; \ungam)$ to $CFF_*(Y_2;\ungam'')$. Then for large $T > 0$,
\[
\zeta_* \circ \zeta'_* = \zeta^{(T)}_*.
\]

\end{itemize}

\end{lemma}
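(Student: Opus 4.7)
The plan is to define $\zeta$ on generators by
\[
\langle \zeta([\rho] \otimes \gamma_L), [\sigma] \otimes \gamma'_{L'} \rangle := \# M_{\hat{X}}(\rho, \sigma; \{1,\dots,d\} \setminus (L \cup L'))
\]
whenever $L \cap L' = \emptyset$ and $\dim M_{\hat{X}}(\rho, \sigma) = 2|\{1,\dots,d\} \setminus (L \cup L')|$, and zero otherwise. The third listed property of the sections $\hat{s}_l$ guarantees the right-hand side is a finite, signed count, so $\zeta$ is well-defined, and the grading shift makes it degree $0$.

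For the chain-map identity I would imitate the argument of Lemma \ref{lem complex}. Consider, for fixed $\rho, \sigma, L, L'$, a 1-dimensional cut-down moduli space $M_{\hat{X}}(\rho, \sigma; K)$ where $K = \{1,\dots,d\}\setminus (L\cup L')$ and $\dim M_{\hat{X}}(\rho,\sigma) = 2|K|+1$. Its ends are analysed by the weak convergence theorem on $\hat{X}$, the gluing theorem, and the first listed property of $\hat{s}_l$ (the analogue of Proposition \ref{prop section}(\ref{tensor})). A dimension count identical to the one used for $\partial\circ\partial=0$ forces exactly one bubble to split off, and since $\hat{X}$ has two cylindrical ends, that bubble either escapes through $Y_0\times\R_{\geq 0}$ (contributing to $\zeta\circ\partial_0$) or through $Y_1\times\R_{\geq 0}$ (contributing to $\partial_1\circ\zeta$). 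Counting boundary points with sign yields $\partial_1\circ\zeta = \zeta\circ\partial_0$.

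For independence of $(G,\hat{s}_l)$, given two choices $(G_0,\hat{s}_{l,0})$ and $(G_1,\hat{s}_{l,1})$ I would interpolate along a path $(G_t,\hat{s}_{l,t})_{t\in[0,1]}$, form parameterised moduli spaces $M_{\hat{X}}^{[0,1]}(\rho,\sigma)$ of dimension one larger, and build admissible parameterised sections of the pulled-back line bundles $\cL^{\otimes 2}_{\hat{\Sigma}_l}(\rho,\sigma)$ by adjoining the parameter $t$ to the inductive construction of Subsection \ref{ss sections}. Counting the resulting 0-dimensional cut-down parameterised spaces defines a degree-$(+1)$ map $\eta:CFF_*(Y_0;\ungam)\to CFF_{*+1}(Y_1;\ungam')$, and inspecting the ends of the 1-dimensional parameterised cut-down spaces — which now include the $t=0$ and $t=1$ fibres in addition to the usual broken-trajectory ends — yields the chain-homotopy identity $\zeta_1-\zeta_0 = \partial_1\circ\eta+\eta\circ\partial_0$.

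For the composition formula I would stretch the neck $Y_1\times[0,T]$ in $X^{(T)}$ and use a bordism gluing theorem: for $T$ large every element of $M_{X^{(T)}}(\rho,\sigma'';K)$ lies in the image of a gluing map from a pair $([A],[A'])\in M_{\hat{X}}(\rho,\tau;K_0)\times M_{\hat{X}'}(\tau,\sigma'';K_1)$ with $K_0\sqcup K_1 = K$ and $\tau$ a flat connection on $Q_1$. Using the construction of Subsection \ref{ss sections} on $\hat{X}^{(T)}$, we can choose admissible sections $\hat{s}_l^{(T)}$ on $\hat{\Sigma}_l^{(T)}$ that are compatible with this gluing, so the cut-down spaces factor accordingly; matching dimensions and counting signs gives the matrix identity expressing $\zeta^{(T)}$ as the composition of $\zeta$ and $\zeta'$, and passing to homology yields the stated equality (modulo a possibly-needed chain homotopy absorbed into the choice of the $\hat{s}_l^{(T)}$). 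The main obstacle throughout is the parameterised extension of Proposition \ref{prop section}: one must construct admissible sections over families of moduli spaces so that the compactness and transversality of Lemma \ref{lem compactness} and its variants persist after enlarging the dimension by one and accounting for the extra boundary contributions coming from the parameter endpoints and from the neck length $T$.
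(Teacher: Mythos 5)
Your overall strategy coincides with the paper's: the chain-map identity from analysing ends of one-dimensional cut-down moduli spaces over $\hat{X}$; independence of choices from a chain homotopy built out of parameterised moduli spaces over a path of metrics together with admissible parameterised sections (and you correctly identify the parameterised extension of Proposition~\ref{prop section} as the technical crux); and the composition formula from neck-stretching and gluing, exactly as the paper (which defers this to the gluing argument of the next subsection). So the three bullet points are handled along the same lines as the author does.

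However, the very first line of your proposal — the definition of $\zeta$ — is miscalibrated. You set
\[
\langle \zeta([\rho] \otimes \gamma_L), [\sigma] \otimes \gamma'_{L'} \rangle := \# M_{\hat{X}}(\rho, \sigma; \{1,\dots,d\} \setminus (L \cup L')),
\]
requiring $L \cap L' = \emptyset$, i.e.\ you cut down by the \emph{complement} of $L\cup L'$. The paper instead cuts by $L_1\setminus L_0$ with $L_0\subset L_1$:
$\langle \zeta([\rho]\otimes\gamma_{L_0}),[\sigma]\otimes\gamma'_{L_1}\rangle = \# M_{\hat{X}}(\rho,\sigma;L_1\setminus L_0)$. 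These are genuinely different, and your version does not give a degree-zero map. With your convention, nonvanishing requires $\dim M_{\hat{X}}(\rho,\sigma)=2(d-|L|-|L'|)$, while degree preservation ($\delta_{Y_0}(\rho)+2|L|=\delta_{Y_1}(\sigma)+2|L'|$) together with $\dim M_{\hat{X}}(\rho,\sigma)\equiv\delta_{Y_1}(\sigma)-\delta_{Y_0}(\rho)\pmod 8$ forces $2d\equiv 4|L|\pmod 8$, which does not hold in general; so your $\zeta$ is not a degree-zero map and the equation $\partial_1\circ\zeta=\zeta\circ\partial_0$ cannot even be posed degreewise. The index set you wrote is the one that appears in the \emph{pairing} $CFF_*(Y;\ungam)\otimes CFF_*(\bar{Y};\ungam)\to\Q$ (where $\gamma_L$ is dual to $\gamma_{L^c}$), but $\zeta$ here is a linear map $CFF_*(Y_0;\ungam)\to CFF_*(Y_1;\ungam')$ and its matrix coefficients must be indexed by nested subsets, exactly as in the formula $\langle\partial([\rho]\otimes\gamma_{L_1}),[\sigma]\otimes\gamma_{L_2}\rangle=\#M^0(\rho,\sigma;L_2\setminus L_1)$ for $L_1\subset L_2$. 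Once the definition is corrected to cut by $L'\setminus L$ with $L\subset L'$, the rest of your argument — dimension counts, end analysis, chain homotopy, neck-stretching — goes through as you describe.
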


We can derive Proposition \ref{prop well-defined} from Lemma \ref{lem chain map}.
Take two Riemannian metric $g_0, g_1$ on $Y$ and two sets $\{ s_l(\rho, \sigma) \}_{l}$, $\{ s_l'(\rho, \sigma) \}_l$ of sections of $\cL_{l}^{\otimes 2}(\rho, \sigma)$. Put $X=Y \times [0, 1]$, $\Sigma_l = \gamma_l \times [0, 1]$, $P=\pi^* Q$. Here $\pi$ is the projection from $X=Y \times [0,1]$ to $Y$. Choose a Riemannian metric $G$ on $X$ with $G|_{Y_0}=g_0$, $G|_{Y_1} = g_1$. These induce the maps
\[
\begin{split}
\zeta_*:HFF_*(Y; \ungam) &\longrightarrow HFF_*'(Y; \ungam), \\
\zeta'_* : HFF_*'(Y; \ungam) &\longrightarrow HFF_* (Y;\ungam).
\end{split}
\]

On the other hand, the metric $G_0 = g_0 + dt^2$ and the pull-backs $p^*(s_l(\rho, \sigma))$ of $s_l(\rho, \sigma)$ by $p:\M \rightarrow \0M$ induce an endomorphism $\zeta''_*$ of $HFF_*(Y;\ungam)$. It follows from the construction of $\zeta$ we will see below that $\zeta''_*$ is the identity map. 
The third part of Lemma \ref{lem chain map} implies 
\[
\zeta'_* \circ \zeta_* = \zeta''_* =  id.
\]
Similarly the composition $\zeta_* \circ \zeta'_*$ is the identity map of $HFF_*'(Y; \ungam)$. Thus $\zeta_*$ is an isomorphism from $HFF_*(Y;\ungam)$ to $HFF_*'(Y; \ungam)$.

\vspace{2mm}

We begin the proof of Lemma \ref{lem chain map}. Recall that the degree $\delta_{Y_0}( [\rho] )$, $\delta_{Y_1}( [\sigma])$ are defined by
\[
\begin{split}
\delta_{Y_0} ( [ \rho ] ) &\equiv
\ind D_{ A(\rho, \rho_0) } \mod 8, \\
\delta_{Y_1} ([ \sigma ] ) &\equiv
\ind D_{ A( \sigma, \rho_1 ) } \mod 8
\end{split}
\]
for some fixed flat connections $\rho_0, \rho_1$ over $Y_0, Y_1$.
For simplicity of notations, we suppose that 
\[
\ind D_{ A(\rho_0, \rho_1) } \equiv 0 \mod 8.
\]
Here $A(\rho, \sigma)$ is a connection on $\hat{X}$ with limits $\rho_0, \rho_1$. Then the dimension of $M_{ \hat{X} }(\rho, \sigma)$ is equal to $\delta_{Y_1}(\sigma) - \delta_{Y_0}(\rho)$ modulo 8. 

For $0 \leq \beta_0 \leq \beta_1 \leq d$, $L_0 \subset L_1 \subset \{ 1, \dots, d \}$ with $| L_0 | = \beta_0$, $| L_1 | = \beta_1$ and generators $[\rho] \in CF_{j-2\beta_0}(Y_0)$, $[\sigma] \in CF_{j-2\beta_1}(Y_1)$, the intersection $M_{ \hat{X} }(\rho, \sigma; L_1 \backslash L_0)$ is finite.
We define  $< \zeta ([\rho] \otimes \gamma_{L_1}), [\sigma] \otimes \gamma_{L_2} >$ by
\[
< \zeta ([\rho] \otimes \gamma_{L_0}), [\sigma] \otimes \gamma_{L_1}' >=
\# M_{ \hat{X} }(\rho, \sigma; L_1 \backslash L_0).
\]
Then the matrix elements give the map $\zeta:CFF_*(Y_0; \ungam) \rightarrow CFF_*(Y_1; \ungam')$. That is,
\[
\zeta([\rho] \otimes \gamma_{L_0}) =
\sum_{\beta_2} \sum_{ [\sigma] } \sum_{ L_1 }
< \zeta ([\rho] \otimes \gamma_{L_0}), [\sigma] \otimes \gamma_{L_1}' >
[\sigma] \otimes \gamma_{L_1}'.
\]
We give outline of the proof that $\zeta$ has the properties stated in Lemma \ref{lem chain map}.

\vspace{2mm}

The first part follows from a similar discussion to that in the proof that $\partial \circ \partial = 0$. Take a generator $[\tau] \in CF_{j-2\beta_1-1}(Y_1)$. Then we have a cut-down moduli space
$
M_{\hat{X}}(\rho, \tau; L_1 \backslash L_0)
$
with dimension $1$. Counting the number of the end of this moduli space, we get
\[
\partial_1 \circ \zeta = \zeta \circ \partial_0.
\]

\vspace{2mm}

To prove the second part, take other metric $\hat{G}'$ and sections $\hat{s}_l' (\rho, \sigma)$ over $\hat{X}$. Then we have another map $\zeta':CFF_*(Y_0; \ungam) \rightarrow CFF_*(Y_1,;\ungam')$. We will construct a chain homotopy
\[
H:CFF_*(Y_0; \ungam) \longrightarrow CFF_{*+1}(Y_1; \ungam')
\]
such that
\begin{equation} \label{eq chain homotopy}
\zeta' - \zeta = H \circ \partial_0 + \partial_1 \circ H.
\end{equation}
Take a path $\{ G^{s} \}_{ 0 \leq s \leq 1 }$ of metrics on $X$ from $G$ to $G'$, and put
\[
\overline{M}(\rho, \sigma) := 
\bigcup_{0 \leq s \leq 1} 
M_{ \hat{X}. \hat{G}^{s} }(\rho, \sigma) \times \{ s \}.
\]
Then $\overline{M}(\rho, \sigma)$ is smooth for generic paths of metrics. We can define line bundles $\overline{\cL}^{\otimes 2}_{l}(\rho, \sigma)$ over $\overline{M}(\rho, \sigma)$ as usual. 
As in Subsection \ref{ss sections},
for $\rho, \sigma$ with $\dim \overline{M}(\rho, \sigma) < 8$, we can construct sections $\overline{s}_l(\rho, \sigma)$ such that $\overline{s}(\rho, \sigma)$ are compatible with gluing maps, $\{ \overline{s}_l(\rho, \sigma) \}_l$ satisfy the transversality conditions as in Proposition \ref{prop section}, and 
\[
\overline{s}_l(\rho, \sigma)|_{M_{ \hat{X}, \hat{G}^0 }(\rho, \sigma) \times \{ 0 \}  } =s_l(\rho, \sigma),
\quad
\overline{s}_l(\rho, \sigma)|_{M_{ \hat{X}, \hat{G}^1 }(\rho, \sigma) \times \{ 1 \}  } =s_l'(\rho, \sigma).
\]
For $0 \leq \beta_0 \leq \beta_1 \leq d$, $L_0 \subset L_1 \subset \{ 1, \dots, d \}$ with $| L_0 | = \beta_0$, $| L_1 | = \beta_1$ and generators $[\rho] \in CF_{j-2\beta_0}(Y_0)$, $[\sigma] \in CF_{j-2\beta_1+1}(Y_1)$, we have a moduli space $\overline{M}(\rho, \sigma)$ with dimension $2(\beta_1-\beta_0)$. Cutting down $\overline{M}(\rho, \sigma)$ by the sections, we have the finite set $\overline{M}(\rho, \sigma;L_1 \backslash L_0)=\overline{M}(\rho, \sigma) \cap \bigcap_{l \in L_1 \backslash L_0} \overline{V}_l$. Here $\overline{V}_l$ is the zero locus of $\overline{s}_l(\rho, \sigma)$.
We put
\[
< H( [\rho] \otimes \gamma_{ L_0 }  ), [\sigma] \otimes \gamma_{L_1} > :=
\# \overline{M}(\rho, \sigma; L_1 \backslash L_0).
\]
These matrix elements give $H:CFF_{j}(Y_0;\ungam) \rightarrow CFF_{j+1}(Y_1;\ungam')$ as usual.
We can show the chain homotopy condition (\ref{eq chain homotopy}) by counting the number of the ends of the cut-down moduli spaces $\overline{M}(\rho, \tau;L_1 \backslash L_0)$ for generators $[\tau] \in CF_{j-2\beta_1}(Y_1)$.
Thus we obtain the second part of Lemma \ref{lem chain map}.

\vspace{2mm}

The proof of the third part is essentially same as the proof of the gluing formula for Donaldson invariants which will be  given in the next subsection, and we omit the proof here.

\vspace{3mm}

We give a remark on the dependence of $HFF_*(Y;\ungam)$ on $\ungam$.
It seems that $HFF_*(Y; \ungam)$ depends only on the homology classes $[\gamma_l] \in H_1(Y;\Z)$.
If $\gamma_l$ and $\gamma_l'$ are homologous, then we have oriented, compact, surfaces $\Sigma_l$ in $Y \times [0, 1]$ with boundary $\gamma_l \coprod \gamma_l'$. As above we can construct a linear map 
\[
\zeta_{\underline{\Sigma}}:CFF_*(Y;\ungam) \rightarrow CFF_*(Y; \ungam')
\] 
where $\underline{\Sigma} = (\Sigma_1, \dots, \Sigma_{d})$.  
To prove that $\zeta_{\underline{\Sigma} }$ induces an isomorphism from $HFF_*(Y; \ungam)$ to $HFF_*(Y; \ungam')$, it is sufficient to show that the map $\zeta_{ \underline{\Sigma} }$ has the properties as in Lemma \ref{lem chain map}. 
The most difficult part of the proof is to prove that the induced map between the Fukaya-Floer homology groups is independent of $\underline{\Sigma}$.

Suppose that we have another bordism $\Sigma_l'$ from $\gamma_l$ to $\gamma_l'$. 
We would like to show that the two maps between the Fukaya-Floer homology groups induced by $\zeta_{\underline{\Sigma}}$ and $\zeta_{\underline{\Sigma}'}$ are the same.
A natural way to prove this is to use an isomorphism between $\cL_{ \hat{\Sigma}_1 }(\rho, \sigma)$ and $\cL_{ \hat{\Sigma}_1' }(\rho, \sigma)$ which is compatible with the gluing maps in an appropriate sense, if the isomorphism exists.
Hence the problem reduces to the existence of the isomorphism.
This can be regarded as a generalization of the fact that the index of a family of elliptic differential operators on a closed manifold depends only on the bordism class.

We can take a compact, oriented manifold $W$ with corner, whose boundary is 
\begin{equation} \label{eq boundary}
\Sigma_l \cup \Sigma_l' \cup (\gamma_l \times [0, 1]) \cup (\gamma_l' \times [0,1])
\end{equation}
as follows.
Since $[\gamma_l]$ is equal to $[\gamma_l']$ in $H_1(Y;\Z)$, there is a complex line bundle $L$ over $Y$ and sections $s$, $s'$ of $L$ with $s^{-1}(0)=\gamma_l$, $s^{' -1}(0) = \gamma_l'$. Moreover there are sections $\tilde{s}$, $\tilde{s}'$ of the complex line bundle $L \times [0,1]$ over $Y \times [0, 1]$ with \begin{gather*}
\tilde{s}|_{ Y \times \{ 0 \} } = s, \quad
\tilde{s}|_{ Y \times \{ 1 \} } = s', \quad
\tilde{s}^{-1}(0) = \Sigma_l,  \\
\tilde{s}'|_{ Y \times \{ 0 \} } = s, \quad
\tilde{s}'|_{ Y \times \{ 1 \} } = s', \quad
\tilde{s}^{'-1}(0) = \Sigma_l'.
\end{gather*}

Take a section $\hat{s}$ of the line bundle $L \times [0, 1] \times [0, 1]$ over $Y \times [0, 1] \times [0, 1]$ with
\begin{gather*}
\hat{s}|_{ Y \times [0, 1] \times \{ 0 \} } = \tilde{s}, \quad
\hat{s}|_{ Y \times [0, 1] \times \{ 1 \} } = \tilde{s}',  \quad
\hat{s}|_{ Y \times \{ 0 \} \times \{ u \} } = s, \quad
\hat{s}|_{ Y \times \{ 1 \} \times \{ u \} } = s' \quad
(u \in [0, 1]).
\end{gather*}
For a generic $\hat{s}$, the zero locus $W$ of $\hat{s}$ is a compact, oriented manifold whose boundary is (\ref{eq boundary}).
Attaching $\gamma_l \times \R_{ \geq 0}  \times [0, 1]$, $\gamma_l' \times \R_{ \geq 0 } \times [0, 1]$ to $W$, we get a ``bordism'' $\hat{W}$ from $\hat{\Sigma}_l$ to  $\hat{\Sigma}_l'$, i.e. $\hat{W}$ is an oriented manifold with boundary $\hat{\Sigma}_l \coprod \hat{\Sigma}_l'$.

If we mimic the proof of the invariance of the index of operators over closed manifolds with respect to bordism, it may be possible to prove that there is an isomorphism from $\cL_{\hat{\Sigma}_l}(\rho,\sigma)$ to $\cL_{\hat{\Sigma}_l'}(\rho, \sigma)$ which is compatible with the gluing maps.
However, we do not discuss this in this paper and we just state it as a conjecture.

\begin{conjecture} \label{conj invariance}
In the above notations, the bordism $\hat{W}$ between $\hat{\Sigma}_l$ and $\hat{\Sigma}_l'$ gives a natural isomorphism between $\cL_{\hat{\Sigma}_l}(\rho, \sigma)$ and $\cL_{ \hat{\Sigma}_l' }(\rho, \sigma)$ which are compatible with the gluing maps.
\end{conjecture}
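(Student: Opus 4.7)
The plan is to realize the desired isomorphism as the determinant of a family of Dirac-type operators on $\hat{W}$, imitating the classical cobordism invariance of index for families, and then to verify compatibility with gluing by a neck-stretching argument parallel to the one in Subsection \ref{ss eva}.

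First I would choose a spin structure on $\hat{W}$ restricting on the two boundary pieces to the square roots $K_{\hat{\Sigma}_l}^{1/2}$ and $K_{\hat{\Sigma}_l'}^{1/2}$ used in defining $\cL_{\hat{\Sigma}_l}(\rho,\sigma)$ and $\cL_{\hat{\Sigma}_l'}(\rho,\sigma)$; such a choice exists because the trivial-bordism spin structures on $\gamma_l$ and $\gamma_l'$ extend over the 3-manifold $\hat{W}$. For each $[A] \in \cB^*_{\hat X}(\rho,\sigma)$, pull $A$ back to $\hat X \times [0,1]$ and restrict to the embedded $\hat{W}$; twisting the spin Dirac operator by the restricted bundle and endowing the cylindrical ends $\gamma_l \times \R_{\geq 0} \times [0,1]$ and $\gamma_l' \times \R_{\geq 0} \times [0,1]$ with the weight functions $W'_\tau$ from Subsection \ref{ss Fukaya}, one obtains a Fredholm family $\{\Dir_A^{\hat W}\}$. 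The standard APS-type splitting at the boundary identifies
\[
\det \Ind \Dir_A^{\hat W} \;\cong\; \big(\cL_{\hat\Sigma_l}(\rho,\sigma)\big)^{-1}_{[A]} \otimes \cL_{\hat\Sigma_l'}(\rho,\sigma)_{[A]},
\]
so it suffices to construct a canonical nowhere vanishing section of $\det \Ind \Dir^{\hat W}$ over $\cB^*_{\hat X}(\rho,\sigma)$.

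Next I would produce this section by a family version of cobordism invariance. Because $\hat W$ fibers (outside a compact set) over an interval, one can homotope the operator $\Dir_A^{\hat W}$ through a one-parameter family of Fredholm operators interpolating between a trivial model on $\hat{\Sigma}_l \times [0,1]$ and one on $\hat{\Sigma}_l' \times [0,1]$, after suitable stabilization on a finite rank bundle as in Subsection \ref{ss Fukaya}. The resulting determinant line carries a tautological section coming from the fact that, after this stabilization, the index of the Dirac operator on the closed-up 3-manifold vanishes fiberwise, so $\ker$ and $\coker$ have equal rank and are canonically paired. This is the step that directly generalizes ``the index of a family of elliptic differential operators on a closed manifold depends only on the bordism class.''

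The main obstacle, and the step I would work out most carefully, is compatibility with the gluing maps $\widehat{Gl}$. When $[A^\alpha] \to ([A_1^\infty],\dots,[A_r^\infty])$ along a broken trajectory, the 3-manifold $\hat W$ acquires corresponding necks $\gamma_l \times [T_i^\alpha, T_i^\alpha + L] \times [0,1]$ and $\gamma_l' \times [T_i^\alpha, T_i^\alpha + L] \times [0,1]$, and the family $\Dir^{\hat W}_{A^\alpha}$ degenerates into pieces indexed the same way the operators on $\hat{\Sigma}_l, \hat{\Sigma}_l'$ do. I would cut off kernel elements of the limit pieces using the bump functions $\gamma_i$ as in (\ref{eq f'}), apply a right inverse for $\Dir^{\hat W}_{A^\alpha}$ assembled from the right inverses of the pieces (as for $Q_T$ in Subsection \ref{ss eva}), and obtain the gluing isomorphism for $\det\Ind\Dir^{\hat W}$. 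The key estimate is that the correction term goes to zero in the weighted $L^{2,(-\tau,\tau)}$-norm as the neck lengths diverge, which one proves by the same splitting into exponentially decaying tails and uniformly continuous central regions as in the proof of Lemma \ref{lem 2 instantons I}. Once this is in place, the factorization of the APS section through gluing gives the compatibility in Conjecture \ref{conj invariance}.

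Finally, independence of the isomorphism from the choice of $\hat W$ would follow by the same principle applied one dimension higher: two choices $\hat W, \hat W'$ bound a 4-dimensional bordism in $Y \times [0,1]^3$ (obtained by a generic section argument as for $\hat W$ itself), and the corresponding family index over that bordism is again canonically trivial, forcing the two isomorphisms to agree.
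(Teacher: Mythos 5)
The statement you are trying to prove is labeled \textbf{Conjecture} in the paper, and the author explicitly writes ``we do not discuss this in this paper and we just state it as a conjecture.'' There is therefore no proof in the paper against which to compare; what the author offers is only the heuristic you have taken as your starting point, namely ``mimic the proof of the invariance of the index of operators over closed manifolds with respect to bordism.''

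Within that constraint, your outline correctly identifies the two things a genuine proof would need: (i) a family of operators over $\hat{W}$ whose determinant line splits against $\cL_{\hat{\Sigma}_l}^{-1}\otimes\cL_{\hat{\Sigma}_l'}$, and (ii) a neck--stretching verification, modeled on Lemma \ref{lem 2 instantons I}, that the resulting isomorphism intertwines the $\widehat{Gl}$ maps. The second part of your sketch is plausible and follows the same pattern as the arguments in Subsection \ref{ss eva}.

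The gap is in the first part, and it is substantive. You assert that the APS-type splitting gives
\[
\det\Ind\Dir_A^{\hat W}\;\cong\;\bigl(\cL_{\hat\Sigma_l}(\rho,\sigma)\bigr)^{-1}_{[A]}\otimes\cL_{\hat\Sigma_l'}(\rho,\sigma)_{[A]},
\]
and then that $\det\Ind\Dir^{\hat W}$ is canonically trivialized because ``ker and coker have equal rank and are canonically paired.'' Neither claim is justified as written. Equality of ranks never produces a canonical isomorphism of determinant lines; for a genuinely self-adjoint family one can canonically trivialize $\det\Ind$, but $\Dir_A^{\hat W}$ is not self-adjoint here---the cylindrical ends carry the asymmetric weight $W_\tau'$, and $\hat W$ has nonempty boundary so a boundary condition must be imposed before one even has a Fredholm problem. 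Moreover, the APS boundary condition on the $3$--manifold $\hat W$ references the \emph{full} Dirac operators on $\hat\Sigma_l$, $\hat\Sigma_l'$, whereas the bundles $\cL_{\hat\Sigma_l}$, $\cL_{\hat\Sigma_l'}$ are built from the \emph{chiral} twisted $\bar\partial$-operators; the two are related but not by a tautological splitting, and passing from one to the other is precisely where the determinant-line-level content of the conjecture lies. Your intermediate claim that $\hat W$ ``fibers over an interval'' is also not correct away from the ends: $W$ is a generic zero locus in $Y\times[0,1]\times[0,1]$ and the restriction of $\hat s$ to a slice $Y\times[0,1]\times\{u\}$ may have singular zero set. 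So as it stands, the key step---producing a canonical nowhere vanishing section of $\det\Ind\Dir^{\hat W}$ that survives the neck degenerations---remains unestablished, which is consistent with the fact that the paper leaves this point open.
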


If this is true, $HFF_*(Y; \ungam)$ depends only on the homology classes $[\gamma_l] \in H_1(Y;\Z)$.

\subsection{Relative Donaldson invariants}

In  this subsection, we will see that
the Fukaya-Floer homology groups allow us to extend the Donaldson invariants for closed 4-manifolds to 4-manifolds with boundary. 

Let $Y$ be a closed, oriented 3-manifold and $\ungam = \{ \gamma_l \}_{l=1}^{d}$ be a set of loops in $Y$, where $0 \leq d \leq 3$. For compact, oriented 4-manifolds $X_0$ with boundary $Y$, we write $\hat{X}_0$ for $X_0 \cup ( Y \times \R_{ \geq 0})$. We also denote $\Sigma_{l} \cup ( \gamma_l \times \R_{\geq 0})$ by $\hat{\Sigma}_l$ for surfaces $\Sigma_l$ embedded in $X_0$ with boundary $\gamma_l$.
Assume that $X_0$ is simply connected and $b^+ >1$. Moreover suppose that we have a $U(2)$-bundle $P_0$ on $X_0$ such that the restriction $Q = P_0 |_{Y}$ satisfies Hypothesis \ref{ass flat conn}. 
Take a metric $\hat{g}_0$ on $\hat{X_0}$ such that the restriction of $\hat{g}_0$ to $Y \times \R_{\geq 0}$ is equal to $g_Y + dt^2$ for some metric $g_Y$ on $Y$.
Using families of twisted $\bar{\partial}$ operators over $\hat{\Sigma}_l$, we can define complex line bundles $\cL_{l}^{\otimes 2}(\rho)$ over the moduli space $M_{\hat{X}_0}(\rho)$ of instantons on the extension $\hat{P}_0$ of $P_0$ to $\hat{X}_0$ with limit $\rho$. As before, we can show that for $\rho$ with $\dim M_{\hat{X}_0}(\rho)<8$ there are sections $s_l(\rho)$ of the line bundles with properties similar to those in Proposition \ref{prop section}.

Let $\rho_0$ be the fixed flat connection used to define $\delta_{Y}$. For simplicity of notations, we suppose that
\[
\ind D_{A(\rho_0)} \equiv 0 \mod 8,
\]
where $A(\rho_0)$ is a connection on $\hat{P}_0$ with limit $\rho_0$. Then we have
\[
\dim M_{ \hat{X}_0 }(\rho) \equiv -\delta_Y ( [\rho] ) \mod 8.
\]
Under these hypotheses,  we define an element $\psi_{X_0} = \psi_{ \hat{X}_0 } \in CFF_0(Y; \ungam)$ as follows.
Let $\beta$ be an integer with $0 \leq \beta \leq d$ and take $L \subset \{ 1, \dots, d \}$ with $| L | = \beta$ and a generator $[\rho] \in CF_{-2\beta}(Y)$. Then we get a number
\[
< \psi_{X_0}, [\rho] \otimes \gamma_L > := \# M_{ \hat{X}_0 }(\rho; L).
\]
Here $M_{ \hat{X}_0}(\rho; L)$ is the 0-dimensional cut-down moduli space by the sections $\{ s_l(\rho) \}_{l \in L }$.
We define $\psi_{X_0}$ by
\[
\psi_{X_0} := 
\sum_{\beta} \sum_{L} \sum_{[\rho]}
< \psi_{X_0}, [\rho] \otimes \gamma_L > [ \rho ] \otimes \gamma_L
\in
CFF_0(Y; \ungam).
\]

The following proposition follows from a combination of a discussion of the case when $Y$ is a homology 3-sphere and techniques we have developed in this paper.

\begin{proposition}
The chain $\psi_{X_0}$ is a cycle, and the class $\Psi_{X_0}=\Psi_{X_0}(\Sigma_1, \dots, \Sigma_{d}) \in HFF_0(Y; \ungam)$ represented by $\psi_{X_0}$ is independent of the metric and the sections.
\end{proposition}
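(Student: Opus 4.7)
The plan is to imitate, in the relative setting, the two arguments already carried out in Section \ref{section Fukaya-Floer}: the proof that $\partial \circ \partial = 0$ (Lemma \ref{lem complex}) for the cycle condition, and the chain-homotopy construction appearing in Lemma \ref{lem chain map} for the independence. The analogue of Proposition \ref{prop section} for sections $s_l(\rho)$ on $M_{\hat{X}_0}(\rho)$ is the only new analytic input; granted this, both the compactness/transversality of the cut-down 0-dimensional moduli spaces defining $\psi_{X_0}$ and the end analysis for one-dimensional moduli spaces work formally as before. The whole argument is driven by counting ends.

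First I would verify that $\partial \psi_{X_0} = 0$. Fix a generator $[\sigma] \otimes \gamma_{L_2} \in CFF_{-1}(Y;\ungam)$ with $|L_2| = \beta_2$ and $\delta_Y([\sigma]) \equiv -2\beta_2-1 \pmod 8$. The matrix coefficient of $\partial \psi_{X_0}$ on $[\sigma]\otimes\gamma_{L_2}$ is, by definition,
\[
\sum_{\beta_1 \leq \beta_2}\ \sum_{L_1 \subset L_2,\ |L_1|=\beta_1}\ \sum_{[\rho]}\ \# M_{\hat{X}_0}(\rho;L_1) \cdot \# M_{Y\times\R}^0(\rho,\sigma; L_2 \backslash L_1),
\]
where $[\rho]$ runs over flat connections with $\delta_Y([\rho]) \equiv -2\beta_1 \pmod 8$. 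Consider the cut-down moduli space $M_{\hat{X}_0}(\sigma; L_2)$, which has formal dimension $1$; after a generic perturbation of the sections (on a compact set, as in the proof of Proposition \ref{prop section}) it is a smooth $1$-manifold. Exactly the dimension count used in Lemma \ref{lem complex}, together with the additivity of the index and the assumption $\dim M_{\hat{X}_0}(\sigma) < 8$ that rules out bubbling, shows that each end of $M_{\hat{X}_0}(\sigma; L_2)$ is given by gluing a point of $M_{\hat{X}_0}(\rho;L_1)$ to a point of $M_{Y\times\R}^0(\rho,\sigma;L_2 \backslash L_1)$ for some $(\beta_1, [\rho], L_1)$ as above, and conversely every such pair contributes a unique end via the gluing construction and property \eqref{tensor} of admissibility. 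Counting the ends with sign gives $\langle \partial\psi_{X_0}, [\sigma]\otimes\gamma_{L_2}\rangle = 0$.

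For independence, take two choices $(\hat{g}_0, \{s_l(\rho)\})$, $(\hat{g}_0', \{s_l'(\rho)\})$ and interpolate by a generic path $\{\hat{g}_0^u\}_{u \in [0,1]}$ of metrics on $\hat{X}_0$. On the parametrized moduli space $\overline{M}(\rho) = \bigcup_{u} M_{\hat{X}_0,\hat{g}_0^u}(\rho) \times \{u\}$, construct admissible sections $\bar{s}_l(\rho)$ agreeing with $s_l(\rho)$ at $u = 0$ and with $s_l'(\rho)$ at $u = 1$; this is done exactly as in Subsection \ref{ss sections}, and the compatibility with the gluing maps and with the sections $s_l(\rho,\sigma)$ on the neck follows from the same induction. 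For $L \subset \{1,\dots,d\}$ and $[\rho]$ with $\dim \overline{M}(\rho;L) = 0 $ (i.e.\ $\delta_Y([\rho]) \equiv -2|L|+1 \pmod 8$), set
\[
\langle h, [\rho] \otimes \gamma_L \rangle := \# \overline{M}(\rho;L).
\]
This defines $h \in CFF_1(Y;\ungam)$. Counting the ends of the $1$-dimensional parametrized moduli spaces $\overline{M}(\sigma;L_2)$ now gives contributions of three types: the boundary $u = 0, 1$ slices, producing $\psi_{X_0}' - \psi_{X_0}$; breakings where an instanton on $\hat{X}_0$ splits off an instanton along the neck, producing $\partial h$; and breakings where a neck instanton bubbles off first, producing $h\circ \partial$-style terms, which all vanish because $\psi_{X_0}$ is defined with no incoming end (equivalently, there is no predecessor chain to apply $\partial$ to). Putting this together yields $\psi_{X_0}' - \psi_{X_0} = \partial h$, hence $[\psi_{X_0}] = [\psi_{X_0}']$ in $HFF_0(Y;\ungam)$.

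The main obstacle is the end analysis: one has to justify that the only codimension-one degenerations of $M_{\hat{X}_0}(\sigma;L_2)$ and of $\overline{M}(\sigma;L_2)$ are the ones listed above. Bubbling is excluded by the dimension bound $\dim M_{\hat{X}_0}(\sigma) < 8$ and the additivity of the Fredholm index, exactly as in Section \ref{section Fukaya-Floer}. The more delicate point is the bookkeeping of how admissibility of $s_l(\rho)$ on $\hat{X}_0$ interacts with admissibility of $s_l(\rho,\sigma)$ on the cylinder: the convergence property \eqref{tensor} (extended to $\hat{X}_0$) is what guarantees that a sequence in the cut-down one-manifold whose underlying instantons weakly converge must have each factor sitting in the appropriate cut-down moduli space of the limit, while the transversality properties \eqref{transversality I} and \eqref{transversality II} force the splitting to occur at only one neck and at the dimensions claimed. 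Once this analogue of Proposition \ref{prop section} for $\hat{X}_0$ is established, the combinatorics of ends is formally identical to the proof of Lemma \ref{lem complex} and the chain-homotopy part of Lemma \ref{lem chain map}, so the argument proceeds along the same lines.
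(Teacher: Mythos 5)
Your proposal is correct and fills in, along precisely the lines the paper intends, the details that the paper relegates to the one-line remark that the proposition ``follows from a combination of a discussion of the case when $Y$ is a homology 3-sphere and techniques we have developed in this paper.'' Both pieces — the cycle condition by counting ends of the 1-dimensional cut-down moduli spaces $M_{\hat X_0}(\sigma;L_2)$, and the independence via a parametrized moduli space over a path of metrics/sections yielding a chain $h$ with $\psi_{X_0}'-\psi_{X_0}=\partial h$ — are exactly the relative analogues of Lemma \ref{lem complex} and the chain-homotopy part of Lemma \ref{lem chain map}, and the dimensional bookkeeping you carry out matches the paper's conventions. The only slightly loose phrasing is the claim that ``$h\circ\partial$-style terms all vanish because $\psi_{X_0}$ has no incoming end''; it would be cleaner to say that since $\hat X_0$ has a single cylindrical end, every codimension-one degeneration of the $1$-dimensional parametrized moduli space is either a boundary slice $u=0,1$ or a single splitting along that end, so the chain-homotopy identity for a chain (rather than a chain map) reduces to $\psi'-\psi=\partial h$ with no second term present at all.
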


\begin{remark}
It seems that $\Psi_{X_0}(\Sigma_1, \dots, \Sigma_d)$ depend only on the homology classes $[\Sigma_1], \dots, [\Sigma_d] \in H_2(X_0,Y; \Z)$.
This is true if an analogy of Conjecture \ref{conj invariance} holds for the line bundles $\cL^{ \otimes 2 }_l(\rho)$.
\end{remark}

\subsection{Gluing formula}
\label{ss gluing D inv}

We consider a situation where a closed, oriented 4-manifold $X$ is cut into two parts $X_0$, $X_1$ along a closed 3-manifold $Y$. Let $P$ be a $U(2)$-bundle  over $X$ such that the restriction $Q=P|_{Y}$ satisfies Hypothesis \ref{ass flat conn} and $\dim M_P = 2d$ for some integer $d$ with $0 \leq d \leq 3$.
We suppose that $X_0, X_1$ are simply connected and $b^+$ of $X_0, X_1$ are larger than $1$.
Let $\Sigma_1, \dots, \Sigma_d$ be embedded surfaces in $X$ such that the intersections $\gamma_l := Y \cap \Sigma_l$ are diffeomorphic to $S^1$.
We denote $X_0 \cap \Sigma_l$, $X_1 \cap \Sigma_l$ by $\Sigma_l', \Sigma_l''$.
Then we have relative invariants
\[
\Psi_{X_0}(\Sigma_1', \dots, \Sigma_{d}') \in HFF_0(Y; \ungam), \quad
\Psi_{X_1}(\Sigma_1'', \dots, \Sigma_{d}'') \in HFF_{2d-3}( \bar{Y}; \ungam).
\]
We will express the invariant $\Psi_{X}([\Sigma_1],\dots, [\Sigma_d])$ of the closed manifold $X$ in terms of the relative invariants $\Psi_{X_0}$, $\Psi_{X_1}$. To do this, we define a pairing 
\[
< \ , \ >:HFF_{j}(Y; \ungam) \otimes HFF_{-j-3+2d}( \bar{Y}; \ungam) 
\longrightarrow
\Q
\]
as follows. 

For flat connections $\rho$, we have
\[
\delta_{ \bar{Y} }([\rho]) \equiv -\delta_{Y}([\rho]) - 3 \mod 8.
\]
Hence $CF_{j}(Y)$ and $CF_{-j-3}( \bar{Y})$ are dual to each other.
We define a pairing 
\[
< \ , \ > : CFF_{j}(Y;\ungam) \otimes CFF_{-j-3+2 d}( \bar{Y}; \ungam) \longrightarrow \Q
\]
by setting $[\rho] \otimes \gamma_{L^c} \in CF_{-j-3+2\beta}( \bar{Y}) \otimes \Q< \gamma_{L^c}>$ as the dual element of $[\rho] \otimes \gamma_{L} \in CF_{j-2\beta}(Y) \otimes \Q < \gamma_L >$.
Here $L$ is a subset of $\{ 1, \dots, d \}$ with $| L | = \beta$ and $L^c$ is the complement of $L$. It is easy to see the following.

\begin{lemma}
$< \partial ( [\rho] \otimes \gamma_{L}), [\sigma] \otimes \gamma_{L'} > = 
\pm < [\rho] \otimes \gamma_L, \partial( [\sigma] \otimes \gamma_{L'}) >.
$
\end{lemma}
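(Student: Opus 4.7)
The plan is to use the orientation-reversing involution $\iota: Y \times \R \to \bar Y \times \R$, $(y,t) \mapsto (y,-t)$, to identify the cut-down moduli spaces on $Y$ with those on $\bar Y$, and then match the two sides of the claimed identity against the definitions of $\partial$ and of the pairing.

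First I would note that $\iota$ is an orientation-preserving isometry of Riemannian 4-manifolds (the orientation reversals on $Y$ and on $\R$ cancel) which swaps the two asymptotic ends of a flow line. Pullback by $\iota$ therefore induces a canonical diffeomorphism
$$\iota^*: M^0_{Y\times\R}(\rho,\sigma) \xrightarrow{\sim} M^0_{\bar Y\times\R}(\sigma,\rho),$$
which restricts on each slice $\Gamma_l = \gamma_l \times \R$ to the analogous involution and produces an isomorphism $\iota^*\bar{\cL}^{\otimes 2}_l(\sigma,\rho) \cong \cL^{\otimes 2}_l(\rho,\sigma)$ of complex line bundles (the factor of two in the tensor power absorbs the swap of holomorphic and antiholomorphic structures under the orientation reversal on $\Gamma_l$). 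Because every ingredient in the construction of admissible sections in Proposition \ref{prop section} --- the locally finite cover $\{U_{\lambda}\}$ built from the gluing regions, the partition of unity of Lemma \ref{lem partition}, and the compact-set perturbation justified by Lemma \ref{lem compactness} --- is defined naturally on $Y\times\R$, the map $\iota^*$ carries admissible sections on the $Y$-side to admissible sections on the $\bar Y$-side and identifies the cut-down moduli spaces $M^0_{Y\times\R}(\rho,\sigma; L) \cong M^0_{\bar Y\times\R}(\sigma, \rho; L)$ for every $L \subset \{1,\dots,d\}$. The oriented point counts therefore coincide up to a sign depending only on universal dimension parities.

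The rest is combinatorial. Writing $L = L_1$ and $L' = L_2^c$ for some $L_2 \subset \{1,\dots,d\}$, the definition of $\partial_Y$ gives
$$\langle \partial([\rho]\otimes\gamma_{L_1}),\,[\sigma]\otimes\gamma_{L_2^c}\rangle = \# M^0_{Y\times\R}(\rho,\sigma; L_2\setminus L_1),$$
nonzero only when $L_1 \subset L_2$. On the other side, the coefficient of $[\rho]\otimes\gamma_{L_1^c}$ in $\partial_{\bar Y}([\sigma]\otimes\gamma_{L_2^c})$ is $\# M^0_{\bar Y\times\R}(\sigma,\rho; L_1^c\setminus L_2^c)$, which vanishes unless $L_2^c \subset L_1^c$, i.e.\ $L_1 \subset L_2$; moreover $L_1^c \setminus L_2^c = L_2 \setminus L_1$. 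Pairing with $[\rho]\otimes\gamma_{L_1}$ picks off exactly this coefficient, and via $\iota^*$ it equals the left-hand side up to the universal sign above.

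The main obstacle is verifying that the orientation-sign discrepancy introduced by $\iota^*$ is independent of the data $(\rho,\sigma,L_1,L_2)$ so that it can legitimately be absorbed into the $\pm$ of the statement. A careful treatment would track this via the spectral-flow convention orienting $M^0_{Y\times\R}(\rho,\sigma)$ and the canonical complex orientation of the $\cL_l^{\otimes 2}$; however, since signs are not pinned down elsewhere in the paper (the ultimate applications are over $\Z_2$), the identification of underlying finite sets via $\iota^*$ already suffices to prove the lemma as stated.
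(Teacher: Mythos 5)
Your proof is correct, and it supplies the argument that the paper leaves entirely implicit (the paper simply says ``it is easy to see the following'' and gives no proof). The structure you use --- identify $M^0_{Y\times\R}(\rho,\sigma)$ with $M^0_{\bar Y\times\R}(\sigma,\rho)$ via $(y,t)\mapsto(y,-t)$, transport the admissible sections along this identification so that the cut-down moduli spaces $M^0_{Y\times\R}(\rho,\sigma;L)$ and $M^0_{\bar Y\times\R}(\sigma,\rho;L)$ coincide, and then verify the combinatorics of the dual basis $[\rho]\otimes\gamma_L \leftrightarrow [\rho]\otimes\gamma_{L^c}$ --- is the intended one, and your check that $L_1\subset L_2$ matches $L_2^c\subset L_1^c$ and that $L_1^c\setminus L_2^c = L_2\setminus L_1$ is exactly right.

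One small caveat: the parenthetical assertion that ``the factor of two in the tensor power absorbs the swap of holomorphic and antiholomorphic structures'' on $\Gamma_l$ is not quite accurate. The orientation reversal on $\Gamma_l$ conjugates the determinant line, and squaring a complex line does not undo complex conjugation, so $\iota^*\bar{\cL}_l^{\otimes 2}$ is not canonically isomorphic as a complex line bundle to $\cL_l^{\otimes 2}$. What one actually has is an antilinear isomorphism, which identifies zero sets of conjugate sections but reverses the complex orientation of the normal bundle. This discrepancy is harmless here precisely for the reason you already state at the end: the lemma is only claimed up to sign, the paper does not track signs at all, and the identification of the underlying finite sets (which the antilinear isomorphism does give) is sufficient. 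I would simply drop the parenthetical explanation rather than assert the incorrect isomorphism of complex line bundles.
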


Therefore we get the pairing
\[
< \, \ >:HFF_{j}(Y; \ungam) \otimes HFF_{-j-3+2d}(Y;\ungam) \longrightarrow \Q
\]
on Fukaya-Floer homology groups.

\begin{theorem} \label{thm gluing D inv}
In the above situation, we have
\[
\Psi_{X}([\Sigma_1], \dots, [\Sigma_d] ) =
\frac{1}{2^{d}}
< \Psi_{X_0}( \Sigma_1', \dots, \Sigma_d'), \Psi_{X_1}(\Sigma_1'', \dots, \Sigma_{d}'')>.
\]
\end{theorem}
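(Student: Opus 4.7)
The plan is to execute the outline sketched in Subsection \ref{ss outline} by combining the admissible sections of Proposition \ref{prop section} with analogous sections over $\hat{X}_0$ and $\hat{X}_1$. First I would fix Riemannian metrics $\hat{g}_0$, $\hat{g}_1$ on $\hat{X}_0$, $\hat{X}_1$ of cylindrical form $g_Y + dt^2$ on the ends, and for a divergent sequence $T^\alpha \to \infty$ let $g^\alpha$ be the induced metric on $X^\alpha = X_0 \cup (Y \times [-T^\alpha, T^\alpha]) \cup X_1$ (diffeomorphic to $X$). The key technical input is a family of sections $s_l^\alpha$ of $\cL_{\Sigma_l}^{\otimes 2}$ over $M_P(g^\alpha)$ for $l = 1,\dots,d$ such that whenever $[A^\alpha]$ weakly converges to a broken trajectory $([A_0^\infty], [A_1^\infty], \dots, [A_r^\infty])$ with $[A_0^\infty] \in M_{\hat{X}_0}(\rho(0))$, $[A_r^\infty] \in M_{\hat{X}_1}(\rho(r-1))$ and $[A_i^\infty] \in M^0_{Y \times \R}(\rho(i-1), \rho(i))$ in between, the factorisation (\ref{eq section tensor}) holds. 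These $s_l^\alpha$ are built by the same inductive partition-of-unity procedure as in Subsection \ref{ss sections}, now combining three flavours of building block: the cap sections $s_l(\rho)$ over $M_{\hat{X}_0}(\rho)$ and $M_{\hat{X}_1}(\rho)$ used to define $\Psi_{X_0}$, $\Psi_{X_1}$, together with the cylindrical admissible sections $s_l(\rho(i-1),\rho(i))$ from Proposition \ref{prop section}, glued via the instanton gluing maps with cut-off parameters controlled as in (\ref{eq condition on T}).

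Once the $s_l^\alpha$ are in place, the argument splits into compactness and gluing halves, both parallel to the proof of Lemma \ref{lem complex}. For the compactness half, any $[A^\alpha] \in M_P(g^\alpha) \cap V_{\Sigma_1} \cap \cdots \cap V_{\Sigma_d}$ has a weakly convergent subsequence by Uhlenbeck compactness, and since $\dim M_P = 2d \le 6 < 8$ no bubbling occurs, so the limit is a broken trajectory of the above form. The section convergence together with $s_l^\alpha([A^\alpha]) = 0$ forces, for each $l$, at least one factor on the right of (\ref{eq section tensor}) to vanish; feeding this into index additivity and the transversality conditions (\ref{transversality I}), (\ref{transversality II}) of Proposition \ref{prop section} (and their cap analogues), exactly as in the proof of $\partial \circ \partial = 0$, I conclude $r = 1$ and obtain a subset $L \subset \{1,\dots,d\}$ and a flat limit $[\rho]$ with $\dim M_{\hat{X}_0}(\rho) = 2|L|$, $\dim M_{\hat{X}_1}(\rho) = 2|L^c|$, $[A_0^\infty] \in M_{\hat{X}_0}(\rho;L)$, $[A_1^\infty] \in M_{\hat{X}_1}(\rho;L^c)$.

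For the gluing half, any pair $([A_0^\infty], [A_1^\infty]) \in M_{\hat{X}_0}(\rho;L) \times M_{\hat{X}_1}(\rho;L^c)$ with matching flat limit $[\rho]$ can be glued to an approximate instanton on $X^\alpha$ for $\alpha$ large, which after the standard Newton-iteration perturbation lies in $M_P(g^\alpha)$. Property (\ref{eq section tensor}) together with the local transversality of the $s_l^\alpha$ then shows (as in the end-identification (\ref{eq M' end})) that exactly one nearby glued instanton lands in $V_{\Sigma_1} \cap \cdots \cap V_{\Sigma_d}$. Combining the two halves yields a sign-preserving bijection
\[
M_P(g^\alpha) \cap V_{\Sigma_1} \cap \cdots \cap V_{\Sigma_d} \;\cong\; \coprod_L \coprod_{[\rho]} M_{\hat{X}_0}(\rho;L) \times M_{\hat{X}_1}(\rho;L^c)
\]
for large $\alpha$; dividing both signed counts by $2^d$ and recognising the right-hand sum as the Fukaya-Floer pairing of the chain-level representatives $\psi_{X_0}$, $\psi_{X_1}$ of $\Psi_{X_0}$, $\Psi_{X_1}$ yields the claimed identity, after passage to homology classes by independence of representative.

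The hard part will be setting up the $s_l^\alpha$ so that (\ref{eq section tensor}) holds simultaneously for every possible broken-trajectory configuration. Unlike Subsection \ref{ss sections}, the limit now has two distinguished end caps ($\hat{X}_0$, $\hat{X}_1$) in addition to arbitrarily many intermediate cylinder pieces, so the partition-of-unity bookkeeping must track three rather than one flavour of building block, and must be arranged so that when any subset of neck lengths diverge simultaneously all gluing identifications converge to the same tensor factorisation in the sense of Definition \ref{def section covergence}. The required arguments are strictly analogous to those in Lemmas \ref{lem tensor}, \ref{lem 2 instantons I} and \ref{lem 2 instantons II}, extended to accommodate the two cap ends; no essentially new analytic input beyond Section \ref{section Fukaya-Floer} is needed.
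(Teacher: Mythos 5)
Your proposal follows the same approach as the paper: construct compatible sections $s_l^\alpha$ on $M_P(g^\alpha)$ by extending the partition-of-unity machinery of Subsection \ref{ss sections} to accommodate the two cap ends, use dimension counting plus Proposition \ref{prop section} (and its cap analogues) to show every limit is an unbroken gluing over a single flat connection, and establish the sign-preserving identification of the cut-down moduli space with the product sets via the gluing maps, as in the derivation of (\ref{eq M' end}). This is exactly the paper's argument, stated there more tersely with a pointer to Subsection \ref{ss sections} for the section construction.
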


To prove the gluing formula, we need sections of $\cL_{\Sigma_l}^{\otimes 2}$ which are compatible with gluing maps as usual.
Fix a Riemannian metric $g_Y$ on $Y$ and choose Riemannian metrics $g_0, g_1$ on $X_0, X_1$ such that the restrictions to $Y$ are equal to $g_Y$.
For each $T>0$, we define a manifold $X^{ \#(T) }$ by $X^{\#(T)}=X_0 \cup Y \times [0, T] \cup X_1$. Then $g_0, g_1$ naturally induce a Riemannian metric $g^{\#(T)}$ on $X^{\#(T)}$.
Fix a sequence $T^{\alpha} \rightarrow \infty$ and we write $X^{\alpha}$ for $X^{ \#(T^{\alpha}) }$. As in Subsection \ref{ss sections}, we can construct sections $s^{\alpha}_{l}$ of $\cL_{\Sigma_l}^{\otimes 2} \rightarrow M_{X^{\alpha}}$ such that
 for each sequence $[A^{\alpha}] \in M_{X^{\alpha}}$ converging to $([A_0^{\infty}], \dots, [A_r^{\infty}])$ the values $s_l([A^{\alpha}])$ at $[A^{\alpha}]$ converge to
$s_{l}([A_0^{\infty}]) \boxtimes \cdots \boxtimes s_l([A_r^{\infty}])$ in an appropriate sense.
By dimension counting, we can show that if $[A^{\alpha}]$ lie in the intersection
\[
M_{ X^{ \alpha } } \cap \bigcap_{l=1}^d V_l
\]
then $r=1$ and $([A_0^{\infty}], [A_1^{\infty}]) \in M_{ \hat{X}_0 }(\rho; L) \times M_{ \hat{X_1} }(\rho;L^c)$. Here $[\rho] \in CF_{-2\beta}(Y)$ with $0 \leq \beta \leq d$ and $L$ is a subset of $\{ 1, \dots, d \}$ with $| L |= \beta$. The transversality conditions for the sections imply that $M_{ \hat{X}_0 }(\rho; L)$ and $M_{ \hat{X_1} }(\rho;L^c)$ are finite sets. From an argument like that in the proof of (\ref{eq M' end}), we can show there is a natural identification
\[
M_{X^{\alpha}} \cap \bigcap_{l=1}^d V_l
\cong
\bigcup_{ \beta = 0}^d
\bigcup_{ 
\begin{subarray}{c}
\delta_{Y}( [\rho] ) \equiv \\
-2\beta \mod 8
\end{subarray}
}
\bigcup_{ | L | = \beta }
M_{ \hat{X}_0 }(\rho;L) \times M_{ \hat{X}_1 }(\rho; L^c)
\]
for large $\alpha$.
Counting the number of elements of both sides with signs, we obtain the gluing formula.

\section{Floer homology for 2-torsion instanton invariants}
\label{s torsion}

In this section, we consider a variant $\Psi_{X}^{u_1}$ of Donaldson invariants for non-spin 4-manifolds $X$, which is a linear function
\[
\Psi_{X}^{u_1}:A'(X) \longrightarrow \Z_2.
\]
Here $A'(X)$ is the subspace of $\oplus_{d \geq 0 } H_2(X;\Z)^{ \otimes d}$ generated by the set
\[
\{ \ [\Sigma_1] \otimes \cdots \otimes [\Sigma_d] \ | 
\ [\Sigma_l] \in H_2(X;\Z), \ 
\ [\Sigma_l] \cdot [\Sigma_l] \equiv 0 \mod 2 \ \}.
\]
This invariant is defined by a 2-torsion cohomology class $u_1$ of the moduli spaces.
Originally this was defined for spin manifolds by Fintushel-Stern \cite{FS}, and the author extended to non-spin 4-manifolds in \cite{S}.
In this section, we construct a variant of Floer homology group and prove a gluing formula for $\Psi_X^{u_1}$.

\vspace{2mm}

In \cite{S}, it was shown that $\Psi_{X}^{u_1}$ is non-trivial for $X = \CP^2_0 \# \CP^2_1  \# \barCP2$, where $\CP^2_0$ and $\CP^2_1$ are copies of $\CP^2$.
More precisely
\begin{equation} \label{eq non vanishing}
\Psi_{X}^{u_1} ( -H_0 + E, H_1 - E) \equiv 1 \mod 2.
\end{equation}
Here $H_0, H_1$ and $E$ are the generators of $H_2(\CP^2_0;\Z)$, $H_2(\CP^2_1;\Z)$ and $H_2(\barCP2;\Z)$ respectively.
We can consider $X=\CP^2_0 \# \CP^2_1 \# \barCP2$ as a connected sum of $X_0 = \CP^2_0$ and $X_1=\CP^2_1 \# \barCP2$.
Since the only flat connection over $S^3$ is the trivial one, we can deduce from (\ref{eq non vanishing}) that the trivial flat connection has an important role in the gluing formula for $\Psi_{X}^{u_1}$ in contrast to the case of usual Donaldson invariants.
Note also that the homology class $-H_1 + E$ in (\ref{eq non vanishing}) is a sum of homology classes $-H_1$ and $E$ of $X_0$ and $X_1$ with self-intersection numbers odd.

On the other hand, for closed, simply connected, non-spin 4-manifolds $X_0, X_1$ with $b^+$ positive, we can show the following vanishing theorem.
For homology classes $[\Sigma_{l}] \in H_2(X_0;\Z)$, $[\Sigma_{k}'] \in H_2(X_1;\Z)$ with self-intersection numbers even, 
\begin{equation} \label{eq vanishing}
\Psi_{X_0 \# X_1}^{u_1}([\Sigma_1], \dots, [\Sigma_{d_0}], [\Sigma_{1}'], \dots, [\Sigma_{d_1}']) \equiv 0 \mod 2.
\end{equation}
This follows from a standard dimension-counting argument.
The formula (\ref{eq vanishing}) implies that the key of the non-vanishing result (\ref{eq non vanishing}) is that the homology class $-H_1 + E$ is split into two homology classes of $X_0, X_1$ with self-intersection numbers odd.

In this section, we consider a situation where $X$ has a decomposition $X = X_0 \cup_{Y} X_1$ for a homology 3-sphere $Y$ and a closed, oriented surface $\Sigma$ in $X$ is split into two surfaces with self-intersection numbers odd along $Y$.
We will give a gluing formula for $\Psi_{X}^{u_1}([\Sigma])$ in terms of differential-topological date about $X_0, X_1$.
This situation is similar to that of Section \ref{section Fukaya-Floer} and we can apply the method developed in Section \ref{section Fukaya-Floer}.
The main difference is that a $U(2)$-bundle over a homology 3-sphere has the trivial flat connection, which is reducible, and we need to pay attention to the effect of the trivial flat connection on the gluing formula.

\subsection{2-torsion instanton invariants for closed manifolds}

We will summarize the construction of 2-torsion instanton invariants for closed  non-spin 4-manifolds which is given in \cite{S}.
See also \cite{FS}.

Let $X$ be a closed, oriented, simply connected, non-spin 4-manifold with $b^+ > 1$.
Take a $U(2)$-bundle $P$ over $X$ with $w_2( P ) = w_2(X)$.
For $[\Sigma] \in H_2(X; \Z)$, we have the complex line bundle
\[
\tilde{\cL}_{\Sigma} \stackrel{\C}{\longrightarrow} \tilde{\cB}_{ \Sigma }.
\]
We can show that if $[\Sigma] \cdot [\Sigma] \equiv 0 \mod 2$ then the center $\{ \pm 1 \}$ of $SU(2)$ acts trivially on the line bundle. Hence we get a line bundle
\[
\cL_{\Sigma} = \tilde{\cL}_{\Sigma} / SO(3) \longrightarrow \cB_{\Sigma}^*.
\]

Let $c$ be a spin-c structure of $X$. For each connections $A$ on $P$, we have the twisted Dirac operator
\[
\Dir_A : \Gamma(S^+ \otimes E) \longrightarrow \Gamma (S^- \otimes E).
\]
Here $E$ is the rank two complex vector bundle associated with $P$ and $S^{\pm}$ are the spinor bundles. If $c_1( \det c ) = - c_1(P)$, we have a ``real part" of the Dirac operators:
\[
(\Dir_A)_{\R}:\Gamma ( (S^+ \otimes E)_{\R} ) \longrightarrow \Gamma ( (S^- \otimes E)_{\R}).
\]
(See \cite{AMR}.) The family of real operators $\{ (\Dir_A)_{\R} \}_{[A] \in \tilde{\cB}_{P} }$ gives a real line bundle
\[
\tilde{\Lambda} \stackrel{\R}{\longrightarrow} \tilde{B}_{P}^*.
\]
We also suppose that $c_2(P) \equiv 0 \mod 2$. Then we can see that the center $\{ \pm 1 \}$ of $SU(2)$ acts trivially on $\tilde{\Lambda}$, and we obtain a real line bundle
\[
\Lambda = \tilde{\Lambda}/SO(3) \stackrel{\R}{\longrightarrow} \cB_{P}^*.
\]
We define $u_1 \in  H^1(\cB_P^*; \Z_2)$ to be $w_1(\Lambda)$.

When $b^+(X)$ is even, the virtual dimension of the moduli space $M_P$ is odd. We can write $\dim M_P = 2d + 1$ for some integer $d$. Assume $d \geq 0$. Then we define
\[
\Psi_{X}^{u_1} ([\Sigma_1], \dots, [\Sigma_{d}]) =
``< u_1 \cup c_1(\cL_{\Sigma_1}) \cup \cdots \cup c_1(\cL_{\Sigma_d}), [M_P] >" \in \Z_2.
\]
Here $[\Sigma_1], \dots, [\Sigma_d] \in H_2(X; \Z)$ with $[\Sigma_i] \cdot [\Sigma_i] \equiv 0 \mod 2$. In general $M_P$ is not compact, however we can define the pairing as follows. 
Let $s_{\Sigma_i}:\cB_{\Sigma_i}^* \rightarrow \cL_{\Sigma_i}$ be sections and $V_{\Sigma_i}$ be the zero loci. We can prove the following lemma by a standard dimension counting argument.

\begin{lemma} \label{lem compact}
Assume that the dimension of $M_P$ is $2d + r$ for some $d \geq 0$ and $0 \leq r \leq 3$. Then the intersection $M_P \cap V_{\Sigma_1} \cap \cdots \cap V_{\Sigma_{d}}$ is compact and smooth for generic sections $s_{\Sigma_i}$.
\end{lemma}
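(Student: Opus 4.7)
The plan is to establish smoothness by a Sard--Smale argument and compactness by Uhlenbeck compactness followed by a dimension count that rules out bubbling.

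For smoothness, I first fix a generic Riemannian metric on $X$; because $w_2(P) = w_2(X) \neq 0$ and $b^+(X) > 1$, reducible solutions on $P$ are avoided and $M_P$ is a smooth manifold of dimension $2d + r$. With the metric fixed, the sections $s_{\Sigma_i}$ vary in an infinite-dimensional Banach space, and a standard Sard--Smale argument shows that for a generic choice of the family $(s_{\Sigma_1}, \ldots, s_{\Sigma_d})$ the simultaneous intersection $M_P \cap V_{\Sigma_1} \cap \cdots \cap V_{\Sigma_d}$ is transverse. Because each section is defined via a tubular neighborhood $\nu(\Sigma_i)$ of $\Sigma_i$ (to avoid reducibility of restrictions), the needed submersion property holds on the relevant open dense subset of $M_P$. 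Since each $V_{\Sigma_i}$ has real codimension $2$, the transverse intersection is smooth of dimension $r$.

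For compactness, let $\{[A_\alpha]\}$ be a sequence in the intersection. By Uhlenbeck compactness, a subsequence converges weakly to some $([A_\infty], Z) \in M_{P_k} \times \Sym^k(X)$ where $c_2(P_k) = c_2(P) - k$; the total stratum then has dimension $\dim M_P - 4k$. I rule out $k \geq 1$ by a dimension count: for generic sections, the induced cut-down of this stratum by the natural extensions of the zero loci $V_{\Sigma_i}$ is transverse and hence has dimension $(2d + r - 4k) - 2d = r - 4k \leq 3 - 4 < 0$. So the cut-down of the stratum is empty, forcing $k = 0$. Then $[A_\alpha] \to [A_\infty]$ strongly, and continuity of the sections yields $[A_\infty] \in M_P \cap \bigcap_i V_{\Sigma_i}$, so the intersection is sequentially compact.

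The main obstacle will be arranging the dimension count uniformly across all bubbled strata using a single generic choice of sections. The extension of $V_{\Sigma_i}$ to a stratum $M_{P_k} \times \Sym^k(X)$ requires care when bubble points of $Z$ happen to lie on $\Sigma_i$, since the restriction of the ideal limiting connection to $\Sigma_i$ is then singular. This is handled by the standard stratified Sard--Smale argument of \cite{DK}: since $k$ is bounded above by $(2d + r)/8$, only finitely many strata arise, and a small generic perturbation of each $\Sigma_i$ together with a generic choice of sections achieves transversality simultaneously on $M_P$ and on every bubbled stratum.
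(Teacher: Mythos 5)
Your proof is correct and is essentially the argument the paper has in mind: the paper offers no details beyond asserting that the lemma follows by ``a standard dimension counting argument,'' and your Sard--Smale transversality for smoothness together with Uhlenbeck compactness and the $r-4k$ count to exclude bubbled strata is precisely that argument (as in \cite{DK}). The one step worth making explicit is that the codimension $2d$ hidden in $(2d+r-4k)-2d$ is distributed between the factors $M_{P_k}$ and $\Sym^k(X)$ depending on which $\Sigma_i$ have bubble points in their tubular neighborhoods; using that the $\Sigma_i$ are chosen with transverse pairwise and empty triple intersections, each possible distribution of the $k$ bubbles still gives a cut-down stratum of dimension at most $r-4k<0$, which is what closes the compactness argument.
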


When $\dim M_P$ is $2d + 1$, the intersection
\[
M_P \cap V_{\Sigma_1} \cap \cdots \cap V_{\Sigma_d}
\]
is a compact, smooth manifold of dimension $1$.
Precisely the invariant $\Psi_{X}^{u_1}([\Sigma_1], \dots, [\Sigma_d])$ is defined to be
\[
< u_1, [M_P \cap V_{\Sigma_1} \cap \cdots \cap V_{\Sigma_d}] > \in \Z_2.
\]
We can show that $\Psi_{X}^{u_1}([\Sigma_1], \dots, [\Sigma_d])$ is independent of the metric on $X$ and sections of $\cL_{\Sigma_i}$, and hence it is a differential-topological invariant of $X$.

\begin{remark}
When we define the invariant $\Psi_{X}^{u_1}([\Sigma_1], \dots, [\Sigma_d])$, we use the pull-back $r_{\Sigma_i}^*(s_{\Sigma_i})$ of the section $s_{\Sigma_i}$ over $\cB_{\Sigma_i}^*$.
Here $r_{\Sigma_i}$ is the map $M_P \rightarrow \cB_{\Sigma_i}^*$ defined by restricting connections to $\Sigma_i$.
However Lemma \ref{lem compact} shows that when the dimension of $M_P$ is $2d + 1$ with $d \geq 0$ $M_P \cap V_{\Sigma_2} \cap \cdots \cap V_{\Sigma_{d}}$ is a compact, smooth manifold of dimension $3$. Hence we need not to use the pull-back $r_{\Sigma_1}^* (s_{\Sigma_1})$. We can use any section of $r_{\Sigma_1}^* (\cL_{\Sigma_1})$ which is transverse to the zero section.
\end{remark}

\subsection{Line bundles}

In this subsections, we introduce line bundles over $\cB_{Y \times \R}^*(\rho, \sigma)$ which are defined by families of Dirac operators over $Y \times \R$ and twisted $\bar{\partial}$ operators over surfaces in $Y \times \R$.
These are used to define a variant of Floer homology groups for 2-torsion instanton invariants.

\vspace{2mm}

Let $Y$ be a homology 3-sphere and $Q = Y \times U(2)$ be the trivial $U(2)$-bundle over $Y$.
Fix a connection $a_{\det}$ on $Q_{\det} = Y \times U(1)$. As before, connections on $Q$ and $\pi^* Q$ are compatible with $a_{\det}$, unless explicitly stated otherwise.

We assume the following.

\begin{hypothesis}
All flat connections on $Q$ are non-degenerate.
\end{hypothesis}

For irreducible flat connections $\rho, \sigma$, we define $\cB^*_{Y \times \R}(\rho, \sigma)$, $\tilde{\cB}_{Y \times \R}^* (\rho, \sigma)$ as before.
We have just one gauge equivalence class of projectively flat connections which are not irreducible. It is represented by a connection $\theta_0$ which induce the trivial connection on the adjoint bundle of $Q$.  
Fix a smooth map $g_1:Y \rightarrow SU(2)$ with degree $1$ and put $\theta_{a}:= g_1^a(\theta_0)$ for $a \in \Z$.
We define $\cB_{Y \times \R}^*(\theta_a, \sigma)$, $\tilde{\cB}_{Y \times \R}(\rho, \theta_a)$ as usual. Let $\Gamma_{\rho}$ be the stabilizer of $\rho$ in $\cG_{Q}$. Then we have a natural action of $\Gamma_{\rho} \times \Gamma_{\sigma}$ on $\tB$, and $\B = \tB / \Gamma_{\rho} \times \Gamma_{\sigma}$. Note that the action of the subgroup $\{ \pm (1, 1) \} \subset \Gamma_{\rho} \times \Gamma_{\sigma}$ on $\tB$ is trivial.

For flat connections $\rho, \sigma$, we have real line bundles
\[
\tilde{\Lambda}(\rho, \sigma) 
\stackrel{\R}{\longrightarrow}
\tilde{\cB}^*_{Y \times \R}(\rho, \sigma)
\]
induced by families of the real part of Dirac operators.
Since the action of $\{ \pm (1, 1) \} \subset \Gamma_{\rho} \times \Gamma_{\sigma}$ on the line bundle is not trivial in general, the line bundle may not descend to $\B$. To avoid this problem, we will introduce real line bundles $\underline{\R}(\theta_a, \rho)$ and $\underline{\R}(\sigma, \theta_b)$ over $\tB$ for irreducible flat connections $\rho, \sigma$ and $a, b \in \Z$. Choose connections $A(\theta_a, \rho)$, $A(\sigma, \theta_b)$ on $\pi^* Q \rightarrow Y \times \R$ such that
\[
\begin{split}
A(\theta_a, \rho) &=
\left\{
\begin{array}{cl}
\theta_a & \text{ on $Y \times (-\infty, -1)$ }, \\
\rho & \text{ on $Y \times (1, \infty)$ }.
\end{array}
\right. \\
A(\sigma, \theta_b) &=
\left\{
\begin{array}{cl}
\sigma & \text{ on $Y \times (-\infty, -1)$ }, \\
\theta_b & \text{ on $Y \times (1, \infty)$ }.
\end{array}
\right. 
\end{split}
\]
Then we have the Dirac operators
\[
\begin{split}
\Dir_{A(\theta_a, \rho)} &:
L_{4}^{2,(-\tau,\tau)} (S^+ \otimes E) \longrightarrow
L_3^{2, (-\tau, \tau)}(S^- \otimes E), \\
\Dir_{A(\sigma, \theta_b)} &:
L_{4}^{2,(-\tau,\tau)} (S^+ \otimes E) \longrightarrow
L_3^{2, (-\tau, \tau)}(S^- \otimes E).
\end{split}
\]
We denote the numerical index of these operators by $\Ind^{- + }\Dir_{A(\theta_a, \rho)}$, $\Ind^{-+}\Dir_{A(\sigma, \theta_b)}$.
Define $\underline{\R}(\theta_a, \rho)$ to be trivial line bundle with action of $\Gamma_{\rho} = \Z_2$ of wight $\aInd$. Similarly $\underline{\R}(\sigma, \theta_b)$ is the trivial real line bundle with $\Gamma_{\sigma}$ action of wight $\bInd$. Here we put
\[
\tilde{\Lambda}_{ab}(\rho, \sigma) :=
\aR \otimes \tilde{\Lambda}(\rho, \sigma) \otimes \bR.
\]
The wight of the action of $\{ \pm (1, 1) \} \subset \Gamma_{\rho} \times \Gamma_{\sigma}$ is 
\[
\aInd + \Ind^{-+} \Dir_{A} + \bInd = \abInd.
\]
Here $[A] \in \B$.
If $\abInd$ is even, the line bundle $\tilde{\Lambda}_{ab}(\rho, \sigma)$ descends to $\B$.

\begin{lemma} \label{lem ind Dir}
$\abInd = -(b-a)$.
\end{lemma}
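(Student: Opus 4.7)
The strategy is to compactify $Y \times \R$ into the closed 4-manifold $Y \times S^1$ by identifying the two ends through a gauge transformation that matches the asymptotic connections, and then compute the resulting closed-manifold index via Atiyah--Singer.

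First I would reduce to the case $a = 0$. Since $\theta_c = g_1^c(\theta_0)$ for every integer $c$, the constant gauge transformation $g_1^{-a}$ on $\pi^* Q \to Y \times \R$ carries the connection $A(\theta_a, \theta_b)$ to one gauge-equivalent to $A(\theta_0, \theta_{b-a})$. Because the index of a Fredholm Dirac operator on the weighted space $L^{2,(-\tau,\tau)}$ is invariant under gauge transformation, $\abInd = \Ind^{-+}\Dir_{A(\theta_0, \theta_{b-a})}$, and it suffices to prove $\Ind^{-+}\Dir_{A(\theta_0, \theta_b)} = -b$.

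Next, form a closed rank-two complex bundle $\tilde E \to Y \times S^1$ by clutching the trivial bundle on a long cylinder $Y \times [-T, T]$ via the transition function $g_1^b : Y \to SU(2) \subset U(2)$ at the identification $Y \times \{T\} \sim Y \times \{-T\}$. The identity $\theta_b = g_1^b(\theta_0)$ ensures that $A(\theta_0, \theta_b)$, after a standard cut-off interpolation near the gluing region, descends to a smooth connection $\tilde A$ on $\tilde E$. The bundle $\tilde E$ has $c_1(\tilde E) = 0$ since $g_1$ takes values in $SU(2)$, and its second Chern number equals the degree of the clutching map,
\[
c_2(\tilde E)[Y \times S^1] = \deg(g_1^b) = b,
\]
as one verifies either via the identification $[Y, SU(2)] \cong H^3(Y;\Z) = \Z$, or by integrating the Chern--Weil form $-\frac{1}{8 \pi^2} \Tr(F_{\tilde A} \wedge F_{\tilde A})$ and relating it to the Chern--Simons difference between $\theta_0$ and $\theta_b$.

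Since $Y$ is an orientable closed 3-manifold, $Y \times S^1$ is parallelizable, so $\hat A(T(Y \times S^1)) = 1$, and the Atiyah--Singer index theorem yields
\[
\ind \Dir_{\tilde A} = \int_{Y \times S^1} \ch(\tilde E) = -c_2(\tilde E)[Y \times S^1] = -b,
\]
using that the degree-four component of $\ch(\tilde E)$ is $\tfrac{1}{2}(c_1^2 - 2c_2) = -c_2$ when $c_1 = 0$. To conclude I would invoke the relative index / excision principle for Dirac operators on manifolds with cylindrical ends: the $L^{2,(-\tau,\tau)}$-index on $Y \times \R$ coincides with the Atiyah--Singer index on the glued closed manifold, provided $\tau > 0$ is chosen below the first nonzero eigenvalue of the Dirac operator on $(Y, \theta_0)$. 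The main obstacle is the careful handling of the reducible flat connection $\theta_0$ at the ends; the weight $\tau$ is precisely what resolves this, so that after identifying the indices we obtain $\abInd = -b = -(b-a)$.
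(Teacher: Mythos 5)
Your proposal follows essentially the same route as the paper: both glue the two ends to form $Y \times S^1$ carrying a $U(2)$-bundle with $c_1 = 0$, $c_2 = b-a$, and then apply the index formula with $\hat{A}(Y \times S^1) = 1$ together with index additivity/excision to identify the cylindrical-end index with the closed-manifold index. The paper states this in two lines; you spell out the clutching construction, the normalization to $a = 0$, and the role of the weight $\tau$, but the underlying argument is the same.
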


\begin{proof}
Let $P_{b-a}$ be a $U(2)$-bundle over $Y \times S^1$ with $c_1(P_{b-a})=0$,  $c_2(P_{b-a})=b-a$.
It follows from the additivity of index and the index formula that
\[
\abInd = ch (P_{b-a}) \hat{A}(Y \times S^1)/[Y \times S^1].
\]
Since $ch(P_{b-a}) = 2 - c_2(P_{b-a})$ and $\hat{A}(Y \times S^1)=1$, we have
\[
\abInd = -<c_2(P_{b-a}), [Y \times S^1] > = -(b-a).
\]
\end{proof}

For $a, b \in \Z$ with $b - a \equiv 0 \mod 2$, we get a real line bundle
\[
\Lambda_{ab}(\rho, \sigma):= \tilde{\Lambda}_{ab}(\rho, \sigma)/SO(3)
\stackrel{\R}{\longrightarrow}
\B.
\]

Let $a, b, a', b'$ be integers with $a \equiv b \equiv a' \equiv b' \mod 2$. 
Then we have a natural isomorphism from $\underline{\R}(\theta_{a'},\theta_{a}) \otimes \tilde{\Lambda}_{ab}(\rho, \sigma) \otimes \underline{\R}(\theta_{b},\theta_{b'})$ to $\tilde{\Lambda}_{a'b'}(\rho,\sigma)$ and we can see that this isomorphism induces an isomorphism from $\Lambda_{ab}(\rho, \sigma)$ to $\Lambda_{a'b'}(\rho, \sigma)$. Therefore we obtain:

\begin{lemma} 
If $a \equiv b \equiv a' \equiv b' \mod 2$, $\Lambda_{ab}(\rho,\sigma)$ and $\Lambda_{a' b'}(\rho, \sigma)$ are isomorphic to each other.
\end{lemma}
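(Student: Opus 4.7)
The plan is to unwind the isomorphism that the author has already sketched and then verify that it descends to the $SO(3)$-quotients. Recall that the two line bundles are formed by tensoring the universal line bundle $\tilde{\Lambda}(\rho,\sigma)$ with $1$-dimensional real $\Gamma_\rho\times\Gamma_\sigma$-modules $\underline{\R}(\theta_\bullet,\rho)$ and $\underline{\R}(\sigma,\theta_\bullet)$, whose actions of the $\Z_2$ stabilizers have weights prescribed by certain Dirac indices.

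First, I would invoke additivity of the numerical index for the operators $\Dir_{A(\theta_{a'},\theta_a)}$, $\Dir_{A(\theta_a,\rho)}$, $\Dir_{A(\sigma,\theta_b)}$, $\Dir_{A(\theta_b,\theta_{b'})}$ on $Y\times\R$. Gluing connections over a long neck gives
\[
\Ind^{-+}\Dir_{A(\theta_{a'},\rho)} \;=\; \Ind^{-+}\Dir_{A(\theta_{a'},\theta_a)} + \Ind^{-+}\Dir_{A(\theta_a,\rho)},
\]
together with the analogous identity on the $\theta_b$-side. Since two one-dimensional real $\Z_2$-modules of the same weight are canonically isomorphic up to sign, this produces canonical identifications
\[
\underline{\R}(\theta_{a'},\rho) \iso \underline{\R}(\theta_{a'},\theta_a) \otimes \underline{\R}(\theta_a,\rho),\qquad
\underline{\R}(\sigma,\theta_{b'}) \iso \underline{\R}(\sigma,\theta_b) \otimes \underline{\R}(\theta_b,\theta_{b'})
\]
as $\Gamma_\rho$- and $\Gamma_\sigma$-equivariant line bundles over $\tB$. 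Tensoring with $\tilde{\Lambda}(\rho,\sigma)$ gives the natural isomorphism
\[
\tilde{\Lambda}_{a'b'}(\rho,\sigma) \iso \underline{\R}(\theta_{a'},\theta_a) \otimes \tilde{\Lambda}_{ab}(\rho,\sigma) \otimes \underline{\R}(\theta_b,\theta_{b'})
\]
advertised in the paragraph preceding the lemma.

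Second, I would check that both extra factors descend trivially. By Lemma \ref{lem ind Dir},
\[
\Ind^{-+}\Dir_{A(\theta_{a'},\theta_a)} = -(a-a'),\qquad \Ind^{-+}\Dir_{A(\theta_b,\theta_{b'})} = -(b'-b).
\]
Under the hypothesis $a\equiv a'\pmod 2$ and $b\equiv b'\pmod 2$, both indices are even, so the weight of the $\{\pm 1\}\subset\Gamma_\rho\times\Gamma_\sigma$-action on $\underline{\R}(\theta_{a'},\theta_a)\otimes\underline{\R}(\theta_b,\theta_{b'})$ is even; in particular the diagonal $\{\pm(1,1)\}$ acts trivially, so these factors are pulled back from a point and contribute a trivial real line bundle after passing to the quotient by $SO(3)=\Gamma_\rho\times\Gamma_\sigma/\{\pm(1,1)\}$. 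Taking $SO(3)$-quotients of the displayed isomorphism therefore yields $\Lambda_{a'b'}(\rho,\sigma)\iso\Lambda_{ab}(\rho,\sigma)$, as required.

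The only step that is not purely formal is the first: making the index-additivity identification canonical at the level of these $\Z_2$-modules (as opposed to merely on underlying vector spaces). I would carry this out by modelling $\underline{\R}(\theta_a,\rho)$ explicitly as $\det\ker\Dir_{A(\theta_a,\rho)}\otimes(\det\coker\Dir_{A(\theta_a,\rho)})^*$ for a stabilized family, and then observing that the standard excision/gluing isomorphism for determinants of Dirac operators on $Y\times\R$ — applied to the decomposition of $A(\theta_{a'},\rho)$ into $A(\theta_{a'},\theta_a)$ and $A(\theta_a,\rho)$ via a cut-off — is $\Gamma_\rho$-equivariant, because the cut-off region is supported away from a neighbourhood where gauge transformations in $\Gamma_\rho$ act nontrivially. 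This is the expected main obstacle, but it is exactly parallel to the standard equivariant gluing for determinant lines used earlier in Section \ref{section Fukaya-Floer}.
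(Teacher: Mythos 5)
Your proof follows the same route the paper sketches in the paragraph preceding the lemma: tensor $\tilde{\Lambda}_{ab}(\rho,\sigma)$ by $\underline{\R}(\theta_{a'},\theta_a)$ and $\underline{\R}(\theta_b,\theta_{b'})$, identify the result with $\tilde{\Lambda}_{a'b'}(\rho,\sigma)$ via additivity of the index, and check by Lemma~\ref{lem ind Dir} and the parity hypothesis that the correction factors carry an even weight so the isomorphism descends to the quotient.

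One small remark on your final paragraph: the worry about making the identification ``canonical'' via determinant-line gluing is unnecessary here, because the paper does not define $\underline{\R}(\theta_a,\rho)$ as a determinant line of a Dirac operator; it is \emph{by definition} the fixed trivial real line bundle equipped with the $\Gamma_\rho=\Z_2$-action of weight $\aInd$. Thus the identification $\underline{\R}(\theta_{a'},\rho)\iso\underline{\R}(\theta_{a'},\theta_a)\otimes\underline{\R}(\theta_a,\rho)$ is an isomorphism of trivial lines, and it is $\Gamma_\rho$-equivariant precisely because index additivity makes the weights agree modulo two; no excision argument is needed.
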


\begin{definition}
For irreducible flat connections $\rho, \sigma$ and $a \in \{ 0, 1 \}$, we write $\Lambda^{(a)}(\rho, \sigma)$ for $\Lambda_{aa}(\rho, \sigma)$. Moreover we define $u_1^{(a)}=u_1^{(a)}(\rho, \sigma)$ by
\[
u_1^{(a)} := w_1( \aLam) \in H^1(\B; \Z_2).
\]
\end{definition}

Let $\gamma \cong S^1$ be a loop in $Y$. Then we have the determinant line bundle
\[
\tilde{\cL}_{\Gamma} \stackrel{\C}{\longrightarrow} \tB.
\]
Here $\Gamma = \gamma \times \R$.
This line bundle does not descend to $\B$ in general,  we can however apply the technique used to define $\Lambda_{ab}(\rho, \sigma)$.

Let $\theta_{\gamma, 0}$ be the restriction of $\theta_0$ to $\gamma$.
Choose a smooth map $g_{\gamma}:\gamma \rightarrow U(2)$ such that the homotopy class $[g_{\gamma} ]$  is the generator of $\pi_1(U(2)) \cong \Z$.
Then for $a \in \Z$ we put
\[
\theta_{\gamma, a} := g_{\gamma}^a (\theta_{\gamma,0}).
\]
Note that these connections are not compatible with the fixed connection $a_{\det}$ on $Q_{\det}$.
For $a, b \in \Z$ and irreducible flat connections $\rho, \sigma$, we take connections $A(\theta_{\gamma,a}, \rho)$, $A( \sigma, \theta_{\gamma, b} )$ on $\pi^* Q$ such that
\[
\begin{split}
A(\theta_{ \gamma, a }, \rho) &=
\left\{
\begin{array}{cl}
\theta_{\gamma, a} & \text{on $Y \times (-\infty, -1)$,} \\
\rho & \text{on $Y \times (1, \infty)$,}
\end{array}
\right. \\
A(\sigma, \theta_{\gamma, b}) &=
\left\{
\begin{array}{cl}
\sigma & \text{on $Y \times (-\infty, -1)$,} \\
\theta_{\gamma, b} & \text{ on $Y \times (1, \infty)$.  }
\end{array}
\right.
\end{split}
\]
(These connections are not compatible $a_{\det}$ either.)
Let $\aC$ and $\bC$ be the trivial complex line bundles over $\tB$ with $\Gamma_{\rho}$-action of weight $\gamaInd$ and $\Gamma_{\sigma}$-action of wight $\gambInd$ respectively. Put
\[
\tilde{\cL}_{\Gamma, ab}(\rho, \sigma) := 
\aC \otimes \tilde{\cL}_{\Gamma}(\rho, \sigma) \otimes \bC
\longrightarrow 
\tB.
\]
Then $\Gamma_{\rho} \times \Gamma_{\sigma}$ naturally acts on $\tilde{\cL}_{\Gamma, ab}(\rho, \sigma)$ and the weight of $\{ \pm (1, 1) \}$ is 
\[
\gamaInd + \Ind^{-+} \bar{\partial}_{A(\rho, \sigma)} + \gambInd =
\gamabInd.
\]
If this number is even, the line bundle $\tilde{\cL}_{\Gamma, ab}(\rho, \sigma)$ descends to $\B$.

\begin{lemma} \label{lem ind d-bar}
$\gamabInd = b-a$.
\end{lemma}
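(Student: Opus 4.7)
The plan is to mirror the proof of Lemma~\ref{lem ind Dir}. The equality displayed just above the statement,
\[
\gamaInd + \Ind^{-+}\bar{\partial}_{A(\rho,\sigma)} + \gambInd = \gamabInd,
\]
is already a consequence of the additivity of the Fredholm index under gluing, so proving the lemma amounts to computing the right-hand side, i.e.\ the index of $\bar{\partial}$ on the cylinder $\Gamma = \gamma \times \R$ with asymptotic limits $\theta_{\gamma,a}$ at $-\infty$ and $\theta_{\gamma,b}$ at $+\infty$.

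First I would compactify $\Gamma$ into a closed surface. The connection $A(\theta_{\gamma,a},\theta_{\gamma,b})$ is flat outside a compact set, and by construction $\theta_{\gamma,b}$ is obtained from $\theta_{\gamma,a}$ by the gauge transformation $g_\gamma^{b-a}$ on $\gamma$. Identifying the two ends of $\R$ via $g_\gamma^{b-a}$ therefore yields a closed Riemann surface $T^2 = \gamma \times S^1$ carrying a $U(2)$-bundle $P'_{b-a}$ whose clutching across the added seam is $g_\gamma^{b-a}$. The induced spin structure, coming from the bounding spin structure on $\gamma$ (Remark~\ref{rm spin}) together with the bounding spin structure on the added circle, gives a genuine square root $K_{T^2}^{\frac{1}{2}}$, so the operator $\bar{\partial}_{E'_{b-a}\otimes K_{T^2}^{\frac{1}{2}}}$ on $T^2$ is well defined. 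The weight $W'_\tau(y,t)=e^{\tau t}$ for $|t|>1$ forces decay at $+\infty$ and permits controlled growth at $-\infty$, and under the compactification this matches the unweighted condition on $T^2$, so by a standard excision argument for weighted Fredholm indices on cylinders with flat asymptotics one obtains
\[
\gamabInd \;=\; \ind \bar{\partial}_{E'_{b-a}\otimes K_{T^2}^{\frac{1}{2}}}.
\]

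Second I would evaluate the right-hand side by Atiyah--Singer on $T^2$. Since $[g_\gamma] \in \pi_1(U(2)) \cong \Z$ is the generator and the isomorphism is induced by $\det$, the determinant of $g_\gamma^{b-a}$ has degree $b-a$, and the standard clutching computation gives $c_1(P'_{b-a})[T^2] = b-a$. Because $T^2$ has trivial tangent bundle its Todd class is $1$, and since the chosen spin structure makes $K_{T^2}^{\frac{1}{2}}$ a square root of the trivial canonical bundle, $c_1(K_{T^2}^{\frac{1}{2}}) = 0$. Consequently
\[
\ind \bar{\partial}_{E'_{b-a}\otimes K_{T^2}^{\frac{1}{2}}} \;=\; \int_{T^2} \ch(E'_{b-a}) \;=\; \int_{T^2} c_1(E'_{b-a}) \;=\; b-a,
\]
which is the desired formula.

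The main obstacle I anticipate is the bookkeeping of signs: one must check that the orientation of $T^2$ induced by the gluing, the choice of generator of $\pi_1(U(2))$ used to define the $\theta_{\gamma,a}$, and the sign convention in the weighted index all conspire to yield $+(b-a)$ rather than $-(b-a)$. As a consistency check against Lemma~\ref{lem ind Dir}, the Dirac case on the four-manifold $Y\times S^1$ involves $\ch_2 = -c_2$, which produces the sign $-(b-a)$; whereas for $\bar\partial$ on the two-dimensional $T^2$ the relevant contribution is $\ch_1 = c_1$, carrying no sign flip, in agreement with the target value $+(b-a)$.
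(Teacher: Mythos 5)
Your proof is correct and takes essentially the same route as the paper: compactify $\Gamma$ to $\gamma\times S^1$ carrying a $U(2)$-bundle with $c_1 = b-a$ determined by the clutching map $g_\gamma^{b-a}$, then apply the index theorem for the twisted $\bar\partial$-operator to obtain $(\ch\cdot\hat A)/[\gamma\times S^1] = (2+c_1)/[\gamma\times S^1] = b-a$. The paper presents the compactification and the excision step more tersely (invoking only "additivity of index and the index formula"), whereas you spell these out; the arithmetic is identical.
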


\begin{proof}
Let $P_{\gamma, b-a}$ be a $U(2)$ bundle over $\gamma \times S^1$ with $c_1(P_{\gamma, b-a})=b-a$. From the additivity of index and the index formula, we have
\[
\begin{split}
\gamabInd 
&=ch(P_{\gamma, b-a})\hat{A}(\gamma \times S^1)/[\gamma \times S^1] \\
&=(2 + c_1(P_{\gamma, b-a}))/[\gamma \times S^1] \\
&=b-a.
\end{split}
\]
\end{proof}

This lemma implies that for $a, b \in \Z$ with $a \equiv b \mod 2$ we can define
\[
\cL_{\Gamma,ab}(\rho,\sigma) :=
\tilde{\cL}_{\Gamma, ab}(\rho,\sigma) / \Gamma_{\rho} \times \Gamma_{\sigma}
\stackrel{\C}{\longrightarrow}
\B.
\]
We can also show that for $a \equiv b \equiv a' \equiv b' \mod 2$ we have a natural isomorphism between $\cL_{\Gamma,ab}(\rho,\sigma)$ and $\cL_{\Gamma,a'b'}(\rho,\sigma)$. We get two bundles $\cL_{\Gamma}^{(0)}(\rho,\sigma):=\cL_{\Gamma,00}(\rho,\sigma)$, $\1L:=\cL_{\Gamma,11}(\rho,\sigma)$. We will use only $\1L$ to define a variant of Floer homology.

\subsection{The complex}

Using the line bundles introduced in the previous subsection, we construct homology groups which allow us to extend 2-torsion instanton invariants to 4-manifolds with boundary.
The idea of the construction is fundamentally the same as Fukaya-Floer homology groups in Section \ref{section Fukaya-Floer}. The main difference is the existence of a reducible flat connection.

\vspace{2mm}

Fix $a \in \{ 0, 1 \}$ and take sections
\[
\begin{split}
s_{\Lambda}=s_{\Lambda}(\rho, \sigma) &: \0M \longrightarrow \aLam \\
s_{\cL}=s_{\cL}(\rho, \sigma) &: \0M \longrightarrow \1L.
\end{split}
\]
We denote the zero sets by $V_{\Lambda}$, $V_{\Lambda}$.
For sets
\[
L_0 = \emptyset, \quad 
L_1=\{ \Lambda \},\quad
L_2=\{ \cL \}, \quad
L_3=\{ \Lambda, \cL \},
\]
we define
\begin{gather*}
M_{Y \times \R}^0(\rho, \sigma;L_0):= M_{Y \times \R}^0(\rho, \sigma), \quad
M_{Y \times \R}^0(\rho, \sigma;L_1):= M_{Y \times \R}^0(\rho, \sigma) \cap V_{\Lambda}, \\
M_{Y \times \R}^0(\rho, \sigma;L_2):= M_{Y \times \R}^0(\rho, \sigma) \cap V_{\cL}, \ 
M_{Y \times \R}^0(\rho, \sigma;L_3) := M_{Y \times \R}^0(\rho, \sigma) \cap V_{\Lambda} \cap V_{\cL}.
\end{gather*}

\begin{lemma} \label{lem rho irr}
Let $\rho, \sigma$ be (possibly reducible) flat connections on a $U(2)$-bundle $Q$ over a homology 3-sphere $Y$ with $\dim \0M \leq 3$. Take a sequence $\{ [A^{\alpha} ] \}_{\alpha}$ in $\0M$ converging to $([A_1^{\infty}], \dots, [A_{r}^{\infty}])$ with $r>1$. Here $[A_i^{\infty}] \in \i0M$. Then $\rho(i)$ are irreducible for $i=1,\dots, r-1$.
\end{lemma}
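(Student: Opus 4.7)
The plan is to argue by contradiction via a dimension count, exploiting the fact that gluing a broken trajectory through a reducible intermediate costs extra moduli-space parameters beyond the usual $\R$-translation.

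First I would classify the reducible projectively flat connections on $Q$. Since $Y$ is a homology $3$-sphere and $Q = Y \times U(2)$ is trivial, every representation $\pi_1(Y) \to SU(2)$ is trivial, so every reducible projectively flat connection on $Q$ with fixed determinant is gauge equivalent to $\theta_a$ for some $a \in \Z$. Thus, assuming for contradiction that $\rho(i_0)$ were reducible for some $1 \leq i_0 \leq r-1$, we would have $\rho(i_0) = \theta_a$ for some $a$.

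Next I would invoke the gluing construction at a reducible intermediate. The stabilizer of $\theta_a$ in the reduced gauge group is $SO(3)$, of dimension $3$, while adjacent irreducible flat connections have trivial (discrete) reduced stabilizer. Consequently, matching $[A_{i_0}^\infty]$ and $[A_{i_0+1}^\infty]$ at $\theta_a$ requires the usual $\R$-translation parameter \emph{plus} a choice of framing in $SO(3)$. Combining this with the additivity of the index of $D_A$ across all breakings yields
\[
\dim \0M \;\geq\; \sum_{i=1}^{r} \dim \i0M \;+\; (r-1) \;+\; 3k,
\]
where $k$ denotes the number of reducible intermediates. Since each piece $\i0M$ is non-empty (it contains $[A_i^\infty]$) and hence of non-negative dimension, and since $r \geq 2$ and $k \geq 1$, this gives $\dim \0M \;\geq\; (r-1) + 3 \;\geq\; 4$, contradicting $\dim \0M \leq 3$. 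Therefore every intermediate $\rho(i)$ with $1 \leq i \leq r-1$ must be irreducible.

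The main obstacle I anticipate is the careful justification of the gluing-dimension bookkeeping at a reducible, adapting the construction sketched in Subsection \ref{ss eva} so that the matching at $\theta_a$ incorporates the $SO(3)$-worth of framings. One has to verify that this $SO(3)$ is not absorbed by the $\{\pm 1\}$-stabilizers of the neighbouring irreducibles and that it contributes three genuine extra moduli dimensions to the glued space; once this is in hand, the additivity of the index of $D_A$ delivers the displayed inequality and the contradiction follows at once.
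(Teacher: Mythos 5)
Your conclusion and the key mechanism are correct, and your dimension identity is in fact what the paper derives: writing $\dim M_{Y\times\R}(\rho,\sigma)=\dim\0M+1$ and $\dim\iM=\dim\i0M+1$, the paper's formula
\[
\dim M_{Y \times \R}(\rho,\sigma)=\sum_{i=1}^r\dim\iM+\sum_{i=1}^{r-1}\dim H_{\rho(i)}^0
\]
is precisely your
\[
\dim\0M = \sum_{i=1}^r\dim\i0M + (r-1) + \sum_{i=1}^{r-1}\dim H^0_{\rho(i)},
\]
as an equality, with the $3k$ appearing as $\sum\dim H^0_{\rho(i)}$. The real difference is in how this identity is justified, and this dissolves your self-identified ``main obstacle.'' You frame it as a gluing-dimension count and then worry that one must show the $SO(3)$-worth of framings at $\theta_a$ is not absorbed and really contributes three genuine moduli parameters. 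The paper never touches gluing at this point: it uses only (i) the Kuranishi-type identity $\dim\iM=\Ind^{-+}D_{A_i^\infty}-\dim H^0_{\rho(i-1)}$, which is a pure transversality/Fredholm statement, and (ii) additivity of the Fredholm index of $D_A$ across the weak limit. The term $\dim H^0_{\rho(i_0)}=3$ for the reducible $\theta_a$ then appears automatically in the bookkeeping, with nothing to verify about framings or gluing maps. So your argument is correct but takes a detour through gluing analysis that can be avoided; the index-additivity route gives the same identity as an exact equation with no extra work, and the contradiction $4\geq\dim M_{Y\times\R}(\rho,\sigma)\geq r+3\geq5$ follows at once.
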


\begin{proof}
The dimension of $\dim \M$ is given by
\[
\dim \M = \Ind^{++} D_{A^{\alpha}} = \Ind^{-+}D_{A^{\alpha}} - \dim H_{\rho}^0,
\]
where $H_{\rho}^0$ is the Lie algebra of $\Gamma_{\rho}$.
The additivity of index implies that
\[
\Ind^{- +} D_{A^{\alpha}} 
= \sum_{i=1}^r \Ind^{-+} D_{A_i^{\infty}}
= \sum_{i=1}^r \big( \dim \iM + \dim H_{\rho(i-1)}^0 \big).
\]
Hence we have
\[
\dim \M = \sum_{i=1}^r \dim \iM + \sum_{i=1}^{r-1} \dim H_{\rho(i)}^0.
\]
Assume that $\rho(i_0)$ is reducible for some $i_0$ with $1 \leq i_0 \leq r-1$.
Then $\rho(i_0)$ is the trivial flat connection and the dimension of $H_{\rho(i_0)}^0$ is $3$, and $\dim \iM \geq 1$. Hence
\[
4 \geq \dim \M \geq r + 3 >4.
\]
This is a contradiction.
\end{proof}

This lemma means that we can apply the method in the previous section to showing that we can take admissible sections of the line bundles $\aLam$, $\1L$ when $\dim \0M \leq 3$.
Thus we obtain the following.

\begin{lemma} \label{lem torsion section}
Let $\rho, \sigma$ be irreducible flat connections on $Q$ with $\dim \0M \leq 3$. Then we can take sections $s_{\Lambda}, s_{\cL}$ of $\aLam$, $\1L$ with the following properties.

\begin{itemize}

\item
Let $\{ [A^{\alpha}] \}_{\alpha}$ be a sequence in $\0M$ with $[A^{\alpha}] \rightarrow ([A_1^{\infty}], \dots, [A_{r}^{\infty}])$. Then we have
\[
\begin{split}
s_{\Lambda}([A^{\alpha}]) &\longrightarrow 
s_{\Lambda}([A_1^{\infty}]) \boxtimes \cdots \boxtimes s_{\Lambda}([A_r^{\infty}]), \\
s_{\cL}([A^{\alpha}]) &\longrightarrow
s_{\cL}([A_1^{\infty}]) \boxtimes \cdots \boxtimes s_{\cL}([A_r^{\infty}]).
\end{split}
\]

\item
Let $\beta$ be an integer with $0 \leq \beta \leq 3$. If $\dim \0M < \beta$, then
\[
M_{Y \times \R}^0 (\rho, \sigma;L_{\beta}) = \emptyset.
\]

\item
If $\dim \0M = \beta$, then the intersection $\bM$ is transverse and compact. Hence $\bM$ is a finite set.

\end{itemize}

\end{lemma}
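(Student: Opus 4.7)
The plan is to imitate the construction of admissible sections from the proof of Proposition~\ref{prop section} in Subsections~\ref{ss sections}--\ref{ss proof of lem}, carried out in parallel for the two line bundles $\aLam$ and $\1L$, with the role of the Fukaya-Floer line bundles $\cL_{l}^{\otimes 2}$ played here by these torsion-type bundles. The key enabling observation is Lemma~\ref{lem rho irr}: because $\dim \0M \leq 3$, any weak limit $([A_1^{\infty}], \dots, [A_r^{\infty}])$ of a sequence in $\0M$ has irreducible intermediate flats $\rho(1), \dots, \rho(r-1)$, so every lower-dimensional moduli space $\i0M$ entering the gluing has irreducible endpoints and again lies in the setting of the lemma, and the bundles $\aLam$ and $\1L$ are defined on each such piece.

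I proceed by induction on $d := \dim \0M \in \{0, 1, 2, 3\}$. For the base case $d = 0$, additivity of the index combined with the irreducibility of all $\rho(i)$ gives $\dim \0M = \sum_{i=1}^{r} \dim \i0M + (r-1) \geq r-1 \geq 1$ for any broken limit with $r \geq 2$, so $\0M$ is already compact; generic sections $s_\Lambda, s_\cL$ then satisfy all three properties, the transversality assertions for $\beta \geq 1$ being vacuous. For the inductive step, assume the statement holds for all pairs $(\rho', \sigma')$ with strictly smaller $\dim M_{Y \times \R}^0(\rho', \sigma')$. Construct the locally finite open cover $\{U_\lam\}_{\lam \in \Lam(\rho, \sigma)}$ of $\0M$ and a partition of unity $\{f_\lam\}$ with the property of Lemma~\ref{lem partition}, exactly as in Subsection~\ref{ss sections}, using the gluing maps $Gl_{\unlam}$ from products of lower-dimensional reduced moduli spaces between irreducible endpoints. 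Define preliminary sections by
\[
s_{\bullet}(\rho, \sigma) := \sum_{r=2}^{d} \sum_{\rho(1), \dots, \rho(r-1)} \sum_{\unlam, \unn} f_{\unlam, \unn}\, \widehat{Gl}_{\unlam}\bigl( s_{\bullet}(\rho, \rho(1)) \boxtimes \cdots \boxtimes s_{\bullet}(\rho(r-1), \sigma) \bigr)
\]
for $\bullet \in \{\Lambda, \cL\}$, where each summand section is admissible by induction and $\widehat{Gl}_{\unlam}$ is the gluing isomorphism on the corresponding line bundle. The tensor-product convergence property is then obtained by the verbatim argument of Lemma~\ref{lem tensor}, which is purely local on the ends of the moduli space and depends only on the formal structure of the line bundles.

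The compactness of $\bM$ when $\dim \0M \leq \beta$ follows from an argument modelled on Lemma~\ref{lem compactness}: a non-convergent sequence in $\bM$ would weakly converge to a broken trajectory, and the subset $L_\beta \subset \{\Lambda, \cL\}$ would distribute among the pieces so as to force at least one piece to have strictly fewer transverse cuts than its formal dimension allows, contradicting the inductive vanishing statement applied to that piece. With the problematic region compact, I then perturb $s_\Lambda$ and $s_\cL$ on a compact subset of $\0M$ to achieve simultaneous transversality of $\bM$ for every $\beta$; the perturbation is supported away from the ends, so the tensor-product convergence persists. The main technical obstacle to formalizing this plan is verifying that the gluing map $\widehat{Gl}_{\unlam}$ descends coherently to the quotient bundles $\aLam$ and $\1L$, i.e.\ that the weight-adjustment factors $\aR$, $\bR$, $\aC$, $\bC$ cancel correctly at the intermediate irreducible flats $\rho(i)$ so that composition with the quotient by $\Gamma_\rho \times \Gamma_\sigma$ is well defined and compatible with boxing of sections; this is a routine but delicate bookkeeping check, and it is the only point at which the geometry of these torsion bundles departs structurally from the $\cL_l^{\otimes 2}$ setup of Section~\ref{section Fukaya-Floer}.
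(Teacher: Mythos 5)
Your proposal matches the paper's approach exactly: the paper proves Lemma~\ref{lem rho irr} (that intermediate flats in any broken limit are irreducible when $\dim \0M \leq 3$) and then states, in a single sentence, that this allows the method of Section~\ref{section Fukaya-Floer} — the locally finite covers, the partition of unity of Lemma~\ref{lem partition}, the gluing-built sections~(\ref{eq section}), and Lemmas~\ref{lem tensor}--\ref{lem compactness} — to be applied to $\aLam$ and $\1L$. Your spelled-out induction, the base case dimension count, and the identification of the one substantive check (that $\widehat{Gl}_{\unlam}$ descends to the $\Gamma_\rho \times \Gamma_\sigma$-quotient bundles, i.e.\ that the weight-shift factors $\underline{\R}(\rho(i), \theta_a) \otimes \underline{\R}(\theta_a, \rho(i))$ and their $\bar{\partial}$-analogues telescope to even-weight, hence trivial, $\Gamma_{\rho(i)}$-representations by additivity of the index over $Y \times S^1$) are precisely the details the paper leaves implicit.
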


In this section, we refer to $CF_i(Y)$ as the $\Z_2$-vector space generated by gauge equivalence classes $[\rho]$ of irreducible flat connections with $\delta_{Y}([\rho]) \equiv i \mod 8$.
Using admissible sections of $\Lambda^{(a)}(\rho, \sigma)$, $\cL^{(1)}_{ \Gamma }(\rho, \sigma)$, we define the complex as follows.

\begin{definition}
$\phantom{a}$

\begin{enumerate}
\item
\[
C_j^{(a)}(Y):=
\left\{
\begin{array}{cl}
\left(
\displaystyle \bigoplus_{\beta=0}^3 CF_{8n-\beta}(Y)
\right)
\oplus \Z_{2} < \theta_n > &
\text{ if $j=8n$, $n \equiv a + 1 \mod 2$  }, \\
\displaystyle \bigoplus_{\beta = 0}^3 CF_{j- \beta}(Y) &
\text{otherwise.}
\end{array}
\right.
\]

\item
We define $\partial^{(a)}:C_j^{(a)}(Y) \rightarrow C_{j-1}^{(a)}(Y)$ as follows.
\begin{enumerate}
\item
For integers $\beta_1, \beta_2$ with $0 \leq \beta_1 \leq \beta_2 \leq 3$ and generators $[\rho] \in CF_{j-\beta_1}(Y)$, $[\sigma] \in CF_{j-\beta_2-1}(Y)$, we put
\[
< \partial^{(a)} ([\rho]), [\sigma]> :=
\left\{
\begin{array}{cl}
\# \b12M \mod 2 & \text{ if $ L_{\beta_1} \subset L_{\beta_2}  $,  } \\
0 & \text{ otherwise. }
\end{array}
\right. 
\]

\item
For $[\rho] \in CF_{8n+1}(Y)$ with $n \equiv a+1 \mod 2$, we put
\[
< \partial^{(a)}([\rho]), \theta_n > := \# M_{Y \times \R}^0(\rho, \theta_n) \mod 2.
\]

\item
For $[\sigma] \in CF_{8n-4}(Y)$ with $n \equiv a + 1 \mod 2$, we put
\[
< \partial^{(a)}(\theta_n), [\sigma] > = \# M_{Y \times \R}^0(\theta_n, \sigma) \mod 2.
\]

\end{enumerate}

\end{enumerate}

\end{definition}

We claim that $\partial^{(a)} \circ \partial^{(a)}$ is identically zero.

\begin{lemma} \label{lem torsion complex}
$\partial^{(a)} \circ \partial^{(a)} = 0$.
\end{lemma}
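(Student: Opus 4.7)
The plan is to adapt the $\partial^{2}=0$ argument from Lemma~\ref{lem complex} to the present setting: for each matrix coefficient $\langle \partial^{(a)} \partial^{(a)} x, y\rangle \in \Z_2$, I would identify it with the number modulo $2$ of boundary points of a compact $1$-dimensional cut-down moduli space, which must be even. By degree considerations---$\theta_n$ lives only in degree $8n$ of $C^{(a)}_*(Y)$ and $\partial^{(a)}$ strictly lowers degree by one---the nontrivial matrix coefficients fall into three cases: (I) $x=[\rho]$, $y=[\sigma]$ both irreducible; (II) $x=[\rho]$ irreducible, $y=\theta_n$; (III) $x=\theta_n$, $y=[\sigma]$ irreducible.

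For Case (I), I would fix $[\rho]\in CF_{j-\beta_1}(Y)$ and $[\sigma]\in CF_{j-2-\beta_3}(Y)$ with $L_{\beta_1}\subset L_{\beta_3}$ and consider the one-dimensional cut-down moduli space $M^0_{Y\times\R}(\rho,\sigma;L_{\beta_3}\setminus L_{\beta_1})$, smooth after generic perturbation by the one-higher-dimensional analogue of Lemma~\ref{lem torsion section}. By Proposition~\ref{prop 1} each end admits a weakly convergent subsequence with limit a broken trajectory, and additivity of the index forces only double breakings. For ends at an irreducible intermediate $[\tau]$, the gluing construction identifies them bijectively with points of $M^0(\rho,\tau;L_{\beta_2}\setminus L_{\beta_1})\times M^0(\tau,\sigma;L_{\beta_3}\setminus L_{\beta_2})$ for admissible $\beta_2$, reproducing the irreducible part of $\langle\partial^{(a)}\partial^{(a)}[\rho],[\sigma]\rangle$ exactly as in Lemma~\ref{lem complex}. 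The dimension identity $\dim M(\rho,\sigma)=\dim M(\rho,\theta_n)+\dim M(\theta_n,\sigma)+\dim H^0_{\theta_n}$ from the proof of Lemma~\ref{lem rho irr} then shows that an intermediate $\theta_n$ can occur only when $(\beta_1,\beta_3)=(0,3)$ and $\delta_Y([\rho])\equiv 8n+1\pmod 8$ with $n\equiv a+1\pmod 2$; in that single subcase the $\theta_n$-ends must contribute exactly $\# M^0(\rho,\theta_n)\cdot\# M^0(\theta_n,\sigma)\bmod 2$, which is precisely the $\theta_n$-channel term in $\langle\partial^{(a)}\partial^{(a)}[\rho],[\sigma]\rangle$.

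Cases (II) and (III) follow the same template applied, respectively, to the one-dimensional moduli spaces $M^0(\rho,\theta_n)$ (requiring $[\rho]\in CF_{8n+2}(Y)$ with $\beta_1=0$, the only configuration allowed by degree) and $M^0(\theta_n,\sigma)$ (with $[\sigma]\in CF_{8n-5}(Y)$). A dimension count based on the same identity rules out any second reducible intermediate in both cases, so only irreducible intermediates $[\tau]\in CF_{8n+1}(Y)$ (resp.\ $CF_{8n-4}(Y)$) appear, and the ends produce $\sum_{[\tau]} \# M^0(\rho,\tau)\cdot\# M^0(\tau,\theta_n)$ (resp.\ $\sum_{[\tau]} \# M^0(\theta_n,\tau)\cdot\# M^0(\tau,\sigma)$), matching the corresponding matrix coefficients of $\partial^{(a)}\circ\partial^{(a)}$.

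The principal technical obstacle is the analysis of the $(\beta_1,\beta_3)=(0,3)$ subcase of Case (I). Because $\theta_n$ is reducible with stabilizer containing $SU(2)$, gluing at $\theta_n$ introduces an $SO(3)\cong SU(2)/\{\pm 1\}$ family of relative framings in addition to the usual translation parameter, so the broken stratum $M^0(\rho,\theta_n)\times M^0(\theta_n,\sigma)$ is glued into the compactified moduli space via a $4$-dimensional fiber. Verifying that the zero loci of the admissible sections of $\Lambda^{(a)}(\rho,\sigma)$ and $\cL^{(1)}_\Gamma(\rho,\sigma)$, of total real codimension $1+2=3$, cut this fiber down to a $1$-dimensional family whose inner boundary equals $\# M^0(\rho,\theta_n)\cdot\# M^0(\theta_n,\sigma)\bmod 2$ will require a local gluing theorem near the reducible stratum together with an analysis of how the Dirac determinant bundle $\Lambda^{(a)}$ and the $\bar\partial$-determinant bundle $\cL^{(1)}_\Gamma$ restrict to the $SO(3)$-orbit. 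The index formulas of Lemmas~\ref{lem ind Dir} and~\ref{lem ind d-bar}, together with the explicit admissible sections constructed in Proposition~\ref{prop section}, are expected to be the main tools.
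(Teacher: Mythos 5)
Your overall framework matches the paper's: you split into the same matrix-coefficient cases, reduce everything to counting ends of $1$-dimensional cut-down moduli spaces, use Lemma~\ref{lem rho irr} to exclude reducible intermediates when the dimension of $\0M$ is small, and correctly identify the analysis of the $SO(3)$-orbit at $\theta_n$ as the nontrivial technical step. However, there is a genuine logical gap in your treatment of Case (I) with $(\beta_1,\beta_3)=(0,3)$.

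You assert that the dimension identity forces $n\equiv a+1\pmod 2$ whenever a reducible intermediate $\theta_n$ appears. This is false: the dimension count only pins down the integer $n$ via $\dim M^0_{Y\times\R}(\rho,\theta_n)=\dim M^0_{Y\times\R}(\theta_n,\sigma)=0$; the parity of that $n$ relative to $a$ is not constrained by any index computation and can be either value. The condition $n\equiv a+1\pmod 2$ is merely the condition for $\theta_n$ to be a \emph{generator} of $C^{(a)}_{8n}(Y)$. When $n\not\equiv a+1\pmod 2$, the broken trajectories through $\theta_n$ still appear in the compactification of $\3M$, yet there is no $\theta_n$-channel in $\partial^{(a)}\circ\partial^{(a)}$ for them to pair against; the identity $\partial^{(a)}\circ\partial^{(a)}=0$ can only hold if those ends contribute $0\pmod 2$. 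This is precisely what the paper's Lemma~\ref{lem SO(3)} establishes: $\1L|_{SO(3)_{\unA}}$ is always nontrivial, while $\aLam|_{SO(3)_{\unA}}$ is nontrivial \emph{if and only if} $n\equiv a+1\pmod 2$, so that $\#\big(SO(3)_{\unA}\cap V_\Lambda\cap V_\cL\big)$ is $1\pmod 2$ when $n\equiv a+1$ (matching the $\theta_n$-channel term) and $0\pmod 2$ otherwise (so nothing is left unmatched). Your ``principal technical obstacle'' paragraph correctly flags the $SO(3)$-orbit bundle computation, but because your preceding paragraph rules out the $n\not\equiv a+1$ case by fiat, it does not register that the dichotomy in that computation is essential to closing the argument in both parities, rather than merely confirming a contribution of $1$ in a single parity.
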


We must show that for generators $[\rho] \in C_j^{(a)}(Y)$, $[\sigma] \in C_{j-2}^{(a)}(Y)$
\begin{equation} \label{eq torsion complex}
< \partial^{(a)} \circ \partial^{(a)}([\rho]), [\sigma] > \equiv 0 \mod 2.
\end{equation}
We split the proof into four cases.

\vspace{2mm}

\begin{enumerate}[(1)]

\item
Flat connections $\rho, \sigma$ are irreducible and $[\rho] \in CF_{j-\beta_1}(Y)$, $[\sigma] \in CF_{j-\beta_2-2}(Y)$ for some integers $\beta_1, \beta_2$ with $0 \leq \beta_2 - \beta_1 \leq 2$.

\vspace{2mm}

\item
$[\rho] \in CF_{8n+2}(Y)$, $\sigma = \theta_{n}$ for some $n \in \Z$ with $n \equiv a+1 \mod 2$.

\vspace{2mm}

\item
$\rho = \theta_n$, $[\sigma] \in CF_{8n-5}(Y)$ for some $n \in \Z$ with $n \equiv a + 1 \mod 2$.

\vspace{2mm}

\item
$[\rho] \in CF_{j}(Y)$, $[\sigma] \in CF_{j-5}(Y)$.

\end{enumerate}

\vspace{3mm}

To prove (\ref{eq torsion complex}), we need to describe the ends of $\0M$ as in the previous section.
In the cases (1), (2), (3), the dimension of the moduli space $\0M$ is less than 4. Lemma \ref{lem rho irr} means that (\ref{eq torsion complex}) follows from the same discussion in the previous section.
Therefore we need to consider only the case (4).

\begin{lemma}
Take generators $[\rho] \in CF_{j}(Y)$, $[\sigma] \in CF_{j-5}(Y)$. We can take sections $s_{\Lambda}$, $s_{\cL}$ of the line bundles $\aLam$, $\1L$ with the following property.
Let $\{ [A^{\alpha}] \}_{\alpha}$ be a sequence in $\0M$ converging to some $([A_1^{\infty}], \dots, [A_r^{\infty}])$. Here $[A_i^{\infty}] \in \i0M$. Suppose that $\rho(i)$ are irreducible for $i=1, \dots, r-1$. Then 
\[
\begin{split}
s_{\Lambda}([A^{\alpha}]) &\longrightarrow s_{\Lambda}([A_1^{\infty}]) \boxtimes \cdots \boxtimes s_{\Lambda}([A_r^{\infty}]), \\
s_{\cL}([A^{\alpha}]) &\longrightarrow s_{\cL}([A_1^{\infty}]) \boxtimes \cdots \boxtimes s_{\cL}([A_r^{\infty}]).
\end{split}
\]
\end{lemma}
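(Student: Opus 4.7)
The plan is to repeat the inductive construction of Subsection \ref{ss sections} one dimensional step further, now with $\dim \0M = 4$, but only demanding the gluing-compatible convergence on sequences whose intermediate flat limits $\rho(i)$ are all irreducible. By Lemma \ref{lem rho irr}, reducible (trivial) intermediate limits $\rho(i)=\theta_n$ can indeed appear when $\dim \0M = 4$, since the dimension estimate in that lemma breaks down at $4 = r+3$; the hypothesis of the present lemma, however, simply excludes such sequences, so we are free to choose $s_\Lambda, s_\cL$ arbitrarily on the pieces of the end described by chains containing a trivial flat connection.

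First I would build a locally finite open cover of $\0M$. On the ``irreducible part'' of the end, take open sets $U_{\unlam,\unn}$ produced from gluing maps indexed by chains $\rho = \rho(0), \rho(1), \dots, \rho(r) = \sigma$ of irreducible flat connections, exactly as in Subsection \ref{ss sections}. For each intermediate factor $\iM$ appearing in such a chain, the additivity of the index forces $\dim \iM \leq 3$, so Lemma \ref{lem torsion section} supplies admissible sections $s_\Lambda(\rho(i-1),\rho(i))$ and $s_\cL(\rho(i-1),\rho(i))$ inductively. Using the partition of unity from Lemma \ref{lem partition}, I then define $s_\Lambda(\rho,\sigma)$ and $s_\cL(\rho,\sigma)$ on a neighbourhood of this ``irreducible end'' by the obvious analogue of formula \eqref{eq section}, with the gluing isomorphisms $\widehat{Gl}$ for the two line bundles constructed by the same cut-off plus right-inverse recipe used in Subsection \ref{ss eva}. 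On the remaining part of $\0M$, namely a neighbourhood of $N$ together with the strata containing a trivial intermediate connection, the sections are chosen generically.

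The verification of the stated convergence then proceeds exactly as in the proof of Lemma \ref{lem tensor}. The key technical input, Lemma \ref{lem open set}, applies verbatim to localise the support of the partition functions to chains of the appropriate depth; hence, when a sequence $[A^\alpha]$ has limit $([A_1^\infty],\dots,[A_r^\infty])$ with every $\rho(i)$ irreducible, only those partition-of-unity terms indexed by chains of irreducible flat connections contribute for large $\alpha$, and the argument of Lemma \ref{lem tensor} produces the desired limit $s_\Lambda([A_1^\infty]) \boxtimes \cdots \boxtimes s_\Lambda([A_r^\infty])$ and similarly for $s_\cL$.

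The main obstacle is purely bookkeeping: one must run the gluing arguments of Subsection \ref{ss eva} in parallel for the Dirac determinant line $\Lambda^{(a)}$ and the twisted $\bar{\partial}$ line $\cL^{(1)}_\Gamma$, which are defined on different weighted Sobolev spaces, and check that the cut-off construction of sections on $X^{\#(T)}$ respects the twists by the trivial $\Gamma_\rho$- and $\Gamma_\sigma$-modules $\aR, \bR, \aC, \bC$ entering the definitions of $\aLam$ and $\1L$. Since these twists are by finite-dimensional modules sitting over a single point of $\B$, they commute with the analytical gluing and contribute no new analysis; the argument for each of the two line bundles is otherwise identical to that for $\cL_l^{\otimes 2}(\rho, \sigma)$ carried out earlier.
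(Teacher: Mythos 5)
Your proposal is correct and fills in precisely what the paper leaves implicit; the paper's proof is the single sentence ``We can show this by using a partition of unity on the moduli space as Proposition \ref{prop section},'' and your elaboration matches that approach. You correctly identify the two points that make the one-dimension-higher inductive step go through: first, that Lemma \ref{lem rho irr} no longer rules out a trivial intermediate limit at $\dim \0M = 4$ (since $\dim \M = 5$ satisfies $5 \geq r + 3$ with $r = 2$), so the convergence can only be demanded along chains of irreducible flat connections, which is exactly the restricted hypothesis in the lemma; and second, that along any such irreducible chain the additivity of index forces every intermediate factor to satisfy $\dim \i0M \leq 3$, so Lemma \ref{lem torsion section} supplies the admissible sections feeding into the analogue of \eqref{eq section}. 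Your remark that near the reducible end the sections need only be chosen generically, and that the gluing isomorphisms for $\aLam$ and $\1L$ require no new analysis beyond the cancellation of the auxiliary $\Gamma_\rho$- and $\Gamma_\sigma$-twists under the index additivity, is also correct.
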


We can show this by using a partition of unity on the moduli space as Proposition \ref{prop section}.
The following lemma follows from the usual dimension-counting argument.

\begin{lemma} \label{lem ML3 end}
Take generators $[\rho] \in CF_{j}(Y)$, $[\sigma] \in CF_{j-5}(Y)$.
Let $\{ [A^{\alpha}] \}_{\alpha}$ be a sequence in $\3M$ converging to $([A_1^{\infty}], \dots, [A_r^{\infty}])$ with $r > 1$. Then we have either

\begin{itemize}

\item
$r=2$ and 
\[
[A_1^{\infty}] \in M_{Y \times \R}^0(\rho, \rho(1);L_{\beta}), \quad
[A_2^{\infty}] \in M_{Y \times \R}^0(\rho(1), \sigma; L_3 \backslash L_{\beta})
\]
for some integer $\beta$ with $0 \leq \beta \leq 3$ and an irreducible flat connection $\rho(1)$ with $[\rho(1)] \in CF_{j-\beta - 1}(Y)$, 
or

\vspace{2mm}

\item
$r=2$, $\rho(1)=\theta_n$ for some $n$ and 
\[
\dim M_{Y \times \R}^0(\rho, \theta_n) = \dim M_{Y \times \R}^0(\theta_n, \sigma) = 0.
\]

\end{itemize}
\end{lemma}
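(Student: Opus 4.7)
The argument is a dimension count on the weak-limit decomposition, in the spirit of the $\partial\circ\partial=0$ proof inside Lemma~\ref{lem complex}, with one extra case to handle a reducible intermediate limit.

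First I would compute $\dim M_{Y\times\R}^0(\rho,\sigma)=\delta_Y(\rho)-\delta_Y(\sigma)-1=4$, so that the cut-down space $\3M$ has formal dimension $1$; since $4<8$, no bubbling occurs in the weak limit. Following the template used for the Fukaya-Floer boundary, I would introduce for each $i$ a subset $L(i)\subset L_3=\{\Lambda,\cL\}$ recording which of $s_\Lambda, s_\cL$ vanishes on $[A_i^\infty]$, together with a codimension weight $\beta(L_0)=0$, $\beta(L_1)=1$, $\beta(L_2)=2$, $\beta(L_3)=3$ reflecting that $V_\Lambda$ has real codimension $1$ while $V_\cL$ has real codimension $2$. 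The analysis then splits according to whether every intermediate $\rho(i)$ with $1\le i\le r-1$ is irreducible, or some one of them is reducible (hence equal to some $\theta_n$, since up to the degree-one gauge transformation $g_1$ this is the only reducible flat connection on $Q$).

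In the all-irreducible case, the convergence lemma immediately preceding Lemma~\ref{lem ML3 end} gives $L(1)\cup\cdots\cup L(r)=L_3$, so $\sum_i\beta(L(i))\ge 3$, while the transversality clause of Lemma~\ref{lem torsion section} applied to $[A_i^\infty]\in M_{Y\times\R}^0(\rho(i-1),\rho(i);L(i))$ gives $\dim M_{Y\times\R}^0(\rho(i-1),\rho(i))\ge\beta(L(i))$ for each $i$. Additivity of the index (with no $H^0$-correction, since every intermediate is irreducible) yields $\sum_i\dim M_{Y\times\R}^0(\rho(i-1),\rho(i))=4-(r-1)=5-r$, so $5-r\ge 3$, forcing $r=2$ with equality throughout. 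Equality then forces $L(1)$ and $L(2)$ to partition $L_3$, the two pieces to have dimensions $\beta$ and $3-\beta$ with $L(1)=L_\beta$, and consequently $[\rho(1)]\in CF_{j-\beta-1}(Y)$; this is the first alternative.

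In the reducible case, I would instead apply the index additivity in the form recorded inside the proof of Lemma~\ref{lem rho irr}, namely $\dim M_{Y\times\R}(\rho,\sigma)=\sum_i\dim M_{Y\times\R}(\rho(i-1),\rho(i))+\sum_{i=1}^{r-1}\dim H^0_{\rho(i)}$, with $\dim H^0_{\theta_n}=3$. Each piece is non-empty, hence $\dim M_{Y\times\R}(\rho(i-1),\rho(i))\ge 1$; combined with the $+3$ contribution of the reducible intermediate, the total $5$ forces $r+3\le 5$, whence $r=2$, $i_0=1$, and $\dim M_{Y\times\R}(\rho,\theta_n)=\dim M_{Y\times\R}(\theta_n,\sigma)=1$, i.e.\ both reduced moduli spaces are $0$-dimensional. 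The main subtlety here is that the section-convergence $s_l([A^\alpha])\to s_l([A_1^\infty])\boxtimes s_l([A_2^\infty])$ was only established for irreducible intermediates, so one must not invoke a section-tensor-product decomposition through $\theta_n$; fortunately the $\dim H^0_{\theta_n}=3$ correction alone is decisive, which is precisely why the second alternative in the lemma carries no $L$-data.
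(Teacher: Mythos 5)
Your proof is correct and is essentially the argument the paper intends: the paper dispatches this lemma with the single remark that it ``follows from the usual dimension-counting argument,'' and what you write is a careful unwinding of that count, split according to whether any intermediate limit is reducible, using the additivity of the index (with the $\dim H^0_{\rho(i)}$ correction from the proof of Lemma~\ref{lem rho irr}) together with the transversality clauses of Lemma~\ref{lem torsion section} for the lower-dimensional strata.
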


This lemma means that the reducible flat connections $\theta_n$ do not appear in the description of the end of $\0M$ if $[\rho] \not \in CF_{8n+1}(Y)$ for any $n$. Therefore (\ref{eq torsion complex}) follows from a discussion like that in the previous section in this case.

From now on, we assume that $[\rho] \in CF_{8n+1}(Y)$, $[\sigma] \in CF_{8n-4}(Y)$. 
To describe the end of the cut-down moduli space $\bM$, we put
\[
\begin{split}
N_{irr} &:=
\bigcup_{\beta = 0}^3
\bigcup_{ [\rho(1)] } 
M_{Y \times \R}^0(\rho, \rho(1); L_{\beta}) \times M_{ Y \times \R }^0 (\rho(1), \sigma; L_3 \backslash L_{\beta} ), \\
N_{red} &:=
M_{Y \times \R}^0 (\rho, \theta_n) \times M_{Y \times \R}^0(\theta_n, \sigma), \\
N &:= N_{irr} \coprod N_{red}.
\end{split}
\]
Here $[\rho(1)]$ runs over generators of $CF_{8n+1-\beta}(Y)$.
For each $\unA =([A_1], [A_2]) \in N_{irr}$, we have a gluing map
\[
Gl_{\unA}:U_{[A_1]} \times (T_0, \infty) \times U_{[A_2]} \longrightarrow \0M
\]
for some precompact open neighborhoods  $U_{[A_1]}, U_{[A_2]}$ of $[A_1]$, $[A_2]$ and $T_0 > 0$, and for $\unA=([A_1], [A_2]) \in N_{red}$, we have a gluing map
\[
Gl_{\unA}:(T_0, \infty) \times SO(3) \longrightarrow \0M.
\]
For $T_1 > T_0$, we write $Gl_{\unA, T > T_1}$ for the restriction $Gl_{\unA}$ to the region where $T>T_1$. Here we put
\[
M'=M_{T_1}' := \3M \ \backslash \ \bigcup_{ \unA \in N } \Im Gl_{\unA, T>T_1}.
\]
Lemma \ref{lem ML3 end} means that $M'$ is compact. 
For irreducible flat connections $\rho(1)$, the sections of the line bundles over $M_{Y \times \R}(\rho, \rho(1))$, $M_{Y \times \R}^0(\rho(1), \sigma)$ satisfy the transversality conditions in Lemma \ref{lem torsion section}. Hence from a discussion like that in the proof of (\ref{eq M' end}), we have
\begin{equation} \label{eq M' end irr}
M' \cap \bigcup_{ \unA \in N_{irr} } \Im Gl_{ \unA, T_1-1 < T \leq T_1 }
\cong
N_{irr} \times (0, 1]
\end{equation}
for large $T_1$.
Perturbing the sections $s_{\Lambda}$, $s_{\cL}$ outside the end (\ref{eq M' end irr}) of $M'$, we get a 1-dimensional, smooth, compact manifold with boundary
\[
N_{irr} \ \cup \ \bigcup_{ \unA \in N_{red} } \big( SO(3)_{\unA} \cap V_{\Lambda} \cap V_{\cL} \big).
\]
Here $SO(3)_{\unA}$ is the image of $SO(3) \times \{ T_1 \}$ by $Gl_{\unA}$.
We write $S_{\unA}$ for the intersection $SO(3)_{\unA} \cap V_{\Lambda} \cap V_{\cL}$.
Then we obtain
\[
\# N_{irr} + \sum_{ \unA \in N_{red} } \# S_{\unA} \equiv 0 \mod 2.
\]
We must show
\begin{equation} \label{eq S_A}
\# S_{\unA} =
\left\{
\begin{array}{cl}
1 \mod 2 & \text{ if $n \equiv a + 1 \mod 2$}, \\
0 \mod 2 & \text{ otherwise }
\end{array}
\right.
\end{equation}
to get (\ref{eq torsion complex}). This follows from the following.

\begin{lemma} \label{lem SO(3)}
The restriction $\1L|_{SO(3)_{\unA}}$ is non-trivial, and the restriction $\aLam|_{SO(3)_{\unA} }$ is non-trivial if and only if $n \equiv a+1 \mod 2$.
\end{lemma}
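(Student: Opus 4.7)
The plan is to compute both restrictions by lifting the embedding $SO(3)_{\underline A}\hookrightarrow\cB^*(\rho,\sigma)$ along the universal cover $p\colon S^3=SU(2)\to SO(3)=\R P^3$ and reading off the non-triviality from the monodromy of the deck involution on the trivialization coming from the gluing formula for determinant lines. Since $\{\pm I\}\subset SU(2)$ acts trivially on connections, the gluing parameter space through the reducible $\theta_n$ is $SU(2)/\{\pm I\}=SO(3)$, so $SO(3)_{\underline A}$ is an embedded $\R P^3$ in $\cB^*(\rho,\sigma)$. Both $H^2(S^3;\Z)$ and $H^1(S^3;\Z_2)$ vanish, so the pull-back of any complex or real line bundle becomes trivial on $S^3$, and the class in $H^2(\R P^3;\Z)\iso\Z_2$ or $H^1(\R P^3;\Z_2)\iso\Z_2$ is measured by how the nontrivial deck transformation acts on this trivialisation.

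A natural trivialization comes from the excision/gluing isomorphism for determinant lines: at each $\tilde{\phi}\in SU(2)$ the fibre of $\tilde{\cL}^{(1)}_{\Gamma}$ (resp.\ $\tilde{\Lambda}^{(a)}$) at $[A^{\#(T_1)}_{p(\tilde{\phi})}]$ decomposes via $\tilde{\phi}$ as a tensor product of $A_1$-side and $A_2$-side factors meeting at the neck asymptote $\theta_n$. The core computation is that replacing $\tilde{\phi}$ by $-\tilde{\phi}$ modifies this identification by the constant gauge $-I$ on the $A_2$-side. Since $-I$ scales sections of $K^{\frac{1}{2}}_\Gamma\otimes E$ (resp.\ of $(S^+\otimes E)_\R$) by $-1$, it acts on the $A_2$-side determinant factor by $(-1)^{\Ind^{-+}\bar\partial_{A(\theta_n|_\gamma,\sigma)}}$ (resp.\ $(-1)^{\Ind^{-+}\Dir_{A(\theta_n,\sigma)}}$), and on the twist $\underline{\C}(\sigma,\theta_{\gamma,1})$ (resp.\ $\underline{\R}(\sigma,\theta_a)$) by $(-1)^{\Ind^{-+}\bar\partial_{A(\sigma,\theta_{\gamma,1})}}$ (resp.\ $(-1)^{\Ind^{-+}\Dir_{A(\sigma,\theta_a)}}$). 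By additivity and Lemma~\ref{lem ind d-bar}, for $\cL^{(1)}_\Gamma$ the two exponents add to $\Ind^{-+}\bar\partial_{A(\theta_{\gamma,0},\theta_{\gamma,1})}=1$ (using that $\theta_n|_\gamma$ is $SU(2)$-gauge equivalent to $\theta_{\gamma,0}$ because $g_1|_\gamma\colon\gamma\to SU(2)$ is null-homotopic), which is always odd; by Lemma~\ref{lem ind Dir} for $\Lambda^{(a)}$ they add to $\Ind^{-+}\Dir_{A(\theta_n,\theta_a)}=n-a$, which is odd iff $n\equiv a+1\mod 2$.

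The main obstacle will be the rigorous justification of the equivariant gluing formula for determinant lines at a reducible limit, i.e., showing that the trivialisation above is globally well-defined over $S^3$ and that the deck action is captured entirely by the ``apply $-I$ on the $A_2$-side'' contribution, without any additional correction coming from the obstruction space at $\theta_n$. Once this is in place, the two non-triviality claims reduce immediately to the weight computations above, using only additivity of the index and Lemmas~\ref{lem ind Dir}, \ref{lem ind d-bar}.
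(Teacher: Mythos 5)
Your proposal is essentially the paper's proof: both identify $SO(3)_{\unA}$ as a quotient of a trivialized line bundle over $SU(2)$, compute the weight of the $\{\pm 1\}$-action via excision/additivity of the index through the reducible neck, and read off non-triviality from parity via Lemmas~\ref{lem ind Dir} and \ref{lem ind d-bar}. The only cosmetic difference is that you attribute the $-I$ twist to the $A_2$-side while the paper puts it on the $A_1$-side and its adjacent twist factor, but this yields the same total index (up to sign) and hence the same parity conclusion.
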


\begin{proof}
Let $X_1, X_2$ be two copies of $Y \times \R$. And we write $X_1(T)$, $X_2(T)$ for $Y \times (-\infty, 2T)$, $Y \times (-2T, \infty)$. Gluing $X_1(T)$, $X_2(T)$ through the identification
\[
\begin{array}{ccc}
Y \times (T, 2T) & \stackrel{\cong}{\longrightarrow} & Y \times (-2T, -T) \\
(y,t) & \longmapsto & (y, t-3T),
\end{array}
\]
we get a manifold $X^{\#(T)}$. 

Let $E$ be the rank-two complex vector bundle over $Y$ associated with $Q$.
Take a trivialization $\phi$ of $E$ such that $\theta_n$ is trivial under $\phi$. 
We write $E_i$ for $\pi_i^*E$ and $\phi_i$ for $\pi^*_i \phi$. 
Here $\pi_i$ are the projections from $X_i(T)$ to $Y$. 
For $\tilde{u} \in SU(2)$ we define a vector bundle $E(\tilde{u})$ by
\[
E(\tilde{u}):= E_1 \bigcup_{ \phi_2^{-1} \circ (\tilde{u}  \phi_1) } E_2
\longrightarrow 
X^{\#(T)}.
\]

Gluing two instantons $([A_1], [A_2]) \in N_{red}$, we get an instanton $[A( \tilde{u})]$ on $E(\tilde{u})$, and we have the determinant line bundle $\{ \cL^{(1)}_{\Gamma, [A(\tilde{u})]} \}_{\tilde{u} \in SU(2)}$ over $SU(2)$. We have an action of $\{ \pm 1 \}$ on this bundle defined by the following diagram:
\[
\begin{CD}
( \cL^{(1)} )_{ [A(\tilde{u})] } @>{ \cong }>> 
\big( \C( \theta_{\gamma, 1}, \rho ) \otimes ( \cL)_{ [A_1] } \big) \otimes 
\big( ( \cL)_{ [A_2] } \otimes \C (\sigma, \theta_{ \gamma, 1 }) \big)  \\
@V{-1}VV        @V{ (-1)^{ m_1 + m_2} \otimes 1 }VV \\
( \cL^{(1)} )_{ [A(  -\tilde{u} )] } @>{ \cong }>> 
\big( \C( \theta_{\gamma, 1}, \rho ) \otimes ( \cL)_{ [A_1] } \big) \otimes 
\big( ( \cL)_{ [A_2] } \otimes \C (\sigma, \theta_{ \gamma, 1 }) \big).
\end{CD}  
\]
Here
\[
m_1 = \Ind^{-,+} \ind \bar{\partial}_{ A(\theta_{\gamma, 1}, \rho) }, \quad
m_2 = \Ind^{-,+} \bar{\partial}_{ A_1 }.
\]
The restriction $\1L|_{ SO(3)_{\unA} }$ is the quotient of $\{ \cL^{(1)}_{\Gamma, [A(\tilde{u})]} \}_{\tilde{u} \in SU(2)}$ by this action.  The necessary and sufficient condition for $\1L|_{ SO(3)_{\unA} }$ to be non-trivial is that $m_1 + m_2$ is odd. From the additivity of the index, we have
\[
m_1 + m_2 = \Ind^{-,+} \bar{\partial}_{ A(\theta_{\gamma, 1}, \theta_{n}|_{\gamma} ) }.
\]
Here $A(\theta_{\gamma, 1}, \theta_{n}|_{\gamma})$ is the connection on $\Gamma = \gamma \times \R$ with limits $\theta_{\gamma, 1}$, $\theta_{n}|_{\gamma}$.
Let $g_1$ be the fixed map from $Y$ to $SU(2)$ with degree 1. The class $[ g_1 |_{\gamma} ]$ represented by the restriction of $g_1$ to $\gamma$ is trivial in $\pi_1(U(2))$. Hence we have
\[
m_1 + m_2 
= \Ind^{-,+} \bar{\partial}_{ A(\theta_{\gamma, 1}, \theta_{\gamma, 0}) }
=-1
\]
by Lemma \ref{lem ind d-bar}. Therefore the restriction $\1L|_{ SO(3)_{\unA} }$ is non-trivial.

\vspace{2mm}

Similarly the necessary and sufficient condition for $\aLam|_{SO(3)_{\unA}}$ to be non-trivial is that
\[
\Ind^{-,+} \Dir_{ A(\theta_a, \theta_{n}) } \equiv 1 \mod 2.
\]
Lemma \ref{lem ind Dir} implies that this condition is equivalent to
\[
n \equiv a + 1 \mod 2.
\]

\end{proof}

\begin{definition}
$I^{(a)}_*(Y; \gamma) := H_*( C_*^{(a)}(Y), \partial^{(a)} )$.
\end{definition}

As in Subsection \ref{ss FFH well-defined}, we can show the following.

\begin{proposition}
$I^{(a)}(Y;\gamma)$ is independent of the metric on $Y$ and the sections of $\aLam$, $\1L$.
\end{proposition}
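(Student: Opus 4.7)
The plan is to follow the scheme of Proposition~\ref{prop well-defined}, suitably modified to incorporate the trivial flat connection generators $\theta_n$ that now appear in $C^{(a)}_*(Y)$. Given two Riemannian metrics $g_0, g_1$ on $Y$ and two collections of admissible sections $\{s_\Lambda,s_\cL\}$, $\{s_\Lambda',s_\cL'\}$, realize the trivial cobordism as a Riemannian bordism $(X, G)=(Y\times[0,1],G)$ interpolating between $(Y,g_0)$ and $(Y,g_1)$, form $\hat{X} := X \cup (Y\times\R_{\geq 0}) \cup (Y\times\R_{\geq 0})$, and (by the partition-of-unity construction of Subsection~\ref{ss sections}, using the irreducibility lemma \ref{lem rho irr} applied to the ends over $\hat{X}$) choose admissible sections $\hat{s}_\Lambda,\hat{s}_\cL$ of the appropriate analogues of $\Lambda^{(a)}(\rho,\sigma)$ and $\cL^{(1)}_{\Gamma}(\rho,\sigma)$ over the moduli spaces $M_{\hat{X}}(\rho,\sigma)$.

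Define a chain map $\zeta : C^{(a)}_*(Y) \to C'^{(a)}_*(Y)$ by counting, modulo $2$, the zero-dimensional cut-down intersections $M_{\hat{X}}(\rho,\sigma;L_\beta)$ for irreducible pairs, together with matrix elements $\langle \zeta([\rho]),\theta_n\rangle := \#M_{\hat{X}}(\rho,\theta_n)$ and $\langle \zeta(\theta_n),[\sigma]\rangle := \#M_{\hat{X}}(\theta_n,\sigma)$ in the remaining degrees. The identity $\partial^{(a)}_1 \circ \zeta = \zeta \circ \partial^{(a)}_0$ is then obtained by enumerating the ends of the one-dimensional cut-down moduli spaces over $\hat{X}$; the subcase where both flat limits are irreducible runs identically to Section~\ref{section Fukaya-Floer}, while the splittings through a trivial flat connection $\theta_n$ yield an $SO(3)$-gluing end whose signed count, via the parity computation of Lemma~\ref{lem SO(3)}, supplies exactly the $n\equiv a+1\pmod 2$ contributions required to match the $\theta_n$-matrix elements of $\partial^{(a)}$.

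For invariance on homology, construct the analogous $\zeta' : C'^{(a)}_*(Y) \to C^{(a)}_*(Y)$ from the reverse bordism, and stack the two bordisms across a neck $Y\times[0,T]$ to obtain $X^{(T)}$ with induced metric $G^{(T)}$. A parametric moduli space argument over a generic path of metrics and sections interpolating between $(G^{(T)},\hat{s}_l \# \hat{s}_l')$ and the product data $(g + dt^2, p^*s_l)$ on the cylinder produces a chain homotopy $H$ satisfying $\zeta' - \zeta = H\partial^{(a)}_0 + \partial^{(a)}_1 H$, verifying first that on the product cylinder the map $\zeta''$ is the identity chain-level map on generators (both irreducible and $\theta_n$), and then that $\zeta'_*\circ\zeta_* = \zeta''_* = \mathrm{id}$ and symmetrically $\zeta_*\circ\zeta'_* = \mathrm{id}$.

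The main obstacle will be the reducible stratum: in the parametric one-dimensional cut-down moduli spaces used both for $\partial^{(a)}\circ\zeta=\zeta\circ\partial^{(a)}$ and for the chain homotopy, ends through $\theta_n$ produce $SO(3)$-orbits of glued connections, and one must verify that the restrictions of $\Lambda^{(a)}$ and $\cL^{(1)}_{\Gamma}$ to these $SO(3)$-orbits have the same parity behavior as in Lemma~\ref{lem SO(3)} even when the bordism and the path-of-metrics parameters are present. Once this parametric extension of the parity computation is in place, the standard end-counting on the parametric moduli space closes the argument and yields the canonical isomorphism $I^{(a)}_*(Y;\gamma) \cong I'^{(a)}_*(Y;\gamma)$.
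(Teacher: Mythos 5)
Your proof follows essentially the same approach as the paper, which simply invokes the scheme of Subsection \ref{ss FFH well-defined} (Riemannian bordisms giving chain maps, chain homotopies for independence, bordism composition, and the identity on the product cylinder). You correctly identify the genuinely new point that the paper leaves implicit—namely that splittings through the reducible limits $\theta_n$ produce $SO(3)$-gluing parameters whose cut-down counts must be controlled by the parity computation of Lemma~\ref{lem SO(3)}—and your treatment of the $\theta_n$-matrix elements of $\zeta$ and of $\zeta''=\mathrm{id}$ on the $\theta_n$ generators is the right way to close the argument.
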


\begin{remark}
We assumed that $Y$ is a homology 3-sphere. In this case, it is sufficient to consider a very small loop $\gamma$ which represents the trivial class in $\pi_1(Y)$ or one point which is a degenerate loop, when we calculate invariants using our gluing formula. However we have considered a general loop $\gamma$ which may not be trivial in $\pi_1(Y)$. One of the reason is that even if we take a degenerate loop, we can see that the components of the boundary operator are not trivial, and we need to consider the components as in the case where the homotopy class of the loop is non-trivial. Another reason is that we hope our construction is extended to more general 3-manifolds.

\end{remark}

\subsection{Relative invariants}

Let $X$ be a closed, oriented, simply connected, non-spin $4$-manifold with $b^+(X)$ positive and even. 
We consider a situation where we have a decomposition $X= X_0 \cup_{Y} X_1$. Here $Y$ is an oriented homology 3-sphere and $X_0, X_1$ are compact, simply connected, non-spin 4-manifold with boundary $Y$, $\bar{Y}$ and $b^+(X_0), b^+(X_1)>1$. We consider invariants $\Psi_{X,P}^{u_1}(z)$, where $P$ is a $U(2)$-bundle over $X$ with $w_2(P) \equiv w_2(X)$ and $\dim M_P = 3$ and $z \in H_2(X;\Z)$ with $z \cdot z \equiv 0 \mod 2$. 
The purpose is to write $\Psi_{P}^{u_1}(z)$ in terms of data from $X_0, X_1$ as in the case of the usual Donaldson invariants.

We fix a trivialization $\varphi_0$ of $Q:=P|_{Y}$ and we write $\theta_0$ for the trivial connection. Choose a smooth map $g_1$ from $Y$ to $SU(2)$ of degree $1$. For $n \in \Z$, we denote $g_1^n(\theta_0)$ by $\theta_n$ as before.
For a sequence $T^{\alpha} \rightarrow \infty$, put $X^{\alpha}=X_0 \cup (Y \times [0, T^{\alpha}]) \cup X_1$.
We can easily show:

\begin{lemma}
Take $[A^{\alpha}] \in M_{X^{\alpha},P}$ and assume that $[A^{\alpha}]$ converges to $([A_0^{\infty}],\dots, [A_{r}^{\infty}])$. 
Here $[A_i^{\infty}] \in \i0M$.
Then we have either

\begin{itemize}

\item
$\rho(i)$ are irreducible for $i=1, \dots, r-1$, or

\vspace{2mm}

\item
$r=1$ and $\rho(1) = \theta_n$ for some $n \in \Z$, moreover $\dim M_{ \hat{X}_0}(\theta_n)= \dim M_{ \hat{X}_1 }(\theta_n) = 0$.

\end{itemize}

\end{lemma}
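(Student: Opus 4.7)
The plan is to carry out a dimension-counting argument completely parallel to that of Lemma \ref{lem rho irr}, but now for the closed manifold $X^{\alpha}$. The key input I will need is the closed analogue of the additivity formula: by additivity of the Fredholm index combined with the fact that at each neck the gluing parameter lives in an orbit of the stabilizer $\Gamma_{\rho(i)}$, one should have
\[
3 \ = \ \dim M_{X^{\alpha},P} \ = \ \dim M_{\hat{X}_0}(\rho(0)) + \sum_{i=1}^{r-1}\dim M_{Y\times\R}(\rho(i-1),\rho(i)) + \dim M_{\hat{X}_1}(\rho(r-1)) + \sum_{i=0}^{r-1}\dim H^0_{\rho(i)},
\]
where $H^0_{\rho}$ is the Lie algebra of $\Gamma_{\rho}$, so $\dim H^0_{\rho}=0$ when $\rho$ is irreducible and $\dim H^0_{\theta_n}=3$.

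I would then exploit the obvious lower bounds on each summand. The cap dimensions are non-negative, and since $[A_i^{\infty}]$ represents a non-trivial translation class in $M_{Y\times\R}^0(\rho(i-1),\rho(i))$ for $i=1,\dots,r-1$, the corresponding unreduced moduli space has dimension at least $1$. Plugging these bounds in produces
\[
3 \ \geq \ (r-1) + \sum_{i=0}^{r-1}\dim H^0_{\rho(i)}.
\]
If every flat limit $\rho(i)$ along a neck is irreducible, the correction sum vanishes and the first alternative of the lemma holds automatically. Otherwise some $\rho(i_0)=\theta_n$, contributing $3$ to the sum, which forces $r-1\leq 0$, i.e.\ $r=1$ (we always have $r\geq 1$ since $[A^{\alpha}]$ must split as the neck of $X^{\alpha}$ lengthens). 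With $r=1$ the identity collapses to $3 = \dim M_{\hat{X}_0}(\theta_n) + \dim M_{\hat{X}_1}(\theta_n) + 3$, so both cap dimensions are forced to be zero, which is the second alternative.

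The main obstacle will be pinning down the dimension formula in the closed case with the $\dim H^0_{\rho(i)}$ correction contributed at \emph{every} neck, including the two ``outer'' necks where a cap meets a middle piece, or (when $r=1$) where the two caps meet directly. This is essentially the one new analytic point beyond the version used in Lemma \ref{lem rho irr}; to establish it I would identify the gluing parameter at each neck with an orbit of $\Gamma_{\rho(i)}$ and combine this with a careful index additivity computation. Once the formula is in place, the remaining steps are pure arithmetic.
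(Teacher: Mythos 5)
Your proposal is correct and is precisely the dimension-counting argument the paper has in mind; the paper states only ``We can easily show'' before this lemma, and its earlier proof of Lemma~\ref{lem rho irr} is the template you follow. Your additivity formula, with the correction $\dim H^0_{\rho(i)}$ appearing at each of the $r$ necks (including the outer ones where a cap meets the chain), is exactly what one obtains from $\ind D_{A^\alpha} = \Ind^+D_{A_0^\infty} + \sum_{i=1}^{r-1}\Ind^{-+}D_{A_i^\infty} + \Ind^-D_{A_r^\infty}$ together with the relation $\Ind^{-+} = \Ind^{++} + \dim H^0$ at the relevant end, and the subsequent arithmetic is the one the paper relies on in Subsection~\ref{ss gluing D inv}.
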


This lemma means that the construction of the gluing formula for $\Psi_{X}^{u_1}(z)$ is the same as that in Subsection \ref{ss gluing D inv} if $\dim M_{\hat{X}_0}(\theta_n) \not= 0$ for all $n \in \Z$. Hence we suppose the following.

\begin{hypothesis} \label{ass trivial flat}
There is an integer $n_0$ with $\dim M_{ \hat{X}_0 }(\theta_{n_0})=0$.
\end{hypothesis}

We will define relative invariants for $X_0$ under this assumption. 
Put $P_0 :=P|_{X_0}$ and let  $\gamma \cong S^1$ be a loop in $Y$ and $\Sigma_0$ be an oriented surface embedded in $X_0$ with boundary $\gamma$.
Using Dirac operators associated with a spin-c structure $c_0$ over $X_0$ with $c_1( \det c_0) = -c_1(P_0)$ and twisted $\bar{\partial}$ operators over $\hat{\Sigma}_0$, we have line bundles
\[
\tilde{\Lambda}(\rho) \stackrel{\R}{\longrightarrow} \tX0B, \quad
\tilde{\cL}_{ \hat{\Sigma} _0} \stackrel{\C}{\longrightarrow} \tX0B.
\]
For irreducible flat connections $\rho$ and $a \in \{ 0, 1 \}$ we put
\[
\tilde{\Lambda}^{(a)}:= \tilde{\Lambda}(\rho) \otimes \underline{\R}(\rho, \theta_a), \quad
\tilde{\cL}_{ \hat{\Sigma}_0  }^{(1)}(\rho) :=
\tilde{ \cL }_{ \hat{\Sigma}_0 }(\rho) \otimes \underline{\C}(\rho, \theta_{ \gamma, 1 }).
\]
Here $\theta_{\gamma, 1}$ is defined as the previous subsection.
Then $\Gamma_{\rho}=\Z_2$ naturally acts on these bundles.

\begin{lemma} \label{lem line descend}
Let $\varphi_0$ be the fixed trivialization of $Q$ and put $\varphi_{a}:=g_1^{a} \varphi_0$. When $c_2(P_0, \varphi_a) \equiv 0 \mod 2$, $\tilde{\Lambda}^{(a)}(\rho)$ descends to $\X0B$.
Let $z_0 \in H_2(X_0,Y;\Z) \cong H_2(X_0;\Z)$ be the class represented by $\Sigma_0$. If $z_0 \cdot z_0 \equiv 1 \mod 2$,
the line bundle $\tilde{ \cL }^{(1)}_{ \hat{\Sigma}_0 }(\rho)$ also descends from $\tX0B$ to $\X0B$.
\end{lemma}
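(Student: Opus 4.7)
The plan is to use that $\X0B = \tX0B / \Gamma_{\rho}$ with $\Gamma_{\rho} = \{\pm 1\}$, so each of the two line bundles descends if and only if the central element $-1$ acts trivially on any fiber. I will compute the corresponding $\Z_2$-weight of $-1$ by additivity of index plus the topological index theorem, in parallel with the proofs of Lemma \ref{lem ind Dir} and Lemma \ref{lem ind d-bar}.

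For $\tilde{\Lambda}^{(a)}(\rho)$, the element $-1 \in \Gamma_\rho$ acts on $E$ by $-1$ while preserving the real structure on $S \otimes E$, so it acts on the fiber of $\tilde{\Lambda}(\rho)$ at $[A] \in \tX0B$ with weight $\ind_{\R}(\Dir_{A})_{\R}$ mod $2$. By construction the factor $\underline{\R}(\rho, \theta_a)$ contributes weight $\Ind^{-+}\Dir_{A(\rho,\theta_a)}$. Gluing $A$ (over $\hat{X}_0$) to $A(\rho, \theta_a)$ (over $Y \times \R$) along the common limit $\rho$ and capping off the $\theta_a$-end with the trivialization $\varphi_a$ of $Q$ produces a connection $\bar{A}$ on a closed $U(2)$-bundle whose second Chern number is $c_2(P_0, \varphi_a)$. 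Additivity of the index then gives
\[
\ind_{\R}(\Dir_{A})_{\R} + \Ind^{-+}\Dir_{A(\rho,\theta_a)} \equiv \ind_{\C} \Dir_{\bar{A}} \pmod{2},
\]
where I use that the complexification of $(\Dir)_{\R}$ equals $\Dir$ so the real and complex indices agree. The Atiyah--Singer index theorem (in the form used in \cite{AMR, FS}) identifies $\ind_{\C}\Dir_{\bar{A}} \bmod 2$ with $c_2(P_0, \varphi_a) \bmod 2$, so the weight of $-1$ vanishes exactly under the hypothesis $c_2(P_0, \varphi_a) \equiv 0 \bmod 2$ and $\tilde{\Lambda}^{(a)}(\rho)$ descends.

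For $\tilde{\cL}^{(1)}_{\hat{\Sigma}_0}(\rho)$ the strategy is the same, with the $\bar{\partial}$ operator on $\hat{\Sigma}_0$ and the family over $\gamma \times \R$ playing the roles of $\Dir$ and $\Dir_{A(\rho,\theta_a)}$. The element $-1$ acts on the fiber with total weight $\Ind^{-+}\bar{\partial}_{A} + \Ind^{-+}\bar{\partial}_{A(\rho,\theta_{\gamma,1})}$. Gluing and capping off the $\theta_{\gamma,1}$-end by the degree-one loop $g_{\gamma}$ produces a closed Riemann surface $\bar{\Sigma}_0$ (obtained from $\hat{\Sigma}_0$ by attaching a disk) and a rank-two bundle $\bar{E}$ whose first Chern class is shifted by $+1$ from the degree-one twist, so that $c_1(\bar{E}) = c_1(P_0)[\Sigma_0] + 1$ (as in Lemma \ref{lem ind d-bar}). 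Riemann--Roch then gives
\[
\ind_{\C}\bar{\partial}_{\bar{E} \otimes K^{1/2}_{\bar{\Sigma}_0}} \equiv c_1(P_0)[\Sigma_0] + 1 \pmod{2},
\]
and Wu's formula, applied to the relative class $z_0 \in H_2(X_0, Y; \Z) \cong H_2(X_0; \Z)$ using $w_2(P_0) \equiv w_2(X_0) \bmod 2$ and the vanishing of the $\Z_2$-homology of $Y$, yields $c_1(P_0)[\Sigma_0] \equiv z_0 \cdot z_0 \pmod{2}$. Hence the total weight is $z_0 \cdot z_0 + 1 \bmod 2$, which is even precisely when $z_0 \cdot z_0$ is odd.

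The hard part will be pinning down the precise effect of the degree-one twist $g_{\gamma}$ on the topology of $\bar{E}$ over $\bar{\Sigma}_0$ and checking that the fixed spin structure on $\Gamma$ (the one representing the trivial class in $\Omega_1^{spin}$, as in Remark \ref{rm spin}) extends across the attached disk so that $K^{1/2}_{\bar{\Sigma}_0}$ is defined and Riemann--Roch applies on the closed surface; this is essentially the same calculation as at the end of the proof of Lemma \ref{lem SO(3)}. Once this is settled, both halves of the lemma reduce to a single application of the additivity-of-index template used throughout Section \ref{section Fukaya-Floer} and Section \ref{s torsion}.
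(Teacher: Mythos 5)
Your overall strategy coincides with the paper's: you compute the $\Z_2$-weight of the central element $-1 \in \Gamma_\rho$ by additivity of the index and then reduce to a characteristic-number computation on a closed manifold. The second half (for $\tilde{\cL}^{(1)}_{\hat{\Sigma}_0}(\rho)$) is essentially the paper's argument, just with the two additivity steps fused into one; the paper instead splits $\Ind^+\bar\partial_{A(\theta_{\gamma,1})} = \Ind^+\bar\partial_{A(\theta_{\gamma,0})} + \Ind^{-+}\bar\partial_{A(\theta_{\gamma,0},\theta_{\gamma,1})}$ and invokes Lemma \ref{lem ind d-bar} for the cylinder term, but that is a cosmetic difference.

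The one genuine gap is in the first half. You write that ``capping off the $\theta_a$-end with the trivialization $\varphi_a$ produces a connection $\bar A$ on a closed $U(2)$-bundle,'' but a trivialization of $Q$ over $Y$ does not, on its own, produce a closed $4$-manifold; you must actually glue on a compact $4$-manifold $X_0'$ with $\partial X_0' = \bar Y$. The paper chooses $X_0'$ \emph{spin}, equips it with the trivial $U(2)$-bundle glued via $\varphi_a$, and equips it with a spin-c structure coming from the spin structure, so that $\Dir_{\theta_{X_0'}}$ is a direct sum of two copies of the spin Dirac operator and its index contribution is automatically even. This choice is what makes the additivity congruence $\Ind^+\Dir_{A(\theta_a)} \equiv \Ind\Dir_{\bar A} \pmod 2$ hold. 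Without specifying $X_0'$ and its spin-c structure, the congruence has no content. Similarly, the bare citation ``Atiyah--Singer identifies $\ind_\C\Dir_{\bar A} \bmod 2$ with $c_2(P_0,\varphi_a)\bmod 2$'' requires the cancellation of the cross term $\tfrac{1}{2}c_1(E')(c_1(\det c')+c_1(E'))$, which happens precisely because the spin-c structure is chosen with $c_1(\det c) = -c_1(P_0)$; this is a specific feature of the paper's setup, not a generic corollary of the index theorem. Once you make the filling explicit and carry out this computation (it is exactly the paper's), the first half is complete.
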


\begin{proof}
The wight of the $\Gamma_{\rho}$ action on $\tilde{\Lambda}^{(a)}(\rho)$ is $\Ind^+ \Dir_{A(\theta_{a})}$. Hence it is sufficient to show that the index is even if $c_2(P_0; \varphi_a)$ is even.
Choose a compact, spin 4-manifold $X_0'$ with boundary $\bar{Y}$ and we consider  a manifold $X'=X_0 \cup_{Y} X_0'$. Let $P'$ be a $U(2)$-bundle over $X'$ obtained by gluing $P_0$ and the trivial bundle $P_{X_0'}$ over $X_0'$ through $\varphi_a$.
Let $c_0'$ be a spin-c structure over $X_0'$ induced by a spin structure. Then we have a spin-c structure $c'$ on $X'$ induced by $c_0, c_0'$ and we have the Dirac operator
\[
\Dir_{A'}:\Gamma (S^+_{c'} \otimes E') \longrightarrow \Gamma(S^-_{c'} \otimes E').
\]
Here $A'$ is a connection induced by $A(\theta_a)$ and the trivial connection $\theta_{X_0'}$ on $P_{X_0'}$ and $E'$ is the complex vector bundle of rank two associated with $P'$. Since $\Dir_{\theta_{X_0'} }$ is the direct sum of two copies of a Dirac operator, the index $\Ind^- \cD_{ \theta_{X_0'} }$ is even. Thus we get
\[
\begin{split}
\Ind^+ \Dir_{ A(\theta_a) }
&\equiv \Ind^+ \Dir_{ A(\theta_a) } + \Ind^- \Dir_{ \theta_{X_0'} } \mod 2  \\
&\equiv \Ind \Dir_{A'} \mod 2.
\end{split}
\]
Applying the index formula, we have
\[
\begin{split}
\Ind \Dir_{A'} 
&= ch(E') \hat{A}(X') e^{ \frac{1}{2}c_1(\det c') }/[X'] \\
&=2 \hat{A}(X')e^{ \frac{1}{2} c_1(\det c') }/[X'] +
\frac{1}{2} c_1(E')(c_1(\det c') + c_1(E')) /[X'] - c_2(E')/[X'].
\end{split}
\]
Since $\hat{A}(X')e^{ \frac{1}{2} c_1(\det c') }/[X']$ is the index of a spin-c Dirac operator, it is an integer. Hence the first term is even.
From  $c_1(\det c') = -c_1(E')$, the second term is zero. Therefore we obtain
\[
\Ind^+ \Dir_{ A(\theta_a) } 
\equiv -c_2(E')/[X']
\equiv c_2(P_0; \varphi_a)
\mod 2.
\]
Thus the first part of Lemma follows.

To show the second part, we need to show
\[
\Ind^+ \bar{\partial}_{ A( \theta_{\gamma, 1} ) } \equiv 0 \mod 2.
\]
Here $A( \theta_{\gamma, 1} )$ is a connection on $\hat{X}_0$ with limit $\theta_{\gamma, 1}$.
The additivity of index implies that
\[
\Ind^+ \bar{\partial}_{ A( \theta_{\gamma, 1} ) } =
\Ind^+ \bar{\partial}_{ A( \theta_{\gamma, 0} ) } +
\Ind^{-+} \bar{\partial}_{ A( \theta_{\gamma, 0}, \theta_{\gamma, 1} ) }.
\]
Applying Lemma \ref{lem ind d-bar} to the second term in the right hand side, we get
\[
\Ind^+ \bar{\partial}_{ A( \theta_{\gamma, 1} ) } = 
\Ind^+ \bar{\partial}_{ A( \theta_{\gamma, 0} ) } + 1.
\]
Let $\varphi_{\gamma}$ be the restriction of $\varphi_0$ to $\gamma$ and take an oriented, compact surface $\Sigma_0'$ with boundary $\bar{\gamma}$. Gluing $P_0|_{\Sigma_0}$ and the trivial $U(2)$-bundle $P_{\Sigma_0'}$ over $\Sigma_0'$ through $\varphi_{\gamma}$, we get a $U(2)$-bundle $P_{\Sigma'}$ over $\Sigma'=\Sigma_0 \cup_{\gamma} \Sigma_0'$. Let $\theta'$ be the trivial connection on $P_0'$. The index $\Ind^- \bar{\partial}_{\theta'}$ is even, since $\bar{\partial}_{\theta'}$ is the direct sum of two copies of the $\bar{\partial}$-operator. Thus we have
\[
\begin{split}
\Ind^+ \bar{\partial}_{ A( \theta_{\gamma,0} ) }
& \equiv \Ind^+ \bar{\partial}_{ A( \theta_{\gamma,0} ) } + \Ind^- \bar{\partial}_{\theta'} \mod 2 \\
& \equiv \Ind \bar{\partial}_{A'} \mod 2.
\end{split}
\]
Here $A'$ is the connection over $\Sigma'$ induced by $A( \theta_{\gamma,0} )$ and $\theta'$. From the fact that $c_1(P_0) \equiv w_2(X_0) \mod 2$ and $z_0 \cdot z_0 \equiv 1 \mod 2$, we have
\[
\begin{split}
\Ind \bar{\partial}_{A'} 
&=  ch(P_{\Sigma'})\hat{A}(\Sigma')/[\Sigma'] \\
&= (2 + c_1(P_{\Sigma'}))/[\Sigma'] \\
&= < c_1(P_{\Sigma'}), [\Sigma']> \\
&= < c_1(P_0), z_0> \\
&\equiv 1 \mod 2.
\end{split}
\]
Therefore we obtain
\[
\Ind^+ \bar{\partial}_{ A( \theta_{\gamma, 1} ) } \equiv 0 \mod 2.
\]

\end{proof}

\begin{definition}
Let $a \in \{ 0, 1 \}$ with $c_2(P_0;\varphi_a) \equiv 0 \mod 2$.
For irreducible flat connections $\rho$, we define 
\[
\a0Lam := \tilde{\Lambda}^{(a)}(\rho)/\Gamma_{\rho}  \stackrel{\R}{\longrightarrow} \X0B,
\quad
\S0L:= \tilde{\cL}_{ \hat{\Sigma}_0 }^{(1)}/\Gamma_{\rho} \stackrel{\C}{\longrightarrow} \X0B.
\]
\end{definition}

The following lemma follows from a dimension counting argument like the proof of Lemma \ref{lem rho irr}.

\begin{lemma}
Let $\rho$ be a flat connection on $Q$ with $\dim M_{\hat{X}_0}(\rho) < 4$ and $\{ [A^{\alpha}] \}_{\alpha}$ be a sequence in $M_{\hat{X}_0}(\rho)$ which converges to some $([A_1^{\infty}], \dots, [A_r^{\infty}])$. Here $[A_i^{\infty}] \in \i0M$ and $\rho(r) = \rho$. Then $\rho(i)$ are irreducible for $i=0,\dots, r-1$.
\end{lemma}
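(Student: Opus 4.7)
The plan is to repeat the dimension-counting argument in the proof of Lemma~\ref{lem rho irr}, modified to accommodate the single cylindrical end of $\hat{X}_0$ (rather than the two cylindrical ends of $Y \times \R$). Suppose for contradiction that $\rho(i_0)$ is reducible for some $i_0 \in \{0, \dots, r-1\}$. By the standing assumption on $Q$, the only reducible flat connection on $Q$ (up to gauge) is a trivial one, so $\rho(i_0) = \theta_n$ for some $n$; its stabilizer is $SU(2)$, and in particular $\dim H^0_{\rho(i_0)} = 3$.

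The first step is to assemble a dimension identity by additivity of the Fredholm index of $D_A = d_A^* + d_A^+$ applied to a representative sequence $[A^\alpha] \to ([A_1^\infty], \dots, [A_r^\infty])$. Running through the same calculation as in Lemma~\ref{lem rho irr} with one cylindrical end replaced by the compact manifold $\hat{X}_0$ yields
\[
\dim M_{\hat{X}_0}(\rho) \;=\; \dim M_{\hat{X}_0}(\rho(0)) \;+\; \sum_{i=1}^r \dim \iM \;+\; \sum_{i=0}^{r-1} \dim H^0_{\rho(i)},
\]
the only difference from the identity $\dim \M = \sum_i \dim \iM + \sum_{i=1}^{r-1}\dim H^0_{\rho(i)}$ of Lemma~\ref{lem rho irr} being that the left-most $\iM$-factor is replaced by $\dim M_{\hat{X}_0}(\rho(0))$, reflecting the closed-off end, and that the $H^0$ sum picks up the additional endpoint index $i=0$.

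Each summand on the right is non-negative, since all the relevant moduli spaces are non-empty (they contain the corresponding pieces of the limit). Moreover, each bubble $\iM$ whose endpoints include $\theta_n$ on one side and a \emph{distinct} flat limit on the other is a non-constant instanton, so $\dim \iM \geq 1$ because of the free $\R$-translation orbit. Combining this contribution with $\dim H^0_{\rho(i_0)} = 3$ forces
\[
\dim M_{\hat{X}_0}(\rho) \;\geq\; 1 + 3 \;=\; 4,
\]
contradicting the hypothesis $\dim M_{\hat{X}_0}(\rho) < 4$. Hence every intermediate $\rho(i)$ with $0 \leq i \leq r-1$ is irreducible.

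The principal obstacle is the index bookkeeping: tracking the weights $\Ind^+$, $\Ind^{-+}$ and their relation to the virtual dimensions of reduced versus unreduced moduli at each reducible endpoint. This is exactly the calculation already carried out in the proof of Lemma~\ref{lem rho irr}, and only the one-sided modification described above is needed to transcribe it to the present setting.
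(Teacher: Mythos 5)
Your proof is correct and follows exactly the route the paper indicates: the paper itself omits the argument, saying only that the lemma ``follows from a dimension counting argument like the proof of Lemma~\ref{lem rho irr},'' and your write-up is precisely that adaptation, with the correct additivity identity
\[
\dim M_{\hat{X}_0}(\rho) = \dim M_{\hat{X}_0}(\rho(0)) + \sum_{i=1}^r \dim \iM + \sum_{i=0}^{r-1} \dim H^0_{\rho(i)},
\]
and the correct observation that a reducible $\rho(i_0)$ with $i_0 \le r-1$ forces $r \ge 1$ and hence $\dim M_{\hat{X}_0}(\rho) \ge 0 + 1 + 3 = 4$, contradicting $\dim M_{\hat{X}_0}(\rho) < 4$. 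One small imprecision in your prose: you describe the identity as obtained by \emph{replacing} the leftmost $\iM$-factor of Lemma~\ref{lem rho irr}'s identity by $\dim M_{\hat{X}_0}(\rho(0))$, but in fact the $\hat{X}_0$ term is \emph{prepended} to the $r$ cylinder terms (and the $H^0$-sum gains the term $i=0$); the displayed formula is nonetheless correct. Also, $\dim M_{\hat{X}_0}(\rho(0)) \ge 0$ rests on regularity, not merely nonemptiness, but this is at the same level of rigor as the paper's own unelaborated use of $\dim \iM \ge 1$, and is justified here since $w_2(P_0) \neq 0$ rules out reducible ASD connections on $\hat{X}_0$.
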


This lemma implies that for $\rho$ with $\dim \MX0 < 4$ we can take sections $s_{\Lambda}(\rho):\MX0 \rightarrow \a0Lam$, $s_{\cL}(\rho):\MX0 \rightarrow \S0L$ having properties like those in Proposition \ref{prop section}.
Using admissible sections, we define $\psi_{X_0}^{u_1} \in C_{8n_0}^{(a)}(Y)$ as follows. Let $\beta$ be an integer with $0 \leq \beta \leq 3$ and take a generator $[ \rho ] \in CF_{8n_0 - \beta}(Y)$. Then put
\[
< \psi_{X_0}^{u_1}, [\rho] >:= \# M_{\hat{X}_0}(\rho;L_{\beta}) \mod 2,
\]
and we define
\[
< \psi_{X_0}^{u_1}, \theta_{n_0} > :=
\left\{
\begin{array}{cl}
M_{ \hat{X}_0}(\theta_0) \mod 2 & \text{if $n_0 \equiv a + 1 \mod 2$},  \\
0 & \text{otherwise.}
\end{array}
\right.
\]
These numbers define a chain $\psi_{X_0}^{u_1} \in C^{(a)}_{ 8 n_0 }(Y)$ as usual. As before, we have:

\begin{proposition}
The chain $\psi_{X_0}^{u_1}$ is a cycle. Moreover the class $\Psi_{X_0}^{u_1} = [\psi_{X_0}^{u_1}] \in I_{8n_0}^{(a)}(Y; \gamma)$ is independent of the metric on $X_0$ and the admissible sections of $\a0Lam$, $\S0L$.
\end{proposition}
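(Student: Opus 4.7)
First I would verify the cycle condition $\partial^{(a)} \psi^{u_1}_{X_0} = 0$ by evaluating $\langle \partial^{(a)} \psi^{u_1}_{X_0}, \eta \rangle$ on each generator $\eta$ of $C^{(a)}_{8n_0 - 1}(Y)$. For $\eta = [\sigma] \in CF_{8n_0 - 1 - \beta_2}(Y)$ the natural witness is the $1$-dimensional cut-down moduli space $M_{\hat{X}_0}(\sigma; L_{\beta_2})$ on $\hat{X}_0$. When $\beta_2 < 3$, so that $\dim M_{\hat{X}_0}(\sigma) \leq 3$, the analogue of Lemma \ref{lem rho irr} rules out $\theta_n$ as an intermediate flat limit. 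The gluing/convergence analysis of Subsection \ref{ss gluing D inv} then identifies the boundary of $M_{\hat{X}_0}(\sigma; L_{\beta_2})$ with the disjoint union, over irreducible $[\rho] \in CF_{8n_0 - \beta_1}(Y)$ and $L_{\beta_1} \subset L_{\beta_2}$, of the products $M_{\hat{X}_0}(\rho; L_{\beta_1}) \times M_{Y \times \R}^0(\rho, \sigma; L_{\beta_2} \backslash L_{\beta_1})$. Counting mod~$2$ yields exactly $\sum_{\beta_1, [\rho]} \langle \psi^{u_1}_{X_0}, [\rho] \rangle \, \langle \partial^{(a)} [\rho], [\sigma] \rangle$, which is therefore zero.

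The interesting case is $\beta_2 = 3$, $[\sigma] \in CF_{8n_0 - 4}(Y)$, where $\dim M_{\hat{X}_0}(\sigma; L_3) = 1$ while $\dim M_{\hat{X}_0}(\sigma) = 4$. Here the $\hat{X}_0$-analogue of Lemma \ref{lem ML3 end} shows that the ends of $M_{\hat{X}_0}(\sigma; L_3)$ consist of the irreducible gluings as above, together with an extra contribution from pairs $([A_0^\infty], [A_1^\infty]) \in M_{\hat{X}_0}(\theta_{n_0}) \times M^0_{Y \times \R}(\theta_{n_0}, \sigma)$, both of which are $0$-dimensional by Hypothesis \ref{ass trivial flat}. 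Near each such pair the gluing construction yields an $SO(3)$-family $SO(3)_{\underline{A}}$ in $M_{\hat{X}_0}(\sigma)$, and the end of $M_{\hat{X}_0}(\sigma; L_3)$ it produces is the finite set $SO(3)_{\underline{A}} \cap V_\Lambda \cap V_\cL$. The $\hat{X}_0$-analogue of Lemma \ref{lem SO(3)}, combined with Lemmas \ref{lem ind Dir} and \ref{lem ind d-bar}, shows that both $\Lambda^{(a)}(\rho)$ and $\cL^{(1)}_{\hat{\Sigma}_0}(\rho)$ restrict non-trivially to $SO(3)_{\underline{A}}$ precisely when $n_0 \equiv a+1 \mod 2$, so this count is $1 \mod 2$ in that case and $0 \mod 2$ otherwise. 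Summing, the total end count equals $\langle \partial^{(a)} \psi^{u_1}_{X_0}, [\sigma] \rangle$ by the very definition of $\partial^{(a)}(\theta_{n_0})$, so the cycle condition holds.

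For independence of the choices of metric and admissible sections I would run the chain-homotopy argument of Subsection \ref{ss FFH well-defined}. Given two admissible choices $(\hat{g}, s)$, $(\hat{g}', s')$, pick a generic path $\{(\hat{g}^u, s^u)\}_{u \in [0,1]}$ and form the parameterized moduli spaces $\overline{M}(\rho) = \bigcup_u M_{\hat{X}_0, \hat{g}^u}(\rho) \times \{u\}$ together with interpolated admissible sections $\overline{s}_\Lambda(\rho)$, $\overline{s}_\cL(\rho)$. Counting the $0$-dimensional parameterized cut-down moduli spaces---including a $\theta_{n_0}$-coefficient coming from the $1$-parameter family $\bigcup_u M_{\hat{X}_0, \hat{g}^u}(\theta_{n_0})$ when $n_0 \equiv a+1 \mod 2$---defines a chain $\eta^{u_1} \in C^{(a)}_{8n_0 + 1}(Y)$, and exactly the same end analysis, now applied to the parameterized $1$-dimensional cut-downs over $[0,1]$, shows $\partial^{(a)} \eta^{u_1} = \psi^{u_1}_{X_0} - \psi^{' u_1}_{X_0}$; the $SO(3)$-bubble at $\theta_{n_0}$ is once again controlled by the analogue of Lemma \ref{lem SO(3)}.

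The main obstacle is the analysis at the reducible connection $\theta_{n_0}$: one must make precise the $SO(3)$-gluing construction for instantons on $\hat{X}_0$ asymptotic to $\theta_{n_0}$, identify the resulting $SO(3)$-family with the relevant end of the $1$-dimensional cut-down moduli space, and verify that the line bundles $\Lambda^{(a)}(\rho)$ and $\cL^{(1)}_{\hat{\Sigma}_0}(\rho)$ restrict as dictated by Lemmas \ref{lem ind Dir} and \ref{lem ind d-bar}, so that the correction term built into $\psi^{u_1}_{X_0}$ exactly cancels the reducible contribution. Everything else reduces to techniques already developed in Section \ref{section Fukaya-Floer} and the present section.
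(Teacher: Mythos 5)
Your argument is correct in outline and matches the approach the paper clearly intends but does not spell out: the cycle condition is checked generator by generator via the boundary-of-1-manifold argument on $M_{\hat{X}_0}(\sigma; L_{\beta_2})$, the analogue of Lemma~\ref{lem rho irr} kills the reducible intermediate limit when $\dim M_{\hat{X}_0}(\sigma)\le 3$, and in the critical case $[\sigma]\in CF_{8n_0-4}(Y)$ the extra $SO(3)$-bubble at $\theta_{n_0}$ is weighted by the restriction of $\Lambda^{(a)}$ and $\cL^{(1)}_{\hat\Sigma_0}$, so that it exactly matches the $\theta_{n_0}$-term built into $\psi^{u_1}_{X_0}$; invariance is a parameterized-moduli chain homotopy as in Subsection~\ref{ss FFH well-defined}. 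Two small corrections are in order. First, the non-triviality of the restrictions to $SO(3)_{\underline A}$ when gluing at $\theta_{n_0}$ over $\hat X_0$ is governed by $\Ind^+\Dir_{A_0}$ and $\Ind^+\bar\partial_{A_0}$, computed via the argument of Lemma~\ref{lem line descend} (using $c_2(P_0;\varphi_a)\equiv 0$ and $z_0\cdot z_0\equiv 1 \bmod 2$), rather than directly from Lemmas~\ref{lem ind Dir} and \ref{lem ind d-bar}, which are the $Y\times\R$ and $\gamma\times\R$ computations; the conclusion you state is nonetheless correct. Second, the chain $\eta^{u_1}\in C^{(a)}_{8n_0+1}(Y)$ cannot carry a $\theta_{n_0}$-coefficient: the $\theta_n$ generator appears only in degree $8n$, so $C^{(a)}_{8n_0+1}(Y)=\bigoplus_{\beta=0}^3 CF_{8n_0+1-\beta}(Y)$. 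The $\theta_{n_0}$-component of $\partial^{(a)}\eta^{u_1}$ comes from the pairings $\langle\eta^{u_1},[\rho]\rangle\langle\partial^{(a)}[\rho],\theta_{n_0}\rangle$ with $[\rho]\in CF_{8n_0+1}(Y)$; the $1$-parameter family $\bigcup_u M_{\hat{X}_0,\hat g^u}(\theta_{n_0})$ is what you analyze to verify that this equals $\langle\psi-\psi',\theta_{n_0}\rangle$, but it does not itself contribute a coefficient to $\eta^{u_1}$. With these adjustments the proof goes through as you describe.
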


\begin{remark}
As the case of relative Donaldson invariants $\Psi_{X_0}$, if an analogy of Conjecture\ref{conj invariance} holds then relative 2-torsion instanton invariants $\Psi_{X_0}^{u_1}$ is independent of the surface $\Sigma_0$ representing the homology class $z_0$.
\end{remark}

\subsection{Gluing formula}

We begin with the definition of a pairing $I_{*}^{(a)}(Y) \otimes I_{*}^{(a)}(\bar{Y}) \rightarrow \Z_2$.
Since $\delta_{ \bar{Y} } ([\rho]) \equiv -\delta_{Y}([\rho]) - 3 \mod 8$, we have a natural pairing $CF_{j}(Y) \otimes CF_{-j-3}( \bar{Y} ) \rightarrow \Z_2$.
For $[ \theta_n] \in C^{(a)}_{8n}(Y)$ and $[ \theta_{-n} ] \in C^{(a)}_{-8n}( \bar{Y})$, we define $< \theta_n, \theta_{-n} > \equiv 1 \mod 2$. Then we get a pairing
\[
< \ , \ > : C_j^{(a)}(Y) \otimes C_{-j}^{(a)}( \bar{Y} ) \longrightarrow \Z_2.
\]
We can easily show

\begin{lemma}
For $[\rho] \in C^{(a)}_{j}(Y)$ and $[\sigma] \in CF_{-j+1}^{(a)}( \bar{Y} )$,
\[
< \partial^{(a)} ( [\rho] ), \sigma > = < [\rho], \partial^{(a)} ([\sigma]) >.
\]
\end{lemma}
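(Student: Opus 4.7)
The plan is to verify the adjoint identity generator by generator, exploiting the canonical identification of moduli spaces under orientation reversal of $Y\times\R$. The time-reversal involution $(y,t)\mapsto(y,-t)$ induces a diffeomorphism
\[
\Phi:M^0_{Y\times\R}(\rho,\sigma)\xrightarrow{\cong}M^0_{\bar{Y}\times\R}(\sigma,\rho),
\]
and because the spin structure we chose on $\gamma\times\R$ comes from the trivial-bordism spin structure on $\gamma$, $\Phi$ is covered by natural bundle isomorphisms $\Lambda^{(a)}(\rho,\sigma)\cong\Lambda^{(a)}(\sigma,\rho)$ and $\cL^{(1)}_\Gamma(\rho,\sigma)\cong\cL^{(1)}_\Gamma(\sigma,\rho)$, where the indices $\Ind^{-+}\Dir$ and $\Ind^{-+}\bar{\partial}$ used to rigidify the line bundles transform consistently because orientation reversal swaps $\Dir_A\leftrightarrow\Dir_A^*$ and $\bar{\partial}_A\leftrightarrow\bar{\partial}_A^*$. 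Choosing the admissible sections $s_\Lambda,s_\cL$ on the $\bar{Y}$-side to be the $\Phi$-pushforwards of those on the $Y$-side makes the zero loci correspond bijectively.

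For the irreducible–irreducible case, suppose $[\rho]$ lies in the $\beta_1$-summand of $C^{(a)}_j(Y)$ (so $\delta_Y([\rho])\equiv j-\beta_1$) and $[\sigma]$ lies in the $\beta_2$-summand of $C^{(a)}_{j-1}(Y)$. Then under $\delta_{\bar Y}\equiv-\delta_Y-3\pmod 8$, the connection $[\rho]$ lies in the $(3-\beta_1)$-summand of $C^{(a)}_{-j}(\bar Y)$, and $[\sigma]$ in the $(3-\beta_2)$-summand of $C^{(a)}_{-j+1}(\bar Y)$. One checks by inspection on the six pairs $0\le\beta_1\le\beta_2\le 3$ that $L_{\beta_1}\subset L_{\beta_2}$ iff $L_{3-\beta_2}\subset L_{3-\beta_1}$, and that the ``cutting set'' $L_{\beta_2}\setminus L_{\beta_1}$ equals $L_{3-\beta_1}\setminus L_{3-\beta_2}$ as subsets of $\{\Lambda,\cL\}$. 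Under $\Phi$ with pushforward sections,
\[
M^0_{Y\times\R}(\rho,\sigma;L_{\beta_2}\setminus L_{\beta_1})\xrightarrow{\cong}M^0_{\bar Y\times\R}(\sigma,\rho;L_{3-\beta_1}\setminus L_{3-\beta_2}),
\]
so the two $\mathbb{Z}_2$-counts agree.

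For the mixed cases involving the reducible $\theta_n$: the trivial flat connection is preserved by orientation reversal, and since $g_1:Y\to SU(2)$ has degree $1$ on $Y$ and degree $-1$ on $\bar Y$, the connection $\theta_n$ on $Y$ is identified with $\theta_{-n}$ on $\bar Y$, consistent with the pairing rule $\langle\theta_n,\theta_{-n}\rangle=1$. Thus for $[\rho]\in CF_{8n+1}(Y)$ (placed in the $\beta=0$ summand so its degree is $8n+1$), the only nontrivial matrix element on the left is $\langle\partial^{(a)}[\rho],\theta_n\rangle=\#M^0_{Y\times\R}(\rho,\theta_n)$, while on the right $\langle[\rho],\partial^{(a)}\theta_{-n}\rangle=\#M^0_{\bar Y\times\R}(\theta_{-n},\rho)$, and $\Phi$ identifies the two moduli spaces. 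The case $\rho=\theta_n$, $[\sigma]\in CF_{8n-4}(Y)$ is symmetric. All other matrix elements vanish on both sides for grading reasons, so the identity is proved on each pair of generators.

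The main obstacle is the bookkeeping in the second paragraph: making sure the summand-indices $\beta_i$, the $\bmod\,8$ gradings, and the subsets $L_\beta$ all intertwine correctly under orientation reversal, and in particular that $\Phi^*$ sends admissible sections to admissible sections (so that both chain complexes in the statement are computed with compatible data). Once the index/grading dictionary is set up, the identity itself is immediate from $\bmod\,2$ cardinality of a single moduli space; no instanton counting beyond Lemma \ref{lem torsion section} is needed.
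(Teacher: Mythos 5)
Your argument is the natural one and fills in the proof the paper elides with ``We can easily show.'' The orientation-reversal diffeomorphism of moduli spaces, the identification of $\theta_n$ over $Y$ with $\theta_{-n}$ over $\bar Y$, and the combinatorial identity $L_{3-\beta}=L_3\setminus L_\beta$ (which forces $L_{\beta_1}\subset L_{\beta_2}\Leftrightarrow L_{3-\beta_2}\subset L_{3-\beta_1}$ and $L_{\beta_2}\setminus L_{\beta_1}=L_{3-\beta_1}\setminus L_{3-\beta_2}$) are exactly the points needed; the fact that one works modulo 2 absorbs both the sign of the index under orientation reversal and the dualization of the determinant lines. One small bookkeeping slip: there are nine pairs $(\beta_1,\beta_2)$ with $0\le\beta_1\le\beta_2\le 3$ and $L_{\beta_1}\subset L_{\beta_2}$ (not six), but this does not affect the argument.
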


This lemma means that the pairing induces a pairing
\[
< \ , \ > : I^{(a)}_{j}(Y;\gamma) \otimes I^{(a)}_{-j}( \bar{Y};\gamma ) \longrightarrow \Z_2.
\]

The main statement in this section is the following.

\begin{theorem} \label{thm main}
Let $X$ be a closed, oriented, simply connected 4-manifold with $b^+$ positive and even. Assume that $X$ has a decomposition $X=X_0 \cup_{Y} X_1$, where $Y$ is a homology 3-sphere and $X_0, X_1$ are compact, non-spin, simply connected 4-manifolds with boundary $Y$, $\bar{Y}$.  Take a loop $\gamma$ in $Y$ and compact, oriented surfaces $\Sigma_0$, $\Sigma_1$ in $X_0$, $X_1$ with boundary $\gamma$. If the self-intersection numbers of $z_i := [\Sigma_i] \in H_2(X_i,Y;\Z) = H_2(X_i;\Z)$ are odd, then we have
\[
\Psi_X^{u_1}(z) = < \Psi_{X_0}^{u_1}(\Sigma_0), \Psi_{X_1}^{u_1}(\Sigma_1) >,
\]
where $z = z_0 + z_1 \in H_2(X;\Z)$.

\end{theorem}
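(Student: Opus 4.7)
The plan is to mirror the proof of Theorem \ref{thm gluing D inv} for the usual Donaldson invariant, modifying it to incorporate the real line bundle $\Lambda$ associated to the family of real Dirac operators and to account explicitly for the contribution of the trivial flat connection on $Y$. First I would stretch the neck, setting $X^\alpha = X_0 \cup (Y \times [0, T^\alpha]) \cup X_1$ for a sequence $T^\alpha \to \infty$, with metrics $g^\alpha$ extending fixed cylindrical-end metrics on $\hat{X}_0$ and $\hat{X}_1$. Since $\dim M_P(g^\alpha) = 3$ and $z \cdot z = z_0 \cdot z_0 + z_1 \cdot z_1$ is even, the line bundles $\Lambda$ on $\cB_{X^\alpha}^*$ and $\cL_\Sigma$ on $\cB_\Sigma^*$ are well defined. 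Using the partition-of-unity construction from Subsection \ref{ss sections}, I would construct sections $s_\Lambda^\alpha$ and $s_\cL^\alpha$ that are compatible with the gluing maps on the ends of $M_P(g^\alpha)$, in the sense of Definition \ref{def section covergence}. A standard argument then gives
\[
\Psi_X^{u_1}(z) \equiv \#\bigl(M_P(g^\alpha) \cap V_\Lambda^\alpha \cap V_\cL^\alpha\bigr) \pmod 2
\]
for all large $\alpha$.

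Next I would analyze the possible limits of sequences in this zero-dimensional intersection. By the dimension-counting lemma preceding Hypothesis \ref{ass trivial flat}, any weakly convergent subsequence $[A^\alpha] \to ([A_0^\infty], \dots, [A_r^\infty])$ falls into exactly two types: either all intermediate flat limits are irreducible, or $r=1$ with a single reducible intermediate $\rho(0) = \theta_n$ satisfying $\dim M_{\hat{X}_0}(\theta_n) = \dim M_{\hat{X}_1}(\theta_n) = 0$ (bubbling is excluded since $\dim M_P = 3 < 8$). Imitating the dimension count used to prove Lemma \ref{lem compactness}, one further sees that in the irreducible case the splitting is two-fold with $[A_0^\infty] \in M_{\hat{X}_0}(\rho; L)$ and $[A_1^\infty] \in M_{\hat{X}_1}(\rho; L^c)$ for some $L \subset \{\Lambda, \cL\}$, and the standard gluing theory then yields
\[
M_P(g^\alpha) \cap V_\Lambda^\alpha \cap V_\cL^\alpha \;\cong\; \Bigl(\bigsqcup_{[\rho]\text{ irr}}\bigsqcup_L M_{\hat{X}_0}(\rho; L) \times M_{\hat{X}_1}(\rho; L^c)\Bigr) \;\sqcup\; \Bigl(\bigsqcup_n SO(3)_n \cap V_\Lambda^\alpha \cap V_\cL^\alpha\Bigr)
\]
for all large $\alpha$, where $SO(3)_n$ denotes the $SO(3)$-family of glued instantons through $\theta_n$.

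It then remains to match the two sides of the claimed identity on the level of mod-$2$ counts. The irreducible contribution is the sum over irreducible $[\rho]$ and admissible $L$ of $\#M_{\hat{X}_0}(\rho; L) \cdot \#M_{\hat{X}_1}(\rho; L^c)$, which exactly reproduces the irreducible part of $\langle \psi_{X_0}^{u_1}(\Sigma_0), \psi_{X_1}^{u_1}(\Sigma_1)\rangle$ by the definition of the relative invariants. For the reducible contribution, Lemma \ref{lem SO(3)} shows that the restriction of $\cL^{(1)}$ to $SO(3)_n$ is always non-trivial while the restriction of $\Lambda^{(a)}$ is non-trivial if and only if $n \equiv a+1 \pmod 2$; a direct generic-section intersection count on $SO(3) \cong \mathbb{RP}^3$ therefore yields $\#(SO(3)_n \cap V_\Lambda \cap V_\cL) \equiv 1 \pmod 2$ precisely when $n \equiv a+1 \pmod 2$, and vanishes otherwise. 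This is exactly the $\langle \theta_n, \theta_{-n}\rangle$-contribution to $\langle \psi_{X_0}^{u_1}, \psi_{X_1}^{u_1}\rangle$ dictated by the definition of $\langle \psi_{X_0}^{u_1}, \theta_n\rangle$. Summing produces the desired chain-level identity, and independence of metric and sections (proven by a chain-homotopy argument parallel to Lemma \ref{lem chain map}) promotes it to the claimed equality in $I_*^{(a)}$-homology.

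The principal obstacle will be the careful analysis of the $SO(3)_n$-gluing region: I must construct $s_\Lambda^\alpha$ and $s_\cL^\alpha$ in a neighborhood of each $SO(3)_n \times \{T^\alpha\}$ so that, in a manner analogous to Lemma \ref{lem 2 instantons I}, their restriction to the $\mathbb{RP}^3$-fibre is arbitrarily close to a fixed pair of transverse sections of $\Lambda^{(a)}|_{SO(3)_n}$ and $\cL^{(1)}|_{SO(3)_n}$ once $T^\alpha$ is large. This reducible gluing compatibility, together with transversality, is what will let one reduce the glued intersection count to the parity computation of Lemma \ref{lem SO(3)}. A secondary issue is ensuring consistency of the parity index $a$ on the two sides of the decomposition, which reduces to the identity $c_2(P_0;\varphi_a) + c_2(P_1;\varphi_a) \equiv c_2(P) \equiv 0 \pmod 2$ guaranteeing that both $\Lambda^{(a)}$-bundles descend; beyond this, all remaining steps are routine adaptations of the arguments in Section \ref{section Fukaya-Floer}.
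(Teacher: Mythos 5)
Your proposal follows the paper's proof essentially step by step: neck-stretch, construct gluing-compatible sections for $\Lambda$ and $\cL_{\Sigma}$ via the partition-of-unity method, classify limits into irreducible splittings and the single $SO(3)_{n_0}$ family through the trivial connection, match the irreducible products to the chain-level pairing, and compute the $SO(3)$ contribution via the parity of $\Ind^+\bar\partial_{A_0}$ and $\Ind^+\Dir_{A_0}$ (the paper phrases this with a dedicated lemma using the index computations from Lemma~\ref{lem line descend}, not Lemma~\ref{lem SO(3)} itself, but the content is identical). The approach is correct and the same as the paper's.
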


We can generalize this formula to the case when the dimension of $M_{X,P}$ is more than 3. Assume that the dimension of $M_{X,P}$ is $2(d_0 + d_1) + 3$ for some $d_0, d_1 \in \Z_{ \geq 0}$.
For $x_1, \dots, x_{d_0} \in H_2(X_0;\Z)$, $y_1, \dots, y_{d_1} \in H_2(X_1;\Z)$ with $x_i \cdot x_i \equiv y_j \cdot y_j \equiv 0 \mod 2$, we can define relative invariants
\[
\Psi_{X_0}^{u_1}( x_1,\dots, x_{d_0}, \Sigma_0 ) \in I_*^{(a)}(Y;\gamma), \quad
\Psi_{X_1}^{u_1}( y_1, \dots, y_{d_1}, \Sigma_1 ) \in I_*^{(a)}( \bar{Y};\gamma).
\]
Here $\Sigma_i$ are surfaces in $X_i$ as in Theorem \ref{thm main}.
Then we have
\[
\Psi_{X}^{u_1}(x_1,\dots, x_{d_1}, y_1,\dots, y_{d_1}, z) =
< \Psi_{X_0}^{u_1}(x_1,\dots, x_{d_0}, \Sigma_0), \Psi_{X_1}^{u_1}(y_1, \dots, y_{d_1}, \Sigma_1) >.
\]

Applying this formula to the case $Y=S^3$, we immediately obtain the following.

\begin{corollary} \label{coro connected sum}
Let $X_0, X_1$ be closed, oriented, non-spin, simply connected 4-manifolds with $b^+$ positive and odd. Let $P_i$ be $U(2)$-bundles over $X_i$ with $w_2(P_i) = w_2(X_i)$, $c_2(P_i) \equiv 1 \mod 2$ and $\dim M_{P_i} = 2d_i$ for some $d_i \geq 0$.
For homology classes $x_1,\dots, x_{d_0}, z_0 \in H_2(X_0;\Z)$, $y_1, \dots, y_{d_1}, z_1 \in H_2(X_1;\Z)$ with $x_i \cdot x_i \equiv y_j \cdot y_j \equiv 0 \mod 2$, $z_0 \cdot z_0 \equiv z_1 \cdot z_1 \equiv 1 \mod 2$, we have
\[
\Psi_{X}^{u_1}(x_1, \dots, x_{d_0}, y_1, \dots, y_{d_1}, z) \equiv
\Psi_{X_0, P_0} (x_1, \dots, x_{d_0}) \cdot 
\Psi_{X_1, P_1}(y_1, \dots, y_{d_1}) \mod 2.
\]
Here $z = z_0 + z_1 \in H_2(X;\Z)$.
\end{corollary}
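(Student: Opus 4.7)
The plan is to invoke the generalized form of Theorem \ref{thm main} with $Y = S^3$, realizing the connected sum as $X = X_0 \# X_1 \cong X_0' \cup_{S^3} X_1'$, where $X_i'$ is obtained from $X_i$ by removing an open ball. Since $z_0\cdot z_0$ and $z_1\cdot z_1$ are both odd while $z\cdot z = z_0\cdot z_0 + z_1\cdot z_1$ is even, the relevant line bundles $\cL_z$, $\cL^{(1)}_{\hat{\Sigma}_i}$, and $\cL_{x_i}$, $\cL_{y_j}$ are all defined via Lemma \ref{lem line descend} for an appropriate $a \in \{0,1\}$. The crucial simplification coming from $\pi_1(S^3) = 1$ is that the trivial $U(2)$-bundle over $S^3$ admits no irreducible flat connections, so $CF_*(S^3) = 0$, and the complex $C^{(a)}_*(S^3)$ is spanned entirely by the reducibles $\theta_n$ with $n \equiv a+1 \pmod 2$. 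Every matrix element in the defining formula for $\partial^{(a)}$ involves at least one irreducible generator, so $\partial^{(a)} \equiv 0$, yielding $I^{(a)}_*(S^3;\gamma) = \bigoplus_n \Z_2\langle \theta_n\rangle$ with pairing $\langle\theta_n,\theta_{-n}\rangle \equiv 1 \pmod 2$ and all other pairings zero.

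Consequently the generalized gluing formula collapses to
\[
\Psi^{u_1}_X(x_1,\dots,x_{d_0},y_1,\dots,y_{d_1},z) \equiv \sum_n \langle \Psi^{u_1}_{X_0'}(x_1,\dots,x_{d_0},\Sigma_0),\theta_n\rangle \cdot \langle \theta_{-n}, \Psi^{u_1}_{X_1'}(y_1,\dots,y_{d_1},\Sigma_1)\rangle \pmod 2,
\]
and the remaining task is to identify each $\theta_n$-coefficient with the corresponding classical Donaldson invariant. By the additivity of the index and Lemmas \ref{lem ind Dir}, \ref{lem ind d-bar}, the hypotheses $\dim M_{P_i} = 2d_i$ single out a unique integer $n_0$ with $\dim M_{\hat{X}_0'}(\theta_{n_0}) = 2d_0$ and $\dim M_{\hat{X}_1'}(\theta_{-n_0}) = 2d_1$; we choose $a\in\{0,1\}$ so that $n_0 \equiv a+1 \pmod 2$, which matches the parity constraint of Lemma \ref{lem line descend}. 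Only $n = n_0$ contributes.

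Uhlenbeck's removable-singularity theorem provides a natural homeomorphism $M_{\hat{X}_0'}(\theta_{n_0}) \cong M_{X_0,P_0}$, and, for sections of $\cL_{x_i}$ defined through surface representatives supported away from the neck ball, the cut-down loci $V^\circ_{x_i}$ on the two sides agree. Since by construction the $\theta_{n_0}$-component of $\Psi^{u_1}_{X_0'}$ involves no cut-down by $V_{\Lambda}$ or $V_{\cL}$ (the $L_\beta$ cut-downs enter only at irreducible flat limits), we obtain
\[
\langle \Psi^{u_1}_{X_0'}(x_1,\dots,x_{d_0},\Sigma_0),\theta_{n_0}\rangle \equiv \#(M_{X_0,P_0} \cap V^\circ_{x_1} \cap \cdots \cap V^\circ_{x_{d_0}}) \equiv \Psi_{X_0,P_0}(x_1,\dots,x_{d_0}) \pmod 2,
\]
and similarly for $X_1$; multiplying yields the corollary.

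The principal obstacle will be verifying that the $\theta_{n_0}$-stratum of the relative invariant is genuinely decoupled from the $\Lambda$- and $\cL^{(1)}_{\hat{\Sigma}_0}$-cut-downs used to define the irreducible part of $\Psi^{u_1}_{X_0'}$, and that the removable-singularity identification interacts compatibly with the universal families defining these auxiliary line bundles so that the cut-down moduli spaces correspond set-theoretically. The remaining bookkeeping -- tracking framings, dualities, and index parities through Lemmas \ref{lem ind Dir}, \ref{lem ind d-bar}, and \ref{lem line descend} -- is routine once this conceptual identification is secure.
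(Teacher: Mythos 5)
Your proposal is correct and follows essentially the same line the paper intends; the paper deems the corollary an immediate application of Theorem~\ref{thm main} with $Y=S^3$, and you have simply written out why: with no irreducible flat connections on the trivial $U(2)$-bundle over $S^3$, the chain complex $C^{(a)}_*(S^3)$ is concentrated on the reducibles $\theta_n$ with zero differential, the pairing picks out $\langle\theta_{n},\theta_{-n}\rangle$, and the dimension constraint together with Uhlenbeck removability identifies the unique contributing $\theta_{n_0}$-coefficient of each relative invariant with the classical Donaldson count on the closed $X_i$. Your final caution about decoupling the $\theta_{n_0}$-stratum from the $\Lambda$- and $\cL^{(1)}$-cut-downs is already resolved by the definition of $\langle\psi_{X_0}^{u_1},\theta_{n_0}\rangle$, which involves only the plain count $\#M_{\hat{X}_0}(\theta_{n_0};\Sigma_{x_1},\dots,\Sigma_{x_{d_0}})$ with no $V_{\Lambda}$ or $V_{\cL}$ intersection, so nothing further is needed there.
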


\begin{remark}
In general, usual Donaldson invariants $\Psi_{P}(x_1, \dots, x_d)$ do not lie in $\Z$ even if $x_i \in H_2(X;\Z)$.
However if $< w_2(P), x_i > \equiv 0 \mod 2$ for all $i$ then the value $\Psi_{P}(x_1,\dots, x_d)$ lies in $\Z$.
Therefore the above formula makes sense.
\end{remark}

Befoer we show Theorem \ref{thm main}, we prove non-vanising results using Corollary \ref{coro connected sum}.

\begin{theorem} \label{thm 2CP2}
For $i=0, 1$, let $\CP^2_i$ and $\barCP2_i$ be copies of $\CP^2$ and $\barCP2$  respectively. Let $H_i \in H_2(\CP^2_i;\Z)$, $E_i \in H_2(\barCP2;\Z)$ be the standard generators. Then we have
\[
\begin{split}
\Psi_{\CP^2_0 \# \CP^2_1}^{u_1}(H_0 + H_1) &\equiv 1 \mod 2, \\
\Psi_{\CP^2_0 \# \CP^2_1 \# \barCP2_0}^{u_1}(H_0 + E_0, H_0 + H_1) &\equiv 1 \mod 2, \\
\Psi_{\CP^2_0 \# \CP^2_1 \# \barCP2_0 \# \barCP2_1}^{u_1}(H_0 + E_0, H_1 + E_1, H_0 + H_1) &\equiv 1 \mod 2.
\end{split}
\]
\end{theorem}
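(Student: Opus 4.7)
The plan is to derive all three identities from Corollary \ref{coro connected sum} by reducing each to a product of classical $U(2)$ Donaldson invariants on $\CP^2$ and on $\CP^2 \# \barCP2$. In every case the ambient 4-manifold decomposes along $S^3$ into two non-spin, simply connected summands $X_0, X_1$ with $b^+$ odd and positive, so the hypotheses of the corollary are met. I will split the distinguished odd class $H_0 + H_1$ as $z_0 = H_0 \in H_2(X_0;\Z)$ and $z_1 = H_1 \in H_2(X_1;\Z)$ (both with self-intersection one, as required), and place each auxiliary class $H_i + E_i$ (which has self-intersection zero) in whichever summand contains both $H_i$ and $E_i$.

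Concretely, for the first identity I take $X_0 = \CP^2_0$, $X_1 = \CP^2_1$, $d_0 = d_1 = 0$, and bundles $P_i$ with $c_1(P_i) = H_i$ and $c_2(P_i) = 1$. Then $w_2(P_i) = w_2(\CP^2_i)$, $c_2(P_i)$ is odd, and a dimension count gives $\dim M_{P_i} = 0$. Corollary \ref{coro connected sum} reduces the claim to the oddness of $\Psi_{\CP^2_0, P_0} \cdot \Psi_{\CP^2_1, P_1}$, which in turn follows from the classical identification of the zero-dimensional moduli space of $SO(3)$-instantons on $\CP^2$ with $w_2 = H$ as a single point. For the second identity, I take $X_0 = \CP^2_0 \# \barCP2_0$, $X_1 = \CP^2_1$, place the class $H_0 + E_0$ in $X_0$, and choose $P_0$ with $c_1(P_0) = H_0 + E_0$ and $c_2(P_0) = 1$ (so $\dim M_{P_0} = 2$), with $P_1$ as above. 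The corollary reduces the claim to the oddness of $\Psi_{\CP^2 \# \barCP2, P_0}(H_0 + E_0)$ times the already-established odd value from the first case. For the third identity, I take $X_0 = \CP^2_0 \# \barCP2_0$ and $X_1 = \CP^2_1 \# \barCP2_1$, distributing each auxiliary class into its respective summand, and the corollary reduces the claim to a product of two copies of the $\CP^2 \# \barCP2$ invariant just mentioned.

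The main obstacle will be verifying that $\Psi_{\CP^2 \# \barCP2, P}(H+E)$ is odd. Because $b^+(\CP^2 \# \barCP2) = 1$, this standard Donaldson invariant is chamber-dependent, so I must either quote a classical computation (for instance from Donaldson's analysis of low-charge $SO(3)$ moduli spaces on rational surfaces) in the precise chamber corresponding to the long-neck metric implicit in the gluing construction of Corollary \ref{coro connected sum}, or describe the charge-one $SO(3)$-instanton moduli space on $\CP^2 \# \barCP2$ explicitly and compute the pairing with the divisor $V_{H+E}$ directly. The pairing is integer-valued since $\langle c_1(P), H+E \rangle = 1 + (-1) \equiv 0 \bmod 2$, and checking that no wall-crossing intervenes as the neck is stretched is the delicate bookkeeping step; the parity of $\Psi_{\CP^2, P}$ is much simpler, amounting to counting a single generic instanton. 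Once these two computations are in place, all three identities follow immediately from the iterated application of the corollary.
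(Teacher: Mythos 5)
Your proposal follows essentially the same route as the paper: split the odd class $H_0 + H_1$ as $H_0 \in H_2(X_0;\Z)$, $H_1 \in H_2(X_1;\Z)$, place each auxiliary class $H_i + E_i$ in the summand containing it, and apply Corollary~\ref{coro connected sum} to reduce everything to the products of the ordinary $U(2)$ Donaldson numbers $\Psi_{\CP^2, P}$ (with $c_1 = H$, $c_2 = 1$, a zero-dimensional moduli space) and $\Psi_{\CP^2 \# \barCP2, P}(H+E)$ (with the bundle of $p_1 = -4$ and $w_2 = H + E$, a two-dimensional moduli space). This matches the structure of the paper's argument exactly.

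The one place you leave a hole is that you do not actually nail down the parity of the base invariants: you correctly identify that you must either quote a classical computation for $\Psi_{\CP^2 \# \barCP2, P}(H+E)$ or redo the charge-one moduli count, but you stop there. The paper closes this by citing Kotschick \cite[Proposition~7.1]{K}, which gives $\Psi_{\CP^2, P} = -1$ (for $c_1 = H$, $c_2 = 1$) and $\Psi_{\CP^2 \# \barCP2, P}(H+E) = 1$ (for $c_1 = H - E$, $c_2 = 1$). Two small remarks on your version. First, you chose $c_1(P_0) = H_0 + E_0$ rather than Kotschick's $H - E$; this is harmless since $H+E$ and $H-E$ are congruent mod $2$, have the same square, and hence give the same $SO(3)$ bundle and the same moduli problem. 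Second, your worry about chamber-dependence on a $b^+ = 1$ manifold is legitimate in general, but for these specific $w_2 \ne 0$, low-charge bundles on $\CP^2$ and $\CP^2 \# \barCP2$ there are no ASD reducibles for any metric (the relevant diophantine conditions have no integer solutions), so the invariant is metric-independent and Kotschick's value can be quoted without tracking the long-neck chamber. With that reference supplied, your argument is complete and is the paper's argument.
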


In \cite[Proposition 7.1]{K}, Kotschick showed $\Psi_{\CP^2, P} = -1$ for the $U(2)$-bundle $P$ with $c_1 = H$, $c_2 = 1$. (In this case, the dimension of the moduli space is zero.) Hence the first equality in Theorem \ref{thm 2CP2} follows from Corollary \ref{coro connected sum}. We also have $\Psi_{\CP^2 \# \barCP2, P}(H+E) = 1$ for the $U(2)$-bundle $P$ with $c_1 = H-E$, $c_2 = 1$ by \cite[Proposition 7.1]{K}. (Note that in \cite{K}, $2\mu([\Sigma])$ is used to define the invariant.) Hence we obtain the second and third equations from Corollary \ref{coro connected sum}.

\vspace{3mm}

We begin the proof of Theorem \ref{thm main}.
As mentioned before, if there are no integers $n$ such that $\dim M_{ \hat{X}_0 }(\theta_n) = 0$ then the proof of Theorem \ref{thm main} is quite the same as that of Theorem \ref{thm gluing D inv}. Hence we suppose Hypothesis \ref{ass trivial flat}.
To prove Theorem \ref{thm main}, we fix a sequence $T^{\alpha} \rightarrow \infty$ and consider manifolds $X^{\alpha} = X_0 \cup ( Y \times [0, T^{\alpha}] ) \cup X_1$ as usual. As in Subsection \ref{ss sections}, we can take sections $s^{\alpha}_{\Lambda}:M_{X^{\alpha}} \rightarrow \Lambda$, $s^{\alpha}_{\cL}:M_{X^{\alpha} } \rightarrow \cL_{\Sigma}$ with the following property.
Suppose that  a sequence $[A^{\alpha}] \in M_{X^{\alpha}}$ converges to $([A_0^{\infty}], \dots, [A_{r}^{\infty}])$. Here $[ A_0^{\infty} ] \in M_{ \hat{X}_0}(\rho(0))$, $[A_i^{\infty}] \in \i0M$ ($i=1, \dots, r-1$), $[A_r^{\infty}] \in M_{ \hat{X}_1 }(\rho(r-1))$. Moreover suppose that $\rho(i)$ are irreducible for $i=0, \dots, r-1$. Then we have
\[
\begin{split}
s^{\alpha}_{\Lambda}([A^{\alpha}]) &\longrightarrow 
s_{\Lambda}([A_0^{\infty}]) \boxtimes \cdots \boxtimes s_{\Lambda}([A_r^{\infty}]), \\
s^{\alpha}_{\cL}([A^{\alpha}]) &\longrightarrow 
s_{\cL}([A_0^{\infty}]) \boxtimes \cdots \boxtimes s_{\cL}([A_r^{\infty}]).
\end{split}
\]
A standard dimension-counting argument shows that for any sequence $[A^{\alpha}] \in M_{X^{\alpha}}(L_3) = M_{X^{\alpha}} \cap V_{\Lambda} \cap V_{\cL}$, there is a subsequence $\{ [A^{\alpha'}] \}_{\alpha'}$ such that $[A^{\alpha'}]$ converges to $([A_0^{\infty}], [A_1^{\infty}])$, where $[A_0^{\infty}] \in M_{ \hat{X}_0 }(\rho; L_{\beta})$, $[A_1^{\infty}] \in M_{ \hat{X_1} }(\rho; L_3 \backslash L_{\beta})$ for some $0 \leq \beta \leq 3$ and $[\rho] \in CF_{8n_0 - \beta}(Y)$, or $[A_0^{\infty}] \in M_{\hat{X}_0}(\theta_{n_0})$, $[A_1^{\infty}] \in M_{\hat{X}_1}(\theta_{n_0})$. Hence for large $\alpha$ we have
\[
M_{X^{\alpha}}(L_3) \cong 
\bigcup_{\beta} \ \bigcup_{[\rho]} \ 
M_{ \hat{X}_0 }(\rho; L_{\beta}) \times M_{\hat{X}_1}(\rho;L_3 \backslash L_{\beta})
\cup
\bigcup_{ 
\begin{subarray}{c}
\unA \ \in \\
M_{\hat{X}_0}(\theta_{n_0}) \times M_{ \hat{X}_1 }(\theta_{n_0})
\end{subarray}
}
SO(3)_{\unA} \cap V_{\Lambda} \cap V_{\cL}.
\]
Here $SO(3)_{\unA}$ is the image of $SO(3)$ by a gluing map as in the proof of Lemma \ref{lem torsion complex}.
Therefore we get
\[
\Psi_{X}^{u_1}(z) = 
\sum_{\beta} \sum_{[\rho]} 
\# M_{ \hat{X}_0 }(\rho;L_{\beta}) \cdot \# M_{ \hat{X}_1 } (\rho;L_3 \backslash L_{\beta})
+
\sum_{ \unA }  \# \big( SO(3)_{ \unA } \cap V_{\Lambda} \cap V_{\cL} \big).
\]
Thus we must show
\[
\# (SO(3)_{ \unA } \cap V_{\Lambda} \cap V_{\cL}) =
\left\{
\begin{array}{cl}
1 & \text{ if $n_0 = a + 1 \mod 2$, } \\
0 & \text{ otherwise. }
\end{array}
\right.
\]
This follows from the following lemma.

\begin{lemma}
For each $\unA = ([A_0], [A_1])$, the restriction $\cL_{\Sigma}|_{SO(3)_{ \unA }}$ is non-trivial.
And the restriction $\Lambda|_{SO(3)_{\unA}}$ is non-trivial if and only if
$n_0 \equiv a + 1 \mod 2$.
\end{lemma}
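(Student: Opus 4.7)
The proof parallels that of Lemma \ref{lem SO(3)} very closely. Realize $X$ as the long-neck manifold $X_0 \cup (Y \times [0,T]) \cup X_1$ and describe $SO(3)_{\unA} \subset M_{X}$ as the image of the gluing map at $\theta_{n_0}$ applied to the pair $([A_0],[A_1])$ with parameter $\tilde u \in SU(2)$, modulo the center $\{\pm 1\}$ of the stabilizer $\Gamma_{\theta_{n_0}} = SU(2)$. Because $SU(2) = S^3$ is simply connected, any line bundle on $SO(3)_{\unA}$ pulls back to a trivial bundle over its $SU(2)$-cover, and its isomorphism class is detected by the sign with which $-1 \in SU(2)$ acts on a single fiber.

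For $\cL_{\Sigma}|_{[A(\tilde u)]}$ I would use the excision/gluing isomorphism
\[
\det \Ind \bar{\partial}_{A(\tilde u)|_{\Sigma}} \;\cong\; \det \Ind \bar{\partial}_{A_0}|_{\hat{\Sigma}_0} \;\otimes\; \det \Ind \bar{\partial}_{A_1}|_{\hat{\Sigma}_1}
\]
provided by the linear gluing theory developed in Section \ref{section Fukaya-Floer}. Absorbing the change $\tilde u \to -\tilde u$ into a constant $-1$ gauge transformation supported on a neighborhood of $X_0$ (smoothly tapered across the neck), the action of $-1$ on $\cL_{\Sigma}|_{[A(\tilde u)]}$ factors through this decomposition as multiplication by $(-1)^{\Ind^+ \bar{\partial}_{A_0}|_{\hat{\Sigma}_0}}$. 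The computation in the proof of Lemma \ref{lem line descend}, together with the fact that $\theta_{n_0}|_{\gamma}$ is gauge equivalent to $\theta_{\gamma,0}$ (since $[g_1|_{\gamma}]$ is null-homotopic in $U(2)$), gives
\[
\Ind^+ \bar{\partial}_{A_0}|_{\hat{\Sigma}_0} \;\equiv\; \langle c_1(P_0), z_0 \rangle \;\equiv\; z_0 \cdot z_0 \;\equiv\; 1 \mod 2.
\]
Hence the weight is $-1$ and $\cL_{\Sigma}|_{SO(3)_{\unA}}$ is non-trivial. The choice of which side carries the $-1$ gauge transformation is immaterial, since the discrepancy $(-1)^{\Ind^+ \bar{\partial}_{A_{\Sigma}}}$ is trivial by the index formula on the closed $\Sigma$, using $z \cdot z = z_0 \cdot z_0 + z_1 \cdot z_1 \equiv 0 \mod 2$.

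The same argument with the real twisted Dirac determinant in place of $\bar{\partial}$ gives weight $(-1)^{\Ind^+ \Dir_{A_0}|_{\hat{X}_0}}$ on $\Lambda|_{[A(\tilde u)]}$. The assumption $c_2(P_0;\varphi_a) \equiv 0 \mod 2$, together with Lemma \ref{lem line descend}, forces $\Ind^+ \Dir_{A(\theta_a)}|_{\hat{X}_0}$ to be even; by additivity across the cylinder and Lemma \ref{lem ind Dir},
\[
\Ind^+ \Dir_{A_0}|_{\hat{X}_0} \;\equiv\; -\Ind^{-+} \Dir_{A(\theta_{n_0},\theta_a)} \;\equiv\; n_0 - a \mod 2.
\]
Consequently, $\Lambda|_{SO(3)_{\unA}}$ is non-trivial exactly when $n_0 \equiv a+1 \mod 2$, completing the lemma. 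The main technical point is to justify the excision/gluing splitting of $\det \Ind$ and its equivariance under the constant $\pm 1$ gauge transformation: this is the four-manifold analogue of the commutative diagram appearing in the proof of Lemma \ref{lem SO(3)}, and verifying it reduces to the same kind of explicit analysis of $\bar{\partial}$- and Dirac-kernels glued along the long neck as was carried out there.
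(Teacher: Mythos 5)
Your proof is correct and follows essentially the same route as the paper: the paper also reduces the non-triviality of $\cL_{\Sigma}|_{SO(3)_{\unA}}$ and $\Lambda|_{SO(3)_{\unA}}$ to the parities of $\Ind^+\bar{\partial}_{A_0}$ and $\Ind^+\Dir_{A_0}$ by the same $\{\pm1\}$-weight argument from Lemma \ref{lem SO(3)}, and then invokes the computations of Lemma \ref{lem line descend} to get $z_0\cdot z_0 \equiv 1$ and $n_0 - a \bmod 2$. Your extra consistency check (independence of which side absorbs the $-1$, via $z\cdot z\equiv 0 \bmod 2$) is a nice addition but not a departure from the paper's argument.
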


\begin{proof}
As in the proof of Lemma \ref{lem SO(3)}, we can see that a necessary and sufficient condition for the restrictions of the line bundles to be non-trivial is that the indexes $\Ind^+ \bar{\partial}_{A_0}$, $\Ind^+ \Dir_{A_0}$ are odd. The calculations in the proof of Lemma \ref{lem line descend} show that
\[
\begin{split}
& \Ind^+ \bar{\partial}_{A_0} \equiv z_0 \cdot z_0 \equiv 1 \mod 2 \\
& \Ind^+ \Dir_{A_0} \equiv n_0 - a \mod 2.
\end{split}
\]
Therefore we obtain the statements.
\end{proof}

\section{A non-existence result}
\label{s Thm B}

Let $X$ be a closed, oriented, simply connected 4-manifold and $Q_X$ be the intersection form of $X$ on $H_2(X;\Z)$.
For positive integers $k$ and $[\Sigma_1], \dots, [\Sigma_{2k}] \in H_2(X;\Z)$, we define
\[
Q_{X}^{(k)}([\Sigma_1], \dots, [\Sigma_{2k}]) :=
\frac{1}{2^k k!} \sum_{ \sigma \in \frak{S}_{2k} }
Q_{X}( [\Sigma_{ \sigma(1) }], [\Sigma_{\sigma(2)}]) \cdots 
Q_{X}( [\Sigma_{ \sigma( 2k- 1 ) }], [\Sigma_{\sigma( 2k )}]).
\]
In \cite{conn}, Donaldson showed that when $X$ is spin and $b^+ = 1$,  $Q_{X}$ must satisfy the condition
\begin{equation} \label{eq intersection}
Q_{X}^{(2)}([\Sigma_1], \dots, [\Sigma_4]) \equiv 0 \mod 2
\end{equation}
for all $[\Sigma_1], \dots, [\Sigma_4] \in H_2(X;\Z)$.

In this section, we will consider the intersection forms of compact 4-manifolds with boundaries homology 3-spheres.
We prove that the condition also holds for compact, spin 4-manifold $X$ with $b^+ = 1$ and with boundary a homology 3-sphere $Y$, if $HF_1(Y;\Z_2)$ and $HF_2(Y;\Z_2)$ are trivial.
Here $HF_*(Y;\Z_2)$ are the usual Floer homology groups with coefficients $\Z_2$.

\begin{theorem} \label{thm B}
Let $Y$ be an oriented homology 3-sphere with $HF_1(Y;\Z_2)=0$, $HF_2(Y;\Z_2)=0$. If $Y$ bounds a compact, spin, simply connected 4-manifold $X$ with $b^+(X)=1$, then we have
\[
Q^{(2)}_X ([\Sigma_1], \dots, [\Sigma_4]) \equiv 0 \mod 2
\]
for any $[\Sigma_1], \dots, [\Sigma_4] \in H_2(X;\Z)$.

\end{theorem}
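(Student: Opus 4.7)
The strategy follows Donaldson's analysis of the ends of instanton moduli spaces in \cite{conn}, adapted to $\hat X = X \cup (Y\times\R_{\geq 0})$ and the reducible flat connection $\theta_0$ on $Y$, using the 2-torsion Floer-theoretic machinery developed in Sections~\ref{section Fukaya-Floer} and \ref{s torsion}. First I would pick a rank-two bundle $P$ on $X$ with $w_2(P)=0$ (possible since $X$ is spin) whose second Chern class is selected so that the moduli space $M_{\hat X,P}(\theta_0)$ of finite-energy instantons on $\hat X$ converging to the trivial flat connection has formal dimension matching the degrees of the four $\mu$-classes plus $u_1$. For generic representatives $\Sigma_i$ of $[\Sigma_i]\in H_2(X;\Z)$ meeting $Y$ transversally in loops $\gamma_i$, I would extend them to $\hat\Sigma_i = \Sigma_i \cup (\gamma_i\times\R_{\geq 0})$ and take admissible sections of the determinant line bundles $\cL^{\otimes 2}_{\hat\Sigma_i}$ as in Proposition~\ref{prop section}; the cut-down moduli space $M^* = M_{\hat X}(\theta_0)\cap V_1\cap\cdots\cap V_4$ then carries a well-defined mod-2 pairing with the class $u_1$ associated to the real Dirac determinant line bundle $\Lambda^{(a)}(\theta_0)$ from Section~\ref{s torsion}.

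The proof then proceeds by counting the endpoints of this cut-down pairing modulo~$2$ via a cobordism argument in the Uhlenbeck compactification. The ends naturally split into three types. Bubble ends arise when an instanton concentrates at a point $x\in X$: after rescaling, the end is modelled on a product of $X$ (the location of the bubble) with $(0,\lambda_0)$ (the scale parameter) and a lower-charge moduli space, and a direct computation of how the $\mu([\Sigma_i])$ classes and $u_1$ restrict to this product yields, after symmetrizing over $\mathfrak{S}_4$, precisely $Q^{(2)}_X([\Sigma_1],\ldots,[\Sigma_4])$ modulo~$2$. Floer splitting ends arise when a sequence of instantons weakly converges to a broken trajectory $([A^{\hat X}_\infty],[A^{Y\times\R}_\infty])$ with $[A^{\hat X}_\infty]\in M_{\hat X}(\rho)$ and $[A^{Y\times\R}_\infty]\in M^0_{Y\times\R}(\rho,\theta_0)$ for an irreducible flat $\rho$; index additivity forces $\delta_Y([\rho])\in\{1,2\}\pmod 8$, so the total Floer-type contribution is encoded by a relative invariant taking values in $HF_1(Y;\Z_2)\oplus HF_2(Y;\Z_2)$, which vanishes by hypothesis. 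Reducible ends come from the at-most-one reducible connection in $M_{\hat X}(\theta_0)$ permitted by $b^+(X)=1$: its contribution is handled by a local Kuranishi model as in \cite{conn} and can be arranged to contribute trivially mod~2.

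Summing the three end counts modulo~$2$ yields $0 \equiv Q^{(2)}_X([\Sigma_1],\ldots,[\Sigma_4]) \pmod 2$, which is the theorem. The main obstacle will be the bubble end computation: one must carefully match the combinatorial shape of $Q^{(2)}_X$ (a symmetrization of four pairings over $\mathfrak{S}_4$) with the restriction of $u_1\cup\mu([\Sigma_1])\cup\cdots\cup\mu([\Sigma_4])$ to the bubble stratum, keeping track of the spin-structure conventions of Remark~\ref{rm spin} and the trivialization-dependent twists in the definition of $u_1$. A secondary subtlety is arranging the admissible sections of the various line bundles to be compatible near the reducible stratum, where the standard transversality arguments must be supplemented by a local model at the reducible, and verifying that the Floer splitting contributions really are controlled by $HF_1\oplus HF_2$ with $\Z_2$ coefficients rather than by the enriched variant built in Section~\ref{s torsion}.
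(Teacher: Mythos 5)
Your high-level strategy matches the paper's: pass to $\hat X = X \cup (Y\times\R_{\geq 0})$, choose the bundle so that $\dim M_{\hat X}(\theta)=10$, cut down by the four $\mu$-classes to a $3$-dimensional space, intersect with the zero set of a section of the real Dirac determinant line $\Lambda$, and count ends of the resulting $1$-dimensional manifold modulo $2$ --- bubble ends produce $Q^{(2)}_X$ and the cylindrical Floer ends are killed via the Floer-vanishing hypothesis. However, several details diverge from what the paper actually does, and at least one would create unnecessary work. First, the paper takes \emph{closed} surfaces $\Sigma_1,\dots,\Sigma_4$ embedded in the interior of $X$, not surfaces meeting $Y$ transversally in loops $\gamma_i$: since $Y$ is a homology $3$-sphere, $H_2(X;\Z)\to H_2(X,Y;\Z)$ is an isomorphism, so closed interior representatives exist and the Fukaya-Floer line bundles $\cL^{\otimes 2}_{\hat\Sigma_i}$ over $\gamma_i\times\R$ are simply not needed here. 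The only cylindrical-end line bundle the proof requires is $\Lambda^{(a)}(\rho,\sigma)$ from Section~\ref{s torsion}; the $\bar\partial$-determinant line $\cL^{(1)}_\Gamma$ and the loop $\gamma$ play no role in Theorem~\ref{thm B}, and using $\cL^{\otimes 2}_{\hat\Sigma_i}$ would substantially complicate the bubble-end identification of the count with $Q^{(2)}_X$. Second, your third category of ``reducible ends'' does not arise: for a generic metric on $\hat X$ with $b^+(X)=1>0$, the moduli space $M_{\hat X}(\theta)$ contains no nontrivial reducibles, and the paper's end analysis lists only the interior stratum, the charge-$2$ bubble stratum with $[A_0^\infty]=[\theta_{\hat X}]$ and $Z_0=\{x_1,x_2\}$ for $x_1\in\Sigma_{l_1}\cap\Sigma_{l_2}$, $x_2\in\Sigma_{l_3}\cap\Sigma_{l_4}$, and the Floer-splitting strata with $[\rho(0)]\in CF_2(Y)$ or $CF_1(Y)$. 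Third, the vehicle for turning the Floer-splitting contributions into zero is not a direct pairing valued in $HF_1(Y;\Z_2)\oplus HF_2(Y;\Z_2)$: the paper builds the variant complex $C_j(Y)=CF_j(Y)\oplus CF_{j-1}(Y)$ with a $\Lambda$-twisted differential $\partial^{(a)}$, defines $\Psi^{u_1}_X\in I^{(0)}_2(Y)=H_2(C_*(Y),\partial^{(0)})$, defines a map $D^{(0)}:I^{(0)}_2(Y)\to\Z_2$ by counting instanton cylinders asymptotic to $\theta$, proves $D^{(0)}(\Psi^{u_1}_X)\equiv Q^{(2)}_X\pmod 2$ by the end-count, and finally invokes the short exact sequence $0\to CF_{*-1}\to C_*\to CF_*\to 0$ to conclude $I^{(0)}_2(Y)=0$ from $HF_1=HF_2=0$. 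You flag this last point yourself, and rightly so --- making that precise is exactly what Section~4.1 of the paper is for, and the extra degree shift in $C_*(Y)$ is why the hypothesis needs vanishing in two consecutive degrees rather than one.
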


This is originally due to Fukaya, Furuta and Ohta. (See \cite{Fur}.)
Making use of techniques developed in this paper, we write down the proof of this theorem.

\subsection{Relative invariants for spin 4-manifolds}

In the previous section, we defined relative invariants for non-spin 4-manifolds with boundary. Here we define relative 2-torsion instanton invariants for spin 4-manifolds, which are used to prove Theorem \ref{thm B}.
To define the invariants, we modify the construction in the previous section.

Let $Q=Y \times SU(2)$ be the trivial $SU(2)$-bundle over $Y$ and fix $a \in \{ 0, 1 \}$.
For irreducible flat connections $\rho, \sigma$ on $Q$ with $\dim \0M < 2$, we can take sections $s_{\Lambda}:\0M \rightarrow \aLam$ which are compatible with gluing maps and satisfy the transversality conditions.

\begin{definition}
Put $C_j(Y):=CF_{j}(Y) \oplus CF_{j-1}(Y)$. We define $\partial^{(a)}:C_j(Y) \rightarrow C_{j-1}(Y)$ as follows.
Let $\beta_1, \beta_2$ be integers with $0 \leq \beta_1 \leq \beta_2 \leq 1$ and take generators $[\rho] \in CF_{j-\beta_0}$, $[\sigma] \in CF_{j-\beta_1-1}(Y)$. If $\beta_2 - \beta_1 = 0$,  put
\[
< \partial^{(a)} ([\rho]), [\sigma] > := \# \0M \mod 2.
\]
If $\beta_2 - \beta_1 = 1$, put
\[
< \partial^{(a)} ([ \rho ]), [\sigma] > := \# \big( \0M \cap V_{\Lambda} \big) \mod 2.
\]
We define $\partial^{(a)}$ by using these numbers as usual.
\end{definition}

Let $\{ [A^{\alpha}] \}_{\alpha}$ be a sequence in $\0M$ for some flat connections $\rho, \sigma$ with $\dim \0M < 2$. We can show that if $[A^{\alpha}]$ converges to some $([A_1^{\infty}], \dots, [A_r^{\infty}]) \in M_{Y \times \R}^0(\rho, \rho(1)) \times \cdots \times M_{Y \times \R}^0(\rho(r-1), \sigma)$ then $\rho(i)$ are irreducible for $i=1, \dots, r-1$. This means that we can prove
\[
\partial^{(a)} \circ \partial^{(a)} = 0
\]
as in Section \ref{section Fukaya-Floer}.

\begin{definition}
$I_*^{(a)}(Y):= H_*( C_*(Y), \partial^{(a)})$.
\end{definition}

Consider a compact, simply connected, spin 4-manifold $X$ with boundary $Y$ and $b^+(X)=1$. 
We will define a relative invariant
\[
\Psi_{X}^{u_1}:H_2(X;\Z)^{\otimes 4} \longrightarrow I_{2}^{(0)}(Y).
\]
Take a $SU(2)$-bundle $P$ over $X$ and a trivialization $\varphi$ of $P|_{Y}$ with
\[
c_2(P; \varphi) = 2 \in H^4(X, Y; \Z) \cong \Z.
\]
We denote by $\theta$ the trivial connection on $P|_{Y}$ with respect to $\varphi$.
The dimension of the moduli space $M_{ \hat{X} }(\theta)$ of instantons on $\hat{X}$ with limit $\theta$ is 
\[
8c_2(P; \varphi) - 3 (1 + b^+(X)) = 8 \cdot 2 - 3 ( 1 + 1 )= 10.
\]
For generators $[\rho] \in CF_{\beta}(Y)$, we have moduli spaces $M_{\hat{X}}(\rho)$ of dimension $10 - \beta$.

Let $\Sigma$ be a closed, oriented surface embedded in $X$. We put $\cB_{\Sigma,+}^{*} = \cB_{\Sigma}^* \cup \{ [\theta_{\Sigma}] \}$. It is known that the determinant line bundle $\cL_{\Sigma} \rightarrow \cB_{\Sigma}^*$ extends to $\cB_{\Sigma, +}^*$. (See \cite{DK}.) We use the same notation $\cL_{\Sigma}$ for the extension. We can take a section $s_{\Sigma}:\cB_{\Sigma,+}^* \rightarrow \cL_{\Sigma}$ such that $s_{\Sigma}$ does not vanish near $[\theta_{\Sigma}]$. We always assume that sections of $\cL_{\Sigma}$ have this property. 
Let $V_{\Sigma}$ be the zero locus of $s_{\Sigma}$ and we define $M_{ \hat{X} }(\rho) \cap V_{\Sigma}$ to be
\[
\{ \ [A] \in M_{ \hat{X} }(\rho) \ | \ s_{\Sigma} ([A |_{\Sigma}]) = 0 \  \}.
\]

Take homology classes $[\Sigma_1], \dots, [\Sigma_4] \in H_2(X;\Z)$. For generic surfaces $\Sigma_1, \dots, \Sigma_4$, the intersection of any two surfaces is transverse and the intersection of any three surfaces is empty. Moreover for generic sections $s_{\Sigma_1}, \dots, s_{\Sigma_4}$, all generators $[ \rho ] \in CF_{\beta}(Y)$ and all subsets $\{ l_1, \dots, l_{s} \} \subset \{ 1, \dots, 4 \}$, the intersections
\[
M_{ \hat{X} }(\rho;\Sigma_{l_1}, \dots, \Sigma_{l_s } ) = 
M_{ \hat{X} }( \rho ) \cap \bigcap_{ m=1}^s V_{\Sigma_{l_m}}
\]
are transverse.

A standard dimension-counting argument shows the following:

\begin{lemma}
Let $\rho$ be a flat connection on $Q$.
Assume that a sequence $\{ [A^{\alpha} ] \}_{\alpha}$ in $M_{\hat{X}}(\rho; \Sigma_1, \dots, \Sigma_4)$ converges to $(([A_0^{\infty}], Z_1), \dots, ([A_r^{\infty}], Z_r) )$. Here $( [A_0^{\infty}], Z_0) \in M_{ \hat{X} }(\rho(0)) \times \Sym^{n_0}( \hat{X})$ and  $([A_i^{\infty}], Z_i) \in \big( \iM \times \Sym^{n_i}(Y \times \R) \big)/\R$ for $i=1,\dots, r$.

\begin{enumerate}

\item
If $[\rho] \in CF_{2}(Y)$ and $\dim M_{\hat{X}}(\rho)=8$ then $r=0$, $Z_0=\emptyset$.

\vspace{2mm}

\item
If $[\rho] \in CF_1(Y)$ and $\dim M_{\hat{X}}(\rho)=9$ then $r \leq 1$ and all $Z_i$ are empty. Moreover when $r=1$, $\rho(0)$ is irreducible.

\vspace{2mm}

\item
If $\rho=\theta$, then one of the following holds:

\begin{itemize}

\item
$r=0$, $Z_0 = \emptyset$.

\item
$r=0$ and $Z_0=\{ x_1, x_2 \}$ for some $x_1 \in \Sigma_{l_1} \cap \Sigma_{l_2}$, $x_2 \in \Sigma_{l_3} \cap \Sigma_{l_4}$ with $l_1, \dots, l_4$ distinct.

\item
$r=1$, $\rho(0)$ is irreducible and all $Z_i$ are empty.

\item
$r=2$, $\rho(0), \rho(1)$ are irreducible and all $Z_i$ are empty.

\end{itemize}

\end{enumerate}

\end{lemma}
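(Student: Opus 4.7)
The proof is a dimension-counting argument at the ends of the cut-down moduli space $M_{\hat X}(\rho; \Sigma_1, \dots, \Sigma_4)$, based on the Fredholm index-additivity identity
\[
\dim M_{\hat X}(\rho) = \dim M_{\hat X}(\rho(0)) + \sum_{i=1}^r \bigl( \dim M_{Y \times \R}^0(\rho(i-1), \rho(i)) + 1 \bigr) + 8 \sum_{i=0}^{r} |Z_i|,
\]
combined with the generic transversality of the relevant cut-down moduli spaces. In the limit, each surface $\Sigma_l$ must be accounted for in one of two ways: either the section $s_{\Sigma_l}$ vanishes on the restriction of one of the pieces $A_i^\infty$ (cutting two from the dimension of the $i$-th piece), or $\Sigma_l$ passes through one of the bubble points (cutting two from the four location degrees of freedom of that point in $\hat X$ or $Y \times \R$).

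Parts (1) and (2) follow directly from this bookkeeping. In part (1) the cut-down virtual dimension is $0$, so no splitting or bubbling can occur: each splitting contributes at least $+1$ and each bubble at least $+8$ to the right-hand side of the identity, which cannot be absorbed by the available four surface cuts. In part (2) the cut-down dimension is $1$, which allows at most one splitting ($r \leq 1$) and no bubbles. When $r = 1$, if $\rho(0)$ were the trivial flat connection $\theta$, then $\dim M_{\hat X}(\theta) = 10$ would force $\dim M_{Y \times \R}^0(\theta, \rho) = -2$, empty by transversality; hence $\rho(0)$ must be irreducible.

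Part (3) is the delicate case. With $\rho = \theta$ and cut-down dimension $2$, I would enumerate the boundary strata by case analysis on $(r, |Z_0|)$. The splittings with $r = 1$ or $r = 2$ and no bubbles yield the third and fourth bullets directly, with the irreducibility of the intermediate $\rho(i)$'s forced by the same reducibility-versus-dimension argument as in part (2). A single bubble at a point $x \in \hat X$ has effective dimension $-2$ after the four surface cuts (independent of how many surfaces pass through $x$), hence the stratum is empty by transversality. For two bubbles at $x_1, x_2 \in \hat X$ with limit $A_0^\infty = \theta$: since the sections $s_{\Sigma_l}$ are chosen not to vanish at the trivial connection, all four surfaces must pass through the bubble set $\{x_1, x_2\}$; the only non-empty transverse distribution has two surfaces through each bubble, partitioning $\{1, 2, 3, 4\}$ into disjoint pairs $\{l_1, l_2\}$ and $\{l_3, l_4\}$ with $x_1 \in \Sigma_{l_1} \cap \Sigma_{l_2}$ and $x_2 \in \Sigma_{l_3} \cap \Sigma_{l_4}$. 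Configurations with three or more surfaces meeting at a common point are ruled out because three generic surfaces do not intersect in a $4$-manifold, and configurations with three or more bubbles, or mixing bubbles with splittings, are excluded by the index identity together with non-negativity of the component dimensions.

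The main obstacle will be the careful handling of the two-bubble stratum in part (3): one must use in an essential way that $s_{\Sigma_l}$ is chosen nonzero near $[\theta_{\Sigma_l}]$ (so that the trivial restricted connection cannot itself satisfy the surface constraint), and one must verify that a bubble concentrated at a point $x \in \Sigma_l$ automatically places the limit in $V_{\Sigma_l}$ in the appropriate weak sense, so that these sub-strata do lie in the closure of the cut-down moduli space. The splitting sub-cases, and the reducibility exclusion for intermediate $\rho(i)$'s, are comparatively routine reruns of the argument used for part (2).
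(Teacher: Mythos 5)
The paper itself gives no proof of this lemma---it simply states ``A standard dimension-counting argument shows the following:''---and your proposal supplies a plausible version of exactly that argument, so the approaches coincide. One small inaccuracy worth flagging: your index-additivity identity omits the stabilizer contributions $\sum_{i}\dim H^0_{\rho(i)}$ of the intermediate flat connections (compare the computation in the paper's Lemma~\ref{lem rho irr}, where passing through $\theta_n$ adds $\dim H^0_{\theta}=3$ to the right-hand side). Since these terms are nonnegative and only make the constraints more restrictive, every stratum you rule out via your formula is also ruled out by the correct one, so the conclusions of parts (1)--(3) are unaffected; e.g.\ in (2) you find $\dim M^0_{Y\times\R}(\theta,\rho)=-2$ where the correct count gives $-5$, both negative. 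You correctly identify the two essential points in case (3): that $s_{\Sigma_l}$ is chosen to be nonvanishing near $[\theta_{\Sigma_l}]$, which forces all four surfaces to pass through the bubble set when the background becomes trivial, and that bubbling against $\Sigma_l$ weakly absorbs the constraint $V_{\Sigma_l}$---both are exactly the ingredients the paper invokes (the latter implicitly via \cite{DK}, the former explicitly in its setup of the section $s_\Sigma$).
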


It follows from the lemma that we may take admissible sections $s_{\Lambda}(\rho):M_{ \hat{X} }(\rho;\Sigma_1, \dots, \Sigma_4) \rightarrow \Lambda(\rho)$ for $[\rho] \in CF_{1}(Y)$.
We define a chain $\psi_{X}^{u_1}=\psi_{X}^{u_1}(\Sigma_1, \dots, \Sigma_4) \in C_2^{(0)}(Y)$ as follows.
For $[\rho] \in CF_{2}(Y)$ put
\[
< \psi_{X}^{u_1}, [\rho] > := 
\# \big( M_{ \hat{X} }(\rho;\Sigma_1, \dots, \Sigma_4) \cap V_{\Lambda} \big) \mod 2.
\]
For $[\rho] \in CF_{1}(Y)$, put
\[
< \psi_{X}^{u_1}, [\rho] > :=
\# M_{ \hat{X} }(\rho;\Sigma_1, \dots, \Sigma_4) \mod 2.
\]
These define a chain $\psi_{X}^{u_1} \in C_2^{(0)}(Y)$ as usual.

\begin{lemma}
The chain $\psi_{X}^{u_1} \in C_2^{(0)}(Y)$ is a cycle. Moreover the class $\Psi_{X}^{u_1}([\Sigma_1], \dots, [\Sigma_4]) = [\psi_{X}^{u_1}] \in I^{(0)}_2(Y)$ is independent of the metric and the sections and depends only on the homology classes $[\Sigma_1], \dots, [\Sigma_4] \in H_2(X;\Z)$.
\end{lemma}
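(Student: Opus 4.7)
The plan is to verify three assertions in turn: (A) $\partial^{(0)}\psi_X^{u_1}=0$; (B) the class $[\psi_X^{u_1}]\in I_2^{(0)}(Y)$ is independent of the metric on $\hat{X}$ and of the admissible sections $s_{\Sigma_l}$, $s_\Lambda(\rho)$; and (C) this class depends only on the homology classes $[\Sigma_l]\in H_2(X;\Z)$.  In all three parts the arguments are modeled on those in Section \ref{section Fukaya-Floer} (for the general parameterized-moduli technology) and Section \ref{s torsion} (for the treatment of a reducible flat connection appearing in the neck), with the trivial connection $\theta$ here playing the role that the $\theta_n$ played in Theorem \ref{thm main}.

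For (A), I would pair $\partial^{(0)}\psi_X^{u_1}$ against each generator $[\tau]$ of $C_1^{(0)}(Y)=CF_1(Y)\oplus CF_0(Y)$ and show the pairing is even.  For each such $[\tau]$, consider a suitable one-dimensional cut-down moduli space over $\hat X$ (of formal dimension one, cut by enough sections to reduce the ambient $M_{\hat X}(\tau)$ down to one).  By the convergence lemma preceding this one, together with dimension counting, the ends of this $1$-manifold are the neck-stretching degenerations of the form $M_{\hat X}(\rho;\Sigma)\times M_{Y\times\R}^0(\rho,\tau)$, possibly intersected with $V_\Lambda$ on one of the two factors according to the relative degrees of $[\rho]$ and $[\tau]$.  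Matching these ends to the matrix entries of $\partial^{(0)}\psi_X^{u_1}$ reproduces the cycle identity.  Two further sources of ends must be controlled: first, instanton bubbling, handled by the convention that the sections $s_{\Sigma_l}$ extend over the compactified base $\cB^*_{\Sigma_l,+}$ without vanishing near the trivial point $[\theta_{\Sigma_l}]$, so that bubble limits cancel in pairs; and second, degenerations whose neck flat limit is the reducible $\theta$, whose boundary contribution must be analyzed by an $SO(3)$-gluing argument in the style of Lemma \ref{lem SO(3)}.

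For (B), construct a chain homotopy $H:C_*^{(0)}(Y)\to C_{*+1}^{(0)}(Y)$ by the standard parameterized-moduli method.  Pick a generic path $\{(\hat g^s, s_\Sigma^s, s_\Lambda^s(\rho))\}_{s\in[0,1]}$ interpolating between two choices of data, form the parameterized moduli $\bar M(\rho)=\bigsqcup_{s\in[0,1]} M_{\hat X,\hat g^s}(\rho)\times\{s\}$ together with extended line bundles $\bar\Lambda(\rho)$ and $\bar{\cL}_{\Sigma_l}$, and choose admissible extensions of the given boundary sections exactly as in Subsection \ref{ss FFH well-defined}.  Matrix entries of $H$ are cut-down counts on the $1$-dimensional parameterized moduli, and end-counts on the $2$-dimensional parameterized moduli (again with the $\theta$-contribution tracked by the same $SO(3)$-analysis) yield $\psi_X^{u_1,1}-\psi_X^{u_1,0}=H\circ\partial^{(0)}+\partial^{(0)}\circ H$.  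For (C), given another representative $\Sigma_l'$ of $[\Sigma_l]$ with the same boundary behavior on $Y$, pick an embedded $3$-chain $W_l\subset X\times[0,1]$ cobounding $\Sigma_l\sqcup\Sigma_l'$ and run the analogous parameterized argument through $W_l$, using it to interpolate the line bundles $\cL_{\Sigma_l}$ and $\cL_{\Sigma_l'}$ in a manner compatible with the gluing maps.  This is a compact-setting analog of Conjecture \ref{conj invariance}, and because $X$ is compact and the bordism $W_l$ is explicit, the needed bundle identification can be produced directly, avoiding the full generality of the conjecture.

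The hardest step will be the end analysis in (A), and within it the $SO(3)$-gluing computation at $\theta$: one must verify, via index computations analogous to Lemmas \ref{lem ind Dir} and \ref{lem ind d-bar}, that the restriction of the real line bundle $\Lambda$ to the $SO(3)$-fibre of the gluing parameter at $\theta$ is topologically trivial in the grading $a=0$, so that the number of zeros of $s_\Lambda$ along this fibre is even mod $2$ and the $\theta$-contribution drops out of the cycle identity.  This parity check is precisely what singles out $a=0$ as the correct grading for the spin, $b^+=1$ invariant to be well-defined.
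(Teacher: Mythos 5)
The paper states this lemma without proof (the analogous results elsewhere in the paper --- Lemma \ref{lem complex}, Lemma \ref{lem chain map}, and the proposition at the end of Subsection 2.6 --- are invoked implicitly), so there is no proof in the paper to compare against, and I am evaluating the proposal on its own. Your overall plan --- end-count for the cycle condition, chain homotopy via parameterized moduli for metric and section independence, bordism for representative independence --- is the correct one and matches the pattern followed throughout the paper.

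However, the proposal misidentifies where the difficulty lies, in a way that would send the write-up in the wrong direction. You flag two ``further sources of ends'' in (A), bubbling and degenerations through the reducible $\theta$, and single out the $SO(3)$-gluing analysis at $\theta$ as the hardest step, claiming a parity computation shows the $\theta$-contribution ``drops out.'' In fact, neither source of ends arises. The lemma immediately preceding the one in question already states that for $[\rho]\in CF_2(Y)$ with $\dim M_{\hat X}(\rho)=8$ all sequences have $r=0$, $Z_0=\emptyset$, and for $[\rho]\in CF_1(Y)$ with $\dim M_{\hat X}(\rho)=9$ one has $r\leq 1$, all $Z_i=\emptyset$, and $\rho(0)$ irreducible. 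The same codimension count as in Lemma \ref{lem rho irr} --- a reducible intermediate flat connection adds $\dim H^0_{\theta}=3$ to the formal dimension --- rules out $\theta$-degenerations and bubbling for the remaining generators $[\tau]\in CF_0(Y)$ as well, after cutting by $\Sigma_1,\dots,\Sigma_4$ and $V_\Lambda$. This is precisely why $C_*^{(a)}(Y)$ in Section 4 can be built without a $\theta$ generator, in contrast to Section 3 where Hypothesis \ref{ass trivial flat} forces one in. The end analysis in (A) therefore reduces to the standard count, with no $SO(3)$-fibre contribution to cancel. Relatedly, ``so that bubble limits cancel in pairs'' misdescribes the role of the convention on $s_{\Sigma_l}$ near $[\theta_{\Sigma_l}]$: that convention excludes bubbled-off configurations from $V_{\Sigma_l}$; it is not a cancellation.

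Your closing claim that the parity check ``singles out $a=0$ as the correct grading'' is also off target. The value $a=0$ is forced by the descent condition, exactly as in Lemma \ref{lem line descend}: one needs $c_2(P;\varphi_a)\equiv 0\bmod 2$ for $\tilde\Lambda^{(a)}(\rho)$ to descend from $\tilde{\cB}^*_{\hat X}(\rho)$ to $\cB^*_{\hat X}(\rho)$, and here $c_2(P;\varphi_0)=2$. The $SO(3)$-fibre parity argument you describe does appear in the paper, but only in Section 4.2, in the computation of $D^{(0)}\Psi_X^{u_1}$ over $M_{\hat X}(\theta)$, not in the proof that $\psi_X^{u_1}$ is a cycle. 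Parts (B) and (C) of your plan are fine in outline; for (C) note that the $\Sigma_l$ here are closed surfaces in a compact $X$, so the bundle identification over the compact piece is the standard Donaldson--Kronheimer $\mu$-map argument, and the subtlety is only in verifying compatibility with the gluing maps on the cylindrical end, not in producing the isomorphism itself.
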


\subsection{Proof of Theorem \ref{thm B}}

We start with the following lemma.

\begin{lemma} \label{lem Floer vanish}
If both of $HF_{j-1}(Y;\Z_2)$ and $HF_j(Y;\Z_2)$ are trivial, then $I_j^{(a)}(Y)$ is also trivial.
\end{lemma}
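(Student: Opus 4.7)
The plan is to recognise the complex $(C_*(Y),\partial^{(a)})$ as the mapping cone of a chain map on the usual Floer complex and then read off the conclusion from the long exact sequence. Unwinding the definition of $\partial^{(a)}$, an element of $C_j(Y)=CF_j(Y)\oplus CF_{j-1}(Y)$ has the form $(x,y)$ and the three cases $(\beta_1,\beta_2)\in\{(0,0),(0,1),(1,1)\}$ give respectively the standard Floer differential $\partial_F:CF_j\to CF_{j-1}$ on the $x$-component, a ``Lambda'' map $\Lambda:CF_j\to CF_{j-2}$ obtained by cutting down $1$-dimensional reduced moduli spaces by $V_\Lambda$, and the standard differential $\partial_F:CF_{j-1}\to CF_{j-2}$ on the $y$-component. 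Thus
\[
\partial^{(a)}(x,y)=\bigl(\partial_F x,\ \Lambda x+\partial_F y\bigr),
\]
and the identity $\partial^{(a)}\circ\partial^{(a)}=0$ (already proved) splits into $\partial_F^2=0$ and the chain-map relation $\partial_F\Lambda+\Lambda\partial_F=0$ in $\Z_2$.

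Next I would package this as a short exact sequence of chain complexes
\[
0\longrightarrow CF_{*-1}(Y;\Z_2)\stackrel{\iota}{\longrightarrow}C_*(Y)\stackrel{\pi}{\longrightarrow}CF_*(Y;\Z_2)\longrightarrow 0,
\]
where $\iota(y)=(0,y)$ and $\pi(x,y)=x$; both maps commute with the differentials by the matrix form above. The associated long exact sequence in homology, with the connecting homomorphism given precisely by $\Lambda_*$ (this is the standard computation for a mapping cone), takes the shape
\[
\cdots\longrightarrow HF_{j+1}(Y;\Z_2)\stackrel{\Lambda_*}{\longrightarrow}HF_{j-1}(Y;\Z_2)\longrightarrow I^{(a)}_j(Y)\longrightarrow HF_j(Y;\Z_2)\stackrel{\Lambda_*}{\longrightarrow}HF_{j-2}(Y;\Z_2)\longrightarrow\cdots.
\]

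Finally, plugging in the hypotheses $HF_{j-1}(Y;\Z_2)=0$ and $HF_j(Y;\Z_2)=0$, the segment of the sequence around $I^{(a)}_j(Y)$ reads $0\to I^{(a)}_j(Y)\to 0$, giving $I^{(a)}_j(Y)=0$. The only step requiring a little care is the verification that the connecting homomorphism really is the map induced by $\Lambda$, but this is forced by the shape of the differential $\partial^{(a)}$ and does not need any new geometry beyond what has already been established; in particular, no further analysis of moduli-space ends is needed beyond $\partial^{(a)}\circ\partial^{(a)}=0$. So no serious obstacle remains — the main content is purely homological-algebraic packaging of the complex $(C_*(Y),\partial^{(a)})$ as a mapping cone.
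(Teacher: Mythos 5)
Your proposal is correct and matches the paper's argument exactly: the paper simply observes that $I^{(a)}_j(Y)$ sits in the long exact sequence associated to the short exact sequence $0\to CF_{*-1}(Y;\Z_2)\to C_*^{(a)}(Y)\to CF_*(Y;\Z_2)\to 0$ and reads off the vanishing. Your identification of $C_*^{(a)}(Y)$ as a mapping cone and of the connecting map as $\Lambda_*$ is a correct elaboration, but it is not needed for the conclusion — once the short exact sequence is in place, the two vanishing hypotheses force $I^{(a)}_j(Y)=0$ regardless of what the connecting homomorphism is.
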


We can easily show this by considering the long exact sequence associated with a short exact sequence
\[
0 \longrightarrow CF_{*-1}(Y;\Z_2) \longrightarrow C_*^{(a)}(Y) \longrightarrow CF_{*}(Y;\Z_2) \longrightarrow 0.
\]

\vspace{2mm}

Next we define a map
\[
D^{(a)}:I_2^{(a)}(Y) \longrightarrow \Z_2.
\]
For $[\rho] \in CF_{1}(Y)$, put
\[
D^{(a)}([\rho]) := \# M_{ Y \times \R}^0(\rho, \theta) \mod 2
\]
and for $[\rho] \in CF_{2}(Y)$, put
\[
D^{(a)}( [\rho] ) := 
\# \big( M_{Y \times \R}^0( \rho, \theta) \cap V_{\Lambda}  \big) \mod 2.
\]
Then we have a linear map $D^{(a)}$ from $C^{(a)}_2(Y)$ to $\Z_2$.
Counting the number of the ends of appropriate 1-dimensional moduli spaces, we get
\[
D^{(a)} \circ \partial^{(a)} = 0.
\]
Hence we obtain
\[
D^{(a)}:I_2^{(a)}(Y) \longrightarrow \Z_2.
\]
(We denote the map by the same notation.)

Theorem \ref{thm B} immediately follows from Lemma \ref{lem Floer vanish} and the following proposition.

\begin{proposition}
The image of $\Psi_{X}^{u_1}([\Sigma_1], \dots, [\Sigma_4]) \in I_{2}^{(0)}(Y)$ by $D^{(0)}$ is
\[
Q_{X}^{(2)}([\Sigma_1], \dots, [\Sigma_4]) \mod 2.
\]
\end{proposition}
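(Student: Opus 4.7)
The plan is to analyze the ends of the $1$-dimensional cut-down moduli space
\[
\mathcal{N} := M_{\hat X}(\theta;\Sigma_1,\dots,\Sigma_4) \cap V_\Lambda
\]
on $\hat X$, of virtual dimension $\dim M_{\hat X}(\theta) - 2\cdot 4 - 1 = 10 - 9 = 1$. After generic perturbations of the metric, surfaces, and admissible sections $s_{\Sigma_i}$, $s_\Lambda$, this is a smooth $1$-manifold whose Uhlenbeck compactification $\overline{\mathcal N}$ is compact, so $\#\partial\overline{\mathcal N}\equiv 0\pmod 2$. The strategy is to identify $\partial\overline{\mathcal N}$ as the disjoint union of two pieces with mod-$2$ counts $D^{(0)}(\Psi_X^{u_1})$ and $Q_X^{(2)}([\Sigma_1],\dots,[\Sigma_4])$, and to conclude by equating the two.

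First I will identify the ends coming from broken trajectories along the $Y$-end of $\hat X$. By the compactness results of Section \ref{section Fukaya-Floer}, adapted to $\hat X$ as in Subsection \ref{ss gluing D inv}, each such end has the form $([A_0^\infty], [A_1^\infty])$ with $[A_0^\infty]\in M_{\hat X}(\rho)$ for some irreducible flat $\rho$ and $[A_1^\infty]\in M^0_{Y\times\R}(\rho,\theta)$; the surface and $V_\Lambda$ cut-downs split between the two factors in the unique dimensionally-balanced way, with $\rho$ ranging over generators of $CF_1(Y)\cup CF_2(Y)$. Summing over $\rho$ and over the splittings reproduces exactly the pairing $D^{(0)}(\psi_X^{u_1})$ by the definitions of $\psi_X^{u_1}$ and $D^{(0)}$.

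Second I will analyze the Uhlenbeck ends. Because $c_2(P)=2$ and the only $SU(2)$-instanton on $\hat X$ with limit $\theta$ at $c_2=0$ is the reducible $[\theta_{\hat X}]$, a dimension count combined with the generic transversality of the $\Sigma_i$ (triple intersections $\Sigma_i\cap\Sigma_j\cap\Sigma_k$ empty, and each $\Sigma_i\cap\Sigma_j$ transverse) forces every surviving Uhlenbeck end to lie in the deepest stratum
\[
\{[\theta_{\hat X}]\}\times \Sym^2(\hat X),
\]
with bubble centers $x_1\in \Sigma_i\cap\Sigma_j$ and $x_2\in\Sigma_k\cap\Sigma_l$ for some unordered partition $\{1,\dots,4\}=\{i,j\}\sqcup\{k,l\}$. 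All four surface conditions are absorbed by the bubble positions, and the only remaining freedom is the link of the bubble stratum, a quotient of $SO(3)\times SO(3)$ by the stabilizer of $[\theta_{\hat X}]$, cut transversely by $V_\Lambda$. A local model computation at a pair of concentrated charge-$1$ instantons, in the spirit of Donaldson's analysis in \cite{conn}, shows that the mod-$2$ intersection number of $V_\Lambda$ with this link is $1$. Summing contributions over ordered bubble locations within each partition and then over the three partitions of $\{1,\dots,4\}$ yields
\[
\#(\text{Uhlenbeck ends}) \equiv \sum_{\{i,j\}\sqcup\{k,l\}=\{1,\dots,4\}} Q_X(\Sigma_i,\Sigma_j)\,Q_X(\Sigma_k,\Sigma_l) \equiv Q_X^{(2)}([\Sigma_1],\dots,[\Sigma_4]) \pmod 2,
\]
the combinatorial factor $1/(2^2\cdot 2!)$ in the definition of $Q_X^{(2)}$ being compensated exactly by the $\mathfrak{S}_4$-orbit size on such partitions. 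Equating the two boundary contributions gives the claimed equality.

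The main obstacle is the bubble computation in the second step. It requires (i) extending $\cL_{\Sigma_i}$ and the real line bundle $\Lambda$ across the deepest Uhlenbeck stratum $\{[\theta_{\hat X}]\}\times \Sym^2(\hat X)$; (ii) verifying that $c_1(\cL_{\Sigma_i})$ restricts to the Poincaré dual of $[\Sigma_i]$ on the bubble-center factor, so that the $V_{\Sigma_i}$-conditions are absorbed by the bubble positions in the expected way; and (iii) computing the parity of $V_\Lambda$ against the $SO(3)\times SO(3)$ gluing link of a pair of concentrated charge-$1$ $SU(2)$-instantons. The spin hypothesis on $X$ is precisely what makes $\Lambda$ a well-defined real line bundle on the Uhlenbeck boundary and forces its restriction to this link to be nontrivial; without it, neither $\Psi_X^{u_1}$ nor $u_1^{(0)}$ would descend. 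Each of the individual ingredients is a variant of a computation already carried out in \cite{conn} or Section \ref{section Fukaya-Floer}, so the technical heart of the proof is their careful assembly in this relative setting.
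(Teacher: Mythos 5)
Your overall approach is exactly the paper's: identify the ends of the $1$-dimensional cut-down moduli space $M_{\hat X}(\theta;\Sigma_1,\dots,\Sigma_4)\cap V_\Lambda$, show that broken-trajectory ends along the cylindrical end produce $D^{(0)}(\psi_X^{u_1})$ and that Uhlenbeck bubbling ends produce $Q_X^{(2)}$, and equate. The treatment of the flat-limit ends (splitting $V_\Lambda$ between the two factors according to whether $[\rho]\in CF_1$ or $CF_2$), the confinement of bubbling to the deepest stratum $\{[\theta_{\hat X}]\}\times\Sym^2(\hat X)$ with both bubble centers at pairwise intersections of the $\Sigma_i$, and the combinatorial identification $\sum_{\text{partitions}}Q_X(\Sigma_i,\Sigma_j)Q_X(\Sigma_k,\Sigma_l)\equiv Q_X^{(2)}\ (\mathrm{mod}\ 2)$ all match the paper.

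The one place where your description drifts from what actually happens is the local model at an Uhlenbeck end. You assert that \emph{all four} surface conditions are ``absorbed by the bubble positions'' and that the only remaining freedom is the link, ``a quotient of $SO(3)\times SO(3)$ by the stabilizer of $[\theta_{\hat X}]$, cut transversely by $V_\Lambda$.'' That quotient is a $3$-manifold, so cutting it by the codimension-one $V_\Lambda$ gives a $2$-manifold, not the finite set of boundary points of a $1$-manifold; the ``mod-$2$ intersection number of $V_\Lambda$ with this link'' as you phrase it does not type-check dimensionally. In Donaldson's analysis (cited in the paper), the four $V_{\Sigma_i}$ do more than fix the bubble centers: they also cut the residual gluing parameter $SO(3)$ (the quotient you describe) down to a loop $\ell$ generating $\pi_1(SO(3))\cong\Z_2$. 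It is then the $0$-dimensional intersection $\ell\cap V_\Lambda$ whose parity is computed, and it is $1$ because $u_1=w_1(\Lambda)$ restricted to $\ell$ is nontrivial --- which in turn follows because the charge-$1$ instanton $J$ on $S^4$ has odd Dirac index (exactly the parity argument already used in the proof of Lemma \ref{lem SO(3)}). Once this dimension accounting is fixed, your argument coincides with the paper's.
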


To prove this, we consider the ends of the cut-down moduli space $M_{ \hat{X} }(\theta;\Sigma_1, \dots, \Sigma_4) \cap V_{\Lambda}$ with dimension 1.
Let $\{ [A^{\alpha}] \}_{\alpha}$ be a sequence in the cut-down moduli space. Then there is a subsequence $\{ [A^{\alpha'}] \}_{\alpha'}$ which goes to $\big( ([A_0], Z_0), \dots,  ([A_r^{\infty}], Z_r) \big)$. 
We can see that we are in one of the following cases:

\begin{enumerate}

\item
$r=0$, $Z_0 = \emptyset$.

\vspace{2mm}

\item \label{theta}
$r=0$, 
$Z_0=\{ x_1, x_2 \}$ 
for some $x_1 \in \Sigma_{l_1} \cap \Sigma_{l_2}$, $x_2 \in \Sigma_{l_3} \cap \Sigma_{l_4}$ with $l_1, \dots, l_4$ distinct and
$[A_0^{\infty}] = [\theta_{ \hat{X} }] $.

\vspace{2mm}

\item \label{deg 2}
$r=1$, all $Z_i$ are empty, and $[\rho(0)] \in CF_{2}(Y)$, 
$[A_0^{\infty}] \in M_{ \hat{X} }(\rho(0); \Sigma_1, \dots , \Sigma_4)$, $[A_1^{\infty}] \in M^0_{ Y \times \R }(\rho(0), \theta) \cap V_{\Lambda}$.

\vspace{2mm}

\item \label{deg 1}
$r=1$, $[\rho (0) ] \in CF_1(Y)$, $[A_0^{\infty}] \in M_{\hat{X}}(\rho(0); \Sigma_1, \dots, \Sigma_4) \cap V_{\Lambda}$, $[A_1^{\infty}] \in M^0_{ Y \times \R }(\rho(0), \theta)$.

\end{enumerate}

We describe the ends of the moduli space relevant to the case (\ref{theta}).
Let $x_1 \in \Sigma_{l_1} \cap \Sigma_{l_2}$, $x_2 \in \Sigma_{l_3} \cap \Sigma_{l_4}$ for distinct $l_1,\dots, l_4$ and take small neighborhoods $U_{x_1}$, $U_{x_2}$ in $\hat{X}$.
There is an $SO(3)$-equivariant map
\[
\kappa:U_{x_1} \times U_{x_2} \times SO(3) \times SO(3) \times (T_0, \infty) \times (T_0, \infty)
\longrightarrow 
H_{ \theta_{ \hat{X} } }^2 = H^+(X) \otimes \su(2)
\]
such that $\kappa^{-1}(0)/SO(3)$ is homeomorphic to an open set in $M_{ \hat{X} }(\theta)$.
As in \cite{conn} we can see that
\[
( \kappa^{-1}(0)/SO(3) ) \cap V_{\Sigma_1} \cap \cdots \cap V_{\Sigma_4}
\cong
\{ x_1 \} \times \{ x_2 \} \times \ell \times \{ 1 \} \times \{ \ ( T, T ) \ | \ T > T_0 \ \},
\]
where $\ell$ is a loop in $SO(3)$ which represents the generator of $\pi_1(SO(3)) \cong \Z_2$.
For each $\underline{x}=(x_1, x_2)$, we have a gluing map
\[
Gl_{\underline{x}}: \ell \times \{ \ (T, T) \ | \ T > T_0 \ \}
\longrightarrow
M_{ \hat{X} }(\theta; \Sigma_1, \dots, \Sigma_4)
\]
which is given by gluing the trivial connection on $\hat{X}$ and two copies of the fundamental instanton $J$ over $S^4$ at $x_1, x_2$.
The intersection of the image of this map and $V_{\Lambda}$ is one of the ends of the moduli space.

Let  $\unA = ([A_0], [A_1])$ be an element of $M_{\hat{X}}( \rho; \Sigma_1, \dots, \Sigma_4 ) \times \big( M^0_{ Y \times \R }(\rho, \theta) \cap V_{\Lambda} \big)$ for $[\rho] \in CF_2(Y)$,  or an element of $\big( M_{\hat{X}}( \rho; \Sigma_1, \dots, \Sigma_4 ) \cap V_{\Lambda} \big) \times M^0_{ Y \times \R }(\rho, \theta) $ for $[\rho] \in CF_1(Y)$. Then we have a gluing map
\[
Gl_{ \unA }:(T_0, \infty) \longrightarrow M_{ \hat{X} }(\theta; \Sigma_1, \dots, \Sigma_4).
\]
The ends of the moduli space relevant to the case (\ref{deg 2}), (\ref{deg 1}) are the intersections of the images of these gluing maps and $V_{\Lambda}$.

Put
\[
M' = 
M_{ \hat{X} }(\theta;\Sigma_1, \dots, \Sigma_4) \ \backslash \ 
\big(
\bigcup_{ \underline{x} } \Im Gl_{\underline{x}} \cup
\bigcup_{ \unA } \Im Gl_{ \unA }
\big).
\]
If we perturb the section $s_{\Lambda}$ over a compact set in $M'$ then the intersection $M' \cap V_{\Lambda}$ is a smooth compact manifold with boundary
\[
\begin{split}
& \bigcup_{ [\rho] }
M_{ \hat{X} } (\rho; \Sigma_1, \dots, \Sigma_4) \times \big( M_{Y \times \R}^0(\rho, \theta) \cap V_{\Lambda} \big) \\
\cup & \bigcup_{ [\sigma] }
\big( M_{ \hat{X} } (\sigma; \Sigma_1, \dots, \Sigma_4) \cap V_{\Lambda} \big)
\times
M_{ Y \times \R}^0(\sigma, \theta) \\
\cup & \bigcup_{ \underline{x}}
Gl_{ \underline{x} }(\ell, T_0) \cap V_{\Lambda}.
\end{split}
\]
Here $[\rho]$ runs over the generators of $CF_2(Y)$ and $[\sigma]$ runs over the generators of $CF_1(Y)$.
Therefore we get
\[
D^{(0)}( \Psi_{X}^{u_1} ([\Sigma_1], \dots, [\Sigma_4]) ) + 
\sum_{ \underline{x} } \# \big( Gl_{\underline{x}}( \ell, T_0) \cap V_{\Lambda} \big)
\equiv
0 \mod 2.
\]
We can see that
\[
\# \big( Gl_{\underline{x}}( \ell, T_0) \cap V_{\Lambda}) \equiv 1 \mod 2
\]
for each $\underline{x}$.
This follows from arguments similar to those which deduce (\ref{eq S_A}).
The point is that $J$ is an instanton on an $SU(2)$-bundle $P'$ with $c_2(P') = 1$ and hence the index $\Ind D_{J}$ is odd. Therefore we get
\[
D^{(0)}( \Psi_{X}^{u_1} ([\Sigma_1], \dots, [\Sigma_4]) ) 
\equiv 
\sum_{ \underline{x} } 1
\equiv 
Q_{X}^{(2)}([\Sigma_1], \dots, [\Sigma_4]) \mod 2
\]
as required.


\end{document}